\documentclass[12pt]{article}
\usepackage[T1]{fontenc}
\usepackage[utf8]{inputenc}
\usepackage{lmodern}
\usepackage[letterpaper,margin=1.25in]{geometry}
\usepackage[nopatch=footnote]{microtype}
\usepackage{amsmath,amssymb,amsthm,mathtools,bm,mathrsfs}
\usepackage[round,authoryear]{natbib}
\usepackage{enumitem}
\usepackage{booktabs}
\usepackage{graphicx}
\usepackage{longtable}
\usepackage{pdflscape}
\usepackage{array}
\usepackage{chngcntr}
\usepackage{setspace}
\usepackage{xurl}
\usepackage{xr-hyper}
\usepackage[hidelinks]{hyperref}
\allowdisplaybreaks[2]
\newtheorem{theorem}{Theorem}[section]
\newtheorem{proposition}[theorem]{Proposition}
\newtheorem{lemma}[theorem]{Lemma}
\newtheorem{corollary}[theorem]{Corollary}
\newtheorem{assumption}[theorem]{Assumption}
\theoremstyle{definition}
\newtheorem{remark}[theorem]{Remark}
\newtheorem{example}[theorem]{Example}
\newcommand{\RR}{\mathbb R}
\newcommand{\PP}{\mathbb P}
\newcommand{\EE}{\mathbb E}
\newcommand{\1}{\mathbf 1}
\newcommand{\Cov}{\operatorname{Cov}}
\newcommand{\Var}{\operatorname{Var}}
\newcommand{\tr}{\operatorname{tr}}

\newcommand{\HS}{\mathrm{HS}}
\newcommand{\op}{\mathrm{op}}
\newcommand{\CF}{\mathrm{CF}}
\newcommand{\suppref}[1]{Supplementary Appendix~\ref{#1}}

\newcommand{\mcse}[1]{\,(#1)}

\hypersetup{pdftitle={Functional Inference for the Mean Curve of Intraday Spot Correlation},pdfauthor={}}

\newcommand{\papertitle}{Functional Inference for the Mean Curve of Intraday Spot Correlation}
\title{\papertitle}
\author{}
\date{}

\title{
  Functional Inference for the Mean Curve of Intraday Spot Correlation
}

\author{
   Aodong Guo
  \thanks{
    Faculty of Science, University of Macau.
    E-mail:
    \href{mailto:yc47928@um.edu.mo}
         {yc47928@um.edu.mo}.
  }
  \quad
  Yu Jiang
  \thanks{
    School of Economics, Renmin University of China.
    E-mail:
    \href{mailto:yujiang1101@ruc.edu.cn}
         {yujiang1101@ruc.edu.cn}.
  }
  \quad
  Zhi Liu
  \thanks{
    Faculty of Science, University of Macau.
    E-mail:
    \href{mailto:liuzhi@um.edu.mo}
         {liuzhi@um.edu.mo}.
  }
}
\date{October 22, 2026}

\begin{document}
\maketitle
\begin{abstract}
In this paper, we propose estimators for the mean of the daily spot-correlation
curves, which are robust to the presence of finite-variation jumps in the price
processes. The related feasible functional central limit theorems for the proposed
estimators have been established under a joint infill and long-span setting. Based
on the asymptotic theory, we develop a test procedure for flatness of the curves.
Monte Carlo simulations confirm the strong finite-sample performance of the
estimators. In an empirical application to 30 U.S. stocks from 2010 to 2019,
we find a pronounced end-of-day surge in the mean of correlation curves across
diverse stock groupings, alongside substantial calendar-time non-flatness in the
curves.
\end{abstract}

\noindent\textit{Keywords:} high-frequency data; spot correlation; mean curve;
functional central limit theorem; long-run covariance.\\
\textit{JEL classification:} C12; C14; C32; C58.
\bigskip

\section{Introduction}
\label{sec:introduction}

Correlation is a central input to portfolio allocation, risk measurement,
hedging, and the assessment of systematic exposure
\citep{Markowitz1952}. At the intraday horizon, the strength of market
comovement can vary over the trading session, and the sensitivity of measured
correlation to the observation interval has long been recognized
\citep{Epps1979}. A single full-day coefficient can therefore conceal when
assets move together most strongly. Moreover, an intraday covariance pattern
need not reflect variation in dependence alone, because it may be generated
by marginal volatility, correlation, or both. Studying intraday dependence
on its original scale requires recovery and inference for the latent
spot-correlation curve itself.

A major strand of the correlation literature models time-varying conditional
correlations. The dynamic conditional correlation model of
\citet{Engle2002} and its asymmetric extension in
\citet{CappielloEngleSheppard2006}, for example, describe the evolution of
conditional correlations in discrete time. Alongside this literature,
high-frequency econometrics made it
possible to study dependence at finer scales. \citet{BarndorffNielsenShephard2004}
develop distribution theory for realized covariance, regression, and
correlation; \citet{AitSahaliaFanXiu2010} treat high-frequency covariance
estimation under market microstructure noise; and
\citet{BibingerHautschMalecReiss2019} estimate spot covariation and correlation
under noisy and nonsynchronous observations. These contributions provide the
measurement foundations for analyzing the evolution of dependence within the
trading day.

A parallel literature studies recurrent time-of-day variation in volatility.
The evidence and models in \citet{AndersenBollerslev1997} and
\citet{AndersenBollerslev1998} documented pronounced intraday volatility
periodicity. \citet{ChristensenHounyoPodolskij2018} develop a nonparametric
assessment of whether a deterministic diurnal component is sufficient to
explain intraday volatility variation, while
\citet{AndersenThyrsgaardTodorov2019} allow the volatility curve itself to
vary over time. More recent work develops long-span functional inference for
latent intraday volatility curves \citep{AndersenSuTodorovZhang2024} and tests
the stability of their mean across calendar time
\citep{AndersenTanTodorovZhang2025}.

The closest correlation-specific contributions are
\citet{ChristensenHounyoLiu2026} and
\citet{AndersenTanTanTodorovZhang2026}. In their baseline model,
\citet{ChristensenHounyoLiu2026} represent spot correlation as a deterministic periodic
factor multiplied by a stationary stochastic component and test whether
the periodic factor is constant. Their Section~5.1 accommodates stochastic
diurnal correlation through a population ratio formed from the mean spot
covariance coordinates. \citet{AndersenTanTanTodorovZhang2026} retain a multiplicative
specification but normalize the correlation pattern by its $L^2$ norm,
rather than identifying the deterministic covariance component through a
signed-integral restriction. They establish functional limit theory in
$D[0,1]$ under the supremum norm and compare normalized patterns across
two prespecified periods. They point out that the covariance-to-correlation
map is not Hadamard differentiable on its natural $L^2$ domain.

This paper studies the mean of the latent daily spot-correlation curves.
Let $\rho_i$ denote the curve on trading session $i$, indexed by
intraday time $\kappa\in[0,1]$ as specified in
Section~\ref{subsec:model_semimartingale}. Our primary target is
\begin{equation}
r_u:=\EE_{\mathcal H}[\rho_i]\in\mathcal H,
\qquad \mathcal H=L^2([0,1]).
\label{eq:intro_equal_day_target}
\end{equation}
Every trading day enters with the same weight, and $r_u$ retains the
original correlation scale. We develop inference for this mean without
requiring a multiplicative decomposition of the correlation process.
When $\bar r_u:=\int_0^1r_u(s)\,ds$ is nonzero, we also study its
normalized shape $k_u^\circ=r_u/\bar r_u$ and the flatness hypothesis
$k_u^\circ=\1$ in $\mathcal H$.

We first construct the estimator of $r_{u}$.
Within each trading session, we estimate a positive-semidefinite covariance
matrix from locally truncated returns, form its correlation ratio on each
block, and assemble a bounded daily curve $\widehat\rho_i$.
The sample mean $\widetilde\rho_u=T^{-1}\sum_{i=1}^T\widehat\rho_i$
then estimates $r_u$ directly. Averaging the covariance coordinates before
forming a correlation ratio generally yields a different population
quantity; Supplementary Appendix~\ref{app:population} gives the
relationship and examples.

Under joint infill and long-span asymptotics, with finite-activity price
jumps and stationary strongly mixing daily states, we prove consistency and
the functional central limit theorem
\begin{equation}
\sqrt T\,(\widetilde\rho_u-r_u)
=\frac{1}{\sqrt T}\sum_{i=1}^T\{\rho_i-r_u\}+o_P(1)
\ \Rightarrow\ G_\rho
\qquad\text{in }\mathcal H,
\label{eq:intro_raw_fclt}
\end{equation}
where $G_\rho$ is a centered Gaussian element with the long-run covariance
of the latent daily curves. The expansion establishes that high-frequency
recovery leaves the first-order distribution of the mean unchanged. Its
proof combines finite-dimensional correlation expansions on nondegenerate
blocks with bounded-ratio control on the complement, and handles the
accumulated nonlinear remainder at the $\sqrt T$ scale. The argument allows
finite-activity price jumps without moment restrictions on their sizes.
Once this approximation is established, a Hilbert-space central
limit theorem applies to the bounded latent daily curves. A separate
functional delta method for $f\mapsto f/\int_0^1f(s)\,ds$ yields inference
for the normalized mean shape.

For feasible inference, we construct positive-semidefinite lag-window
estimators of the long-run covariance operators and prove convergence in
trace norm. This implies $\ell^1$ convergence of the ordered eigenvalues,
so whole-curve confidence sets can be calibrated using all eigenvalues of
the estimated covariance operator, without eigenvalue-separation
conditions or a prespecified principal-component truncation. The same
framework delivers inference for fixed linear summaries, normalized mean
shapes, and intraday constancy. It also accommodates fixed weighted
averages of a finite collection of dependent asset-pair curves while
retaining all cross-covariance terms among the component curves.

The simulations examine curve recovery, confidence-set calibration, and
flatness detection. As the number of trading days and the intraday sampling frequency increase jointly, errors in recovering the latent correlation curves decrease. The test also gains power as departures from intraday flatness become more pronounced. Coverage results distinguish the original
mean scale from normalized shape inference, and a paired comparison with
\citet{ChristensenHounyoLiu2026} provides a reference for finite-sample test performance.
In an application to 2,370 trading days for 30 U.S. stocks, the closing-hour
average exceeds the opening-hour average for both the estimated stock--proxy
and average-pairwise mean curves. Tests in all 38 complete quarterly windows
reject flatness for both aggregates after Holm adjustment.
The test of \citet{ChristensenHounyoLiu2026} yields the same quarterly decisions
except for average-pairwise correlation in one quarter, yet the two approaches
give different estimates of correlation levels and of the increase in average
correlation from the opening hour to the closing hour.

Section~\ref{sec:model} introduces the model and assumptions.
Section~\ref{sec:estimation} constructs the estimator and establishes
consistency and the functional central limit theorems.
Section~\ref{sec:inference} develops feasible inference.
Sections~\ref{sec:simulations} and \ref{sec:empirical} report the numerical
and empirical evidence; all proofs and additional results are collected in
the Supplementary Appendix.

\section{Setup}
\label{sec:model}

\subsection{Model}
\label{subsec:model_semimartingale}

All processes are defined on a common filtered probability space
\((\Omega,\mathcal F,(\mathcal F_t)_{t\ge0},\PP)\). Let
\(Z_t=(X_t,Y_t)^\top\) be a bivariate log-price process. We measure time in
trading-session units and write
\begin{equation}
Z_t
=
Z_0+\int_0^t a_s\,ds+M_t+J_t,
\qquad t\ge0,
\label{eq:semimartingale_jump_model}
\end{equation}
where \(a\) is predictable, \(M=(M^X,M^Y)^\top\) is a continuous local
martingale, and \(J=(J^X,J^Y)^\top\) is a finite-activity pure-jump process.
The jump count \(N_t^J:=\sum_{0<s\le t}\1\{\Delta J_s\ne0\}\) is therefore
almost surely finite on compact time intervals. This is the finite-activity
case of the high-frequency semimartingale framework
\citep{JacodProtter2012}.

Let \(c_t\) denote the predictable spot covariance matrix of \(M\), with
\(d\langle M\rangle_t=c_t\,dt\). Its diagonal entries \(c_t^X\) and
\(c_t^Y\) are the marginal spot variances, and \(\rho_t\) is the spot
correlation of the continuous martingale components:
\begin{equation}
c_t
=
\begin{pmatrix}
c_t^X & c_t^{XY}\\
c_t^{XY} & c_t^Y
\end{pmatrix},
\qquad
c_t^{XY}
=
\rho_t\sqrt{c_t^Xc_t^Y},
\qquad
\rho_t
=
\frac{c_t^{XY}}{\sqrt{c_t^Xc_t^Y}}
\quad\text{when }c_t^Xc_t^Y>0.
\label{eq:spot_correlation_definition}
\end{equation}
\citet{BarndorffNielsenShephard2004} study realized correlation;
\citet{BibingerHautschMalecReiss2019} consider spot covariation and
correlation estimation under more general observation schemes.
The index \(\wp\) ranges over \(\{X,Y,XY\}\) for covariance coordinates
and over \(\{X,Y\}\) for price, volatility, jump, and threshold quantities.

Let $g_{\mathrm{nt}}\ge0$ be the fixed length of the non-trading interval
between consecutive sessions, and set
\[
\underline t_i=(i-1)(1+g_{\mathrm{nt}}),\qquad i\ge1.
\]
The $i$th observed trading session is
$[\underline t_i,\underline t_i+1]$; observations from the intervening
non-trading intervals are not used. For $\kappa\in[0,1]$, define
\begin{equation}
\rho_i(\kappa):=\rho_{\underline t_i+\kappa},
\qquad
c_i^\wp(\kappa):=c_{\underline t_i+\kappa}^\wp,
\quad
\wp\in\{X,Y,XY\}.
\label{eq:daily_rho_curve}
\end{equation}
The case $g_{\mathrm{nt}}=0$ gives contiguous sessions and is used in the
simulation design below.

We view the daily curves as random elements of
\(\mathcal H=L^2([0,1])\), with inner product
\(\langle f,g\rangle_{\mathcal H}=\int_0^1f(\kappa)g(\kappa)\,d\kappa\)
and norm \(\|\cdot\|_{\mathcal H}\). Our target is the mean of the daily
correlation curves, with every day given the same weight. Under
Assumption~\ref{ass:C5_daily_mixing}, define the mean curve and the
centered daily curves by
\begin{equation}
\begin{aligned}
r_u&:=\EE_{\mathcal H}[\rho_i],
&
A_i^\rho&:=\rho_i-r_u.
\end{aligned}
\label{eq:ru_target}
\end{equation}
Since \(|\rho_i|\le1\), the mean is well defined in \(\mathcal H\), and
\(r_u(\kappa)=\EE[\rho_i(\kappa)]\) for almost every \(\kappa\in[0,1]\).
Equalities between curves are understood in \(\mathcal H\).
When $g_{\mathrm{nt}}>0$, the mean correlations at the open and close need
not be equal.

We also consider the integrated mean and, when it is nonzero, the
normalized shape:
\begin{equation}
\bar r_u:=\int_0^1r_u(\kappa)\,d\kappa,
\qquad
k_u^\circ:=\frac{r_u}{\bar r_u}.
\label{eq:normalized_target}
\end{equation}
Normalization is applied to the mean curve, not to the daily curves before
averaging. The mean curve itself remains well defined when it changes sign
or has zero integral.

\subsection{Assumptions}
\label{subsec:theory_assumptions}

\begin{assumption}
\label{ass:J_price_jumps}
For each \(\wp\in\{X,Y\}\),
\[
J_t^\wp=\int_0^t\int_{\RR}x\,\mu^\wp(ds,dx),
\qquad
\nu^\wp(dt,dx)=\chi_t^\wp dt\otimes F^\wp(dx),
\]
where \(\mu^\wp\) is an integer-valued random measure with compensator
\(\nu^\wp\), and \(\chi^\wp\) is nonnegative, predictable, and c\`adl\`ag.
\end{assumption}

No moment condition is imposed on price-jump sizes.  Finite activity follows
from \(F^\wp(\RR)<\infty\) and the intensity regularity in
Assumption~\ref{ass:C4_moments}.

\begin{assumption}
\label{ass:V_volatility}
For each \(\wp\in\{X,Y\}\), the marginal volatility
\(\sigma_t^\wp=\sqrt{c_t^\wp}\) is an adapted c\`adl\`ag It\^o
semimartingale,
\[
\sigma_t^\wp
=
\sigma_0^\wp
+\int_0^t\widetilde a_s^\wp\,ds
+\int_0^t\widetilde\sigma_s^\wp\,dW_s
+\int_0^t\widetilde v_s^\wp\,d\widetilde W_s
+\int_0^t\int_{\RR}x\,\widetilde\mu^\wp(ds,dx).
\]
Here \(W\) drives a representation of the continuous price martingale,
\(\widetilde W\) is independent of \(W\), and
\(\widetilde\mu^\wp\) is the integer-valued jump measure of
\(\sigma^\wp\), with compensator
\[
\widetilde\nu^\wp(dt,dx)
=
\widetilde\chi_t^\wp dt\otimes\widetilde F^\wp(dx).
\]
The intensity \(\widetilde\chi^\wp\) is nonnegative and predictable.
We use the convention
\(\widetilde\mu^\wp(\{t\}\times\RR)\le1\), with mark
\(x=\Delta\sigma_t^\wp\) at a volatility-jump time.  Brownian motions and
coefficient loadings may be vector-valued with conformable dimensions.
\end{assumption}

\begin{assumption}
\label{ass:C4_moments}
For each \(\wp\in\{X,Y\}\),
\[
\sup_{t\ge0}\EE\!\left[
e^{|a_t^\wp|}+e^{|\sigma_t^\wp|}+e^{|\chi_t^\wp|}
\right]<\infty,
\qquad
F^\wp(\RR)<\infty.
\]
Moreover,
\[
\widetilde F^\wp(\RR)<\infty,
\qquad
\int_{\RR}|x|^8\widetilde F^\wp(dx)<\infty,
\]
and, for some common \(p_\star>8\),
\[
\sup_{t\ge0}\EE\!\left[
|\widetilde a_t^\wp|^{p_\star/2}
+|\widetilde\sigma_t^\wp|^{p_\star}
+|\widetilde v_t^\wp|^{p_\star}
+|\widetilde\chi_t^\wp|^{p_\star/2}
\right]<\infty.
\]
\end{assumption}

The exponential bounds imply moment bounds of every finite order for
$a^\wp,\sigma^\wp,\chi^\wp$, uniformly in time. The higher moment order for
$\widetilde\sigma^\wp$ and $\widetilde v^\wp$ accounts for the squared
terms in It\^o's formula for the covariance coordinates. The condition
$p_\star>8$ permits H\"older bounds for products of states and coefficients
without an independence assumption.

\begin{assumption}
\label{ass:primitive_correlation_regularity}
The spot correlation is adapted and c\`adl\`ag, satisfies
\(|\rho_t|\le1\) almost surely, and admits the finite-activity It\^o
semimartingale representation
\begin{equation}
\rho_t
=
\rho_0
+\int_0^t b_s^\rho\,ds
+\int_0^t\Sigma_s^\rho\,dB_s^\rho
+\int_0^t\int_E\delta^\rho(s,x)\,\mu^\rho(ds,dx).
\label{eq:primitive_correlation_raw}
\end{equation}
Here \(B^\rho\) is a Brownian motion, \((E,\mathcal E)\) is a measurable mark
space, and \(\mu^\rho\) is an integer-valued random measure on
\(\RR_+\times E\) with compensator
\[
\nu^\rho(ds,dx)=\chi_s^\rho ds\otimes F^\rho(dx),
\qquad
F^\rho(E)<\infty.
\]
The intensity \(\chi^\rho\) is nonnegative and predictable.
The jump coefficient \(\delta^\rho\) is
\(\mathcal P\otimes\mathcal E\)-measurable, where \(\mathcal P\) is the
predictable \(\sigma\)-field of the underlying filtration.
We further require
\(\rho_{s-}+\delta^\rho(s,x)\in[-1,1]\) for
\(\nu^\rho\)-almost every \((s,x)\).  For some
\(p_\star^\rho>8\),
\[
\sup_{s\ge0}\EE\!\left[
|b_s^\rho|^{p_\star^\rho/2}
+\|\Sigma_s^\rho\|^{p_\star^\rho}
+|\chi_s^\rho|^{p_\star^\rho/2}
\right]<\infty.
\]
\end{assumption}

When Assumptions~\ref{ass:C4_moments} and
\ref{ass:primitive_correlation_regularity} are used together, we write
\(p_\star\) for the smaller of the two moment exponents, which is still
greater than \(8\).
All jump integrals use the original measures, not their compensated
versions. For each coordinate, there is at most one mark at a given time,
but different coordinates may jump simultaneously. The local bounds for
the high-frequency approximation are given in Supplementary
Appendix~\ref{app:regularity}.

\begin{assumption}
\label{ass:C5_daily_mixing}
For $i\ge1$ and $\kappa\in[0,1]$, the vector
\[
\bigl(c_{\underline t_i+\kappa}^X,
      c_{\underline t_i+\kappa}^Y,
      \rho_{\underline t_i+\kappa}\bigr)
\]
is a measurable function of the multivariate state
$\mathsf Z_{\underline t_i+\kappa}$. The mapping may depend on
$\kappa$, but not on the trading-session index $i$.
The state process is stationary and
\(\alpha\)-mixing, with
\[
\begin{aligned}
\alpha_{\mathsf Z}(s)
&:=
\sup_{t\ge0}\sup\{
|\PP(A\cap B)-\PP(A)\PP(B)|:
A\in\mathcal G_t,\ B\in\mathcal G^{t+s}\},
\\
\mathcal G_t&:=\sigma(\mathsf Z_u:u\le t),
\qquad
\mathcal G^{t+s}:=\sigma(\mathsf Z_u:u\ge t+s),
\end{aligned}
\]
and \(\alpha_{\mathsf Z}(s)=O(s^{-q-\iota})\) for some \(q>0\) and
\(\iota>0\).
\end{assumption}

These conditions make the daily curves stationary and ergodic across
sessions, while allowing their mean to vary with $\kappa$. Together with
high-frequency recovery, they yield consistency. The functional central
limit theorem and feasible inference use \(q\ge3\). For related results
on dependent functional time series, see
\citet{HormannKokoszka2010,Merlevede2003HilbertMixingCLT}.

\begin{assumption}
\label{ass:positivity_normalization}
There is a constant \(\underline c>0\) such that, almost surely,
\[
c_t^X\ge\underline c,
\qquad
c_t^Y\ge\underline c,
\qquad t\ge0.
\]
For normalized objects only, we additionally require
\(\bar r_u\ne0\).
\end{assumption}

The variance lower bound keeps the population correlation denominator
away from zero. The condition \(\bar r_u\ne0\) is needed only for
normalization; the consistency and functional central limit theorem for
the unnormalized mean do not require it.

\section{Estimation and functional limit theory}
\label{sec:estimation}

We estimate $r_u$ by forming local correlation ratios within each day and
averaging the resulting daily curves. The results below allow dependence
within and across trading days.

\subsection{Estimation of daily spot-correlation curves}
\label{subsec:daily_curve_estimator}
\label{subsec:truncated_covariance}
\label{subsec:direct_estimator}

During each trading session $i=1,\ldots,T$, the two log prices are observed
without noise at the same $n+1$ equally spaced times in
$[\underline t_i,\underline t_i+1]$. With $\Delta_n=1/n$, write
\[
\begin{gathered}
t_{i,\ell}=\underline t_i+\ell\Delta_n,
\qquad \ell=0,\ldots,n,\\
\Delta_{i,\ell}Z=Z_{t_{i,\ell}}-Z_{t_{i,\ell-1}},
\qquad \ell=1,\ldots,n.
\end{gathered}
\]
The same increment notation is used for \(X\) and \(Y\). Partition each
session into blocks of \(k_n\) increments. For simplicity, suppose that
\(k_n\) divides \(n\), and set
\[
\begin{gathered}
m_n:=\frac{n}{k_n},
\qquad
L_n:=\frac{k_n}{n}=\frac1{m_n},\\
\mathcal I_j^n
:=
\{(j-1)k_n+1,\ldots,jk_n\},
\qquad
\kappa_j:=\frac{j-1}{m_n},
\qquad
j=1,\ldots,m_n.
\end{gathered}
\]
Thus block \(j\) covers \([\kappa_j,\kappa_j+L_n)\).

To accommodate price jumps, we truncate each return series using thresholds
based on its blockwise bipower variation
\citep{BarndorffNielsenShephard2004Bipower,Mancini2009,ChristensenHounyoLiu2026}.
For \(\wp\in\{X,Y\}\), fix
\(\tau_\wp>0\) and \(0<\varpi<1/2\). The blockwise bipower estimator and the
corresponding threshold are
\begin{subequations}
\begin{align}
\widehat b_{i,j,n}^{\wp}
&=
\frac{\pi}{2(k_n-1)}
\sum_{\substack{\ell\in\mathcal I_j^n\\
\ell>(j-1)k_n+1}}
\bigl|\sqrt n\,\Delta_{i,\ell-1}\wp\bigr|
\bigl|\sqrt n\,\Delta_{i,\ell}\wp\bigr|,
\qquad k_n\ge2,
\label{eq:localized_bipower_pilot}\\
\widehat u_{i,j,n}^{\wp}
&=
\tau_\wp\sqrt{\widehat b_{i,j,n}^{\wp}}\,n^{-\varpi}.
\label{eq:adaptive_localized_threshold}
\end{align}
\end{subequations}
The threshold scales with the estimated local volatility, allowing it to
vary across assets, days, and intraday blocks.

For \(\ell\in\mathcal I_j^n\), let
\[
\begin{gathered}
I_{i,\ell}^{X,n}
=
\1\{|\Delta_{i,\ell}X|\le \widehat u_{i,j,n}^X\},
\qquad
I_{i,\ell}^{Y,n}
=
\1\{|\Delta_{i,\ell}Y|\le \widehat u_{i,j,n}^Y\},\\
\Delta_{i,\ell}Z^{\mathrm{tr}}
:=
\bigl(
\Delta_{i,\ell}X I_{i,\ell}^{X,n},
\Delta_{i,\ell}Y I_{i,\ell}^{Y,n}
\bigr)^\top.
\end{gathered}
\]
The truncated local covariance matrix and its ordered coordinate vector are
\begin{equation}
\begin{aligned}
\widehat c_{i,\kappa_j}
&:=
\frac{n}{k_n}
\sum_{\ell\in\mathcal I_j^n}
\Delta_{i,\ell}Z^{\mathrm{tr}}
(\Delta_{i,\ell}Z^{\mathrm{tr}})^\top\\
&=
\begin{pmatrix}
\widehat c_{i,\kappa_j}^{X}
&
\widehat c_{i,\kappa_j}^{XY}
\\
\widehat c_{i,\kappa_j}^{XY}
&
\widehat c_{i,\kappa_j}^{Y}
\end{pmatrix},
\qquad
\widehat C_{i,\kappa_j}
:=
\bigl(
\widehat c_{i,\kappa_j}^{X},
\widehat c_{i,\kappa_j}^{Y},
\widehat c_{i,\kappa_j}^{XY}
\bigr)^\top.
\end{aligned}
\label{eq:truncated_cov_matrix}
\end{equation}
Define the local correlation estimate by
\begin{equation}
\widehat\rho_{i,\kappa_j}
=
\begin{cases}
\displaystyle
\frac{
\widehat c_{i,\kappa_j}^{XY}
}{
\sqrt{
\widehat c_{i,\kappa_j}^{X}
\widehat c_{i,\kappa_j}^{Y}
}
},
&
\widehat c_{i,\kappa_j}^{X}>0,\ 
\widehat c_{i,\kappa_j}^{Y}>0, \\[1.1em]
0,
&
\text{otherwise}.
\end{cases}
\label{eq:direct_local_ratio}
\end{equation}
The outer-product form of \(\widehat c_{i,\kappa_j}\) ensures positive
semidefiniteness and hence \(\lvert\widehat\rho_{i,\kappa_j}\rvert\le1\) for
every sample. Keeping each ratio constant within its block gives the daily
estimate
\[
\widehat\rho_i(\kappa)
=
\sum_{j=1}^{m_n}
\widehat\rho_{i,\kappa_j}
\1_{[\kappa_j,\kappa_j+L_n)}(\kappa),
\qquad
\widehat\rho_i\in\mathcal H,
\qquad
\|\widehat\rho_i\|_\infty\le1.
\]
\subsection{Estimation of the mean curve and consistency}
\label{subsec:equal_day_target}
\label{subsec:theory_consistency}

The estimator of the mean curve \(r_u\) is
\begin{equation}
\widetilde\rho_u(\kappa)
:=
\frac1T\sum_{i=1}^T
\widehat\rho_i(\kappa).
\label{eq:direct_average_curve}
\end{equation}

To estimate the normalized shape, use a deterministic cutoff
\(\varepsilon_T\downarrow0\) and set
\begin{equation}
\widehat k_u^\circ(\kappa)
:=
\begin{cases}
\displaystyle
\dfrac{\widetilde\rho_u(\kappa)}
{\int_0^1\widetilde\rho_u(s)\,ds},
&
\left|\int_0^1\widetilde\rho_u(s)\,ds\right|>\varepsilon_T,
\\[1.1em]
0,
&
\left|\int_0^1\widetilde\rho_u(s)\,ds\right|\leq\varepsilon_T.
\end{cases}
\label{eq:direct_normalized_estimator}
\end{equation}
Because the blocks have equal length,
\(\int_0^1\widetilde\rho_u(s)\,ds
=m_n^{-1}\sum_{j=1}^{m_n}\widetilde\rho_u(\kappa_j)\) exactly.

\begin{assumption}
\label{ass:rates_finite_jump}
\leavevmode
\begin{enumerate}
\item[(i)] The truncation thresholds are those in
\eqref{eq:adaptive_localized_threshold}, where \(\tau_\wp>0\) for
\(\wp\in\{X,Y\}\) and \(0<\varpi<1/2\). The number of returns per block and
the number of trading days satisfy
\begin{equation}
k_n\to\infty,
\qquad
\frac{k_n}{n}\to0,
\qquad
T\to\infty.
\label{eq:block_rate_basic}
\end{equation}
\item[(ii)] In addition, for the functional limits, let
\[
T\asymp n^c,
\qquad
k_n\asymp n^d,
\qquad
0<c<1,
\qquad
\frac c2<d<
1-\frac c2.
\]
\end{enumerate}
\end{assumption}

Part~(i) lets each block contain more returns while covering a shorter
interval. Together with the process assumptions in Section~\ref{sec:model},
it is sufficient for consistency.

For the functional limits, the high-frequency estimation error in the mean
must also vanish after multiplication by $\sqrt T$. The lower bound on $d$ gives
$\sqrt T/k_n\to0$ and controls finite-block bias in the local correlation
estimates. The upper bound gives $\sqrt T L_n\to0$ and controls the terms
from within-block approximation. The condition $c<1$ also gives $T/n\to0$.
Supplementary Appendix~\ref{app:mean_approximation} gives the full error
bound. These rates are asymptotic restrictions, not a finite-sample rule
for choosing $k_n$.

\begin{proposition}
\label{prop:daily_recovery}
Suppose Assumptions~\ref{ass:J_price_jumps}--\ref{ass:primitive_correlation_regularity},
the variance lower bound in Assumption~\ref{ass:positivity_normalization},
and Assumption~\ref{ass:rates_finite_jump}(i) hold. Then,
for a constant $C$ independent of $i,T,n$,
\begin{equation}
\sup_{i\ge1}\EE\|\widehat\rho_i-\rho_i\|_{\mathcal H}^2
\le C\left(k_n^{-1}+L_n\right).
\label{eq:daily_recovery}
\end{equation}
Consequently,
\[
\frac1T\sum_{i=1}^T\|\widehat\rho_i-\rho_i\|_{\mathcal H}^2
=O_P(k_n^{-1}+L_n).
\]
Under Assumption~\ref{ass:rates_finite_jump}(ii),
the last quantity is $o_P(T^{-1/2})$.
\end{proposition}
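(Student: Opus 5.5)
By stationarity and the uniformity of all moment bounds in $t$, it suffices to bound $\EE\|\widehat\rho_i-\rho_i\|_{\mathcal H}^2$ for one generic session, with constants free of $i$. Since $\widehat\rho_i$ is blockwise constant, decompose
\[
\|\widehat\rho_i-\rho_i\|_{\mathcal H}^2
\le 2\sum_{j=1}^{m_n}L_n\,|\widehat\rho_{i,\kappa_j}-\rho_i(\kappa_j)|^2
+2\sum_{j=1}^{m_n}\int_{\kappa_j}^{\kappa_j+L_n}|\rho_i(\kappa_j)-\rho_i(\kappa)|^2\,d\kappa .
\]
\emph{Second term (within-block approximation).} Use the It\^o representation in Assumption~\ref{ass:primitive_correlation_regularity}. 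The drift and Brownian parts give $\EE|\rho_{t+h}-\rho_t|^2\le Ch$ by the moment bounds. The jump part is bounded by $4\,\PP(\text{a jump in }[t,t+h])\le 4\EE\int_t^{t+h}\chi_s^\rho F^\rho(E)\,ds\le Ch$, using $|\rho|\le1$. Integrating gives $O(L_n)$.

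\emph{First term (local estimation error).} Since $|\widehat\rho_{i,\kappa_j}|\le1$ and $|\rho|\le1$, each summand is at most $4$, so bounded-ratio control on bad events is free. I would fix block $j$ and define a good event $\mathcal A_j$. On $\mathcal A_j$, no price jump of $J^X$ or $J^Y$ occurs in the block and no volatility or correlation jump occurs. Also on $\mathcal A_j$, $\widehat b^\wp$ lies within a factor of $2$ of $c^\wp_{\underline t_i+\kappa_j}$, and no continuous increment exceeds the threshold. $\PP(\mathcal A_j^c)$ then has three pieces. Jumps contribute $O(L_n)$ by finite activity and the intensity moments; note that no moment of the jump sizes is needed, since we only pay the probability. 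Bipower deviation contributes $O(k_n^{-1}+L_n)$ by Chebyshev. Threshold exceedance by continuous increments is $o(n^{-p})$ for every $p$, using the exponential moments on $a,\sigma$ and $\varpi<1/2$. Jumps that occur and are kept because they are small are included in this bad event. On $\mathcal A_j$, the truncated increments coincide with the continuous ones, so $\widehat c_{i,\kappa_j}$ is the spot covariance estimator of the continuous part. Freezing $c$ at $\kappa_j$ gives a Wishart-type term with variance $O(k_n^{-1})$ plus an approximation error of $O(L_n)$ in $L^2$, both from Assumption~\ref{ass:V_volatility} and the moment conditions. Finally, apply the correlation map $\phi(C)=C^{XY}/\sqrt{C^XC^Y}$, which is Lipschitz on $\{C^X,C^Y\ge\underline c/2\}$ by Assumption~\ref{ass:positivity_normalization}. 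On the subevent where $\widehat c^\wp\ge\underline c/2$ this gives $\EE[\dots]\le C(k_n^{-1}+L_n)$. The complement again has probability $O(k_n^{-1}+L_n)$ by Chebyshev and costs at most $4$. Summing with weights $L_n$ over $m_n=L_n^{-1}$ blocks yields \eqref{eq:daily_recovery}.

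The main obstacle is making the good-event construction rigorous uniformly in $j$. One issue is that the threshold itself is random and built from the same block, so the bipower control must hold uniformly. The other is that correlation between states and coefficients must be handled without independence, by H\"older with $p_\star>8$. I would handle the randomness by sandwiching $\widehat u$ between two deterministic multiples of $\sqrt{c^\wp}\,n^{-1/2-\varpi}$ on $\mathcal A_j$.

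The consequence follows from Markov's inequality applied to the average, whose expectation is at most $C(k_n^{-1}+L_n)$. Under Assumption~\ref{ass:rates_finite_jump}(ii), $\sqrt T/k_n\asymp n^{c/2-d}\to0$ and $\sqrt T L_n\asymp n^{c/2+d-1}\to0$. Hence $k_n^{-1}+L_n=o(T^{-1/2})$, and the quantity is $o_P(T^{-1/2})$.
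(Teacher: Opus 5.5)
Your proof is correct and has the same skeleton as the paper's. Bad events (price jumps, state marks, pilot failure, false deletion) are paid for only through their probabilities, using $|\widehat\rho_{i,\kappa_j}-\rho_{s_{i,j}}|\le2$. The correlation map is linearized on the good event against a covariance mean-square error of order $k_n^{-1}+L_n$. The within-block increment of $\rho$ contributes $O(L_n)$.

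The implementation differs in two places.
\begin{enumerate}
\item \emph{Good event versus auxiliary process.} Instead of computing moments on a good event, the paper builds, for each block, an auxiliary continuous price $Z^{*,s}$ whose covariance is frozen at the left limit of the first state mark. It shows that $\widehat\rho_{i,\kappa_j}=\widehat\rho^*_{i,\kappa_j}$ off the bad events (Lemma~\ref{lem:auxiliary_comparison}). It then applies the global bound $|h_0(\widehat x)-h(x)|^2\le C_a|d|^2$ of Lemma~\ref{lem:ratio_expansion}, which already absorbs your separate step for $\widehat c^\wp<\underline c/2$. The freezing is exactly the device that turns the non-adapted event ``no state mark in the block'' into clean, adapted moment bounds, which is the rigor issue you identify as the main obstacle.
\item \emph{Pilot control.} You control the pilot two-sidedly by Chebyshev at rate $O(k_n^{-1}+L_n)$. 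The paper needs only $\widehat b^{\wp,c}_{i,j,n}\ge b_0$ and obtains it from Azuma's inequality over disjoint pairs, counting affected increments rather than requiring a volatility-jump-free block (Lemma~\ref{lem:pilot}). This gives the tail $r_n=e^{-c_0k_n}+n^{-1}+n^{-p_0/2}$. Your cruder bound is enough for this proposition, but the sharper tail is what Lemma~\ref{lem:mean_approximation} reuses. There the pilot-failure probability enters the conditional bias through its square root at the $\sqrt T$ scale, and $T(k_n^{-1}+L_n)$, unlike $Tr_n$, need not vanish under Assumption~\ref{ass:rates_finite_jump}(ii).
\end{enumerate}

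A few corrections:
\begin{enumerate}
\item \emph{Stationarity.} It is not among the hypotheses (Assumption~\ref{ass:C5_daily_mixing} is not assumed), and you do not need it. Uniformity in $t$ of the moment and intensity bounds already makes all constants free of $i$.
\item \emph{Threshold scale.} Since $\widehat b^\wp_{i,j,n}$ is built from $\sqrt n\,\Delta_{i,\ell}\wp$, it estimates $c^\wp$ itself. So on your good event $\widehat u^\wp_{i,j,n}$ lies between multiples of $\sqrt{c^\wp}\,n^{-\varpi}$, not $\sqrt{c^\wp}\,n^{-1/2-\varpi}$. At the latter scale the threshold would sit below the $n^{-1/2}$ size of continuous returns, and false deletion would be the typical event. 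Your appeal to $\varpi<1/2$ shows you intended the correct scale.
\item \emph{Lipschitz domain.} The map $\phi$ is Lipschitz on the convex set $\{C^X,C^Y\ge\underline c/2,\ (C^{XY})^2\le C^XC^Y\}$, not on all of $\{C^X,C^Y\ge\underline c/2\}$, where $\partial\phi/\partial C^X$ grows with $|C^{XY}|$. Both matrices are positive semidefinite, so this is harmless.
\item \emph{Covariance split.} Split the covariance error as realized minus integrated covariance plus integrated minus spot covariance (the paper's $D_i^*+V_i^*$). Do not do it by freezing $c^{1/2}$ inside the returns. Nothing keeps $|\rho|$ away from one, and near singular $c$ the matrix square root is only H\"older of order $1/2$. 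That can degrade the mean-square approximation term from $O(L_n)$ to $O(L_n^{1/2})$.
\end{enumerate}
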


The proposition concerns recovery of individual daily curves and uses
neither stationarity nor independence across days. The next theorem combines
this recovery with the ergodic law for the latent daily curves.

\begin{theorem}
\label{thm:finite_jump_consistency}
Suppose Assumptions~\ref{ass:J_price_jumps}--\ref{ass:C5_daily_mixing},
the variance lower bound in Assumption~\ref{ass:positivity_normalization},
and Assumption~\ref{ass:rates_finite_jump}(i) hold. Then
\[
\|\widetilde\rho_u-r_u\|_{\mathcal H}\xrightarrow{P}0.
\]
If, in addition, $\bar r_u\ne0$, then
\[
\|\widehat k_u^\circ-k_u^\circ\|_{\mathcal H}\xrightarrow{P}0.
\]
\end{theorem}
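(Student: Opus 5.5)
The plan splits the estimation error as
\[
\widetilde\rho_u-r_u=\frac1T\sum_{i=1}^T(\widehat\rho_i-\rho_i)+\frac1T\sum_{i=1}^T A_i^\rho ,
\]
and shows that each piece vanishes in $\mathcal H$-norm in probability. For the first piece I use the triangle inequality and then Cauchy--Schwarz over $i$:
\[
\Bigl\|\frac1T\sum_{i=1}^T(\widehat\rho_i-\rho_i)\Bigr\|_{\mathcal H}
\le\Bigl(\frac1T\sum_{i=1}^T\|\widehat\rho_i-\rho_i\|_{\mathcal H}^2\Bigr)^{1/2}.
\]
Proposition~\ref{prop:daily_recovery} gives that the right-hand side is $O_P((k_n^{-1}+L_n)^{1/2})$. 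This is $o_P(1)$ under Assumption~\ref{ass:rates_finite_jump}(i), since $k_n\to\infty$ and $L_n=k_n/n\to0$. This step needs no stationarity.

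For the second piece I need a law of large numbers in $\mathcal H$ for the centered bounded curves $A_i^\rho$, with $\|A_i^\rho\|_{\mathcal H}\le2$. I will bound its second moment directly:
\[
\EE\Bigl\|\frac1T\sum_{i=1}^T A_i^\rho\Bigr\|_{\mathcal H}^2
=\frac1{T^2}\sum_{i,i'=1}^T\int_0^1\Cov\bigl(\rho_i(\kappa),\rho_{i'}(\kappa)\bigr)\,d\kappa .
\]
Measurability of the integrand is supplied by the c\`adl\`ag paths of $\rho$. By Assumption~\ref{ass:C5_daily_mixing}, $\rho_i(\kappa)$ is a fixed measurable function of $\mathsf Z_{\underline t_i+\kappa}$, and the index $i$ enters only through the time shift. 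Two such variables from sessions $i$ and $i'$ are therefore measurable with respect to $\mathcal G_t$ and $\mathcal G^{t+s}$, with gap $s\ge(|i-i'|-1)(1+g_{\mathrm{nt}})$ when $|i-i'|\ge2$. The covariance inequality for bounded $\alpha$-mixing variables gives
\[
|\Cov(\rho_i(\kappa),\rho_{i'}(\kappa))|\le4\alpha_{\mathsf Z}(s),
\]
and the trivial bound $1$ covers adjacent and equal indices. Summing, the second moment is at most
\[
T^{-1}\Bigl(3+8\sum_{h\ge1}\min\{1,\alpha_{\mathsf Z}(h(1+g_{\mathrm{nt}}))\}\Bigr).
\]
Even without summability, $\alpha_{\mathsf Z}(s)\to0$ makes the Ces\`aro average vanish, so the bound is $o(1)$. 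Chebyshev's inequality then gives the first claim. I expect this measurability-across-sessions step to be the only point needing care, in particular that stationarity of $\mathsf Z$ makes $\EE[\rho_i(\kappa)]=r_u(\kappa)$ the same for every $i$. The rest is routine.

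For the normalized shape, continuity of $f\mapsto\int_0^1f$ on $\mathcal H$ gives
\[
\Bigl|\int_0^1\widetilde\rho_u-\bar r_u\Bigr|\le\|\widetilde\rho_u-r_u\|_{\mathcal H}\xrightarrow{P}0 .
\]
Since $\varepsilon_T\downarrow0$ and $\bar r_u\ne0$, the event $\{|\int_0^1\widetilde\rho_u|>\varepsilon_T\}$ has probability tending to one. On that event, for large $T$, the denominator is bounded away from zero by $|\bar r_u|/2$ with probability tending to one. The identity
\[
\frac{\widetilde\rho_u}{\int_0^1\widetilde\rho_u}-\frac{r_u}{\bar r_u}
=\frac{\widetilde\rho_u-r_u}{\int_0^1\widetilde\rho_u}
+r_u\Bigl(\frac1{\int_0^1\widetilde\rho_u}-\frac1{\bar r_u}\Bigr),
\]
together with $\|r_u\|_{\mathcal H}\le1$, then yields $\|\widehat k_u^\circ-k_u^\circ\|_{\mathcal H}\xrightarrow{P}0$.
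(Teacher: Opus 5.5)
Your proof is correct. It differs from the paper's only in how you show that the latent average $\bar\rho_T=T^{-1}\sum_i\rho_i$ converges to $r_u$.

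The paper first establishes, in Lemma~\ref{lem:latent_covariance}, that $(\rho_i)$ is a strictly stationary, ergodic, bounded $\mathcal H$-valued sequence. It then invokes the Hilbert-space ergodic theorem of \citet{PopStojanovic1972}, which gives almost sure convergence. The recovery term and the normalization step are handled exactly as you do: Jensen plus Proposition~\ref{prop:daily_recovery}, then continuity of $\Phi$ on the event $|\ell(\widetilde\rho_u)|\ge|\bar r_u|/2$.

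You instead prove an $L^2$ weak law directly. Fubini reduces $\EE\|\bar\rho_T-r_u\|_{\mathcal H}^2$ to pointwise covariances. Each $\rho_i(\kappa)$ is $\sigma(\mathsf Z_{\underline t_i+\kappa})$-measurable, and the bounded-variable inequality $|\Cov|\le 4\alpha_{\mathsf Z}(\cdot)$ together with Ces\`aro averaging finishes the argument.

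Your route is more elementary and self-contained. It needs only that $\EE\rho_i(\kappa)$ does not depend on $i$ and that $\alpha_{\mathsf Z}(s)\to0$. It makes no appeal to ergodicity of the curve sequence or to an infinite-dimensional ergodic theorem. When $\sum_h\alpha_{\mathsf Z}(h(1+g_{\mathrm{nt}}))<\infty$, it also yields the rate $O_P(T^{-1/2})$, which matches the scale of Theorem~\ref{thm:finite_jump_fclt_unnormalized}. The paper's route is shorter given the citation, and it gives almost sure convergence of $\bar\rho_T$, which is more than the theorem requires.

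Two cosmetic points:
\begin{itemize}
\item At a common $\kappa$, the time gap between $\rho_i(\kappa)$ and $\rho_{i'}(\kappa)$ is exactly $|i-i'|(1+g_{\mathrm{nt}})$. That is what your final display uses; the earlier lower bound $(|i-i'|-1)(1+g_{\mathrm{nt}})$ is unnecessarily loose.
\item In the non-summable case, the sum in that display should be truncated at $h=T-1$ before the Ces\`aro argument is applied.
\end{itemize}
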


The proof is given in Supplementary Appendix~\ref{app:main_mean_proofs}.

\subsection{Functional central limit theorem}
\label{subsec:mean_clt}

Let $\bar\rho_T=T^{-1}\sum_{i=1}^T\rho_i$ be the sample mean of the unobserved
daily curves. The next theorem establishes the approximation
\begin{equation}
\sqrt T(\widetilde\rho_u-r_u)
=\frac1{\sqrt T}\sum_{i=1}^TA_i^\rho
+\sqrt T(\widetilde\rho_u-\bar\rho_T),
\qquad
\|\sqrt T(\widetilde\rho_u-\bar\rho_T)\|_{\mathcal H}=o_P(1).
\label{eq:mean_expansion}
\end{equation}
Daily mean-square recovery alone does not give the $o_P(1)$ bound.
In the proof, each daily estimation error is centered by its conditional
mean at the session start. The centered errors are orthogonal across
sessions, so Proposition~\ref{prop:daily_recovery} controls their sum.
The remaining conditional means are bounded using a finite-dimensional
correlation expansion and comparison with an auxiliary estimate; see
Supplementary Appendix~\ref{app:mean_approximation}.

For $f,g\in\mathcal H$, write
$(f\otimes g)w=f\langle g,w\rangle_{\mathcal H}$. On a stationary extension
of $(A_i^\rho)$ indexed by $i\in\mathbb Z$, define
\begin{equation}
\Gamma_{\rho,h}:=\EE(A_0^\rho\otimes A_h^\rho),
\qquad
\mathcal C_\rho:=\sum_{h\in\mathbb Z}\Gamma_{\rho,h}.
\label{eq:rho_long_run_operator}
\end{equation}
The conditions in the next theorem imply absolute convergence of this
series in trace norm. Its kernel representation is
\begin{equation}
C_\rho(\kappa,\kappa')
:=\sum_{h\in\mathbb Z}
\Cov\{A_0^\rho(\kappa),A_h^\rho(\kappa')\},
\label{eq:rho_long_run_kernel}
\end{equation}
for Lebesgue-almost every $(\kappa,\kappa')\in[0,1]^2$.
Since $\Gamma_{\rho,-h}=\Gamma_{\rho,h}^*$, $\mathcal C_\rho$ is
self-adjoint.

\begin{theorem}
\label{thm:finite_jump_fclt_unnormalized}
Suppose Assumptions~\ref{ass:J_price_jumps}--\ref{ass:C5_daily_mixing}
hold, with $q\ge3$ in Assumption~\ref{ass:C5_daily_mixing}. Suppose also
that the variance lower bound in
Assumption~\ref{ass:positivity_normalization} and parts~(i)--(ii) of
Assumption~\ref{ass:rates_finite_jump} hold. Then
\eqref{eq:mean_expansion} holds and
\begin{equation}
\sqrt T(\widetilde\rho_u-r_u)\Rightarrow G_\rho
\quad\text{in }\mathcal H,
\label{eq:raw_fclt}
\end{equation}
where $G_\rho$ is a centered Gaussian element with covariance operator
$\mathcal C_\rho$. The operator is nonnegative, self-adjoint, and trace
class, and has the kernel \eqref{eq:rho_long_run_kernel}.
\end{theorem}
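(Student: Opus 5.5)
The proof splits into two parts. The first is the high-frequency approximation \eqref{eq:mean_expansion}, which is the main obstacle. The second is a Hilbert-space central limit theorem for the bounded, mixing latent curves $A_i^\rho$.

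\emph{The approximation $\sqrt T\|\widetilde\rho_u-\bar\rho_T\|_{\mathcal H}=o_P(1)$.}
Write $D_i:=\widehat\rho_i-\rho_i$. Let $\mathcal F_i^0:=\mathcal F_{\underline t_i}$ and decompose
\[
D_i=\{D_i-\EE(D_i\mid\mathcal F_i^0)\}+\EE(D_i\mid\mathcal F_i^0).
\]

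The centered pieces $\xi_i:=D_i-\EE(D_i\mid\mathcal F_i^0)$ are $\mathcal F_{i+1}^0$-measurable and have conditional mean zero. They therefore form a martingale difference sequence in $\mathcal H$, and
\[
\EE\Big\|T^{-1/2}\sum_i\xi_i\Big\|^2=T^{-1}\sum_i\EE\|\xi_i\|^2\le T^{-1}\sum_i\EE\|D_i\|^2 .
\]
By Proposition~\ref{prop:daily_recovery} the right-hand side is $O(k_n^{-1}+L_n)\to0$.

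For the conditional means, I will show that $\sup_i\EE\|\EE(D_i\mid\mathcal F_i^0)\|_{\mathcal H}=O(k_n^{-1}+L_n+\text{small})$; multiplying by $\sqrt T$ then gives $o(1)$ under Assumption~\ref{ass:rates_finite_jump}(ii). The argument proceeds block by block.

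First I replace the truncated returns by the continuous-martingale returns. On the event that a block contains no jump, truncation only removes increments with probability $O(n^{-r})$ for any $r$, by the exponential moment bounds. Blocks containing a jump number $O_P(1)$ per day, cover measure $O(L_n)$, and the ratio is bounded by $1$ on them. Their contribution is therefore $O(L_n)$ in $L^1$, with no jump-size moments needed.

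Next I freeze $c$ at the block start, $c_{\underline t_i+\kappa_j}$. Using the Itô representations in Assumptions~\ref{ass:V_volatility}--\ref{ass:primitive_correlation_regularity}, the error is $O(L_n^{1/2})$ pathwise but $O(L_n)$ after conditional expectation. The martingale parts vanish, and the variance floor in Assumption~\ref{ass:positivity_normalization} keeps the map $(c^X,c^Y,c^{XY})\mapsto c^{XY}/\sqrt{c^Xc^Y}$ smooth.

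With $c$ frozen, $\widehat C$ is a Wishart-type average of $k_n$ Gaussian outer products. I perform a second-order Taylor expansion of the correlation map around $C$ on the event $\{\|\widehat C-C\|\le \underline c/2\}$. The linear term has conditional mean zero and the quadratic term has mean $O(k_n^{-1})$. On the complement I use $|\widehat\rho|\le1$ together with an exponential tail bound, so the complement contributes $O(k_n^{-p})$.

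The last discrepancy is between the block constant and $\rho_i(\kappa)$ inside the block. Since $\EE[\rho_{t+s}-\rho_t\mid\mathcal F_t]=O(s)$ by the drift and jump-intensity moments, this also contributes $O(L_n)$.

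Summing these pieces gives a bias of order $\sqrt T(k_n^{-1}+L_n)\to0$ from $c/2<d<1-c/2$. The hardest point is making the conditional expectations legitimate when the coefficients are only moment-bounded and not independent of the state. For this I use Hölder's inequality with exponent $p_\star>8$.

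\emph{CLT for $T^{-1/2}\sum_iA_i^\rho$.}
Since $\|A_i^\rho\|_\infty\le2$ and the $A_i^\rho$ are measurable functionals of $\mathsf Z$ restricted to session $i$, the sequence is stationary and $\alpha$-mixing with coefficients $\alpha_{\mathsf Z}((h-1)(1+g_{\mathrm{nt}}))=O(h^{-q-\iota})$.

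Davydov's covariance inequality for bounded Hilbert-valued variables gives $\|\Gamma_{\rho,h}\|_{\mathrm{tr}}\le C\alpha(h)$. Because $q\ge3>1$, the series $\sum_h\|\Gamma_{\rho,h}\|_{\mathrm{tr}}$ converges, so $\mathcal C_\rho$ is trace class. It is self-adjoint since $\Gamma_{\rho,-h}=\Gamma_{\rho,h}^*$. It is nonnegative because it is the limit of $\Var(T^{-1/2}\sum_i\langle A_i^\rho,w\rangle)$. The kernel formula \eqref{eq:rho_long_run_kernel} then follows by Fubini's theorem.

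Weak convergence follows from the Hilbert-space mixing CLT of \citet{Merlevede2003HilbertMixingCLT}, applicable because the variables are bounded with summable mixing. An alternative is to combine finite-dimensional CLTs on projections with tightness, which follows from
\[
\sup_T\EE\sum_{k>K}\langle T^{-1/2}\textstyle\sum_iA_i^\rho,e_k\rangle^2\le\sum_{k>K}\langle\mathcal C^{\mathrm{abs}}e_k,e_k\rangle\to0,
\]
where $\mathcal C^{\mathrm{abs}}:=\sum_h|\Gamma_{\rho,h}|$, with $|\Gamma_{\rho,h}|=(\Gamma_{\rho,h}^*\Gamma_{\rho,h})^{1/2}$, is trace class. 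Slutsky's lemma combined with the first part then yields \eqref{eq:raw_fclt}.
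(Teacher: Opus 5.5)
Your architecture for \eqref{eq:mean_expansion} is the paper's: center each daily error at its session-start conditional mean, use orthogonality plus Proposition~\ref{prop:daily_recovery} for the centered sum, and bound the rest blockwise. The bias step, which you rightly call the main obstacle, has a genuine gap.

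Because $\|\cdot\|_{\mathcal H}$ is an $L^2$ norm in $\kappa$, the blockwise bounds must hold in $L^2(\Omega)$, not merely ``in $L^1$'' as you state. From $|b_i|\le2$ and pointwise $\EE|b_i(\kappa)|\le C\beta$ one only gets $\EE\|b_i\|_{\mathcal H}\le(2\int_0^1\EE|b_i(\kappa)|\,d\kappa)^{1/2}=O(\beta^{1/2})$. That needs $T/k_n\to0$ and $TL_n\to0$, i.e.\ $c<d<1-c$, rather than the stated $c/2<d<1-c/2$.

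For the quadratic term of the correlation expansion you therefore need $\|\EE(|\widehat C-C_s|^2\mid\mathcal F_s)\|_{L^2(\Omega)}=O(k_n^{-1}+L_n)$. Here your sketch ignores the jumps of $\sigma^\wp$ and $\rho$ that Assumptions~\ref{ass:V_volatility} and \ref{ass:primitive_correlation_regularity} allow within a block. With such jumps, $\EE|c_{s+h}-c_s|^4$ is only $O(h)$. The natural bound $(\EE|R|^4)^{1/2}$ is then $O(k_n^{-1}+L_n^{1/2})$, which yields $\sqrt T L_n^{1/2}$. Your claim that the freezing error is $O(L_n^{1/2})$ pathwise also fails on those blocks. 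Simply discarding blocks with a state jump via $|\widehat\rho|\le1$ does not close the argument. On the complement you would be taking conditional means of quantities multiplied by an indicator, which destroys the martingale structure you rely on.

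The paper handles this with the auxiliary process $Z^{*,s}$ of Lemma~\ref{lem:auxiliary_state}, whose covariance is frozen at the left limit of the first state mark after the block start.
\begin{itemize}
\item It coincides with the observed returns on blocks without price or state marks, given a bipower pilot above $b_0$ and no false deletion.
\item It satisfies $\EE|c^{*,s}_{s+h}-c_s|^4\le Ch^2$, so Lemma~\ref{lem:auxiliary_covariance_moments} gives $\EE|R^*|^4=O(k_n^{-2}+L_n^2)$.
\item Blocks with marks cost at most $2\|\PP(J\mid\mathcal F_s)\|_{L^2(\Omega)}\le CL_n$.
\end{itemize}
The paper also expands directly around the $\mathcal F_s$-measurable $C_s$, with the global bound $|\operatorname{rem}|\le C|d|^2$ valid for every positive-semidefinite $\widehat x$ (Lemma~\ref{lem:ratio_expansion}). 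This removes the need for your Gaussian/Wishart comparison and event splitting. A direct compensator computation of the conditional second moments could replace the auxiliary process, but some such argument must be supplied.

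A smaller omission concerns truncation. Exponential moments bound continuous returns from above, but excluding false truncation also needs the bipower pilot bounded below. That uses the variance floor and a concentration argument (Lemma~\ref{lem:pilot}). The resulting $O(\sqrt{r_n})$ error in $L^2(\Omega)$ is negligible, but your claimed $O(n^{-r})$ is not justified by moments alone.

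Your latent CLT step is correct and differs from the paper. The paper bounds $\|\Gamma_{\rho,h}\|_1\le C\alpha_\rho(h)^{1/2}$ via a Hilbert--Schmidt bound and projection onto step functions, using $\EE|\rho_i(u)-\rho_i(v)|^2\le C|u-v|$. Your sharper $\|\Gamma_{\rho,h}\|_1\le C\alpha_\rho(h)$ needs no such regularity. However, a kernelwise Davydov bound only controls the Hilbert--Schmidt norm. To reach the trace norm, use $\|\Gamma\|_1=\sup_{\|B\|_{\op}\le1}|\tr(\Gamma B)|$ with $\tr(\Gamma_{\rho,h}B)=\EE\langle BA_0^\rho,A_h^\rho\rangle_{\mathcal H}$. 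Then apply the Hilbert-space covariance inequality for bounded strongly mixing elements (Dehling and Philipp, 1982).
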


Under these joint sampling rates, the estimated and latent sample means
have the same limit after centering at $r_u$ and scaling by $\sqrt T$.
The limiting covariance $\mathcal C_\rho$ includes the dependence of the
daily curves across trading days; high-frequency estimation contributes no
additional first-order term.
Supplementary Appendices~\ref{app:latent_clt} and
\ref{app:main_mean_proofs} give the latent functional limit and the proof
of the theorem.

\subsection{Normalization of the mean curve}
\label{subsec:theory_normalization_delta}

For $f\in\mathcal H$, let
$\ell(f)=\int_0^1f(s)\,ds$ and
$\mathcal D_\Phi=\{f\in\mathcal H:\ell(f)\ne0\}$. Define
\begin{equation}
\Phi:\mathcal D_\Phi\longrightarrow\mathcal H,
\qquad \Phi(f)=\frac{f}{\ell(f)}.
\label{eq:normalization_map}
\end{equation}
This map is Fr\'echet differentiable, with bounded linear derivative
\begin{equation}
D\Phi_f[g]=\frac{g}{\ell(f)}
-\frac{f}{\ell(f)^2}\ell(g).
\label{eq:normalization_derivative_section2}
\end{equation}
The second term in the derivative accounts for estimating the integral in
the denominator. When $\bar r_u\ne0$, let
$\xi_i=D\Phi_{r_u}[A_i^\rho]$ and define
\begin{equation}
\mathcal C_k:=\sum_{h\in\mathbb Z}\EE(\xi_0\otimes\xi_h),
\qquad
C_k(\kappa,\kappa'):=\sum_{h\in\mathbb Z}
\Cov\{\xi_0(\kappa),\xi_h(\kappa')\}.
\label{eq:long_run_covariance_normalized}
\end{equation}
As before, the kernel identity is understood almost everywhere.

\begin{theorem}
\label{thm:finite_jump_fclt_normalized}
Under the assumptions of Theorem~\ref{thm:finite_jump_fclt_unnormalized}
and $\bar r_u\ne0$,
\[
\sqrt T(\widehat k_u^\circ-k_u^\circ)\Rightarrow G_k
\quad\text{in }\mathcal H,
\qquad G_k=D\Phi_{r_u}[G_\rho].
\]
The limit is centered Gaussian, with
$\mathcal C_k=D\Phi_{r_u}\mathcal C_\rho D\Phi_{r_u}^*$ and kernel
\begin{align}
C_k(\kappa,\kappa')
={}&\frac{C_\rho(\kappa,\kappa')}{\bar r_u^2}
-\frac{r_u(\kappa')}{\bar r_u^3}
  \int_0^1C_\rho(\kappa,s)\,ds \notag\\
&-\frac{r_u(\kappa)}{\bar r_u^3}
  \int_0^1C_\rho(s,\kappa')\,ds
+\frac{r_u(\kappa)r_u(\kappa')}{\bar r_u^4}
  \int_0^1\!\int_0^1 C_\rho(s,t)\,ds\,dt.
\label{eq:Ck_finite_jump_theory}
\end{align}
No condition on the speed of $\varepsilon_T\downarrow0$ is required.
\end{theorem}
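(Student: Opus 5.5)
The plan is a functional delta method built on Theorem~\ref{thm:finite_jump_fclt_unnormalized}, with care for the cutoff $\varepsilon_T$. Write $\ell(f)=\int_0^1 f$. Since $\ell$ is a bounded linear functional on $\mathcal H$ with $|\ell(f)|\le\|f\|_{\mathcal H}$, Theorem~\ref{thm:finite_jump_fclt_unnormalized} gives $\ell(\widetilde\rho_u)-\bar r_u=O_P(T^{-1/2})$. Fix $\eta=|\bar r_u|/2>0$. Since $\varepsilon_T\downarrow0$, for all large $T$ we have $\varepsilon_T<\eta$. Define the event
\[
E_T=\{|\ell(\widetilde\rho_u)|>\eta\}.
\]
On $E_T$ the first branch of \eqref{eq:direct_normalized_estimator} is in force, so $\widehat k_u^\circ=\Phi(\widetilde\rho_u)$, and $\PP(E_T)\to1$. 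Hence
\[
\sqrt T\,\|\widehat k_u^\circ-\Phi(\widetilde\rho_u)\|_{\mathcal H}\,\1_{E_T}=0 .
\]
Off $E_T$ the difference is arbitrary but has vanishing probability, so it is $o_P(1)$. Because only $\varepsilon_T\to0$ was used, this shows that no rate on $\varepsilon_T$ is needed.

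Next comes Fréchet differentiability of $\Phi$ at $r_u$ with an explicit remainder. Put $h=f-r_u$. A direct computation gives
\[
\Phi(f)-\Phi(r_u)-D\Phi_{r_u}[h]
=-\frac{\ell(h)}{\ell(f)\,\bar r_u}\Bigl(h-\frac{r_u\,\ell(h)}{\bar r_u}\Bigr).
\]
Whenever $|\ell(f)|>\eta$, the norm of this remainder is at most $C\|h\|_{\mathcal H}^2$. Apply this with $h=\widetilde\rho_u-r_u=O_P(T^{-1/2})$ on $E_T$. This yields
\[
\sqrt T\bigl(\Phi(\widetilde\rho_u)-k_u^\circ\bigr)=D\Phi_{r_u}\bigl[\sqrt T(\widetilde\rho_u-r_u)\bigr]+O_P(T^{-1/2}).
\]
The operator $D\Phi_{r_u}$ is bounded and linear, hence continuous. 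The continuous mapping theorem together with Slutsky's lemma in the separable Hilbert space $\mathcal H$ then gives $\sqrt T(\widehat k_u^\circ-k_u^\circ)\Rightarrow D\Phi_{r_u}[G_\rho]=G_k$. A bounded linear image of a centered Gaussian element is centered Gaussian. Its covariance operator is $D\Phi_{r_u}\mathcal C_\rho D\Phi_{r_u}^*$, because
\[
\EE\langle D\Phi G,u\rangle\langle D\Phi G,v\rangle=\langle\mathcal C_\rho D\Phi^*u,D\Phi^*v\rangle .
\]

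To identify $\mathcal C_k$ with the series \eqref{eq:long_run_covariance_normalized}, note that $\xi_i=D\Phi_{r_u}[A_i^\rho]$. Hence $\EE(\xi_0\otimes\xi_h)=D\Phi_{r_u}\Gamma_{\rho,h}D\Phi_{r_u}^*$. The series converges absolutely in trace norm by Theorem~\ref{thm:finite_jump_fclt_unnormalized}, and bounded operators preserve trace-norm convergence, so the sum can be taken termwise. For the kernel, write $D\Phi_{r_u}g=g/\bar r_u-r_u\,\ell(g)/\bar r_u^2$. Substitute this into $\Cov\{\xi_0(\kappa),\xi_h(\kappa')\}$, expand bilinearly, and sum over $h$. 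The linear functional $\ell$ is exchanged with the sum and the expectation by Fubini, which is justified by boundedness $|A_i^\rho|\le2$ and trace-norm summability. This produces the four terms of \eqref{eq:Ck_finite_jump_theory}: the $\bar r_u^{-2}$ term, the two cross terms with $\int_0^1C_\rho(\kappa,s)\,ds$ and $\int_0^1C_\rho(s,\kappa')\,ds$, and the double integral term. These identities hold almost everywhere.

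The main obstacle is modest. It is making the kernel identities rigorous almost everywhere, that is, justifying the interchange of $\sum_h$, $\EE$ and $\int ds$. I would handle this by working at the operator level throughout. One then notes that the kernel of a composition of Hilbert–Schmidt operators with the rank-one operator $g\mapsto r_u\ell(g)$ is obtained by the stated integrals.
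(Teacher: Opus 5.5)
Your proposal is correct and follows essentially the same route as the paper. You restrict to an event of probability tending to one on which the cutoff is inactive, so no rate on $\varepsilon_T$ is needed; you use the same explicit quadratic remainder for $\Phi$ as in Lemma~\ref{lem:normalization}, apply Slutsky, obtain $\mathcal C_k=D\Phi_{r_u}\mathcal C_\rho D\Phi_{r_u}^*$ by conjugating each lag operator, and read off the four kernel terms from the rank-one expansion of $D\Phi_{r_u}$. The one small difference is in interchanging $\sum_h$ with $\int_0^1\cdot\,ds$: the paper uses the pointwise mixing bound $|\Cov\{A_0^\rho(u),A_h^\rho(v)\}|\le C\alpha_\rho(h)$ for $h\ge2$ from Lemma~\ref{lem:latent_covariance}, whereas your appeal to trace-norm summability also works because $\sum_h\|\Gamma_{\rho,h}\|_{\HS}<\infty$ places $\sum_h|\Cov\{A_0^\rho(\kappa),A_h^\rho(\kappa')\}|$ in $L^2([0,1]^2)$ by Minkowski's inequality.
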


For a fixed mean curve with $\bar r_u\ne0$, the estimated integral stays
away from zero with probability tending to one. The cutoff case in
\eqref{eq:direct_normalized_estimator} therefore has vanishing probability
for any deterministic $\varepsilon_T\downarrow0$.
Supplementary Appendix~\ref{app:normalization} gives the quadratic
remainder, covariance calculation, and proof.

\subsection{Finite weighted aggregation}

The same results apply to fixed weighted averages of finitely many dependent
daily curves. An average of pairwise correlations need not equal the
correlation of aggregated returns.

\begin{corollary}
\label{cor:fixed_aggregation}
Let $P<\infty$ and let $w_1,\ldots,w_P$ be fixed nonnegative weights with
$\sum_aw_a=1$. For each asset pair $a=1,\ldots,P$, let $\rho_i^{(a)}$ and
$\widehat\rho_i^{(a)}$ denote its daily correlation curve and estimator,
respectively. Suppose that these curves and estimators jointly satisfy
the conditions of Theorem~\ref{thm:finite_jump_fclt_unnormalized}, with the
high-frequency conditions holding uniformly over $a$ and a common
stationary mixing state. Define
\[
\rho_i^{(w)}=\sum_{a=1}^Pw_a\rho_i^{(a)},\qquad
\widehat\rho_i^{(w)}=\sum_{a=1}^Pw_a\widehat\rho_i^{(a)},\qquad
r_u^{(w)}=\sum_{a=1}^Pw_ar_u^{(a)}.
\]
Consistency, the approximation \eqref{eq:mean_expansion}, and the
functional central limit theorem hold for the aggregate. With
$A_i^{(a)}=\rho_i^{(a)}-r_u^{(a)}$, its long-run covariance is
\begin{equation}
\mathcal C_w=\sum_{a,b=1}^Pw_aw_b
\sum_{h\in\mathbb Z}\EE(A_0^{(a)}\otimes A_h^{(b)}).
\label{eq:aggregate_covariance}
\end{equation}
If $\int_0^1r_u^{(w)}(s)\,ds\ne0$, the normalized conclusion also holds.
The feasible procedures in Section~\ref{sec:inference} apply when their
additional nondegeneracy conditions hold for the aggregate.
\end{corollary}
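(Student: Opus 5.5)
The plan is to reduce everything to the single-pair results by linearity, working componentwise and then aggregating with the fixed weights. By definition,
\[
\widetilde\rho_u^{(w)}-r_u^{(w)}=\sum_{a=1}^Pw_a\bigl(\widetilde\rho_u^{(a)}-r_u^{(a)}\bigr),
\qquad
\widetilde\rho_u^{(w)}-\bar\rho_T^{(w)}=\sum_{a=1}^Pw_a\bigl(\widetilde\rho_u^{(a)}-\bar\rho_T^{(a)}\bigr),
\]
where $\widetilde\rho_u^{(a)}=T^{-1}\sum_i\widehat\rho_i^{(a)}$ and $\bar\rho_T^{(a)}=T^{-1}\sum_i\rho_i^{(a)}$. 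Since $P$ is finite and $0\le w_a\le1$, the triangle inequality reduces both consistency and the remainder bound in \eqref{eq:mean_expansion} to the corresponding statements for each pair. These follow from Theorem~\ref{thm:finite_jump_consistency} and from the $o_P(1)$ part of Theorem~\ref{thm:finite_jump_fclt_unnormalized}, applied separately to each $a$. This is legitimate because each pair individually satisfies the hypotheses; uniformity over $a$ is only needed to ensure the constants are shared, and with $P$ fixed even that is harmless.

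For the functional CLT, I would not apply the single-pair theorem as a black box. Marginal weak convergence of each $\sqrt T(\widetilde\rho_u^{(a)}-r_u^{(a)})$ does not give joint convergence, so I would argue directly on the aggregate latent curves $A_i^{(w)}=\sum_aw_aA_i^{(a)}=\rho_i^{(w)}-r_u^{(w)}$. Because every $\rho_i^{(a)}$ is a fixed (possibly $\kappa$-dependent) measurable function of the common state $\mathsf Z$ on session $i$, the sequence $(A_i^{(w)})$ is stationary in $\mathcal H$. It is also $\alpha$-mixing, with coefficients bounded by $\alpha_{\mathsf Z}$ at lag of order $i(1+g_{\mathrm{nt}})$, and it satisfies $\|A_i^{(w)}\|_\infty\le2$. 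This is exactly the input used for the latent functional limit in the proof of Theorem~\ref{thm:finite_jump_fclt_unnormalized}, namely the Hilbert-space CLT for bounded mixing sequences (Supplementary Appendix~\ref{app:latent_clt}).

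Repeating that argument with $q\ge3$ gives $T^{-1/2}\sum_iA_i^{(w)}\Rightarrow G_w$. Its covariance operator is $\sum_h\EE(A_0^{(w)}\otimes A_h^{(w)})$, and expanding it bilinearly yields \eqref{eq:aggregate_covariance}. The bound $\|\EE(A_0^{(a)}\otimes A_h^{(b)})\|_{\mathrm{tr}}\lesssim\alpha_{\mathsf Z}(h)^{1-2/\delta}$, obtained from Davydov-type covariance inequalities for bounded elements, gives absolute trace-norm summability of each cross term. Hence $\mathcal C_w$ is trace class, self-adjoint and nonnegative. The cross terms are retained precisely because the pairs share the state. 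Combining this limit with the aggregate remainder bound via Slutsky's lemma yields the FCLT for $\widetilde\rho_u^{(w)}$.

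For the normalized statement, the map $\Phi$ and its Fréchet derivative \eqref{eq:normalization_derivative_section2} do not depend on the construction of the curve. The proof of Theorem~\ref{thm:finite_jump_fclt_normalized} uses only three inputs: $\sqrt T$-tightness of $\widetilde\rho_u-r_u$, the FCLT, and $\ell(r_u)\ne0$. It therefore transfers verbatim with $r_u^{(w)}$ in place of $r_u$, the quadratic remainder of $\Phi$ being $O_P(T^{-1})$. The feasible procedures are then applied to the scalar-indexed aggregate sequence. The main obstacle, in my view, is confirming that the latent CLT and the remainder argument in the appendices use only boundedness, stationarity and mixing of the daily curve process and not any pair-specific structure, so that they apply to $A_i^{(w)}$. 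I would check this by verifying that the centering-by-conditional-mean decomposition behind \eqref{eq:mean_expansion} is linear in the estimation errors, so that it aggregates term by term.
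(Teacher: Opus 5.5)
Your proposal is correct. For consistency, the remainder in \eqref{eq:mean_expansion}, the normalized limit, and the feasible procedures, it follows the paper's proof. Componentwise bounds are combined through the fixed weights, by the triangle inequality or by convexity; convexity also gives the aggregate recovery rate $T^{-1}\sum_i\|\widehat\rho_i^{(w)}-\rho_i^{(w)}\|_{\mathcal H}^2=o_P(T^{-1/2})$ that the HAC replacement step needs. The delta method for $\Phi$ then follows.

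You organize the latent limit differently. The paper proves a joint CLT for the bounded vector $(A_i^{(1)},\ldots,A_i^{(P)})$ in $\mathcal H^P$ and pushes it through $(f_1,\ldots,f_P)\mapsto\sum_aw_af_a$. You instead apply Lemma~\ref{lem:bounded_mixing_hilbert_clt} directly to $A_i^{(w)}$. That is valid, since $A_i^{(w)}$ is bounded, stationary, and mixing with coefficient at most $\alpha_{\mathsf Z}(h(1+g_{\mathrm{nt}})-1)$ for $h\ge2$ (the lag is $h$, not $i$). Your route avoids the product space; the paper's also yields the joint limit of the component means.

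The other difference is trace-norm summability of the cross-lag operators, and there your sketch is too quick. A scalar Davydov inequality bounds only the kernel $\Cov\{A_0^{(a)}(u),A_h^{(b)}(v)\}$, hence only the Hilbert--Schmidt norm. That is why the paper adds its projection argument, which uses $\EE|\rho_i(u)-\rho_i(v)|^2\le C|u-v|$ and a rank-$m$ projection to reach $\|\cdot\|_1\le C\alpha(h)^{1/2}$.

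Your claim can be justified, and sharpened, by trace duality: $\|\EE(A_0^{(a)}\otimes A_h^{(b)})\|_1=\sup_{\|U\|_{\op}\le1}|\EE\langle UA_0^{(a)},A_h^{(b)}\rangle_{\mathcal H}|$. Apply the Hilbert-space covariance inequality for bounded strongly mixing elements (Dehling and Philipp, 1982) to $UA_0^{(a)}$ and $A_h^{(b)}$. For simple elements this inequality reduces to the scalar $4\alpha$ bound via Grothendieck's inequality. It gives $C\alpha(h)$ without any intraday regularity. The exponent is therefore one, and your undefined $\delta$ is unnecessary; you should state this step explicitly.

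Finally, the obstacle you flag at the end does not arise. Your own triangle-inequality reduction already handles the remainder, so the conditional-mean decomposition never has to be applied to the aggregate.
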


For inference on a fixed aggregate, the covariance
\eqref{eq:aggregate_covariance} includes all cross-covariance terms among
the component curves. The proof is given in Supplementary
Appendix~\ref{app:normalization}.

\section{Feasible inference}
\label{sec:inference}

We estimate the long-run covariance operators in the functional limits and
use them to construct confidence intervals for fixed linear summaries,
confidence sets for the mean curve, and a test of intraday constancy.
The conditions of the corresponding functional limit theorems remain in
force.

\subsection{Long-run covariance estimation}

Center the estimated daily curves at their sample mean:
\[
\widehat A_i^\rho=\widehat\rho_i-\widetilde\rho_u,
\qquad i=1,\ldots,T.
\]
Let $\omega$ be bounded and even, supported on $[-1,1]$, continuous at
zero, and such that $\omega(0)=1$. Assume also that
$[\omega((i-j)/\ell)]_{i,j=1}^T$ is positive semidefinite for every
positive integer bandwidth $\ell$ and every $T$. The Bartlett and Parzen
kernels are standard choices
\citep{NeweyWest1987,Andrews1991HAC,Politis2011PSD}.
For an integer bandwidth $\ell_T^\rho$, set
\begin{equation}
\widehat{\mathcal C}_{\rho}
=\frac1T\sum_{i,j=1}^T
\omega\!\left(\frac{i-j}{\ell_T^\rho}\right)
\widehat A_i^\rho\otimes\widehat A_j^\rho.
\label{eq:raw_hac}
\end{equation}
For $0\le h<T$, the empirical covariance at lag $h$ is
$T^{-1}\sum_{i=1}^{T-h}\widehat A_i^\rho\otimes
\widehat A_{i+h}^\rho$. The weights in \eqref{eq:raw_hac} combine these
lag covariances to account for dependence across trading days.
For every sample, $\widehat{\mathcal C}_\rho$ is finite rank,
self-adjoint, and nonnegative.

For the normalized mean, use estimated versions of
$\xi_i=D\Phi_{r_u}[A_i^\rho]$. Let
$\widehat{\bar r}_u=\ell(\widetilde\rho_u)$ and set
\begin{equation}
\widehat\xi_i=
\begin{cases}
D\Phi_{\widetilde\rho_u}[\widehat A_i^\rho],
 &|\widehat{\bar r}_u|>\varepsilon_T,\\
0,&|\widehat{\bar r}_u|\le\varepsilon_T.
\end{cases}
\label{eq:xi_hat}
\end{equation}
With a possibly different bandwidth $\ell_T^k$, define
\begin{equation}
\widehat{\mathcal C}_{k}
=\frac1T\sum_{i,j=1}^T
\omega\!\left(\frac{i-j}{\ell_T^k}\right)
\widehat\xi_i\otimes\widehat\xi_j.
\label{eq:normalized_hac}
\end{equation}
Write $\widehat{\mathcal C}_\rho[\ell]$ for the unnormalized estimator
\eqref{eq:raw_hac} evaluated at bandwidth $\ell$, and let
$\widehat{\mathsf D}_u=D\Phi_{\widetilde\rho_u}$ on
$\{|\widehat{\bar r}_u|>\varepsilon_T\}$ and zero otherwise.
Since $\widehat\xi_i=\widehat{\mathsf D}_u\widehat A_i^\rho$,
the covariance estimates satisfy, for every sample,
\begin{equation}
\widehat{\mathcal C}_k
=\widehat{\mathsf D}_u\widehat{\mathcal C}_\rho[\ell_T^k]
\widehat{\mathsf D}_u^*.
\label{eq:normalized_hac_conjugation}
\end{equation}
The bandwidths $\ell_T^\rho$ and $\ell_T^k$ determine the range of lags
across trading days, whereas $L_n$ is the intraday block length. Both
estimators use the step curves without interpolation.

Write $\|\cdot\|_1$ for the trace norm.
\begin{theorem}
\label{thm:hac}
Suppose the conditions of
Theorem~\ref{thm:finite_jump_fclt_unnormalized} hold. If
$\ell_T^\rho\to\infty$ and $\ell_T^\rho=o(T^{1/2})$, then
\begin{equation}
\|\widehat{\mathcal C}_{\rho}-\mathcal C_\rho\|_1
\xrightarrow{P}0.
\label{eq:raw_hac_consistency}
\end{equation}
If also $\bar r_u\ne0$, $\varepsilon_T\downarrow0$,
$\ell_T^k\to\infty$, and $\ell_T^k=o(T^{1/2})$, then
\begin{equation}
\|\widehat{\mathcal C}_{k}-\mathcal C_k\|_1
\xrightarrow{P}0.
\label{eq:normalized_hac_consistency}
\end{equation}
These conclusions hold whether or not the population mean is constant.
\end{theorem}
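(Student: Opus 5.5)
The plan is to split $\widehat{\mathcal C}_\rho-\mathcal C_\rho$ into an oracle part built from the latent centered curves and a perturbation part from high-frequency recovery and mean estimation. Define the oracle estimator
\[
\widetilde{\mathcal C}_\rho=\frac1T\sum_{i,j=1}^T\omega\!\left(\frac{i-j}{\ell_T^\rho}\right)A_i^\rho\otimes A_j^\rho .
\]
\textbf{Oracle part.} I will show $\|\widetilde{\mathcal C}_\rho-\mathcal C_\rho\|_1\xrightarrow{P}0$ in two pieces, bias and variance, working with lag operators $\widetilde\Gamma_h=T^{-1}\sum_{i}A_i^\rho\otimes A_{i+h}^\rho$.

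For the bias, $\|\EE\widetilde{\mathcal C}_\rho-\mathcal C_\rho\|_1$ is bounded by
\[
\sum_{|h|<T}\Bigl|\omega(h/\ell_T^\rho)(1-|h|/T)-1\Bigr|\,\|\Gamma_{\rho,h}\|_1+\sum_{|h|\ge T}\|\Gamma_{\rho,h}\|_1 .
\]
This tends to zero by dominated convergence. The ingredients are that $\omega$ is bounded and continuous at $0$ with $\omega(0)=1$, and that $\sum_h\|\Gamma_{\rho,h}\|_1<\infty$. The latter follows from the mixing rate with $q\ge3$ and $\|A_i^\rho\|_\infty\le2$, via a Hilbert-space covariance inequality of Davydov type: $\|\Gamma_{\rho,h}\|_1\le C\alpha_{\mathsf Z}(h(1+g_{\mathrm{nt}})-1)$, up to an exponent, since trace norm is bounded by a supremum over pairs of orthonormal bases.

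For the variance, trace norm is awkward, so I will pass through the Hilbert--Schmidt norm in a mixed way. Fix a finite orthonormal basis truncation $P_K$ (e.g.\ Haar or Fourier). On the range of $P_K$, trace and HS norms are equivalent up to $\sqrt K$. I will show $\EE\|P_K(\widetilde{\mathcal C}_\rho-\EE\widetilde{\mathcal C}_\rho)P_K\|_{\HS}^2\le C(\ell_T^\rho)^2/T\to0$, using fourth-order cumulant bounds under $\alpha$-mixing for bounded variables; this is where $\ell_T^\rho=o(T^{1/2})$ is used. The tails $(I-P_K)$ are controlled uniformly in $T$. The bound is $\EE\|\widetilde{\mathcal C}_\rho(I-P_K)\|_1\lesssim$ a tail term, obtained by writing $\widetilde{\mathcal C}_\rho$ as a nonnegative operator, which it is by the PSD kernel condition. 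Its trace restricted to $(I-P_K)$ equals a weighted quadratic form with expectation at most $\sum_h|\omega|\,|\tr((I-P_K)\Gamma_{\rho,h})|$, which vanishes as $K\to\infty$ uniformly. Nonnegativity is the key device: for a nonnegative operator $B$, $\|B(I-P_K)\|_1\le\tr(B)^{1/2}\tr((I-P_K)B(I-P_K))^{1/2}$. Letting $T\to\infty$ and then $K\to\infty$ gives the oracle result.

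\textbf{Perturbation part.} Write $\widehat A_i^\rho=A_i^\rho+D_i$ with $D_i=(\widehat\rho_i-\rho_i)-(\widetilde\rho_u-r_u)$. Then $\widehat{\mathcal C}_\rho-\widetilde{\mathcal C}_\rho$ consists of cross terms and a $DD$ term. Using $\|f\otimes g\|_1=\|f\|\|g\|$ and Cauchy--Schwarz over the kernel sum (at most $2\ell_T^\rho+1$ nonzero lags, $\omega$ bounded), each term is bounded by
\[
C\ell_T^\rho\Bigl(T^{-1}\sum_i\|A_i^\rho\|^2\Bigr)^{1/2}\Bigl(T^{-1}\sum_i\|D_i\|^2\Bigr)^{1/2}.
\]
By Proposition~\ref{prop:daily_recovery} under Assumption~\ref{ass:rates_finite_jump}(ii), $T^{-1}\sum\|\widehat\rho_i-\rho_i\|^2=o_P(T^{-1/2})$. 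By Theorem~\ref{thm:finite_jump_fclt_unnormalized}, $\|\widetilde\rho_u-r_u\|^2=O_P(T^{-1})$. The bound is therefore $\ell_T^\rho\,o_P(T^{-1/4})$. This is the main obstacle, since it requires $\ell_T^\rho=o(T^{1/4})$, not $o(T^{1/2})$.

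To fix this, I would exploit the PSD structure. Since $\widehat{\mathcal C}_\rho$ and $\widetilde{\mathcal C}_\rho$ are both nonnegative, it suffices to control the trace difference and weak convergence. Concretely, I will write $\widehat{\mathcal C}_\rho=T^{-1}\mathcal S\mathcal W\mathcal S^*$, where $\mathcal S:\RR^T\to\mathcal H$ maps $e_i\mapsto\widehat A_i^\rho$ and $\mathcal W=[\omega((i-j)/\ell)]$. Then $\|T^{-1}\mathcal S_D\mathcal W\mathcal S_A^*\|_1\le T^{-1}\|\mathcal W\|_{\op}\|\mathcal S_D\|_{\HS}\|\mathcal S_A\|_{\HS}$. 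The operator norm of $\mathcal W$ is at most $\sum_h|\omega(h/\ell)|\le C\ell$, which gives the same rate again. The real improvement must come from $\mathcal S_A$: $\|\mathcal W^{1/2}\mathcal S_A^*\|_{\HS}^2=T\tr\widetilde{\mathcal C}_\rho=O_P(T)$, not $O(T\ell)$. Splitting $\mathcal W=\mathcal W^{1/2}\mathcal W^{1/2}$ gives the bound
\[
T^{-1}\|\mathcal W^{1/2}\|_{\op}\|\mathcal S_D\|_{\HS}\,(T\tr\widetilde{\mathcal C}_\rho)^{1/2}\lesssim \ell^{1/2}\,o_P(T^{-1/4}),
\]
which is $o_P(1)$ when $\ell=o(T^{1/2})$. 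The $DD$ term is bounded by $\ell\,o_P(T^{-1/2})=o_P(1)$, and the mean-centering piece is handled the same way.

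\textbf{Normalized part.} Use the conjugation identity \eqref{eq:normalized_hac_conjugation}. On the event $\{|\widehat{\bar r}_u|>\varepsilon_T\}$, whose probability tends to one, $\|\widehat{\mathsf D}_u-D\Phi_{r_u}\|_{\op}\to0$ in probability by Theorem~\ref{thm:finite_jump_consistency} and continuity of $f\mapsto D\Phi_f$ on $\mathcal D_\Phi$. The first part, applied at bandwidth $\ell_T^k$, gives $\widehat{\mathcal C}_\rho[\ell_T^k]\to\mathcal C_\rho$ in trace norm. Then $\|ABA^*-A_0B_0A_0^*\|_1\le\|A-A_0\|_{\op}\|B\|_1(\|A\|_{\op}+\|A_0\|_{\op})+\|A_0\|_{\op}^2\|B-B_0\|_1$, and this yields \eqref{eq:normalized_hac_consistency}. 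Nothing in the argument uses constancy of $r_u$.
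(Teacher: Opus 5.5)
Your proposal is correct and follows essentially the paper's route. The oracle estimator is handled by dominated-convergence bias control, a fixed finite-rank variance bound requiring $\ell_T=o(T^{1/2})$, and the nonnegativity inequality for the tails; you use a fixed nested basis where the paper uses eigenprojections of $\mathcal C_\rho$. The replacement step is exactly the paper's factorization through $\mathbf W^{1/2}$ in Lemma~\ref{lem:hac_replacement}, using $\|\mathbf W\|_{\op}\le C\ell_T$, $\tr(\widetilde{\mathcal C}_\rho)=O_P(1)$, and $T^{-1}\sum_i\|\widehat A_i^\rho-A_i^\rho\|_{\mathcal H}^2=o_P(T^{-1/2})$, and the normalized case uses the same conjugation identity.

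One caution on the summability step. Your Davydov-type bound for $\sum_h\|\Gamma_{\rho,h}\|_1<\infty$ only works if the Hilbert-space covariance inequality is applied to $\EE\langle A_0^\rho,BA_h^\rho\rangle_{\mathcal H}$ with $\|B\|_{\op}\le1$ (trace-norm duality), not term by term over basis pairs. In any case, this summability is already supplied by Lemma~\ref{lem:latent_covariance}.
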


The trace-norm conclusion controls the sum of the absolute errors in the
ordered eigenvalues. This is used below to justify critical values based
on the estimated eigenvalues. The normalized result follows from
\eqref{eq:normalized_hac_conjugation} and continuity of the derivative.
Supplementary Appendix~\ref{app:hac} gives the proofs of the covariance
and linear-functional results.

\subsection{Linear functionals and confidence sets}

For a fixed $g\in\mathcal H$, write
$\theta_g=\langle r_u,g\rangle_{\mathcal H}$,
$\widehat\theta_g=\langle\widetilde\rho_u,g\rangle_{\mathcal H}$, and
$\widehat v_g=\langle\widehat{\mathcal C}_{\rho}g,g\rangle_{\mathcal H}$.
Examples include $g=\1$ for the full-day mean and
$g=|I|^{-1}\1_I$ for an average over a fixed intraday interval $I$ of
positive length. To obtain the difference between two such interval
averages, take $g$ to be the difference of their weight functions. The
intervals and the contrast must be specified independently of the sample
used for inference.

\begin{corollary}
\label{cor:linear_inference}
Under the conditions for \eqref{eq:raw_hac_consistency}, if
$v_g=\langle\mathcal C_\rho g,g\rangle_{\mathcal H}>0$, then
\[
\widehat v_g\xrightarrow{P}v_g,
\qquad
\frac{\sqrt T(\widehat\theta_g-\theta_g)}{\sqrt{\widehat v_g}}
\Rightarrow N(0,1).
\]
The quotient may be defined as zero when $\widehat v_g=0$, an event with
probability tending to zero. Thus
$\widehat\theta_g\pm z_{1-\alpha/2}\sqrt{\widehat v_g/T}$ has asymptotic
coverage $1-\alpha$. For a fixed linear functional of $k_u^\circ$, define
$v_{k,g}=\langle\mathcal C_k g,g\rangle_{\mathcal H}$ and
$\widehat v_{k,g}=\langle\widehat{\mathcal C}_k g,g\rangle_{\mathcal H}$.
Under the conditions for \eqref{eq:normalized_hac_consistency} and
$v_{k,g}>0$, the same studentized limit and confidence interval apply with
$\langle\widehat k_u^\circ,g\rangle_{\mathcal H}$ and
$\widehat v_{k,g}$.
\end{corollary}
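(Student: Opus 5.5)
The plan is to reduce everything to the two functional limits and the trace-norm consistency already established, using only that evaluation of a trace-class operator at a fixed $g$ is Lipschitz in trace norm. For the unnormalized part, Theorem~\ref{thm:hac} gives $\|\widehat{\mathcal C}_\rho-\mathcal C_\rho\|_1\xrightarrow{P}0$. Since the operator norm is dominated by the trace norm,
\[
|\widehat v_g-v_g|
=\bigl|\langle(\widehat{\mathcal C}_\rho-\mathcal C_\rho)g,g\rangle_{\mathcal H}\bigr|
\le\|\widehat{\mathcal C}_\rho-\mathcal C_\rho\|_{\op}\|g\|_{\mathcal H}^2
\le\|\widehat{\mathcal C}_\rho-\mathcal C_\rho\|_1\|g\|_{\mathcal H}^2\xrightarrow{P}0 .
\]
This gives $\widehat v_g\xrightarrow{P}v_g$. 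Because $v_g>0$, it follows that $\PP(\widehat v_g=0)\le\PP(|\widehat v_g-v_g|\ge v_g)\to0$, and $\widehat v_g^{-1/2}\xrightarrow{P}v_g^{-1/2}$ by continuity. So the convention of setting the quotient to zero on that event does not affect the limit.

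Next, the map $f\mapsto\langle f,g\rangle_{\mathcal H}$ is continuous and linear from $\mathcal H$ to $\RR$. The continuous mapping theorem applied to \eqref{eq:raw_fclt} in Theorem~\ref{thm:finite_jump_fclt_unnormalized} therefore yields
\[
\sqrt T(\widehat\theta_g-\theta_g)=\langle\sqrt T(\widetilde\rho_u-r_u),g\rangle_{\mathcal H}\Rightarrow\langle G_\rho,g\rangle_{\mathcal H}.
\]
A bounded linear functional of a centered Gaussian element is a centered normal variable. Its variance is $\EE\langle G_\rho,g\rangle_{\mathcal H}^2=\langle\mathcal C_\rho g,g\rangle_{\mathcal H}=v_g$, by the definition of the covariance operator. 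Slutsky's lemma then gives the $N(0,1)$ limit for the studentized statistic. The coverage statement follows because the event that $\theta_g$ lies in the interval is exactly the event $|\sqrt T(\widehat\theta_g-\theta_g)/\sqrt{\widehat v_g}|\le z_{1-\alpha/2}$ on $\{\widehat v_g>0\}$, whose probability tends to one. The limit law is continuous at $\pm z_{1-\alpha/2}$.

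The normalized case follows the same three steps with substitutions. Theorem~\ref{thm:finite_jump_fclt_normalized} replaces Theorem~\ref{thm:finite_jump_fclt_unnormalized}, so $\sqrt T\langle\widehat k_u^\circ-k_u^\circ,g\rangle_{\mathcal H}\Rightarrow\langle G_k,g\rangle_{\mathcal H}\sim N(0,v_{k,g})$, since $G_k$ is centered Gaussian with covariance operator $\mathcal C_k$. The consistency \eqref{eq:normalized_hac_consistency} replaces \eqref{eq:raw_hac_consistency}, and the hypothesis $v_{k,g}>0$ is used in the same way.

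I do not expect a genuine obstacle, since all analytic difficulty is already absorbed into the theorems invoked. The only points to check carefully are two identifications. First, the variance of $\langle G,g\rangle_{\mathcal H}$ must equal $\langle\mathcal C g,g\rangle_{\mathcal H}$; this holds by definition of the covariance operator of a Gaussian element, and for $G_k=D\Phi_{r_u}[G_\rho]$ through $\mathcal C_k=D\Phi_{r_u}\mathcal C_\rho D\Phi_{r_u}^*$. Second, the zero-quotient convention must be handled on an event of vanishing probability, as done above.
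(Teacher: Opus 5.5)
Your proposal is correct and follows the paper's own argument: the continuous mapping theorem applied to the functional CLT under $f\mapsto\langle f,g\rangle_{\mathcal H}$, the bound $|\widehat v_g-v_g|\le\|\widehat{\mathcal C}_\rho-\mathcal C_\rho\|_1\|g\|_{\mathcal H}^2$ from Theorem~\ref{thm:hac}, and Slutsky's theorem, with the same substitutions for the normalized case. Your explicit treatment of the zero-quotient convention and of the variance identification $\EE\langle G,g\rangle_{\mathcal H}^2=\langle\mathcal C g,g\rangle_{\mathcal H}$ fills in details that the paper leaves implicit.
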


For whole-curve inference, let
$(\widehat\lambda_{\rho,j})_{j\ge1}$ denote the eigenvalues of
$\widehat{\mathcal C}_{\rho}$ in nonincreasing order, repeated with
multiplicity and completed by zeros. Let $(\eta_j)_{j\ge1}$ be independent
standard normal variables, independent of the observations, and write
$\PP_\eta$ and $\EE_\eta$ for probability and expectation with respect
to these variables. Define
\begin{equation}
\begin{aligned}
\widehat{\mathcal Q}_\rho
&=\sum_{j=1}^{m_n}\widehat\lambda_{\rho,j}\eta_j^2,\\
\widehat c_{\rho,1-\alpha}
&=\inf\{x:\PP_\eta(\widehat{\mathcal Q}_\rho\le x\mid\text{data})
\ge1-\alpha\}.
\end{aligned}
\label{eq:gaussian_calibration}
\end{equation}
The residual curves lie in the $m_n$-dimensional step space, so eigenvalues
beyond $m_n$ are zero. The sum therefore uses all nonzero eigenvalues of
the estimated operator, without a separate truncation to a fixed number
of principal components. No eigenvalue-separation condition is required.
The quantities $\widehat{\mathcal Q}_k$ and
$\widehat c_{k,1-\alpha}$ are defined analogously from the eigenvalues of
$\widehat{\mathcal C}_{k}$.

\begin{corollary}
\label{cor:confidence_sets}
Under the conditions for \eqref{eq:raw_hac_consistency}, suppose
$\tr(\mathcal C_\rho)>0$. For each fixed $\alpha\in(0,1)$, the set
\begin{equation}
\mathfrak C_{\rho,T}(1-\alpha)
=\{f\in\mathcal H:
T\|\widetilde\rho_u-f\|_{\mathcal H}^2
\le\widehat c_{\rho,1-\alpha}\}
\label{eq:confidence_ball}
\end{equation}
satisfies $\PP\{r_u\in\mathfrak C_{\rho,T}(1-\alpha)\}\to1-\alpha$.
Under the conditions for \eqref{eq:normalized_hac_consistency} and
$\tr(\mathcal C_k)>0$, the analogous set centered at
$\widehat k_u^\circ$ with critical value $\widehat c_{k,1-\alpha}$ has
asymptotic coverage $1-\alpha$ for $k_u^\circ$.
\end{corollary}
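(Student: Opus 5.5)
By the continuous mapping theorem, the statistic $T\|\widetilde\rho_u-r_u\|_{\mathcal H}^2$ converges in distribution to $\|G_\rho\|_{\mathcal H}^2$. This follows from Theorem~\ref{thm:finite_jump_fclt_unnormalized}, since $f\mapsto\|f\|_{\mathcal H}^2$ is continuous on $\mathcal H$. By the Karhunen--Lo\`eve expansion, $\|G_\rho\|_{\mathcal H}^2\overset{d}{=}\mathcal Q_\rho:=\sum_{j\ge1}\lambda_{\rho,j}\eta_j^2$. Here $(\lambda_{\rho,j})$ are the ordered eigenvalues of $\mathcal C_\rho$, which are summable because $\mathcal C_\rho$ is trace class. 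The coverage probability equals $\PP\{T\|\widetilde\rho_u-r_u\|^2_{\mathcal H}\le\widehat c_{\rho,1-\alpha}\}$. It therefore suffices to show two things: that $\widehat c_{\rho,1-\alpha}\xrightarrow{P}c_{\rho,1-\alpha}$, the $(1-\alpha)$-quantile of $\mathcal Q_\rho$; and that the distribution function of $\mathcal Q_\rho$ is continuous and strictly increasing at $c_{\rho,1-\alpha}$. Given these, Slutsky's lemma yields the limit $1-\alpha$.

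For the quantile step I will use Theorem~\ref{thm:hac}, namely $\|\widehat{\mathcal C}_\rho-\mathcal C_\rho\|_1\xrightarrow{P}0$. By Lidskii/Mirsky-type inequalities for compact self-adjoint operators, this gives $\sum_j|\widehat\lambda_{\rho,j}-\lambda_{\rho,j}|\le\|\widehat{\mathcal C}_\rho-\mathcal C_\rho\|_1\xrightarrow{P}0$, with both sequences completed by zeros. Since the $\widehat\lambda_{\rho,j}$ vanish for $j>m_n$, $\widehat{\mathcal Q}_\rho=\sum_{j\ge1}\widehat\lambda_{\rho,j}\eta_j^2$. Coupling on the same $\eta$'s gives
\[
\EE_\eta|\widehat{\mathcal Q}_\rho-\mathcal Q_\rho|\le\sum_{j}|\widehat\lambda_{\rho,j}-\lambda_{\rho,j}|\xrightarrow{P}0,
\]
so the conditional law of $\widehat{\mathcal Q}_\rho$ converges weakly to the law of $\mathcal Q_\rho$ in probability. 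This can be formalized via a subsequence argument that gives almost sure convergence along sub-subsequences. Quantile convergence then follows from the standard fact that weak convergence implies convergence of quantiles at points where the limit distribution function is continuous and strictly increasing.

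The main obstacle is checking these regularity properties of $\mathcal Q_\rho$, and this is where $\tr(\mathcal C_\rho)>0$ enters. Under this condition $\lambda_{\rho,1}>0$, so $\mathcal Q_\rho=\lambda_{\rho,1}\eta_1^2+R$ with $R\ge0$ independent of $\eta_1$. Because $\lambda_{\rho,1}\eta_1^2$ has a density positive on $(0,\infty)$, the convolution has a continuous distribution function that is strictly increasing on $(0,\infty)$. Moreover $c_{\rho,1-\alpha}>0$ because $\PP(\mathcal Q_\rho=0)=0$. This establishes coverage for $r_u$.

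The normalized statement follows by the same argument. Here Theorem~\ref{thm:finite_jump_fclt_normalized} replaces the unnormalized functional limit, which gives $T\|\widehat k_u^\circ-k_u^\circ\|^2_{\mathcal H}\Rightarrow\|G_k\|^2_{\mathcal H}$. The normalized part of Theorem~\ref{thm:hac} gives trace-norm convergence of $\widehat{\mathcal C}_k$, and the condition $\tr(\mathcal C_k)>0$ plays the role of $\tr(\mathcal C_\rho)>0$. One remaining point is that $\widehat{\mathcal C}_k$ also has rank at most $m_n$, so the truncation at $m_n$ in $\widehat{\mathcal Q}_k$ loses no eigenvalues. This holds by \eqref{eq:normalized_hac_conjugation}: the $\widehat\xi_i$ lie in the span of the step space and $\widetilde\rho_u$, which has dimension at most $m_n$ since $\widetilde\rho_u$ is itself a step curve.
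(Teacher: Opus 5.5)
Your proposal is correct and follows essentially the paper's route: the continuous mapping theorem with the Gaussian spectral expansion, trace-norm HAC consistency converted into $\ell^1$ eigenvalue convergence, coupling $\widehat{\mathcal Q}_\rho$ and $\mathcal Q_\rho$ on common $\eta_j$, and quantile convergence from the decomposition $\lambda_{\rho,1}\eta_1^2+R$ with $\lambda_{\rho,1}>0$. The differences are minor: you cite the Lidskii--Mirsky inequality where the paper derives $\ell^1$ convergence from fixed-$j$ eigenvalue convergence plus convergence of traces. Also, strict monotonicity of $F$ on all of $(0,\infty)$ needs $\PP(R<\epsilon)>0$ for every $\epsilon>0$ in addition to the positive density of $\lambda_{\rho,1}\eta_1^2$; the paper verifies this explicitly. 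Your argument, however, only needs $F(c+\delta)>1-\alpha$ at $c=c_{\rho,1-\alpha}$, and that follows at once from $\PP(R\le c)\ge\PP(\mathcal Q_\rho\le c)=1-\alpha>0$.
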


The set in \eqref{eq:confidence_ball} is an $L^2$ ball of radius
$(\widehat c_{\rho,1-\alpha}/T)^{1/2}$ around $\widetilde\rho_u$.
This set quantifies uncertainty in integrated squared distance; it does
not provide pointwise confidence intervals or simultaneous confidence
bands. Positive trace ensures a continuous
limiting squared-norm distribution and a unique positive $(1-\alpha)$
quantile. Supplementary Appendix~\ref{app:calibration} proves these facts
and the confidence-set and testing results.

\subsection{Intraday constancy of the mean}

When $\bar r_u\ne0$, constancy of the mean curve is equivalent to
$k_u^\circ=\1$. We test
\[
H_{0,\mathrm{flat}}:k_u^\circ=\1,
\qquad
H_{1,\mathrm{flat}}:\|k_u^\circ-\1\|_{\mathcal H}>0.
\]
This is a restriction on the mean curve, not a requirement that every
daily correlation path be constant. Define
\begin{equation}
Q_T^{\mathrm{flat}}
=T\|\widehat k_u^\circ-\1\|_{\mathcal H}^2
=T\sum_{j=1}^{m_n}L_n
\{\widehat k_u^\circ(\kappa_j)-1\}^2,
\label{eq:flatness_statistic}
\end{equation}
where the second equality is exact for the step curves. Reject when
$Q_T^{\mathrm{flat}}>\widehat c_{k,1-\alpha}$ and
$|\widehat{\bar r}_u|>\varepsilon_T$. If the integral fails this cutoff,
do not reject.

\begin{corollary}
\label{cor:flatness}
Suppose the conditions for \eqref{eq:normalized_hac_consistency} hold.
Under $H_{0,\mathrm{flat}}$,
\[
Q_T^{\mathrm{flat}}\Rightarrow\|G_k\|_{\mathcal H}^2
\ \stackrel{d}{=}\ \sum_{j\ge1}\lambda_{k,j}\eta_j^2,
\]
where $(\lambda_{k,j})$ are the eigenvalues of $\mathcal C_k$.
If $\tr(\mathcal C_k)>0$, the rejection probability converges to $\alpha$.
Under every fixed alternative in $H_{1,\mathrm{flat}}$,
\[
\frac{Q_T^{\mathrm{flat}}}{T}\xrightarrow{P}
\|k_u^\circ-\1\|_{\mathcal H}^2,
\]
and the rejection probability converges to one.
\end{corollary}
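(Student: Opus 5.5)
Under $H_{0,\mathrm{flat}}$ we have $k_u^\circ=\1$, so $Q_T^{\mathrm{flat}}=\|\sqrt T(\widehat k_u^\circ-k_u^\circ)\|_{\mathcal H}^2$. Theorem~\ref{thm:finite_jump_fclt_normalized} gives $\sqrt T(\widehat k_u^\circ-k_u^\circ)\Rightarrow G_k$ in $\mathcal H$. The norm is continuous, so the continuous mapping theorem yields $Q_T^{\mathrm{flat}}\Rightarrow\|G_k\|_{\mathcal H}^2$.

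For the distributional identity, $\mathcal C_k$ is nonnegative, self-adjoint and trace class, since it is $D\Phi_{r_u}\mathcal C_\rho D\Phi_{r_u}^*$ with $D\Phi_{r_u}$ bounded and $\mathcal C_\rho$ trace class. Take its spectral decomposition with orthonormal eigenfunctions $e_j$ and eigenvalues $\lambda_{k,j}$. The Karhunen--Lo\`eve expansion then gives $G_k=\sum_j\sqrt{\lambda_{k,j}}\eta_je_j$ with independent standard normal $\eta_j$. Parseval's identity gives $\|G_k\|^2=\sum_j\lambda_{k,j}\eta_j^2$.

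For size, assume $\tr(\mathcal C_k)>0$. Then at least one $\lambda_{k,j}>0$, so the law of $\sum_j\lambda_{k,j}\eta_j^2$ is continuous and strictly increasing on $(0,\infty)$. Its $(1-\alpha)$-quantile $c_{k,1-\alpha}$ is therefore unique and positive.

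The main step is $\widehat c_{k,1-\alpha}\xrightarrow{P}c_{k,1-\alpha}$. I would derive this from the trace-norm consistency \eqref{eq:normalized_hac_consistency} in Theorem~\ref{thm:hac}:
\begin{enumerate}
\item By Lidskii/Mirsky-type inequalities, the trace-norm bound gives $\sum_j|\widehat\lambda_{k,j}-\lambda_{k,j}|\le\|\widehat{\mathcal C}_k-\mathcal C_k\|_1\to0$.
\item Couple both quadratic forms with the same $\eta_j$. Then $\EE_\eta|\widehat{\mathcal Q}_k-\sum_j\lambda_{k,j}\eta_j^2|\le\sum_j|\widehat\lambda_{k,j}-\lambda_{k,j}|$, so the conditional law of $\widehat{\mathcal Q}_k$ converges weakly in probability.
\item Because the limit law is continuous and the quantile is unique, the conditional quantiles converge in probability.
\end{enumerate}
This is the step deferred to Supplementary Appendix~\ref{app:calibration}; the truncation at $m_n$ is harmless because the eigenvalues of $\widehat{\mathcal C}_k$ beyond $m_n$ vanish.

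Slutsky's lemma then gives $\PP(Q_T^{\mathrm{flat}}>\widehat c_{k,1-\alpha})\to\PP(\|G_k\|^2>c_{k,1-\alpha})=\alpha$. The extra rejection requirement $|\widehat{\bar r}_u|>\varepsilon_T$ holds with probability tending to one, since $\widehat{\bar r}_u\to\bar r_u\ne0$ by Theorem~\ref{thm:finite_jump_consistency} and $\varepsilon_T\downarrow0$. So it does not change the limit.

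Under a fixed alternative, Theorem~\ref{thm:finite_jump_consistency} gives $\widehat k_u^\circ\to k_u^\circ$ in $\mathcal H$ in probability. Continuity of the norm then gives $Q_T^{\mathrm{flat}}/T=\|\widehat k_u^\circ-\1\|^2\xrightarrow{P}\|k_u^\circ-\1\|^2>0$, so $Q_T^{\mathrm{flat}}\to\infty$ in probability. Theorem~\ref{thm:hac} holds whether or not the mean is constant, so $\widehat{\mathcal C}_k\to\mathcal C_k$ in trace norm still. In particular $\tr\widehat{\mathcal C}_k=O_P(1)$, and Markov's inequality under $\PP_\eta$ gives $\widehat c_{k,1-\alpha}\le\tr(\widehat{\mathcal C}_k)/\alpha=O_P(1)$. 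This holds even if $\tr(\mathcal C_k)=0$. Combined with the cutoff event having probability tending to one, the rejection probability tends to one.
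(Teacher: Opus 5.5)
Your proposal is correct and follows the paper's proof essentially step for step. Under the null you use the normalized functional CLT with continuous mapping. For size you use $\ell^1$ eigenvalue convergence, coupling with common $\eta_j$, and quantile convergence, which is the content of Lemma~\ref{lem:calibration}. You also use the fact that the cutoff event has probability tending to one. Under alternatives you combine consistency with the conditional Markov bound $\widehat c_{k,1-\alpha}\le\alpha^{-1}\tr(\widehat{\mathcal C}_k)=O_P(1)$.

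The only minor difference is that you obtain $\sum_j|\widehat\lambda_{k,j}-\lambda_{k,j}|\xrightarrow{P}0$ directly from the Lidskii--Mirsky inequality. The paper instead derives it from fixed-$j$ eigenvalue convergence, trace convergence, and a tail bound; both routes are valid for nonnegative trace-class operators.
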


\begin{remark}
\label{rem:projection}
Constancy of the unnormalized mean can be written as $P_\perp r_u=0$, where
$P_\perp f=f-\ell(f)\1$ is the orthogonal projection onto the functions with
zero integral. Since $P_\perp$ is bounded,
$\sqrt T P_\perp(\widetilde\rho_u-r_u)\Rightarrow P_\perp G_\rho$, and its
covariance is consistently estimated in trace norm by
$P_\perp\widehat{\mathcal C}_{\rho}P_\perp$. Thus constancy can be tested with
$T\|P_\perp\widetilde\rho_u\|_{\mathcal H}^2$ and the corresponding
critical value, without assuming $\bar r_u\ne0$. Positive trace of
$P_\perp\mathcal C_\rho P_\perp$ gives exact asymptotic size. Under a nonzero
constant mean, $D\Phi_{r_u}=\bar r_u^{-1}P_\perp$, so
$G_k=\bar r_u^{-1}P_\perp G_\rho$. The corresponding squared-norm limits
therefore differ by the factor $\bar r_u^{-2}$.
\end{remark}

\subsection{Implementation on the intraday grid}

For computation, let $\mathbf R$ be the $T\times m_n$ matrix with entries
$\widehat A_i^\rho(\kappa_j)$ for the unnormalized mean and
$\widehat\xi_i(\kappa_j)$ for the normalized mean. Let $\mathbf W$ have
entries $\omega((i-i')/\ell_T)$, using $\ell_T^\rho$ or $\ell_T^k$,
respectively. In the orthonormal step basis
$\psi_j=L_n^{-1/2}\1_{[\kappa_j,\kappa_j+L_n)}$, the estimated operator is
represented by
\begin{equation}
\frac{L_n}{T}\mathbf R^\top\mathbf W\mathbf R.
\label{eq:operator_matrix}
\end{equation}
The factor $L_n$ comes from the $L^2$ inner product. The nonzero eigenvalues
of the matrix in \eqref{eq:operator_matrix} coincide with those of the
estimated covariance operator, counting multiplicities, and give the
weights in \eqref{eq:gaussian_calibration}.

The results above use the exact conditional quantiles in
\eqref{eq:gaussian_calibration}. Numerically, these can be approximated by
repeated independent draws of $(\eta_1,\ldots,\eta_{m_n})$.
The kernel condition makes \eqref{eq:operator_matrix} positive
semidefinite in exact arithmetic. Only negative eigenvalues within a
prespecified floating-point tolerance may be set to zero.

\section{Monte Carlo evidence}
\label{sec:simulations}

We examine recovery of the latent daily and mean curves, coverage of the
$L^2$ confidence sets in Corollary~\ref{cor:confidence_sets}, and rejection
frequencies for the flatness test in Corollary~\ref{cor:flatness}. A paired
comparison with \citet[Section~5.1]{ChristensenHounyoLiu2026} uses the same
simulated paths and common numerical settings.

\subsection{Design}
\label{subsec:simulation_design}

The simulations use contiguous trading sessions, $g_{\mathrm{nt}}=0$, so
that $\underline t_i=i-1$. Let $V^X,V^Y,V^\rho$ be independent stationary
Ornstein--Uhlenbeck processes,
\[
 dV_t^a=-1.5V_t^a\,dt+\sqrt{3}\,dB_t^a,
 \qquad a\in\{X,Y,\rho\},
 \qquad V_0^a\sim N(0,1),
\]
with independent state and price drivers.  Put
$v(\kappa)=1+0.4\cos(2\pi\kappa)$ and
$s(\kappa)=-\sin(2\pi\kappa)$.  The spot variances and correlation on day
$i$ are
\begin{align*}
 c_i^a(\kappa)
 &=v(\kappa)\{1+0.25\tanh(V^a_{i-1+\kappa})\},
 \qquad a\in\{X,Y\},\\
 \rho_i(\kappa)
 &=r_0\{1+\delta s(\kappa)\}
   +0.2\tanh(V^\rho_{i-1+\kappa}).
\end{align*}
The main design uses $r_0\in\{0.2,0.6\}$ and
$\delta\in\{0,0.05,0.10,0.20\}$.  Symmetry of the stationary state gives
the exact population targets
\[
 r_u(\kappa)=r_0\{1+\delta s(\kappa)\},
 \qquad \bar r_u=r_0,
 \qquad k_u^\circ(\kappa)=1+\delta s(\kappa),
\]
and hence $\|k_u^\circ-\1\|_{\mathcal H}^2=\delta^2/2$. The normalized
departure from flatness is therefore the same at both correlation levels,
although their finite-sample rejection probabilities may differ. Over the
main parameter grid, correlations remain within $[-0.04,0.44]$ for
$r_0=0.2$ and $[0.28,0.92]$ for $r_0=0.6$; no clipping is used.

Continuous returns are generated from $c_i^X,c_i^Y$, and $\rho_i$ on a
one-second grid with $23{,}400$ cells per day. The OU states are advanced
by their exact transition and continued across days; lower-frequency
returns are sums of the one-second returns. The prices also receive
independent compound-Poisson jumps with intensity $0.2$ per asset-day and
centered Gaussian marks with variance $5/9$. Since expected continuous
quadratic variation is one, jumps account for $10\%$ of expected total
quadratic variation. The return-generation formulas are given in
\suppref{suppsec:simulation_implementation}.

We use two sampling designs.  In the joint-growth design,
\[
 (T,n,k_n,m_n)\in
 \{(66,390,30,13),(132,1560,60,26),(228,4680,104,45)\}.
\]
Both $T$ and $n$ increase across these configurations. The fixed-frequency
design instead holds $(n,k_n,m_n)=(390,30,13)$ and uses
$T\in\{22,66,250\}$. It examines additional trading days at the one-minute
frequency used in the application, rather than a joint high-frequency
sequence. The common $(66,390,30,13)$ configuration is simulated once and
reported in both designs.

All feasible curves use the estimator of Section~\ref{sec:estimation},
with blockwise bipower thresholds computed separately for the two return
series, $\tau_X=\tau_Y=6$, and $\varpi=0.49$. The bipower estimates are not
capped. The primary HAC estimates use the Parzen kernel with
$\ell_T^\rho=\ell_T^k=\lfloor T^{1/3}\rfloor$; the normalization cutoff is
$\varepsilon_T=1/T$. Critical values are computed from $4{,}999$ conditional
Gaussian draws.

For recovery of the daily and mean curves, we use $2{,}000$ replications
for each configuration. For coverage, we use $5{,}000$ replications for
each of the two main configurations with $T=66$ and $2{,}000$ for each
remaining main configuration. For flatness tests, we use $5{,}000$
replications under the null and $2{,}000$ under each fixed alternative.
Monte Carlo standard errors accompany coverage
and rejection frequencies. \suppref{suppsec:simulation_implementation}
gives the remaining calibration and evaluation details, and
\suppref{suppsec:simulations} reports the supplementary results.

\subsection{Recovery of daily and mean curves}
\label{subsec:simulation_recovery}

Daily RMSE is obtained by averaging
$\|\widehat\rho_i-\rho_i\|_{\mathcal H}^2$ over replications and trading
days and then taking the square root. For the mean curve, let
$\bar\rho_T=T^{-1}\sum_{i=1}^T\rho_i$ denote the latent sample mean evaluated
on the one-second grid. We report Monte Carlo means of the sampling error
$\|\bar\rho_T-r_u\|_{\mathcal H}$, the high-frequency estimation error
$\|\widetilde\rho_u-\bar\rho_T\|_{\mathcal H}$, and the total error
$\|\widetilde\rho_u-r_u\|_{\mathcal H}$. These norms are not additive. We also
multiply the high-frequency estimation error by $\sqrt T$ to assess its
magnitude on the scale of the functional central limit theorem. The
estimation and block-projection components of daily squared error are
reported in \suppref{suppsec:ring1_full}.

Table~\ref{tab:simulation_recovery} reports the jump design used in the
subsequent experiments. Daily RMSE declines across the three joint-growth
configurations. The sampling error is approximately proportional to
$T^{-1/2}$: multiplying the displayed values by $\sqrt T$ gives
$0.1356$--$0.1366$. The scaled high-frequency estimation error falls from
$0.1767$ to $0.0944$ for $r_0=0.2$ and from $0.1474$ to $0.0746$ for
$r_0=0.6$. This decrease is consistent with the approximation in
\eqref{eq:mean_expansion}. Results without price jumps show a similar
pattern; see \suppref{suppsec:ring1_full}.

\begingroup
\renewcommand{\mcse}[1]{ (#1)}
\begin{table}[t]
\centering
\caption{Curve recovery in the joint-growth design}
\label{tab:simulation_recovery}
\small
\begin{tabular}{rrrrrrrr}
\toprule
$T$ & $n$ & $r_0$ & Daily RMSE & Sampling & HF estimation & Total & $\sqrt{T}\,$ HF error \\
\midrule
66  & 390  & 0.2 & 0.1759 & 0.01681 & 0.02175 & 0.02735 & 0.1767 \\
132 & 1560 & 0.2 & 0.1239 & 0.01186 & 0.01081 & 0.01616 & 0.1242 \\
228 & 4680 & 0.2 & 0.0939 & 0.00898 & 0.00625 & 0.01110 & 0.0944 \\
66  & 390  & 0.6 & 0.1255 & 0.01681 & 0.01814 & 0.02423 & 0.1474 \\
132 & 1560 & 0.6 & 0.0865 & 0.01186 & 0.00866 & 0.01464 & 0.0995 \\
228 & 4680 & 0.6 & 0.0652 & 0.00898 & 0.00494 & 0.01030 & 0.0746 \\
\bottomrule
\end{tabular}
\begin{minipage}{0.96\linewidth}
\footnotesize
\emph{Notes:} Results use $2{,}000$ replications with jumps and
$\delta=0.10$. Daily RMSE is obtained by averaging the integrated squared
error over replications and trading days and then taking the square root.
Sampling, HF estimation, and Total are
Monte Carlo means of $\|\bar\rho_T-r_u\|_{\mathcal H}$,
$\|\widetilde\rho_u-\bar\rho_T\|_{\mathcal H}$, and
$\|\widetilde\rho_u-r_u\|_{\mathcal H}$, respectively. The final column
multiplies the HF estimation error by $\sqrt T$.
\end{minipage}
\end{table}
\endgroup

\subsection{Functional confidence sets}
\label{subsec:simulation_coverage}

Table~\ref{tab:simulation_coverage} reports coverage and mean radii of the
nominal $95\%$ whole-curve confidence sets with jumps and $\delta=0.10$.
Coverage of the unnormalized confidence sets is below the nominal level,
ranging from $0.8684$ to $0.9185$.
Normalized coverage is close to nominal for $r_0=0.6$
($0.9450$--$0.9552$) and conservative for $r_0=0.2$
($0.9660$--$0.9690$).

Mean radii decrease across the joint-growth configurations for both
objects. The larger normalized radii at $r_0=0.2$ are consistent with the
sensitivity of normalization to the mean level. Coverage does not improve
uniformly as the sets become narrower. The experiment does not separately
identify the contributions of long-run covariance estimation,
high-frequency estimation error, and finite-sample Gaussian approximation
to the coverage errors.

\begingroup
\renewcommand{\mcse}[1]{ (#1)}
\begin{table}[t]
\centering
\caption{Coverage of nominal $95\%$ whole-curve $L^2$ confidence sets}
\label{tab:simulation_coverage}
\small
\resizebox{\textwidth}{!}{%
\begin{tabular}{rrrrrrrr}
\toprule
$T$ & $n$ & $r_0$ & $N$ & Unnormalized coverage & Unnormalized radius & Normalized coverage & Normalized radius \\
\midrule
66  & 390  & 0.2 & 5000 & 0.9110\mcse{0.0040} & 0.03953 & 0.9660\mcse{0.0026} & 0.15478 \\
66  & 390  & 0.6 & 5000 & 0.8684\mcse{0.0048} & 0.03521 & 0.9552\mcse{0.0029} & 0.03858 \\
132 & 1560 & 0.2 & 2000 & 0.9105\mcse{0.0064} & 0.02523 & 0.9675\mcse{0.0040} & 0.07774 \\
132 & 1560 & 0.6 & 2000 & 0.8990\mcse{0.0067} & 0.02392 & 0.9510\mcse{0.0048} & 0.02058 \\
228 & 4680 & 0.2 & 2000 & 0.9185\mcse{0.0061} & 0.01867 & 0.9690\mcse{0.0039} & 0.04814 \\
228 & 4680 & 0.6 & 2000 & 0.9145\mcse{0.0063} & 0.01814 & 0.9450\mcse{0.0051} & 0.01359 \\
\bottomrule
\end{tabular}%
}
\begin{minipage}{0.96\linewidth}
\footnotesize
\emph{Notes:} Monte Carlo standard errors are in parentheses.  Radius is
the replication mean of the $L^2$-ball radius.  Jumps are included and
$\delta=0.10$ throughout.
\end{minipage}
\end{table}
\endgroup

\subsection{Flatness tests}
\label{subsec:simulation_flatness}

The flatness experiment uses $Q_T^{\mathrm{flat}}$ in
\eqref{eq:flatness_statistic}, with $\delta=0$ under the null and
$\delta\in\{0.05,0.10,0.20\}$ as fixed alternatives.
Table~\ref{tab:simulation_flatness} reports rejection frequencies at the
nominal $5\%$ level. Null rejection rates range from $0.0202$ to $0.0408$
and remain below nominal across the design, without a monotone pattern
along the joint-growth configurations.

Power increases with $\delta$ and as $T$ and $n$ increase in the
joint-growth design. For $r_0=0.2$ and $\delta=0.10$, rejection frequencies
are $0.1800$, $0.7935$, and $0.9980$ across the three configurations. The
weaker alternative $\delta=0.05$ gives $0.0615$, $0.2295$, and $0.5975$.
Rejection frequencies are generally higher at $r_0=0.6$.

At fixed one-minute sampling, additional days also increase power. For
$r_0=0.2$ and $\delta=0.10$, rejection frequencies are $0.0415$, $0.1800$,
and $0.7600$ at $T=22,66,250$. These results describe the effect of a
longer sample at fixed $n$ and $k_n$, rather than the joint limit in
Section~\ref{sec:estimation}.

\begingroup
\renewcommand{\mcse}[1]{ (#1)}
\begin{table}[t]
\centering
\caption{Flatness rejection frequencies}
\label{tab:simulation_flatness}
\small
\resizebox{\textwidth}{!}{%
\begin{tabular}{llrrrrrr}
\toprule
Design & $r_0$ & $T$ & $n$ & $\delta=0$ & $\delta=0.05$ & $\delta=0.10$ & $\delta=0.20$ \\
\midrule
Joint growth & 0.2 & 66  & 390  & 0.0348\mcse{0.0026} & 0.0615\mcse{0.0054} & 0.1800\mcse{0.0086} & 0.7370\mcse{0.0098} \\
Joint growth & 0.2 & 132 & 1560 & 0.0282\mcse{0.0023} & 0.2295\mcse{0.0094} & 0.7935\mcse{0.0091} & 1.0000\mcse{0.0000} \\
Joint growth & 0.2 & 228 & 4680 & 0.0328\mcse{0.0025} & 0.5975\mcse{0.0110} & 0.9980\mcse{0.0010} & 1.0000\mcse{0.0000} \\
Joint growth & 0.6 & 66  & 390  & 0.0304\mcse{0.0024} & 0.7200\mcse{0.0100} & 1.0000\mcse{0.0000} & 1.0000\mcse{0.0000} \\
Joint growth & 0.6 & 132 & 1560 & 0.0288\mcse{0.0024} & 1.0000\mcse{0.0000} & 1.0000\mcse{0.0000} & 1.0000\mcse{0.0000} \\
Joint growth & 0.6 & 228 & 4680 & 0.0364\mcse{0.0026} & 1.0000\mcse{0.0000} & 1.0000\mcse{0.0000} & 1.0000\mcse{0.0000} \\
\midrule
Fixed frequency & 0.2 & 22  & 390 & 0.0202\mcse{0.0020} & 0.0200\mcse{0.0031} & 0.0415\mcse{0.0045} & 0.1890\mcse{0.0088} \\
Fixed frequency & 0.2 & 66  & 390 & 0.0348\mcse{0.0026} & 0.0615\mcse{0.0054} & 0.1800\mcse{0.0086} & 0.7370\mcse{0.0098} \\
Fixed frequency & 0.2 & 250 & 390 & 0.0396\mcse{0.0028} & 0.2025\mcse{0.0090} & 0.7600\mcse{0.0095} & 1.0000\mcse{0.0000} \\
Fixed frequency & 0.6 & 22  & 390 & 0.0208\mcse{0.0020} & 0.1955\mcse{0.0089} & 0.8155\mcse{0.0087} & 1.0000\mcse{0.0000} \\
Fixed frequency & 0.6 & 66  & 390 & 0.0304\mcse{0.0024} & 0.7200\mcse{0.0100} & 1.0000\mcse{0.0000} & 1.0000\mcse{0.0000} \\
Fixed frequency & 0.6 & 250 & 390 & 0.0408\mcse{0.0028} & 1.0000\mcse{0.0000} & 1.0000\mcse{0.0000} & 1.0000\mcse{0.0000} \\
\bottomrule
\end{tabular}%
}
\begin{minipage}{0.96\linewidth}
\footnotesize
\emph{Notes:} Monte Carlo standard errors are in parentheses.  Null columns
use $N=5{,}000$; fixed-alternative columns use $N=2{,}000$.  All cells include
jumps.  The $(66,390)$ cells are common to the two designs.  An entry of
$1.0000$ means that all $2{,}000$ simulated samples rejected.
\end{minipage}
\end{table}
\endgroup

Figure~\ref{fig:simulation_size_power} plots the rejection frequencies
for the two sampling designs.

\begin{figure}[!htbp]
\centering
\includegraphics[width=\textwidth]{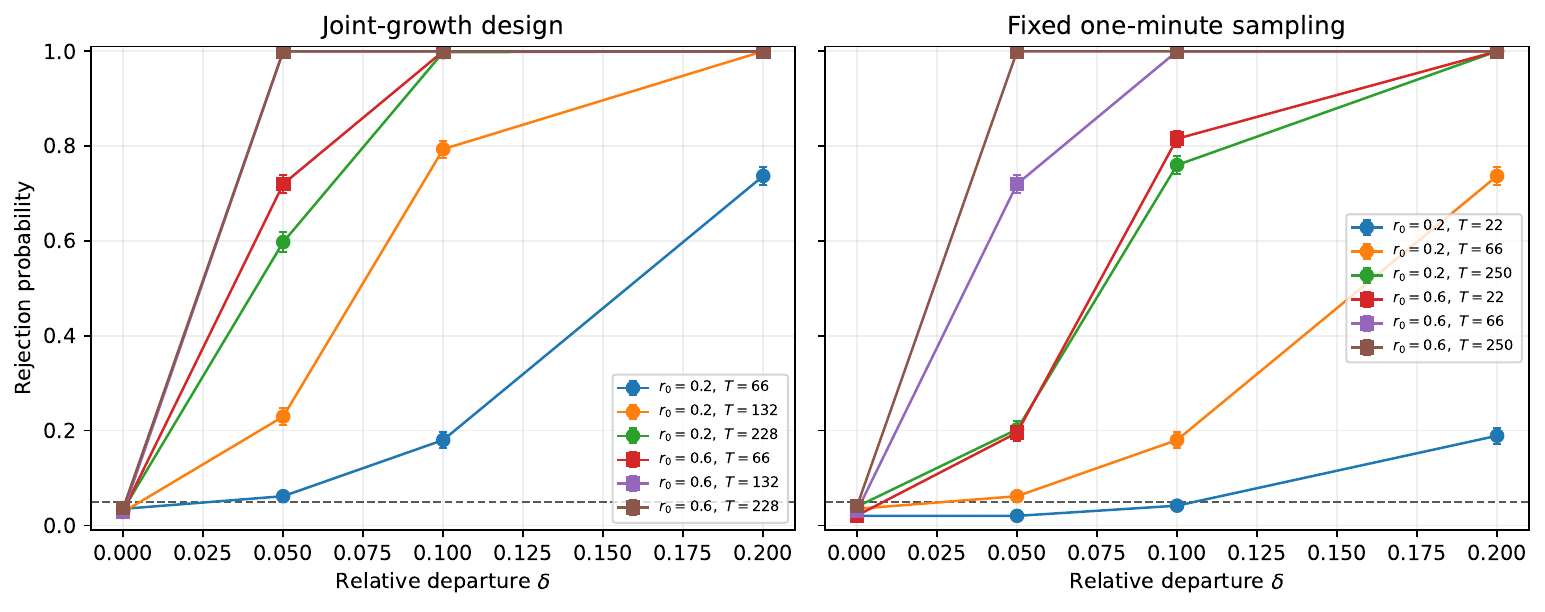}
\caption{Flatness rejection frequencies under joint growth and at fixed
one-minute sampling. The horizontal dashed line is the 5\% nominal level;
bars are 95\% Monte Carlo intervals. Each line holds $(r_0,T)$ fixed and
varies the departure from flatness $\delta$.}
\label{fig:simulation_size_power}
\end{figure}

We compare the test with the procedure in Section~5.1 of
\citet{ChristensenHounyoLiu2026}, denoted CHL below, using the same simulated
paths. The CHL procedure forms a covariance-first correlation curve and uses the
long-run covariance and Gaussian calibration associated with that
estimator. The two flatness null hypotheses coincide under this design.

Rejection frequencies are broadly comparable under common numerical
settings. CHL null rejection rates are $0.0234$--$0.0456$, compared with
$0.0202$--$0.0408$ for the proposed test. Alternative rejection rates are
also close, with differences in either direction across configurations;
they are not adjusted to a common empirical size.
\suppref{suppsec:chl_simulation} reports the paired results and examines
the sensitivity of CHL rejection frequencies to the numerical settings.
Changing these settings has a material effect in some low-correlation
configurations.

The comparisons in \suppref{suppsec:ring3_diagnostics} examine latent
curves, alternative HAC bandwidths, and observations with and without
price jumps. Null rejection remains conservative when latent curves are
used, while high-frequency estimation is associated with a substantial
power loss in the coarse, low-correlation configuration. The specified
price jumps have little effect on rejection frequencies in these
comparisons.

\section{Empirical application}
\label{sec:empirical}

We estimate the level and intraday shape of stock--proxy and
average-pairwise correlation among 30 U.S. stocks over July 2010--December
2019. Pooled curves describe the full sample, while tests of intraday
constancy use prespecified quarterly windows. We compare the estimates
and test decisions with the covariance-first procedure of
\citet[Section~5.1]{ChristensenHounyoLiu2026}.

\subsection{Data and implementation}
\label{subsec:empirical_design}

The balanced panel contains 30 stocks and 2,370 trading days from July~1,
2010 through December~31, 2019, with 390 one-minute regular-session log
returns per stock and day. We divide the 09:30--16:00 session into thirteen
nonoverlapping thirty-minute blocks.

For each stock, the leave-one-out proxy is the equally weighted one-minute
return on the other 29 stocks. We form the proxy before computing local
scales and truncating returns, estimate the 30 stock--proxy daily
correlation curves, and average them with weight $1/30$. The second
aggregate equally averages the daily correlation curves for all
$\binom{30}{2}=435$ unordered stock pairs. The weights are fixed in
advance, and Corollary~\ref{cor:fixed_aggregation} applies to both
aggregates for the proposed estimator.

The proposed daily curves use the estimator in
Section~\ref{sec:estimation}, with blockwise bipower-variation scales,
threshold constant 6, exponent $0.49$, and no upper bound on the scale
estimates. For each stock--proxy or stock pair, the CHL procedure instead
averages covariance coordinates across the days of a window before
forming the correlation ratio; it then averages these curves with the
same fixed weights. For inference, each method estimates the long-run
covariance from its aggregate daily residual or influence curve, retaining
cross-covariances among the component curves.
Supplementary Appendix~\ref{supsubsec:empirical_implementation} gives the
aggregation details.

Formal inference uses all 38 complete calendar quarters from the third
quarter of 2010 (2010Q3) through the fourth quarter of 2019 (2019Q4),
each containing 60--64 trading days. We assume stationarity and
weak dependence within each quarter, while allowing the population mean
curve to differ across quarters. For a quarter of length $T_q$, both
procedures use the Parzen kernel, HAC bandwidth
$\ell_{T_q}=\lfloor T_q^{1/3}\rfloor$, and $200{,}000$ draws from their
respective estimated Gaussian limits. Each procedure uses the covariance
estimate appropriate to its estimator.

\subsection{Correlation levels and intraday shape}
\label{subsec:empirical_curves}

Figure~\ref{fig:empirical_curves} displays the pooled curves on the original
correlation scale and after division by their respective full-day means.
Table~\ref{tab:empirical_levels} reports the full-day, early, and close
averages. For a step curve $f$, let $\bar f_E$ and $\bar f_C$ denote its
averages over 09:30--10:30 and 15:00--16:00, respectively. These intervals
comprise the first two and last two thirty-minute blocks, and the
close-minus-early contrast is $\bar f_C-\bar f_E$.

\begin{figure}[!htbp]
  \centering
  \includegraphics[width=\textwidth]{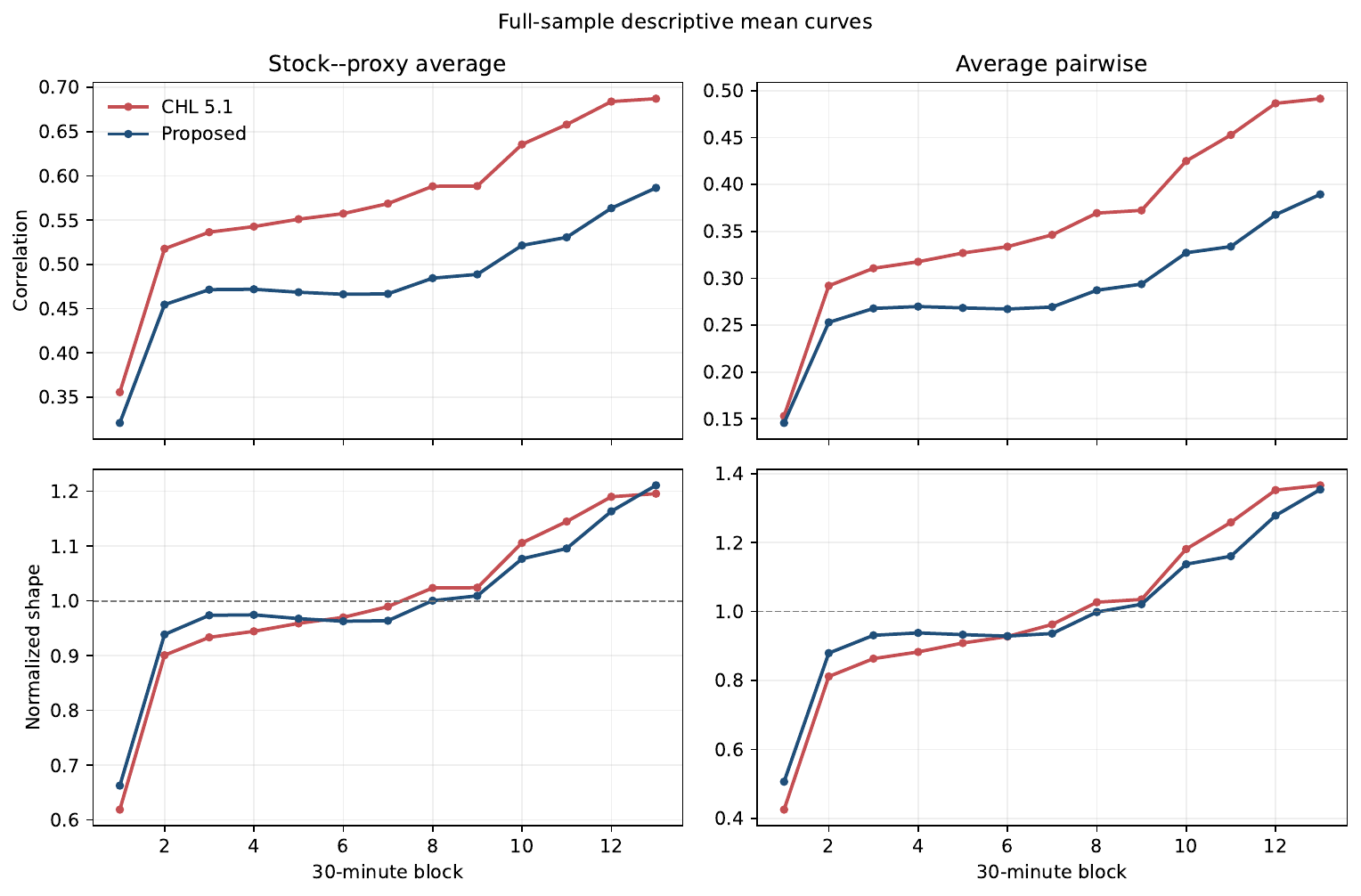}
  \caption{Pooled intraday correlation curves, July 2010--December 2019.
  Columns show the stock--proxy and average-pairwise aggregates. The top
  row uses the original correlation scale; the bottom row divides each
  aggregate curve by its own full-day mean. Proposed denotes the mean of
  the estimated daily correlation curves; CHL~5.1 denotes the
  covariance-first reference. These curves are descriptive; formal
  inference uses quarterly windows.}
  \label{fig:empirical_curves}
\end{figure}

\begin{table}[!htbp]
\centering
\caption{Pooled correlation levels and close-minus-early contrasts}
\label{tab:empirical_levels}
\small
\begin{tabular}{llrrrr}
\toprule
Object & Method & Full-day average & Early & Close & Close $-$ early\\
\midrule
\multicolumn{6}{l}{\textit{Panel A: original correlation scale}}\\
Stock--proxy & Proposed & 0.484250 & 0.387723 & 0.574913 & 0.187190\\
              & CHL 5.1   & 0.574644 & 0.436639 & 0.685501 & 0.248862\\
Average pairwise & Proposed & 0.287664 & 0.199293 & 0.378530 & 0.179237\\
                 & CHL 5.1   & 0.359788 & 0.222514 & 0.488923 & 0.266408\\
\addlinespace
\multicolumn{6}{l}{\textit{Panel B: normalized shape}}\\
Stock--proxy & Proposed & 1 & 0.800668 & 1.187224 & 0.386556\\
              & CHL 5.1   & 1 & 0.759842 & 1.192914 & 0.433072\\
Average pairwise & Proposed & 1 & 0.692797 & 1.315873 & 0.623075\\
                 & CHL 5.1   & 1 & 0.618460 & 1.358920 & 0.740460\\
\bottomrule
\end{tabular}
\begin{minipage}{0.96\linewidth}\footnotesize
\emph{Notes:} All rows use the same 2,370 days and 30-stock sample.
Early and Close report averages over the first two and last two
thirty-minute blocks, respectively. Panel B divides each method's curve
by its own full-day average. The entries are
descriptive estimates, not a test that the two population curves coincide.
\end{minipage}
\end{table}

The proposed full-day estimates are $0.484250$ for stock--proxy
correlation and $0.287664$ for average-pairwise correlation. The
closing-hour averages exceed the opening-hour averages by approximately
$0.187$ and $0.179$ in correlation-coefficient units, respectively. Thus the
two aggregates have similar absolute increases in average correlation
from the opening hour to the closing hour, despite their different
full-day levels.

After normalization, the close-minus-early contrast is $0.623075$ for
average-pairwise correlation and $0.386556$ for stock--proxy correlation.
Pairwise correlation therefore has the larger increase relative to its
full-day mean, although its increase on the original scale is slightly
smaller.

The covariance-first estimates show the same ordering: the closing-hour
average exceeds the opening-hour average for both aggregates, and the
relative increase is larger for average-pairwise correlation. Their full-day estimates,
$0.574644$ and $0.359788$, exceed the corresponding proposed estimates,
as do their normalized contrasts, $0.433072$ and $0.740460$. The two
procedures thus agree on the sign of the close-minus-early contrast but
differ in its magnitude and in the full-day level.

\subsection{Quarterly evidence against flatness}
\label{subsec:empirical_quarterly}

For each quarter and aggregate, we apply the proposed statistic
$Q_T^{\mathrm{flat}}$ in \eqref{eq:flatness_statistic}. CHL~5.1 subtracts
the intraday mean from its estimated curve and calibrates its statistic
using the long-run covariance of its influence curve.
Table~\ref{tab:empirical_quarterly} reports decisions at the $5\%$ level,
before and after Holm adjustment. The adjustment is applied separately
to the 38 quarterly tests for each method--object pair.

\begin{table}[!htbp]
\centering
\caption{Quarterly flatness inference and method agreement}
\label{tab:empirical_quarterly}
\small
\begin{tabular}{llrr}
\toprule
Object & Method & \shortstack{Unadjusted\\rejections} & \shortstack{Holm-adjusted\\rejections}\\
\midrule
Stock--proxy average & Proposed & 38/38 & 38/38\\
                     & CHL 5.1   & 38/38 & 38/38\\
Average pairwise     & Proposed & 38/38 & 38/38\\
                     & CHL 5.1   & 37/38 & 37/38\\
\midrule
\multicolumn{2}{l}{Decision agreement: stock--proxy} &
\multicolumn{2}{c}{38/38}\\
\multicolumn{2}{l}{Decision agreement: pairwise} &
\multicolumn{2}{c}{37/38}\\
\bottomrule
\end{tabular}
\begin{minipage}{0.96\linewidth}\footnotesize
\emph{Notes:} Decisions use the $5\%$ level in all 38 complete quarters
from 2010Q3 to 2019Q4, with 60--64 trading days per quarter. Holm adjustment
uses four families of 38 tests, one for each method--object pair.
Each test concerns intraday constancy within a quarter under maintained
stationarity, not equality of mean curves across quarters.
\end{minipage}
\end{table}

The proposed test rejects flatness for both aggregates in all 38
prespecified quarters, including after Holm adjustment.

The two procedures give the same decision in 75 of the 76 quarter--object
comparisons. The only disagreement is for average-pairwise correlation in
the fourth calendar quarter of 2017 (2017Q4; 62 trading days): the
proposed test rejects flatness
(Holm-adjusted $p=0.000190$), whereas CHL~5.1 does not
(Holm-adjusted $p=0.123469$). Supplementary
Appendix~\ref{supsubsec:empirical_quarterly} reports all quarterly results
and the statistics for this comparison.

\section{Conclusion}
\label{sec:conclusion}

This paper provides direct functional inference for the mean curve of
latent intraday spot correlation. Forming correlation ratios within each
day before averaging aligns the estimator with an interpretable
population mean on the original correlation scale, without requiring a
multiplicatively separable correlation pattern.  Together with positive-semidefinite,
trace-norm consistent long-run covariance estimation, this yields
inference for linear summaries, normalized shapes, and intraday constancy,
as well as whole-curve confidence sets calibrated using all eigenvalues of
the estimated covariance operator. 

In the simulations, errors in estimating the daily and mean curves
decrease as $T$ and $n$ increase jointly. Power increases with the
magnitude of departures from flatness but remains limited in some
low-correlation configurations with weak alternatives.
In the empirical application, the closing-hour average exceeds the
opening-hour average for both aggregates. The proposed test rejects
intraday constancy for both aggregates in all 38 quarters, including
after Holm adjustment.
The methods of this paper and \citet{ChristensenHounyoLiu2026} yield the same flatness-test conclusions in all stock–proxy quarters and in 37 of the 38 average-pairwise quarters, despite differences in their estimates of correlation levels and of the increase in average
correlation from the opening hour to the closing hour. The mean curves quantify these differences, revealing the time-of-day variation of dependence beyond what a single full-day coefficient or a flatness-test decision can convey.

\clearpage
\thispagestyle{empty}
\begin{center}
\vspace*{0.28\textheight}
{\LARGE Supplementary Appendix to\par}
\vspace{1.2em}
{\Large \papertitle\par}
\end{center}
\clearpage

\clearpage
\setcounter{section}{0}
\setcounter{equation}{0}
\setcounter{figure}{0}
\setcounter{table}{0}
\setcounter{theorem}{0}
\renewcommand{\thesection}{S\arabic{section}}
\numberwithin{equation}{section}
\counterwithin{figure}{section}
\counterwithin{table}{section}
\renewcommand{\thepage}{S\arabic{page}}
\makeatletter
\renewcommand*{\theHsection}{supp.\arabic{section}}
\renewcommand*{\theHsubsection}{supp.\arabic{section}.\arabic{subsection}}
\renewcommand*{\theHsubsubsection}{supp.\arabic{section}.\arabic{subsection}.\arabic{subsubsection}}
\renewcommand*{\theHequation}{supp.\arabic{section}.\arabic{equation}}
\renewcommand*{\theHfigure}{supp.\arabic{section}.\arabic{figure}}
\renewcommand*{\theHtable}{supp.\arabic{section}.\arabic{table}}
\providecommand*{\theHtheorem}{}
\renewcommand*{\theHtheorem}{supp.\arabic{section}.\arabic{theorem}}
\providecommand*{\theHpage}{}
\renewcommand*{\theHpage}{supp.\arabic{page}}
\makeatother
\setcounter{page}{1}
\section*{Guide to the Supplementary Appendix}
\addcontentsline{toc}{section}{Guide to the Supplementary Appendix}
This supplement contains all proofs, together with the population
calculations and additional numerical and empirical results. The table
below gives their locations.

\begin{center}
\small
\begin{tabular}{p{0.39\textwidth}p{0.53\textwidth}}
\toprule
Main-paper result or evidence & Proof and supporting material\\
\midrule
Daily recovery & Supplementary Appendix~\ref{app:daily_moments},
using the bounds in Supplementary Appendix~\ref{app:regularity}\\
Consistency and raw FCLT & Supplementary Appendix~\ref{app:consistency},
with the theorem proofs in \ref{app:main_mean_proofs}\\
Normalized FCLT and fixed aggregation &
Supplementary Appendix~\ref{app:normalization}\\
HAC consistency and feasible calibration &
Supplementary Appendices~\ref{app:hac}--\ref{app:calibration}\\
Population identities and separating examples &
Supplementary Appendix~\ref{app:population}\\
Monte Carlo evidence & Supplementary Appendix~\ref{suppsec:simulations}\\
Empirical implementation and quarterly output &
Supplementary Appendix~\ref{suppsec:empirical}\\
\bottomrule
\end{tabular}
\end{center}

\section{High-frequency and semimartingale bounds}
\label{app:regularity}

This section collects the local estimates used to recover the latent daily
curves. Throughout, $C$ denotes a finite constant that may vary from line
to line and, unless stated otherwise, is uniform in the trading-session
and block indices and in $T,n,k_n$.

\subsection{Block notation and continuous characteristics}

Put $\kappa_{m_n+1}=1$ and
$s_{i,j}=\underline t_i+\kappa_j$. For
$\kappa\in[\kappa_j,\kappa_{j+1})$, define
\[
C_i^{[n]}(\kappa)
=(c^X_{s_{i,j}},c^Y_{s_{i,j}},c^{XY}_{s_{i,j}})^\top,
\qquad
\rho_i^{[n]}(\kappa)=\rho_{s_{i,j}}.
\]
They are left-endpoint discretizations; their values at $\kappa=1$ are
immaterial in $L^2$. For coordinate vectors, $|\cdot|$ denotes the Euclidean
norm; for matrices, it denotes the Frobenius norm. For a random scalar,
vector, or matrix $U$, write $\|U\|_{L^2(\Omega)}=(\EE|U|^2)^{1/2}$.
Let $Z^c=Z-J$ be the original continuous-price process. For each price
$\wp\in\{X,Y\}$, write
\[
A_{i,\ell}^\wp=\int_{t_{i,\ell-1}}^{t_{i,\ell}}a_s^\wp\,ds,
\qquad
M_{i,\ell}^\wp=M^\wp_{t_{i,\ell}}-M^\wp_{t_{i,\ell-1}},
\qquad \Delta_{i,\ell}\wp^c=A_{i,\ell}^\wp+M_{i,\ell}^\wp.
\]
Here $A_{i,\ell}^\wp$ denotes a price-drift increment, while $A_i^\rho$
denotes the centered daily correlation curve.

\begin{lemma}
\label{lem:continuous_characteristics}
Under Assumptions~\ref{ass:V_volatility}--\ref{ass:primitive_correlation_regularity},
for $\wp\in\{X,Y,XY\}$,
\begin{equation}
c_t^\wp=c_0^\wp+\int_0^tb_u^{c,\wp}\,du+N_t^{c,\wp}
+\sum_{0<u\le t}\Delta c_u^\wp,
\label{eq:continuous_covariance_decomposition}
\end{equation}
where $N^{c,\wp}$ is a continuous local martingale and
$v_u^{c,\wp}=d\langle N^{c,\wp}\rangle_u/du$ exists. The drift and bracket
density satisfy
\begin{equation}
\sup_u\EE\left[|b_u^{c,\wp}|^4+(v_u^{c,\wp})^2\right]<\infty.
\label{eq:continuous_characteristics}
\end{equation}
The corresponding bounds for $\rho$ hold with drift $b^\rho$ and
bracket density $\|\Sigma^\rho\|^2$.
\end{lemma}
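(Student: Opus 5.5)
The plan is to build the covariance coordinates from It\^o's formula applied to the marginal volatilities in Assumption~\ref{ass:V_volatility} and to the correlation in Assumption~\ref{ass:primitive_correlation_regularity}. For $\wp\in\{X,Y\}$, apply It\^o's formula to $c_t^\wp=(\sigma_t^\wp)^2$. The continuous part gives drift $2\sigma^\wp\widetilde a^\wp+(\widetilde\sigma^\wp)^2+(\widetilde v^\wp)^2$ and martingale part $2\sigma^\wp(\widetilde\sigma^\wp dW+\widetilde v^\wp d\widetilde W)$. The jumps of $c^\wp$ are $(\sigma_{u-}^\wp+x)^2-(\sigma_{u-}^\wp)^2$ at the marks of $\widetilde\mu^\wp$. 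Because all jump integrals are taken with respect to the original, uncompensated measures and the activity is finite ($\widetilde F^\wp(\RR)<\infty$, $F^\rho(E)<\infty$), the jump sums are a.s.\ finite on compacts. They can therefore be kept as the pure sum $\sum_{u\le t}\Delta c_u^\wp$, with no compensator moved into the drift. This gives \eqref{eq:continuous_covariance_decomposition} for $X,Y$.

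For $c^{XY}=\rho\,\sigma^X\sigma^Y$, apply the multivariate It\^o formula to the product of the three It\^o semimartingales $\rho,\sigma^X,\sigma^Y$. The drift $b^{c,XY}$ consists of four groups of terms:
\begin{itemize}
\item products of one drift ($b^\rho$, $\widetilde a^X$, $\widetilde a^Y$) with the other two states;
\item the cross-bracket densities $d\langle\sigma^X,\sigma^Y\rangle$, $d\langle\rho,\sigma^\wp\rangle$ multiplied by the remaining state;
\item for example, the term $\rho\,\widetilde\sigma^X(\widetilde\sigma^Y)^\top$;
\item and $\sigma^\wp\,\Sigma^\rho d\langle B^\rho,W\rangle(\widetilde\sigma^{\wp})^\top$, bounded via Kunita--Watanabe by $\|\Sigma^\rho\|\,|\widetilde\sigma^\wp|$ in the same way.
\end{itemize}
The continuous martingale is $N^{c,XY}=\int(\sigma^X\sigma^Y\Sigma^\rho dB^\rho+\rho\sigma^Y d\sigma^{X,c}+\rho\sigma^X d\sigma^{Y,c})$. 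Its bracket density $v^{c,XY}$ exists because every component is an integral against Brownian motion with progressively measurable integrands. The jumps of $c^{XY}$ are products of jumps of the three factors, again finitely many. For $\rho$ itself the statement is immediate from \eqref{eq:primitive_correlation_raw}: the drift is $b^\rho$ and the bracket density is $\|\Sigma^\rho\|^2$. Their moment bounds follow directly from $p_\star^\rho>8$, since $\EE|b^\rho|^4\le C(1+\EE|b^\rho|^{p_\star/2})$ and $\EE\|\Sigma^\rho\|^4\le C(1+\EE\|\Sigma^\rho\|^{p_\star})$.

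The main work is the moment bound \eqref{eq:continuous_characteristics}, uniform in $u$. I would use H\"older's inequality on each product, exploiting two facts.
\begin{itemize}
\item $|\rho|\le1$, so $\rho$ contributes no moment cost.
\item The state factors $\sigma^\wp$ have moments of every order uniformly in time, by the exponential bounds in Assumption~\ref{ass:C4_moments}.
\end{itemize}
Take, for instance, $|b^{c,XY}|^4\lesssim|b^\rho|^4(\sigma^X\sigma^Y)^4$. With exponents $p=p_\star/8$ on $|b^\rho|^4$ (legitimate because $p_\star/2>4$, so $p>1$) and the conjugate exponent on the state factor, this is bounded by $\{\EE|b^\rho|^{p_\star/2}\}^{8/p_\star}\{\EE(\sigma^X\sigma^Y)^{4p'}\}^{1/p'}<\infty$. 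The quadratic terms such as $|\widetilde\sigma^X|^4|\widetilde\sigma^Y|^4$ need $\EE|\widetilde\sigma|^{8}$-type bounds jointly. They are handled by Cauchy--Schwarz together with $p_\star>8$, and this is exactly where the condition $p_\star>8$ (rather than $\ge8$) is used to leave room for the state factor via a three-factor H\"older bound. The bracket density $v^{c,\wp}$ is a sum of squares of state times coefficient, so $(v^{c,\wp})^2$ is of the same type, at most a fourth power of coefficients times powers of states.

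I expect the bookkeeping of the H\"older exponents in these squared-coefficient terms to be the only delicate step. The plan there is to check that every monomial has total coefficient degree at most $8$ in $(\widetilde\sigma,\widetilde v,\Sigma^\rho)$ and degree at most $4$ in $(\widetilde a,b^\rho)$. Each such monomial is then covered by the available exponents $p_\star$ and $p_\star/2$, with a strictly positive slack absorbed by the state moments.
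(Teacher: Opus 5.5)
Your proposal is correct and follows essentially the same route as the paper: It\^o's formula for $(\sigma^\wp)^2$ and the product rule for $\rho\sigma^X\sigma^Y$ keep the finite-activity jumps as an uncompensated sum, Kunita--Watanabe gives the cross-bracket densities, and H\"older's inequality pairs the state factors (all moments, with $|\rho|\le1$) against coefficient moments of orders $p_\star/2$ and $p_\star$. The paper streamlines your monomial bookkeeping with envelopes $S_s,B_s,H_s$, giving $|b_s^{c,\wp}|^4+(v_s^{c,\wp})^2\le CS_s^8(B_s^4+H_s^8)$ and a single H\"older step with exponent $p_\star/8$; note that the right criterion is the weighted degree $2d_{(\widetilde a,b^\rho)}+d_{(\widetilde\sigma,\widetilde v,\Sigma^\rho)}\le8$, not your two separate caps, which would formally admit an uncovered monomial such as $|\widetilde a|^4\|\widetilde\sigma\|^8$ (none actually arises once you split with $|x+y|^4\le8(|x|^4+|y|^4)$).
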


\begin{proof}
For $\wp\in\{X,Y\}$, It\^o's formula applied to
$c^\wp=(\sigma^\wp)^2$ gives
\[
b_s^{c,\wp}=2\sigma_{s-}^\wp\widetilde a_s^\wp
+\|\widetilde\sigma_s^\wp\|^2+\|\widetilde v_s^\wp\|^2,
\]
\[
N_t^{c,\wp}=2\int_0^t\sigma_{s-}^\wp\widetilde\sigma_s^\wp\,dW_s
+2\int_0^t\sigma_{s-}^\wp\widetilde v_s^\wp\,d\widetilde W_s.
\]
For the cross coordinate, let $N^\rho,N^X,N^Y$ denote the continuous
local-martingale parts of $\rho,\sigma^X,\sigma^Y$, and write
$\gamma_s^{UV}=d\langle N^U,N^V\rangle_s/ds$.  The continuous product
rule for $c^{XY}=\rho\sigma^X\sigma^Y$ yields, with $\rho$, $\sigma^X$,
and $\sigma^Y$ evaluated at $s-$,
\[
\begin{aligned}
b_s^{c,XY}
={}&\sigma^X\sigma^Yb_s^\rho
+\rho\sigma^Y\widetilde a_s^X
+\rho\sigma^X\widetilde a_s^Y
+\rho\gamma_s^{XY}
+\sigma^Y\gamma_s^{\rho X}
+\sigma^X\gamma_s^{\rho Y},\\
dN_s^{c,XY}
={}&\sigma^X\sigma^Y\,dN_s^\rho
+\rho\sigma^Y\,dN_s^X
+\rho\sigma^X\,dN_s^Y .
\end{aligned}
\]
The densities $\gamma^{UV}$ exist by the Kunita--Watanabe inequality
and the absolute continuity of the diagonal brackets. The remaining
terms are the finite sum of the actual covariance jumps, giving
\eqref{eq:continuous_covariance_decomposition}.

To bound the continuous characteristics, put
\begin{align*}
S_s&=1+|\sigma_{s-}^X|+|\sigma_{s-}^Y|,\\
B_s&=1+|\widetilde a_s^X|+|\widetilde a_s^Y|+|b_s^\rho|,\\
H_s&=1+\sum_{\wp=X,Y}(\|\widetilde\sigma_s^\wp\|
+\|\widetilde v_s^\wp\|)+\|\Sigma_s^\rho\|.
\end{align*}
Under the assumptions, $S_s$ has moments of all finite orders, and
$B_s$ and $H_s$ have moments of orders $p_\star/2$ and $p_\star$,
respectively, all uniformly bounded in $s$.
The product formulas and Kunita--Watanabe imply, for every covariance coordinate,
\[
|b_s^{c,\wp}|\le CS_s^2(B_s+H_s^2),
\qquad v_s^{c,\wp}\le CS_s^4H_s^2.
\]
Hence
\[
|b_s^{c,\wp}|^4+(v_s^{c,\wp})^2
\le CS_s^8(B_s^4+H_s^8).
\]
H\"older's inequality with conjugate exponents
$\theta=p_\star/(p_\star-8)$ and $\theta'=p_\star/8$ gives
\[
\sup_s\EE\{S_s^8(B_s^4+H_s^8)\}<\infty,
\]
since $4\theta'=p_\star/2$ and $8\theta'=p_\star$. This proves
\eqref{eq:continuous_characteristics}. The assertion for $\rho$ follows
directly from Assumption~\ref{ass:primitive_correlation_regularity}.
\end{proof}

\subsection{Auxiliary continuous-price processes}

On each local block, we use an auxiliary continuous-price process whose
covariance is frozen at the left limit of the first state-jump mark.
The process agrees with the original continuous prices on blocks without
state marks. The fourth-moment bounds obtained from this construction are used below
to control the quadratic remainder in the local correlation expansion. Let $N^{\rm st}$ count all marks of
$\widetilde\mu^X,\widetilde\mu^Y,\mu^\rho$. Its predictable intensity is
\[
\lambda_u^{\rm st}
=\widetilde F^X(\RR)\widetilde\chi_u^X
+\widetilde F^Y(\RR)\widetilde\chi_u^Y+F^\rho(E)\chi_u^\rho.
\]
For a deterministic block start $s$, put
$\tau_s=\inf\{t>s:N_t^{\rm st}>N_s^{\rm st}\}$, with $\inf\varnothing=\infty$.
For $t\ge s$, define the locally frozen states by
\begin{equation}
(c_t^{*,s},\rho_t^{*,s})
=
\begin{cases}
(c_t,\rho_t),&s\le t<\tau_s,\\
(c_{\tau_s-},\rho_{\tau_s-}),&t\ge\tau_s.
\end{cases}
\label{eq:auxiliary_states}
\end{equation}
We write $c^{*,s,\wp}$ for a covariance coordinate of $c^{*,s}$.

Extend the space by a two-dimensional Brownian motion $B^*$ independent
of the original $\sigma$-field $\mathcal F$. On the product filtration, which we continue
to denote by $(\mathcal F_t)$, the original martingales and compensators
are unchanged. Conditional expectations of original random variables
are also unchanged. Set
\begin{equation}
\begin{aligned}
M_t^{*,s}
&=\int_s^t\1_{\{u\le\tau_s\}}\,dM_u
+\int_s^t\1_{\{u>\tau_s\}}c_{\tau_s-}^{1/2}\,dB_u^*,\\
Z_t^{*,s}&=Z_s^c+\int_s^ta_u\,du+M_t^{*,s},\qquad t\ge s.
\end{aligned}
\label{eq:auxiliary_price}
\end{equation}
The second integrand is defined as zero before $\tau_s$ and when
$\tau_s=\infty$; the square root is the nonnegative symmetric matrix
square root.

\begin{lemma}
\label{lem:auxiliary_state}
Under the conditions of Lemma~\ref{lem:continuous_characteristics} and
the variance lower bound, $M^{*,s}$ is a continuous local martingale
with covariance density $c^{*,s}$. The latter is continuous and positive
semidefinite, with both marginal variances at least $\underline c$.
For deterministic $s\ge0$ and $0<h\le1$, let
$J_{s,h}^{\rm st}=\{\tau_s\le s+h\}$. Then, uniformly in $s,h$,
\begin{equation}
\begin{aligned}
\PP(J_{s,h}^{\rm st}\mid\mathcal F_s)&\le h\Lambda_{s,h}^{\rm st},
&\EE(\Lambda_{s,h}^{\rm st})^2&\le C,\\
\Lambda_{s,h}^{\rm st}
&=\frac1h\EE\left(\int_s^{s+h}\lambda_u^{\rm st}\,du
\,\middle|\,\mathcal F_s\right).
\end{aligned}
\label{eq:state_jump_probability}
\end{equation}
The auxiliary covariance satisfies
\begin{equation}
\begin{aligned}
\EE|c_{s+h}^{*,s}-c_s|^2&\le Ch,&
\EE|c_{s+h}^{*,s}-c_s|^4&\le Ch^2,\\
\EE\sup_{s\le t\le s+1}|c_t^{*,s}|^4&\le C,&
\left\|\EE(c_{s+h}^{*,s}-c_s\mid\mathcal F_s)\right\|_{L^2(\Omega)}
&\le Ch.
\end{aligned}
\label{eq:auxiliary_state_moments}
\end{equation}
For the actual correlation process,
\begin{equation}
\EE|\rho_{s+h}-\rho_s|^2\le Ch,
\qquad
\left\|\EE(\rho_{s+h}-\rho_s\mid\mathcal F_s)\right\|_{L^2(\Omega)}
\le Ch.
\label{eq:rho_increment_bounds}
\end{equation}
These correlation increment bounds require only
Assumption~\ref{ass:primitive_correlation_regularity}.
On $(J_{s,h}^{\rm st})^c$, $Z_t^{*,s}=Z_t^c$ throughout $[s,s+h]$.
\end{lemma}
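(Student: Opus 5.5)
We verify the claims of Lemma~\ref{lem:auxiliary_state} in the order stated: first the martingale and covariance structure of $M^{*,s}$, then the state-jump probability bound, then the moment bounds for $c^{*,s}$, and finally the bounds for $\rho$.

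\emph{Structure of $M^{*,s}$.} On $[s,\tau_s]$, $M^{*,s}$ is the stopped continuous local martingale $M$, with density $c_t$. After $\tau_s$ it is a stochastic integral against $B^*$. This integral is a local martingale on the product filtration, because $B^*$ is independent of $\mathcal F$ and the integrand $c_{\tau_s-}^{1/2}\1_{\{u>\tau_s\}}$ is predictable. The density after $\tau_s$ is $c_{\tau_s-}^{1/2}(c_{\tau_s-}^{1/2})^\top=c_{\tau_s-}$. Positive semidefiniteness and the lower bound $\underline c$ are inherited from $c_{\tau_s-}$, which is a left limit of matrices satisfying both properties.

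For continuity of $c^{*,s}$ we use Lemma~\ref{lem:continuous_characteristics}. On $[s,\tau_s)$ no mark of $\widetilde\mu^X,\widetilde\mu^Y,\mu^\rho$ occurs, so the jump sum in \eqref{eq:continuous_covariance_decomposition} is constant there. Hence $c$ is continuous on $[s,\tau_s)$, and freezing at $c_{\tau_s-}$ preserves continuity. The final claim follows in the same way: on $(J^{\rm st}_{s,h})^c$ we have $\tau_s>s+h$, so the second integral vanishes on $[s,s+h]$ and $Z^{*,s}=Z^c$ there.

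\emph{State-jump probability.} By the compensator identity,
\[
\PP(J^{\rm st}_{s,h}\mid\mathcal F_s)\le\EE(N^{\rm st}_{s+h}-N^{\rm st}_s\mid\mathcal F_s)=\EE\Bigl(\int_s^{s+h}\lambda^{\rm st}_u\,du\,\Bigm|\,\mathcal F_s\Bigr)=h\Lambda^{\rm st}_{s,h}.
\]
Conditional Jensen and Jensen in time give $\EE(\Lambda^{\rm st}_{s,h})^2\le h^{-1}\int_s^{s+h}\EE(\lambda^{\rm st}_u)^2\,du$. This is bounded uniformly in $s,h$, because the intensities have $p_\star/2>4$ moments.

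\emph{Moments of $c^{*,s}$.} Write $c^{*,s}_{s+h}-c_s$ as the continuous part of \eqref{eq:continuous_covariance_decomposition} stopped at $\tau_s$:
\[
\int_s^{(s+h)\wedge\tau_s}b^{c}_u\,du+\int_s^{(s+h)\wedge\tau_s}dN^{c}_u,
\]
with no jump term. This is the key point: freezing removes exactly the jumps, whose sizes have only eighth moments and which would otherwise contribute order $h$ rather than $h^2$ to the fourth moment.
\begin{itemize}[leftmargin=*]
\item The second and fourth moments follow from Burkholder--Davis--Gundy and H\"older with \eqref{eq:continuous_characteristics}. The drift contributes $Ch^4$ to the fourth moment, and the martingale contributes
\[
\EE\Bigl(\int_s^{s+h} v_u\,du\Bigr)^2\le h\int_s^{s+h}\EE v_u^2\,du\le Ch^2 .
\]
\item The bound on $\EE\sup_t|c^{*,s}_t|^4$ follows from this fourth-moment bound together with $\EE|c_s|^4\le C$.
\item For the conditional mean, the stopped martingale has zero conditional mean, provided it is a true martingale. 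This holds because its bracket is integrable, by \eqref{eq:continuous_characteristics} and BDG. The remaining drift term has $L^2$ norm at most $Ch$ by Minkowski.
\end{itemize}

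\emph{Bounds for $\rho$.} Here the actual, unfrozen process is used, so jumps remain. In $\EE|\rho_{s+h}-\rho_s|^2$, the drift and Brownian parts give $Ch$ as above. The jump part is at most $2\cdot\#\{\text{marks in }[s,s+h]\}$ in absolute value, because $|\rho|\le1$ and $\rho$ stays in $[-1,1]$ after each jump. Its second moment is therefore bounded by $C\,\EE[N^2]$ for the mark count $N$ on $[s,s+h]$. This is $O(h)$, as the compensator of $N^2$ is $(2N_-+1)\,d\Lambda$ and the intensity moments control it.

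For the conditional mean, the jump integral uses the raw measure. Its conditional mean equals
\[
\EE\Bigl(\int\!\!\int\delta^\rho\,\chi^\rho\,F^\rho(dx)\,du\,\Bigm|\,\mathcal F_s\Bigr),
\]
with $|\delta^\rho|\le2$. Its $L^2$ norm is therefore at most $2F^\rho(E)\,h\,\sup_u\|\chi^\rho_u\|_{L^2}\le Ch$. The drift is handled as before.

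\emph{Main obstacle.} We expect the main obstacle to be the uniform fourth-moment control, which needs the Hölder bookkeeping that produced \eqref{eq:continuous_characteristics}. A further technical point is verifying that $\int c^{1/2}_{\tau_s-}\,dB^*$ is a local martingale in the enlarged filtration with the original compensators unchanged; independence of $B^*$ from $\mathcal F$ settles this.
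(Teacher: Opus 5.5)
Your proposal is correct. The construction of $M^{*,s}$, the bound on $\PP(J^{\rm st}_{s,h}\mid\mathcal F_s)$, and the moment bounds for $c^{*,s}$ follow the paper's proof essentially step by step. You use the same predictability and independence argument, the same compensation and conditional-Jensen step, and the same stopped representation with no jump term, estimated by Jensen and BDG. One small point: the bound on $\EE\sup_{s\le t\le s+1}|c^{*,s}_t|^4$ needs the maximal form of BDG, not just the fixed-endpoint fourth moment, but BDG supplies exactly that.

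The genuine difference is in the correlation bounds \eqref{eq:rho_increment_bounds}.

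\emph{The paper's route.} It reuses the freezing device: $\rho^{*,s}$ has a stopped representation with drift $b^\rho$ and martingale $\int\Sigma^\rho\,dB^\rho$. The actual process differs from it by at most $2\1_{J^{\rm st}_{s,h}}$. Both bounds then follow from $\PP(J^{\rm st}_{s,h})\le Ch$ and $\|\PP(J^{\rm st}_{s,h}\mid\mathcal F_s)\|_{L^2(\Omega)}\le h\|\Lambda^{\rm st}_{s,h}\|_{L^2(\Omega)}$. Only the first moment of the mark count enters, but working under Assumption~\ref{ass:primitive_correlation_regularity} alone requires re-stopping at the first $\mu^\rho$ mark.

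\emph{Your route.} You work directly with the decomposition \eqref{eq:primitive_correlation_raw} and bound the raw jump sum by $2N$. This forces you to show $\EE N^2=O(h)$ for the count $N$ on $(s,s+h]$, which your one-line compensator remark only sketches. It needs a self-bounding step: using $N_{u-}\le N$ and Cauchy--Schwarz,
$\EE N^2\le Ch+2\,\EE\bigl[N\int_s^{s+h}F^\rho(E)\chi^\rho_u\,du\bigr]\le Ch+Ch\|N\|_{L^2(\Omega)}$,
with a localization to guarantee finiteness a priori. Your conditional-mean argument, which replaces $\mu^\rho$ by $\nu^\rho$ and uses $|\delta^\rho|\le2$ $\nu^\rho$-a.e., is direct and complete.

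\emph{What each buys.} Your route needs only Assumption~\ref{ass:primitive_correlation_regularity} and no auxiliary process, so the lemma's final remark is immediate. The paper's route is shorter because it recycles the frozen process and the first-moment jump-probability bound it has just established.
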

\begin{proof}
\emph{Step 1: construction and the probability of a state mark.}
Both indicators in \eqref{eq:auxiliary_price} are predictable. In
particular, $\1_{\{u>\tau_s\}}c_{\tau_s-}^{1/2}$ is adapted and
left-continuous. The integral of its squared norm is finite on compact
intervals after localization. Independence of $B^*$ makes it orthogonal to $M$,
so the bracket density in \eqref{eq:auxiliary_price} is
$c_u$ before $\tau_s$ and $c_{\tau_s-}$ thereafter. This is $c^{*,s}$.
Before the first state mark, $c$ is continuous; freezing at its left
limit preserves continuity, positive semidefiniteness, and the variance
lower bound. The same construction preserves $|\rho^{*,s}|\le1$.

The indicator of $J_{s,h}^{\rm st}$ is bounded by
$N_{s+h}^{\rm st}-N_s^{\rm st}$. Compensation gives the first inequality
in \eqref{eq:state_jump_probability}. Conditional Jensen and the uniform
second moment of $\lambda^{\rm st}$ give
\[
\EE(\Lambda_{s,h}^{\rm st})^2
\le \frac1h\int_s^{s+h}\EE(\lambda_u^{\rm st})^2du\le C.
\]
On the event without a state mark, the first integral in
\eqref{eq:auxiliary_price} equals $M_t-M_s$ and the second is zero.
The original drift is retained, proving the asserted equality of prices.

\emph{Step 2: moments of the stopped continuous dynamics.}
For every covariance coordinate, the absence of state marks before
$\tau_s$ gives
\begin{equation}
c_t^{*,s,\wp}
=c_s^\wp+\int_s^t\1_{\{u\le\tau_s\}}b_u^{c,\wp}\,du
+\int_s^t\1_{\{u\le\tau_s\}}\,dN_u^{c,\wp}.
\label{eq:auxiliary_state_representation}
\end{equation}
The continuous integrals reach the left limit at $\tau_s$, so this
identity also holds for $t\ge\tau_s$. Jensen and continuous-martingale
isometry give second-moment orders $h^2$ and $h$ for its two increments.
Jensen and BDG similarly give
\[
\begin{aligned}
\EE\left|\int_s^{s+h}\1_{\{u\le\tau_s\}}b_u^{c,\wp}du\right|^4
&\le h^3\int_s^{s+h}\EE|b_u^{c,\wp}|^4du\le Ch^4,\\
\EE\sup_{s\le t\le s+h}
\left|\int_s^t\1_{\{u\le\tau_s\}}dN_u^{c,\wp}\right|^4
&\le C\EE\left(\int_s^{s+h}v_u^{c,\wp}du\right)^2\le Ch^2.
\end{aligned}
\]
Taking $h=1$ and using $\sup_s\EE|c_s|^4<C$ proves the supremum bound
in \eqref{eq:auxiliary_state_moments}. Thus the required moments at the
random freezing time follow from the stopped representation. The
martingale term is square integrable and has conditional mean zero.
Conditional contraction applied to the drift integral proves the
remaining conditional-mean bound.

The same stopped representation holds for $\rho^{*,s}$ with drift
$b^\rho$ and continuous martingale $\int\Sigma^\rho dB^\rho$. It gives
\[
\EE|\rho_{s+h}^{*,s}-\rho_s|^2\le Ch,
\qquad
\left\|\EE(\rho_{s+h}^{*,s}-\rho_s\mid\mathcal F_s)\right\|_{L^2(\Omega)}
\le Ch.
\]
The difference between the actual and auxiliary correlations satisfies
$|\rho_{s+h}-\rho_{s+h}^{*,s}|\le2\1_{J_{s,h}^{\rm st}}$. Hence
\eqref{eq:state_jump_probability} gives
$\EE|\rho_{s+h}-\rho_{s+h}^{*,s}|^2\le4\PP(J_{s,h}^{\rm st})\le Ch$.
For its conditional mean, the conditional probability bound gives
\[
\begin{aligned}
&\left\|\EE(\rho_{s+h}-\rho_{s+h}^{*,s}\mid\mathcal F_s)
\right\|_{L^2(\Omega)}\\
&\qquad\le
2\left\|\PP(J_{s,h}^{\rm st}\mid\mathcal F_s)\right\|_{L^2(\Omega)}
\le 2h\|\Lambda_{s,h}^{\rm st}\|_{L^2(\Omega)}\le Ch.
\end{aligned}
\]
Combining these bounds with those for $\rho^{*,s}$ proves
\eqref{eq:rho_increment_bounds}. To obtain the same conclusion under
Assumption~\ref{ass:primitive_correlation_regularity} alone, stop at the
first mark of $\mu^\rho$ after $s$. The argument then uses only the drift
$b^\rho$, bracket density $\|\Sigma^\rho\|^2$, and mark intensity
$F^\rho(E)\chi^\rho$.
\end{proof}

\subsection{Continuous returns and the blockwise bipower estimator}

\begin{lemma}
\label{lem:price_moments}
Under Assumptions~\ref{ass:V_volatility} and \ref{ass:C4_moments}, for
every finite $r\ge2$, uniformly in $i,\ell$ and
$\wp\in\{X,Y\}$,
\[
\EE|A_{i,\ell}^\wp|^r\le C_rn^{-r},\qquad
\EE|M_{i,\ell}^\wp|^r\le C_rn^{-r/2},\qquad
\EE|\Delta_{i,\ell}\wp^c|^r\le C_rn^{-r/2}.
\]
For fixed $a>0$, $p\ge0$, $q>0$, and $0<\varpi<1/2$,
\begin{equation}
\sup_{i,\ell,\wp}\EE\left[
|\Delta_{i,\ell}\wp^c|^p
\1\{|\Delta_{i,\ell}\wp^c|>an^{-\varpi}\}\right]
\le C_{a,p,q}n^{-q}.
\label{eq:large_continuous_return}
\end{equation}
\end{lemma}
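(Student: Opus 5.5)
The plan is to treat the three increment types separately and then combine them with the elementary inequality $|x+y|^r\le 2^{r-1}(|x|^r+|y|^r)$. The tail bound \eqref{eq:large_continuous_return} will then follow from the moment bounds by a Markov–Hölder argument.

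\emph{Drift increment.} Jensen's inequality on the interval of length $\Delta_n=1/n$ gives
\[
|A_{i,\ell}^\wp|^r\le n^{-(r-1)}\int_{t_{i,\ell-1}}^{t_{i,\ell}}|a_s^\wp|^r\,ds .
\]
Taking expectations, Assumption~\ref{ass:C4_moments} gives $\sup_s\EE|a_s^\wp|^r\le C_r$, because the exponential moment bounds every finite moment. This yields $\EE|A_{i,\ell}^\wp|^r\le C_rn^{-r}$.

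\emph{Martingale increment.} Here $d\langle M^\wp\rangle_s=c_s^\wp\,ds=(\sigma_s^\wp)^2ds$. The Burkholder–Davis–Gundy inequality followed by Jensen gives
\[
\EE|M_{i,\ell}^\wp|^r\le C_r\,\EE\Bigl(\int_{t_{i,\ell-1}}^{t_{i,\ell}}(\sigma_s^\wp)^2ds\Bigr)^{r/2}
\le C_rn^{-r/2+1}\int_{t_{i,\ell-1}}^{t_{i,\ell}}\EE|\sigma_s^\wp|^{r}\,ds\le C_rn^{-r/2}.
\]
The last step again uses the uniform finite moments of $\sigma^\wp$ implied by the exponential bound. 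If BDG is needed only for a local martingale, I would apply it after localization and pass to the limit by Fatou. Combining this with the drift bound, and using $n^{-r}\le n^{-r/2}$, gives the bound for $\Delta_{i,\ell}\wp^c$. All constants depend only on $r$ and on the uniform-in-time moment suprema, so they are uniform in $i,\ell,\wp$.

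\emph{Tail bound.} Fix $a,p,q$ and write $U=\Delta_{i,\ell}\wp^c$. For any $r\ge2$, Cauchy–Schwarz and Markov give
\[
\EE\bigl[|U|^p\1\{|U|>an^{-\varpi}\}\bigr]\le (\EE|U|^{2p})^{1/2}\,\PP(|U|>an^{-\varpi})^{1/2}
\le C\,n^{-p/2}\bigl(a^{-r}n^{r\varpi}\EE|U|^r\bigr)^{1/2}
\le C_{a,p,r}\,n^{-p/2-r(1/2-\varpi)/2}.
\]
For $p<1$ I would bound $\EE|U|^{2p}$ by $(\EE|U|^2)^{p}$, or use the bound $2\vee 2p$ in place of $2p$. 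Since $\varpi<1/2$, choosing $r$ large enough that $r(1/2-\varpi)/2\ge q$ gives $C_{a,p,q}n^{-q}$.

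The proof is routine, and I do not expect a real obstacle. The only point needing care is that the moment bounds on $a^\wp$ and $\sigma^\wp$ must hold uniformly in time and for every order. This is exactly what the exponential-moment condition in Assumption~\ref{ass:C4_moments} supplies, so no Hölder juggling with $p_\star$ is required.
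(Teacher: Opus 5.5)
Your proof is correct and follows the paper's route. The increment bounds come from Jensen and BDG together with the uniform moments implied by the exponential bounds, and \eqref{eq:large_continuous_return} comes from a Markov-type argument with a sufficiently large moment order. The only difference is cosmetic: the paper uses the pointwise bound $|U|^p\1\{|U|>an^{-\varpi}\}\le (an^{-\varpi})^{p-m}|U|^m$ for $m>\max\{p,2\}$ directly, which avoids your Cauchy--Schwarz step and the separate treatment of $p<1$.
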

\begin{proof}
The increment bounds follow from Jensen's inequality, the BDG inequality,
and the uniform moment bounds in Assumption~\ref{ass:C4_moments}.
For $m>\max\{p,2\}$, Markov's inequality gives
\[
\begin{aligned}
&\EE\left[|\Delta_{i,\ell}\wp^c|^p
\1\{|\Delta_{i,\ell}\wp^c|>an^{-\varpi}\}\right]\\
&\qquad\le a^{p-m}n^{\varpi(m-p)}
\EE|\Delta_{i,\ell}\wp^c|^m
\le C_{a,p,m}n^{\varpi(m-p)-m/2}.
\end{aligned}
\]
Choosing $m$ such that $m(1/2-\varpi)+p\varpi\ge q$ proves
\eqref{eq:large_continuous_return}.
\end{proof}

\begin{lemma}
\label{lem:pilot}
Under Assumptions~\ref{ass:V_volatility}--\ref{ass:C4_moments} and the
variance lower bound in Assumption~\ref{ass:positivity_normalization},
let $\widehat b_{i,j,n}^{\wp,c}$ be the blockwise bipower estimator based
on continuous-price returns, given by
\eqref{eq:localized_bipower_pilot} with $\Delta\wp$ replaced by
$\Delta\wp^c$. There exist $b_0,c_0>0$ and a fixed
$p_0\in(2,4]$ such that
\begin{equation}
\sup_{i,j,\wp}\PP\{\widehat b_{i,j,n}^{\wp,c}<b_0\}
\le C r_n,
\qquad r_n=e^{-c_0k_n}+n^{-1}+n^{-p_0/2}.
\label{eq:pilot_tail}
\end{equation}
Let $\mathcal E_{i,j,n}^{\rm fd}$ be the event that a block contains no
price jump, both
continuous-price bipower estimates are at least $b_0$, and a continuous
return exceeds its threshold in \eqref{eq:adaptive_localized_threshold}. Then,
for every $q>0$,
\begin{equation}
\sup_{i,j}\PP(\mathcal E_{i,j,n}^{\rm fd})\le C_qn^{-q}.
\label{eq:false_deletion}
\end{equation}
\end{lemma}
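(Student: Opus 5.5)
The plan has two parts: a lower-tail bound for the continuous-price bipower estimate, and a union bound for false deletions on the good event. For \eqref{eq:pilot_tail}, I would compare $\widehat b_{i,j,n}^{\wp,c}$ with its frozen counterpart. Set $s=\underline t_i+\kappa_j$ and write $\Delta_{i,\ell}\wp^c=A_{i,\ell}^\wp+\sigma_s^\wp\Delta_{i,\ell}W^\wp+R_{i,\ell}^\wp$, where $W^\wp$ is the scalar Brownian motion driving $M^\wp$ and $R_{i,\ell}^\wp=\int(\sigma_u^\wp-\sigma_s^\wp)\,dW_u^\wp$ over the $\ell$th interval. Since $\sigma_s^\wp\ge\underline c^{1/2}$, the main term $\frac{\pi}{2(k_n-1)}\sum|\sqrt n\,\Delta W_{\ell-1}||\sqrt n\,\Delta W_\ell|$ has mean one. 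It is an average of one-dependent products of $|N(0,1)|$ variables with finite exponential moments. Splitting into even and odd indices gives two i.i.d.\ sums, and a Chernoff/Bernstein bound makes the probability that the main term falls below $1/2$ at most $Ce^{-c_0k_n}$. Hence the frozen bipower is at least $\underline c/2$ outside an event of probability $Ce^{-c_0k_n}$.

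For the error from the drift and from $R$, I would use $\bigl||x+y|-|x|\bigr|\le|y|$ and bound the perturbation of each product by $|\sqrt n A|(\cdots)+|\sqrt n R|(\cdots)$. Lemma~\ref{lem:price_moments} gives $\EE|\sqrt n A|^r\le Cn^{-r/2}$. Also $\EE|\sqrt n R_{i,\ell}|^{p}\le C\sup_{u}\EE|\sigma_u-\sigma_s|^{p}$ over the block, which is of order $L_n^{p/2}$ from the continuous part plus a jump-probability term of order $L_n$. The jump term is why $p_0\le4$ appears: the volatility-jump contribution to $p$th moments is only linear in the block length, and the moments available are limited by Assumption~\ref{ass:C4_moments}. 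I would apply Markov's inequality at order $p_0$ to the averaged perturbation. Controlling the drift part yields the $n^{-p_0/2}$ term, and the $n^{-1}$ term comes from a volatility jump inside a single return interval. Taking $b_0=\underline c/4$ gives \eqref{eq:pilot_tail}.

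\textbf{Main obstacle.} The delicate point is the remainder $R$ over a whole block of length $L_n$. That gives a bound in terms of $L_n$ rather than the stated $r_n$. To avoid this, I would first try freezing $\sigma$ locally, at each left endpoint $t_{i,\ell-1}$, instead of at the block start. The main term is then $\sigma_{t_{\ell-1}}\sigma_{t_\ell}|\Delta W||\Delta W|$ with every $\sigma\ge\underline c^{1/2}$. The lower bound still follows from the Chernoff step, because the weights are bounded below, so $\sum\sigma\sigma|\Delta W||\Delta W|\ge\underline c\sum|\Delta W||\Delta W|$. The remainder is then of per-interval size, with $\EE|\sqrt nR|^p\le Cn^{-1}$ up to continuous terms of order $n^{-p/2}$, which matches $r_n$.

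For \eqref{eq:false_deletion}, on $\mathcal E^{\rm fd}_{i,j,n}$ there are no price jumps in the block, so the observed bipower equals $\widehat b^{\wp,c}_{i,j,n}\ge b_0$. The threshold is then at least $\tau_\wp b_0^{1/2}n^{-\varpi}$, so some continuous return must satisfy $|\Delta_{i,\ell}\wp^c|>an^{-\varpi}$ with $a=\tau_\wp b_0^{1/2}$. A union bound over the $k_n\le n$ returns and both coordinates, combined with \eqref{eq:large_continuous_return} at $p=0$ and exponent $q+1$, gives the bound $C_qn^{-q}$ uniformly in $i,j$.
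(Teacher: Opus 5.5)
Your final version, which freezes $\sigma^\wp$ at the left endpoint of each increment, is the paper's decomposition $\sqrt n\,\Delta\wp^c=G_s+\varepsilon_s$, and your false-deletion argument is the paper's union bound. The two routes differ in both halves of \eqref{eq:pilot_tail}.

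\textbf{Gaussian part.} You use $\sigma^\wp\ge\underline c^{1/2}$ to bound the frozen bipower below by $\underline c$ times a bipower of i.i.d.\ standard normals, and then apply Chernoff to the even- and odd-indexed sums. The paper keeps the conditionally Gaussian $G_s$, selects $\lfloor k_n/2\rfloor$ disjoint pairs, and applies Azuma to the indicators that both members of a pair exceed $2a_0$. Your reduction is valid and simpler.

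\textbf{Perturbation.} The paper never estimates the size of the perturbation. It counts the selected increments that are affected, meaning either $|\varepsilon_s|>a_0$ with no volatility jump (expected number $\le Ck_nn^{-p_0/2}$) or containing a volatility jump (expected number $\le Ck_n/n$). It then applies Markov's inequality to this count and notes that each affected increment spoils at most one disjoint pair. So at least $\pi_0h_n/4$ pairs keep both normalized returns above $a_0$.

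\textbf{The gap in your magnitude-based Markov step.} Each perturbation term is $|\sqrt n R_\ell|$ (or the drift part) multiplied by an adjacent frozen or actual return. So you need $\EE(|G_{\ell-1}|^{p_0}|\sqrt nR_\ell|^{p_0})$ and $\EE(|\sqrt nR_{\ell-1}|^{p_0}|G_\ell|^{p_0})$ to be $O(n^{-1}+n^{-p_0/2})$, not just $\EE|\sqrt nR_\ell|^{p_0}$. The jump part of $R_\ell$ is of order one on an event of probability $O(n^{-1})$. H\"older's inequality against the unbounded neighbour therefore gives only $n^{-1/r'}$ with $r'>1$, which falls short of the stated $r_n$.

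\textbf{How to repair it.}
\begin{enumerate}[label=(\roman*)]
\item For the first product, condition on $\mathcal F_{t_{i,\ell-1}}$. The conditional $p_0$th moment of the jump part of $\sqrt nR_\ell$ is at most $n^{-1}$ times a conditional average of the jump intensity, which has enough moments.
\item In the second product, the conditional scale of $G_\ell$ is $\sigma^\wp_{t_{i,\ell-1}}$. This carries the very volatility jump that drives $R_{\ell-1}$, so conditioning alone does not decouple the factors. Write $\sigma^\wp_{t_{i,\ell-1}}=\sigma^\wp_{t_{i,\ell-2}}+(\sigma^\wp_{t_{i,\ell-1}}-\sigma^\wp_{t_{i,\ell-2}})$. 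Condition on $\mathcal F_{t_{i,\ell-2}}$ for the first piece. Apply Cauchy--Schwarz to the increment piece, which gives $n^{-1/2}\cdot n^{-1/2}$ but requires mark moments of order $2p_0\le 8$.
\end{enumerate}
With this addition your route works. Step (ii), rather than the block-length remark you give, is where $p_0\le4$ would matter in your argument. In the paper, $p_0\le 4$ is tied to the moments available in the BDG bound for the continuous volatility increment. The paper's indicator counting avoids products of unbounded factors altogether; it needs only $\PP(|\varepsilon_s^c|>a_0)$ and the compensator of the volatility-jump count.
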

\begin{proof}
Fix a marginal component and select $h_n=\lfloor k_n/2\rfloor$
disjoint adjacent pairs of increments in a block. A marginal Brownian
representation is $dM_t^\wp=\sigma_t^\wp\,dB_t^\wp$. For an increment
starting at $s$, set
$G_s=\sqrt n\,\sigma_s^\wp(B_{s+1/n}^\wp-B_s^\wp)$.
Conditionally on $\mathcal F_s$, $G_s$ is Gaussian with variance
$c_s^\wp\ge\underline c$. Hence some $a_0,\pi_1>0$ satisfy
$\PP(|G_s|>2a_0\mid\mathcal F_s)\ge\pi_1$ uniformly. Let
$I_h^{\rm pair}$ indicate
that both frozen increments in pair $h$ exceed $2a_0$ in absolute value,
and let $\mathcal K_h$ be the sigma-field at its start. Condition first
on the information available at the start of the second increment.
The tower property then gives
$\EE(I_h^{\rm pair}\mid\mathcal K_h)\ge\pi_0:=\pi_1^2$. Thus
$I_h^{\rm pair}-\EE(I_h^{\rm pair}\mid\mathcal K_h)$ are bounded
martingale differences in
pair order, and Azuma's inequality gives
\begin{equation}
\PP\left\{\sum_{h=1}^{h_n}I_h^{\rm pair}<\pi_0h_n/2\right\}
\le Ce^{-c_0k_n}.
\label{eq:pilot_pairs}
\end{equation}
The normalized continuous return satisfies
$\sqrt n\Delta\wp^c=G_s+\varepsilon_s$, with freezing error
\nopagebreak[4]
\[
\varepsilon_s=\sqrt n\int_s^{s+1/n}a_u^\wp\,du
+\sqrt n\int_s^{s+1/n}(\sigma_u^\wp-\sigma_s^\wp)\,dB_u^\wp.
\]
Write the continuous volatility increment as
\[
\sigma_{s,u}^{\wp,c}
:=\int_s^u\widetilde a_v^\wp\,dv
+\int_s^u\widetilde\sigma_v^\wp\,dW_v
+\int_s^u\widetilde v_v^\wp\,d\widetilde W_v.
\]
Fix $p_0\in(2,4]$. Jensen and BDG give
$\EE|\sigma_{s,u}^{\wp,c}|^{p_0}\le C(u-s)^{p_0/2}$ for $u-s\le1$.
Define
\[
\varepsilon_s^c
=\sqrt n\int_s^{s+1/n}a_u^\wp\,du
+\sqrt n\int_s^{s+1/n}\sigma_{s,u}^{\wp,c}\,dB_u^\wp.
\]
Applying Jensen and BDG to these integrals, without conditioning on the
jump event, gives $\EE|\varepsilon_s^c|^{p_0}\le Cn^{-p_0/2}$.
The volatility integrands agree before the first volatility jump after
$s$. Stopping the two price integrals at that time shows that
$\varepsilon_s=\varepsilon_s^c$ on the event of no volatility jump in
$(s,s+1/n]$. Consequently,
\[
\EE\bigl[|\varepsilon_s|^{p_0}
\1\{\text{no volatility jump on }(s,s+1/n]\}\bigr]
\le \EE|\varepsilon_s^c|^{p_0}\le Cn^{-p_0/2}.
\]

Among the $2h_n$ selected increments, the expected number with
$|\varepsilon_s|>a_0$ and no volatility jump is at most
$Ck_nn^{-p_0/2}$. The expected number of intervals containing a volatility
jump is at most $Ck_n/n$, by compensation. Since $h_n$ is of order $k_n$,
Markov's inequality bounds the probability that at least $\pi_0h_n/4$
selected increments fall in either class by $C(n^{-p_0/2}+n^{-1})$.
Thus we control the proportion of affected increments, rather than
requiring the entire block to be free of volatility jumps.

Each affected increment can remove at most one of the disjoint pairs.
Outside this exceptional event and the event in \eqref{eq:pilot_pairs},
at least $\pi_0h_n/4$ pairs therefore have both actual normalized returns
at least $a_0$ in absolute value. Their contribution to the nonnegative
bipower sum gives
\[
\widehat b_{i,j,n}^{\wp,c}
\ge \frac{\pi a_0^2}{2(k_n-1)}\,\frac{\pi_0h_n}{4}
\ge \frac{\pi\pi_0a_0^2}{16},
\]
where $h_n/(k_n-1)\ge1/2$ for $k_n\ge2$. Taking
$b_0=\pi\pi_0a_0^2/16$ proves \eqref{eq:pilot_tail}.

On a price-jump-free block, observed and continuous returns, and hence
their bipower estimates, coincide. If the bipower estimates are at least
$b_0$, deleting a return requires
$|\Delta\wp^c|>\tau_\wp\sqrt{b_0}n^{-\varpi}$. Use
\eqref{eq:large_continuous_return} with $p=0$ and $q$ replaced by $q+1$,
then take a union bound over at most $2k_n\le2n$ component returns to
prove \eqref{eq:false_deletion}.
\end{proof}

\begin{lemma}
\label{lem:innovations}
Let $\mathsf M=(\mathsf M^X,\mathsf M^Y)^\top$ be a continuous local
martingale whose covariance density $\mathsf c$ satisfies
$\sup_t\EE|\mathsf c_t|^4\le C$ on a deterministic interval. For
deterministic $u<v$ in this interval with $v-u\le1$, write
$\mathsf M_{u,t}=\mathsf M_t-\mathsf M_u$ and define
\[
\begin{aligned}
Q_{u,v}^\wp
&=(\mathsf M_{u,v}^\wp)^2-\int_u^v\mathsf c_t^\wp dt,
&&\wp=X,Y,\\
Q_{u,v}^{XY}
&=\mathsf M_{u,v}^X\mathsf M_{u,v}^Y
-\int_u^v\mathsf c_t^{XY}dt.
\end{aligned}
\]
Then, for $\wp\in\{X,Y,XY\}$,
\begin{equation}
\EE(Q_{u,v}^\wp\mid\mathcal F_u)=0,
\qquad
\EE|Q_{u,v}^\wp|^p\le C_p(v-u)^p,
\quad p=2,4.
\label{eq:innovation_moments}
\end{equation}
This applies to the original price martingale and, uniformly in $s$, to
$M^{*,s}$ on $[s,s+1]$. For the auxiliary price increments on that
interval, it also gives
\begin{equation}
\begin{aligned}
&\EE|M_v^{*,s,\wp}-M_u^{*,s,\wp}|^r
+\EE|Z_v^{*,s,\wp}-Z_u^{*,s,\wp}|^r
\\
&\qquad\le C_r(v-u)^{r/2},\qquad r=2,4,8,
\end{aligned}
\label{eq:auxiliary_return_moments}
\end{equation}
for $\wp\in\{X,Y\}$.
\end{lemma}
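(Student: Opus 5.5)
The three claims in Lemma~\ref{lem:innovations} will be proved in order: the conditional-mean identity, the moment bounds for $p=2,4$, and then the transfer to $M^{*,s}$ together with \eqref{eq:auxiliary_return_moments}. The main tool is It\^o's product rule applied to $\mathsf M_{u,t}^X\mathsf M_{u,t}^Y$ for $t\in[u,v]$. It gives
\[
Q_{u,v}^{XY}=\int_u^v\mathsf M_{u,t}^X\,d\mathsf M_t^Y+\int_u^v\mathsf M_{u,t}^Y\,d\mathsf M_t^X,
\qquad
Q_{u,v}^{\wp}=2\int_u^v\mathsf M_{u,t}^\wp\,d\mathsf M_t^\wp,\quad \wp=X,Y.
\]
So each $Q_{u,v}^\wp$ is the terminal value of a continuous local martingale started at zero at time $u$. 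I will first establish the moment bounds below. They show the relevant brackets are integrable, so these are true $L^2$ martingales. This gives $\EE(Q^\wp_{u,v}\mid\mathcal F_u)=0$ without localization issues.

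For the moments, the first step is BDG on $\mathsf M_{u,\cdot}^\wp$ itself. With $h=v-u$, BDG and Jensen give
\[
\EE\sup_{u\le t\le v}|\mathsf M_{u,t}^\wp|^r\le C_r\,\EE\Bigl(\int_u^v\mathsf c_t^\wp dt\Bigr)^{r/2}\le C_r h^{r/2-1}\int_u^v\EE|\mathsf c_t|^{r/2}dt\le C_rh^{r/2}
\]
for $r\le 8$, using $\sup_t\EE|\mathsf c_t|^4\le C$. Next apply BDG to the stochastic-integral representation of $Q$. Its bracket is a sum of terms $\int_u^v(\mathsf M_{u,t}^{\wp'})^2\mathsf c_t^{\wp''}dt$, with $\mathsf c^{XY}$ handled by Kunita--Watanabe, $|\mathsf c^{XY}|\le(\mathsf c^X\mathsf c^Y)^{1/2}$. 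For $p=4$ we need $\EE\bigl(\int_u^v(\mathsf M_{u,t})^2|\mathsf c_t|dt\bigr)^2$. Jensen bounds this by $h\int_u^v\EE[(\mathsf M_{u,t})^4|\mathsf c_t|^2]dt$. Cauchy--Schwarz then bounds it by $h\int_u^v(\EE\mathsf M_{u,t}^8)^{1/2}(\EE|\mathsf c_t|^4)^{1/2}dt\le Ch\cdot h\cdot h^2=Ch^4$, which is $C(v-u)^p$ for $p=4$. The case $p=2$ follows from Lyapunov, or by the same computation with isometry.

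The step needing most care is this exponent bookkeeping: the fourth moment of $\mathsf c$ is exactly what the Cauchy--Schwarz split requires, and there is no room to spare. I would therefore keep the supremum bound on $\mathsf M_{u,\cdot}$ at order $r=8$ explicitly. That case also needs only $\EE(\int\mathsf c)^4\le h^3\int\EE|\mathsf c|^4\le Ch^4$, so it is available under the hypothesis as stated.

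Finally, for the applications I check the hypothesis on $\sup_t\EE|\mathsf c_t|^4$ in each case.
\begin{itemize}
\item For the original martingale, $\sup_t\EE|c_t|^4<\infty$ follows from the exponential moment bound on $\sigma^\wp$ in Assumption~\ref{ass:C4_moments} together with $|\rho|\le1$.
\item For $M^{*,s}$, Lemma~\ref{lem:auxiliary_state} shows it is a continuous local martingale on the enlarged filtration with density $c^{*,s}$, and gives $\EE\sup_{s\le t\le s+1}|c_t^{*,s}|^4\le C$ uniformly in $s$. This yields the uniform version.
\end{itemize}
The martingale part of \eqref{eq:auxiliary_return_moments} is the BDG display above with $r=2,4,8$. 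For $Z^{*,s}$, write $Z_v^{*,s}-Z_u^{*,s}=\int_u^va_t\,dt+M^{*,s}_{u,v}$. The drift contributes at most $h^{r-1}\int_u^v\EE|a_t|^rdt\le C_rh^r\le C_rh^{r/2}$, since $h\le1$ and all moments of $a$ are uniformly bounded. Minkowski's inequality then combines the two parts.
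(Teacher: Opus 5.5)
Your proof is correct and uses the same ingredients as the paper. These are BDG for $\mathsf M_{u,\cdot}$ up to order $8$ driven by $\sup_t\EE|\mathsf c_t|^4$, It\^o's representation of $Q_{u,v}^\wp$ with square integrability for the conditional-mean identity, and the same checks for $c$, $c^{*,s}$, and the drift of $Z^{*,s}$. The one difference is that the paper gets $\EE|Q_{u,v}^\wp|^p\le C_p(v-u)^p$ directly from the definition, bounding $|Q_{u,v}^\wp|$ by $|\mathsf M_{u,v}|^2$ (with Cauchy--Schwarz for the cross product) plus $|\int_u^v\mathsf c_t\,dt|$, and so never applies BDG to the stochastic integral. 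Your route through $\langle Q\rangle$ costs one extra Jensen/Cauchy--Schwarz step but also gives the maximal bound $\EE\sup_{u\le t\le v}|Q_{u,t}^\wp|^p$.
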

\begin{proof}
For $p=1,2,4$, BDG and Jensen give
\[
\EE\sup_{u\le t\le v}|\mathsf M_{u,t}^\wp|^{2p}
\le C_p\EE\left(\int_u^v\mathsf c_t^\wp dt\right)^p
\le C_p(v-u)^p.
\]
The same bound holds for the $p$th moment of the integrated covariance.
The definitions of $Q$ and Cauchy--Schwarz for the cross product thus
give their stated moments. It\^o's formula gives
\[
Q_{u,v}^\wp=2\int_u^v\mathsf M_{u,t}^\wp d\mathsf M_t^\wp,
\qquad
Q_{u,v}^{XY}
=\int_u^v\mathsf M_{u,t}^Xd\mathsf M_t^Y
+\int_u^v\mathsf M_{u,t}^Yd\mathsf M_t^X.
\]
The preceding bounds and Cauchy--Schwarz yield
\[
\EE\int_u^v|\mathsf M_{u,t}^\alpha|^2\mathsf c_t^\beta\,dt
\le C(v-u)^2,\qquad \alpha,\beta\in\{X,Y\}.
\]
Thus all these stochastic integrals are square integrable and have
conditional mean zero given $\mathcal F_u$. The original covariance has
the required fourth moments by Assumption~\ref{ass:C4_moments}; for
$c^{*,s}$, use \eqref{eq:auxiliary_state_moments}. Finally, the original
drift retained in $Z^{*,s}$ has $r$th increment moment of order
$(v-u)^r$. Combining this with the martingale bound proves
\eqref{eq:auxiliary_return_moments}.
\end{proof}

\subsection{Finite-dimensional correlation expansion}

\begin{lemma}
\label{lem:ratio_expansion}
Let $h(x)=x_3/\sqrt{x_1x_2}$ on
$\mathcal D_a=\{x_1\ge a,x_2\ge a,x_3^2\le x_1x_2\}$, $a>0$.
Its gradient and Hessian are uniformly bounded on this set by a constant
depending only on $a$. For covariance-coordinate vectors $x,\widehat x$,
suppose that $x\in\mathcal D_a$ and the matrix represented by
$\widehat x$ is positive semidefinite. Define $h_0(\widehat x)$ to be the
correlation ratio when both diagonal entries are positive and zero
otherwise, and put $d=\widehat x-x$. Then
\begin{equation}
\begin{aligned}
h_0(\widehat x)-h(x)
&=\nabla h(x)^\top d+\operatorname{rem}(x,\widehat x),\\
|\operatorname{rem}(x,\widehat x)|&\le C_a|d|^2,
\end{aligned}
\label{eq:ratio_taylor}
\end{equation}
and $|h_0(\widehat x)-h(x)|^2\le C_a|d|^2$.
\end{lemma}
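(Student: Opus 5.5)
The proof splits into a local Taylor expansion near $x$ and a crude bound far from $x$, using two facts: $|h_0(\widehat x)|\le1$ by positive semidefiniteness, and $|h(x)|\le1$ because $x\in\mathcal D_a$.

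\emph{Step 1: derivatives are bounded.} Since $\partial_3h=(x_1x_2)^{-1/2}$ and $\partial_1h=-\tfrac12x_3x_1^{-3/2}x_2^{-1/2}=-\tfrac12h/x_1$ (and symmetrically for $\partial_2h$), the bound $|h|\le1$ on $\mathcal D_a$ gives first derivatives bounded by $a^{-1}$. Each second derivative is a monomial in $x_3$, $x_1^{-1/2}$, $x_2^{-1/2}$, $x_1^{-1}$, $x_2^{-1}$. Using $|x_3|\le\sqrt{x_1x_2}$, each is bounded by a constant times $a^{-2}$.

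The Taylor step requires convexity of the region along the segment, but $\mathcal D_a$ itself need not be convex. I will therefore work on the enlarged box
\[
B=\{y:y_1\ge a/2,\ y_2\ge a/2\},
\]
which is convex, and drop the constraint $y_3^2\le y_1y_2$. On $B$ the gradient and Hessian are no longer bounded uniformly in $y_3$, because of the $x_3$ factor. So I will only use $B$ locally.

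\emph{Step 2: the local case $|d|\le\delta:=a/4$.} Every point $y=x+t d$ on the segment satisfies $y_1,y_2\ge 3a/4$. Its third coordinate obeys
\[
|y_3|\le|x_3|+\delta\le\sqrt{x_1x_2}+a/4\le C\sqrt{y_1y_2},
\]
where the last step uses $x_i\le y_i+\delta$ and $y_i\ge 3a/4$, so that $x_i\le\tfrac43 y_i$. Feeding this into the monomial bounds of Step 1 gives $\sup_t|\nabla^2h(x+td)|\le C_a$. In this regime $\widehat x_1,\widehat x_2>0$, so $h_0(\widehat x)=h(\widehat x)$. The second-order Taylor formula with integral remainder then yields \eqref{eq:ratio_taylor} with $|\mathrm{rem}|\le C_a|d|^2/2$.

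\emph{Step 3: the far case $|d|>\delta$.} Here I use boundedness only. Define the remainder by the identity $\mathrm{rem}=h_0(\widehat x)-h(x)-\nabla h(x)^\top d$, so that the expansion holds trivially. Then
\[
|\mathrm{rem}|\le 2+|\nabla h(x)||d|\le 2\delta^{-2}|d|^2+C_a\delta^{-1}|d|^2,
\]
using $1\le|d|^2/\delta^2$ and $|d|\le|d|^2/\delta$.

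\emph{Step 4: the final inequality.} For $|d|\le\delta$, the mean value theorem with the gradient bound on the segment gives $|h_0(\widehat x)-h(x)|\le C_a|d|$. For $|d|>\delta$, the difference is at most $2\le 2|d|/\delta$. Squaring in both cases gives $|h_0(\widehat x)-h(x)|^2\le C_a|d|^2$.

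I expect the only delicate point to be the one flagged in Step 1. The derivatives are bounded on $\mathcal D_a$ but not on a convex neighbourhood containing arbitrary segments. This is why I restrict to a small ball of radius $a/4$ around $x$ and control $y_3$ through $\sqrt{y_1y_2}$ there, and handle everything outside that ball with the uniform bound of $2$ coming from positive semidefiniteness.
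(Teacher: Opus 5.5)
Your proof is correct, but it takes a slightly different route from the paper, and the premise that motivates it is mistaken: $\mathcal D_a$ \emph{is} convex. It is the intersection of the cone of positive semidefinite $2\times2$ matrices, $\{x_1\ge0,\ x_2\ge0,\ x_3^2\le x_1x_2\}$, with the two half-spaces $\{x_1\ge a\}$ and $\{x_2\ge a\}$. Equivalently, $\sqrt{x_1x_2}$ is concave on the positive quadrant.

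The paper exploits exactly this convexity and splits on the diagonal entries of $\widehat x$ rather than on $|d|$.
\begin{itemize}
\item If $\widehat x_1,\widehat x_2\ge a/2$, positive semidefiniteness puts $\widehat x$ in $\mathcal D_{a/2}$. Since also $x\in\mathcal D_a\subseteq\mathcal D_{a/2}$, the whole segment lies in the convex set $\mathcal D_{a/2}$, where the gradient and Hessian are bounded. Taylor's theorem and the mean-value theorem then apply with no restriction on the size of $|d|$.
\item Otherwise some diagonal entry of $\widehat x$ is below $a/2$, which forces $|d|>a/2$. The crude bound $2+C_a|d|$ then finishes, as in your Step~3.
\end{itemize}

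Your route instead restricts the Taylor step to $|d|\le a/4$. There you control the off-diagonal coordinate along the segment directly, via $x_i\le\tfrac43y_i$, which gives $|y_3|\le\tfrac53\sqrt{y_1y_2}$. That computation is valid. It has the small feature that positive semidefiniteness of $\widehat x$ is used only in the far case, to get $|h_0(\widehat x)|\le1$. But the proximity argument is extra work that convexity of $\mathcal D_{a/2}$ makes unnecessary. So the ``delicate point'' you flag is not actually delicate.
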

\begin{proof}
The gradient is
\[
\nabla h(x)=
\left(-\frac{x_3}{2x_1^{3/2}x_2^{1/2}},
-\frac{x_3}{2x_1^{1/2}x_2^{3/2}},
(x_1x_2)^{-1/2}\right)^\top.
\]
Differentiating once more and using $x_1,x_2\ge a$ and
$|x_3|\le\sqrt{x_1x_2}$ gives
\[
\sup_{x\in\mathcal D_a}
\bigl(|\nabla h(x)|+|\nabla^2h(x)|\bigr)\le C_a.
\]
If $\widehat x_1,\widehat x_2\ge a/2$, the segment joining
$x$ and $\widehat x$ lies in the convex set $\mathcal D_{a/2}$.
Taylor's theorem gives \eqref{eq:ratio_taylor}, and the mean-value
theorem gives $|h_0(\widehat x)-h(x)|\le C_a|d|$.

If either diagonal entry of $\widehat x$ is less than $a/2$, then
$|d|\ge a/2$. Both correlations have absolute value at most one, so
\[
|\operatorname{rem}(x,\widehat x)|
\le 2+C_a|d|\le C'_a|d|^2,
\qquad
|h_0(\widehat x)-h(x)|\le2\le C'_a|d|.
\]
These bounds include the zero-denominator case and complete the proof.
\end{proof}

\section{Consistency and functional convergence}
\label{app:consistency}

The high-frequency bounds below use
Assumptions~\ref{ass:J_price_jumps}--\ref{ass:primitive_correlation_regularity},
the variance lower bound, and the basic sampling conditions. The
polynomial sampling rates are used to make the mean-curve approximation
in Subsection~\ref{app:mean_approximation} negligible at the $T^{-1/2}$
scale. Stationarity enters in Subsection~\ref{app:latent_clt}, where we
study the latent daily curves, and in the main-result proofs in
Subsection~\ref{app:main_mean_proofs}.

\subsection{Comparison with the auxiliary continuous process}

On block $(i,j)$, put $s=s_{i,j}$ and use the process $Z^{*,s}$ from
Lemma~\ref{lem:auxiliary_state}. Write
$\Delta_{i,\ell}Z^*=Z^{*,s}_{t_{i,\ell}}-Z^{*,s}_{t_{i,\ell-1}}$
for $\ell\in\mathcal I_j^n$, and form the untruncated matrix
\[
\widehat c_{i,\kappa_j}^*
=\frac n{k_n}\sum_{\ell\in\mathcal I_j^n}
\Delta_{i,\ell}Z^*(\Delta_{i,\ell}Z^*)^\top.
\]
Use the zero-denominator convention in \eqref{eq:direct_local_ratio}
to form $\widehat\rho_{i,\kappa_j}^*$, and extend these ratios blockwise
to obtain $\widehat\rho_i^*$, with $|\widehat\rho_i^*|\le1$. The auxiliary
process is constructed separately for each block on the independent
extension in Lemma~\ref{lem:auxiliary_state}; expectations of auxiliary
quantities are taken on that space.

\begin{lemma}
\label{lem:auxiliary_comparison}
With $r_n$ as in \eqref{eq:pilot_tail}, for every fixed $M>0$,
\begin{equation}
\sup_i\EE\|\widehat\rho_i-\widehat\rho_i^*\|_{\mathcal H}^2
\le C_M(L_n+r_n+n^{-M}).
\label{eq:auxiliary_daily}
\end{equation}
Uniformly in $i,j$, the blockwise difference also satisfies
\begin{equation}
\left\|\EE(\widehat\rho_{i,\kappa_j}
-\widehat\rho_{i,\kappa_j}^*\mid\mathcal F_{s_{i,j}})
\right\|_{L^2(\Omega)}
\le C_M(L_n+\sqrt{r_n}+n^{-M/2}).
\label{eq:auxiliary_comparison_bias}
\end{equation}
\end{lemma}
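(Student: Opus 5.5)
We prove the statement blockwise and then sum. Since $\widehat\rho_i-\widehat\rho_i^*$ is a step function on the $m_n$ blocks of length $L_n$, we have $\|\widehat\rho_i-\widehat\rho_i^*\|_{\mathcal H}^2=L_n\sum_j|\widehat\rho_{i,\kappa_j}-\widehat\rho^*_{i,\kappa_j}|^2$. It therefore suffices to show that $\EE|\widehat\rho_{i,\kappa_j}-\widehat\rho^*_{i,\kappa_j}|^2\le C_M(L_n+r_n+n^{-M})$ uniformly in $i,j$. Both ratios are bounded by one, so the squared difference is at most $4$. This bound is used on exceptional events, and on the complement the two ratios coincide exactly.

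The good event on block $(i,j)$, with $s=s_{i,j}$, is the intersection of three events: no price jump in the block, no state mark in $(s,s+L_n]$ (the complement of $J^{\rm st}_{s,L_n}$), and no false deletion with both continuous bipower estimates at least $b_0$. On no state mark, Lemma~\ref{lem:auxiliary_state} gives $Z^{*,s}=Z^c$ on the block. On no price jump, $Z=Z^c$, so the observed returns equal the auxiliary returns and the bipower pilots equal $\widehat b^{\wp,c}_{i,j,n}$. If in addition the pilots are at least $b_0$ and no continuous return exceeds its threshold, truncation deletes nothing. Then $\widehat c_{i,\kappa_j}=\widehat c^*_{i,\kappa_j}$, and the two ratios agree.

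The complement of the good event is the union of four events. We bound the probability of each.
\begin{itemize}
\item A price jump in the block: by compensation, $\PP\le \EE\int_s^{s+L_n}(\chi^X F^X(\RR)+\chi^Y F^Y(\RR))\,du\le CL_n$, using the uniform moments of $\chi^\wp$ in Assumption~\ref{ass:C4_moments}.
\item A state mark: $\PP(J^{\rm st}_{s,L_n})\le L_n\EE\Lambda^{\rm st}_{s,L_n}\le CL_n$ by \eqref{eq:state_jump_probability}.
\item A pilot below $b_0$: probability at most $Cr_n$ by \eqref{eq:pilot_tail}.
\item A false deletion: probability at most $C_Mn^{-M}$ by \eqref{eq:false_deletion}.
\end{itemize}
Multiplying the bound $4$ by the sum of these probabilities gives the blockwise second-moment bound. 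Summing with weights $L_n$ over the $m_n=L_n^{-1}$ blocks then yields \eqref{eq:auxiliary_daily}.

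For \eqref{eq:auxiliary_comparison_bias}, write $D=\widehat\rho_{i,\kappa_j}-\widehat\rho^*_{i,\kappa_j}$, so that $|D|\le 2\1_{B}$, where $B$ is the bad event. We split $B$ into its state-mark and price-jump parts and the pilot and false-deletion part.
\begin{itemize}
\item For the state-mark and price-jump parts we use conditional probabilities: $|\EE(\1_{J^{\rm st}}\mid\mathcal F_s)|\le L_n\Lambda^{\rm st}_{s,L_n}$, whose $L^2(\Omega)$ norm is $\le CL_n$. The price-jump part is handled the same way, with the conditional compensator integral and the uniform second moment of $\chi^\wp$, giving order $L_n$.
\item For the pilot and false-deletion part we only have unconditional bounds. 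Conditional Jensen gives $\|\EE(\1_E\mid\mathcal F_s)\|_{L^2(\Omega)}\le \PP(E)^{1/2}$, which produces the terms $\sqrt{r_n}$ and $n^{-M/2}$.
\end{itemize}
This explains the square roots in the stated rate.

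The main obstacle is bookkeeping on the extended space. The auxiliary construction lives on the product filtration with $B^*$, so we must check that conditional expectations of the original quantities given $\mathcal F_s$ are unchanged, which Lemma~\ref{lem:auxiliary_state} asserts. We must also check that \eqref{eq:pilot_tail} and \eqref{eq:false_deletion} refer to the continuous-price pilots, which coincide with the observed pilots only on the no-price-jump event. For this reason the events are intersected in the order above: first no price jump, then the pilot and false-deletion conditions stated on that event.
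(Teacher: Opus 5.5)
Your proposal is correct and follows essentially the same route as the paper. The two ratios coincide off the union of the price-jump, state-mark, low-pilot, and false-deletion events, so \eqref{eq:auxiliary_daily} follows from the crude bound of $4$ times that probability. For \eqref{eq:auxiliary_comparison_bias}, conditional compensator bounds of order $L_n$ handle the jump events, and an unconditional $\PP(\cdot)^{1/2}$ bound handles the pilot and false-deletion part.

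The differences are cosmetic. The paper merges price jumps and state marks into a single event dominated by the five counting measures with intensity $\chi^{\rm all}$. It then bounds the remaining part through $\{\EE|\delta_{i,j}\1_{J_{i,j}^c}|^2\}^{1/2}$ rather than by conditional Jensen applied to an indicator.
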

\begin{proof}
Fix a block starting at $s$. Let $J_{i,j}$ be the event that the block
contains a price jump or a mark of one of the three state-jump measures.
Its indicator is bounded by the increment of the sum of the five
primitive counting measures, whose intensity is
\[
\chi_u^{\rm all}
=\sum_{\wp=X,Y}\{F^\wp(\RR)\chi_u^\wp
+\widetilde F^\wp(\RR)\widetilde\chi_u^\wp\}
+F^\rho(E)\chi_u^\rho.
\]
This domination also allows simultaneous marks. Since
$\sup_u\EE(\chi_u^{\rm all})^2<C$, compensation and conditional Jensen
give
\[
\PP(J_{i,j}\mid\mathcal F_s)\le L_n\Lambda_{i,j},
\qquad \sup_{i,j}\EE\Lambda_{i,j}^2<C,
\qquad
\Lambda_{i,j}=\frac1{L_n}\EE\left(
\int_s^{s+L_n}\chi_u^{\rm all}\,du\,\middle|\,\mathcal F_s\right).
\]
On $J_{i,j}^c$, the observed, original continuous, and auxiliary returns
coincide. If both continuous-price bipower estimates are at least $b_0$
and no false deletion occurs, the correlation estimates also coincide. Thus, for
$\delta_{i,j}=\widehat\rho_{i,\kappa_j}-\widehat\rho_{i,\kappa_j}^*$,
the bound $|\delta_{i,j}|\le2$ and Lemma~\ref{lem:pilot} imply
\[
\EE|\delta_{i,j}\1_{J_{i,j}^c}|^2\le C_M(r_n+n^{-M}),
\qquad
\EE|\delta_{i,j}|^2\le C_M(L_n+r_n+n^{-M}).
\]
Integration over the daily blocks proves \eqref{eq:auxiliary_daily}.
For the conditional mean, retain the conditional probability bound on
$J_{i,j}$ and use contraction on its complement:
\[
\begin{aligned}
\|\EE(\delta_{i,j}\mid\mathcal F_s)\|_{L^2(\Omega)}
&\le2\|\PP(J_{i,j}\mid\mathcal F_s)\|_{L^2(\Omega)}
+\{\EE|\delta_{i,j}\1_{J_{i,j}^c}|^2\}^{1/2}\\
&\le C_M(L_n+\sqrt{r_n}+n^{-M/2}).
\end{aligned}
\]
This proves \eqref{eq:auxiliary_comparison_bias}.
\end{proof}

\subsection{Auxiliary covariance errors and daily recovery}
\label{app:daily_moments}

Use the coordinate order $(X,Y,XY)$ to write the preceding matrix as
a vector $\widehat C_i^*(\kappa_j)$. For each coordinate, define
\[
\bar c_i^{\wp,*,n}(\kappa_j)
=\frac1{L_n}\int_s^{s+L_n}c_u^{*,s,\wp}\,du,
\qquad s=s_{i,j},
\]
and let $\bar C_i^{*,n}$ collect these three coordinates. Extend the
vectors blockwise and put
\begin{equation}
V_i^*=\bar C_i^{*,n}-C_i^{[n]},\qquad
D_i^*=\widehat C_i^*-\bar C_i^{*,n},\qquad
R_i^*=\widehat C_i^*-C_i^{[n]}=D_i^*+V_i^*.
\label{eq:auxiliary_covariance_errors}
\end{equation}
Thus $V_i^*$ measures variation of the auxiliary covariance within a
block, while $D_i^*$ is its realized-covariance estimation error.

\begin{lemma}
\label{lem:auxiliary_covariance_moments}
For the quantities in \eqref{eq:auxiliary_covariance_errors}, uniformly in $i,j$,
\begin{align}
\EE|D_i^*(\kappa_j)|^2&\le C(k_n^{-1}+n^{-1}),&
\EE|D_i^*(\kappa_j)|^4&\le C(k_n^{-2}+n^{-2}),
\label{eq:auxiliary_D_moments}\\
\EE|R_i^*(\kappa_j)|^2&\le C(k_n^{-1}+L_n+n^{-1}),&
\EE|R_i^*(\kappa_j)|^4&\le C(k_n^{-2}+L_n^2+n^{-2}).
\label{eq:auxiliary_R_moments}
\end{align}
Integrating each pointwise moment over $[0,1]$ preserves the same rate.
\end{lemma}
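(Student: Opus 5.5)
The plan is to bound $D_i^*$ first and then obtain $R_i^*$ from the triangle inequality. The bound on $V_i^*$ comes from the auxiliary state moments \eqref{eq:auxiliary_state_moments}. Fix a block starting at $s=s_{i,j}$ and write $u_\ell=t_{i,\ell-1}$, $v_\ell=t_{i,\ell}$ for $\ell\in\mathcal I_j^n$. The return decomposes as $\Delta_{i,\ell}Z^*=A_\ell+\Delta_\ell M^*$, where $A_\ell=\int_{u_\ell}^{v_\ell}a_t\,dt$ and $\Delta_\ell M^*=M^{*,s}_{v_\ell}-M^{*,s}_{u_\ell}$. Expanding the outer product then splits each coordinate of $D_i^*(\kappa_j)$ into three parts:
\[
\frac{n}{k_n}\sum_{\ell\in\mathcal I_j^n}Q^\wp_{u_\ell,v_\ell}
\;+\;\frac{n}{k_n}\sum_{\ell\in\mathcal I_j^n}(\text{cross terms }A_\ell\,\Delta_\ell M^{*}+A_\ell A_\ell^\top)^\wp .
\]
The first part is an innovation term, built from the $Q$'s of Lemma~\ref{lem:innovations} applied to $\mathsf M=M^{*,s}$ on $[s,s+1]$. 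The second part collects the drift products. The $Q$ terms reproduce $\frac{n}{k_n}\sum\int_{u_\ell}^{v_\ell}c^{*,s,\wp}_t\,dt=\bar c^{\wp,*,n}_i(\kappa_j)$ exactly. This is why the centering in $D_i^*$ is by the block average of the auxiliary covariance.

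The innovation sum is a sum of $k_n$ martingale differences with respect to $(\mathcal F_{u_\ell})$, because $\EE(Q_{u_\ell,v_\ell}\mid\mathcal F_{u_\ell})=0$ and each $Q_{u_\ell,v_\ell}$ is $\mathcal F_{v_\ell}$-measurable. Orthogonality and $\EE|Q|^2\le Cn^{-2}$ give a second moment of order $(n/k_n)^2k_nn^{-2}=k_n^{-1}$. For the fourth moment I will use Burkholder's inequality followed by Jensen on the square function. This gives $\EE|\sum Q|^4\le Ck_n\sum\EE|Q|^4\le Ck_n^2n^{-4}$, and after multiplying by $(n/k_n)^4$ the contribution is $k_n^{-2}$.

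For the drift terms, Cauchy--Schwarz together with Lemma~\ref{lem:price_moments} for $A_\ell$ (all moments of $a$ are bounded) and \eqref{eq:auxiliary_return_moments} for $\Delta_\ell M^*$ gives $\|A_\ell\,\Delta_\ell M^*\|_{L^4}\le Cn^{-3/2}$ and $\|A_\ell^2\|_{L^4}\le Cn^{-2}$. Minkowski over the $k_n$ summands then bounds the drift contribution in $L^4$ by $C n^{-1/2}$. This yields the $n^{-1}$ and $n^{-2}$ terms in \eqref{eq:auxiliary_D_moments}.

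For $V_i^*$, write $V_i^*(\kappa_j)=L_n^{-1}\int_s^{s+L_n}(c^{*,s}_u-c_s)\,du$. This uses $c_s^{*,s}=c_s$, which holds since $\tau_s>s$. Jensen and \eqref{eq:auxiliary_state_moments} with $h=u-s\le L_n$ give $\EE|V_i^*|^2\le CL_n$ and $\EE|V_i^*|^4\le CL_n^2$. Then $R_i^*=D_i^*+V_i^*$ together with $(a+b)^p\le 2^{p-1}(a^p+b^p)$ yields \eqref{eq:auxiliary_R_moments}.

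All constants come from bounds that are uniform in $s$: the bounds of Lemma~\ref{lem:innovations} and \eqref{eq:auxiliary_state_moments}, together with the uniform moments of $a$. Hence they are uniform in $i,j$. The statement about integrating over $[0,1]$ follows because the curves are constant on blocks and each block has length $L_n$, so the integral is an average of the pointwise moments.

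The main obstacle is the fourth-moment bound for the innovation sum, which must be of order $k_n^{-2}$ rather than the crude Minkowski order $1$. This requires the martingale-difference structure across increments within the block. That structure is available only because the auxiliary martingale $M^{*,s}$ is a genuine continuous martingale on the whole block, so Lemma~\ref{lem:innovations} applies to every sub-increment with the same constant. I need to check that the filtration extended by $B^*$ keeps $Q_{u_\ell,v_\ell}$ adapted and conditionally centered. Lemma~\ref{lem:auxiliary_state} asserts this.
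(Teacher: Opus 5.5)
Your proposal is correct and follows essentially the same route as the paper's proof. You split $D_i^*$ into drift-squared, drift--martingale, and innovation sums, with the $Q$'s absorbing $\bar C_i^{*,n}$ exactly; you use orthogonality and discrete BDG with Cauchy--Schwarz for the innovation sum, moment bounds on the drift products (using auxiliary increment moments up to order $8$), Jensen with \eqref{eq:auxiliary_state_moments} for $V_i^*$, and $R_i^*=D_i^*+V_i^*$ to finish.
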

\begin{proof}
On a block starting at $s$, write $M_{i,\ell}^{\wp,*}$ for the increment
of $M^{*,s,\wp}$ and $Q_{i,\ell}^{\wp,*}$ for its centered covariance
innovation in Lemma~\ref{lem:innovations}. The drift increments
$A_{i,\ell}^\wp$ are unchanged. For a marginal coordinate,
\[
D_i^{\wp,*}(\kappa_j)=\frac n{k_n}\sum_{\ell\in\mathcal I_j^n}
\left\{(A_{i,\ell}^\wp)^2
+2A_{i,\ell}^\wp M_{i,\ell}^{\wp,*}
+Q_{i,\ell}^{\wp,*}\right\}.
\]
The cross coordinate has the corresponding terms
$A^XA^Y$, $A^XM^{Y,*}+A^YM^{X,*}$, and $Q^{XY,*}$.
Denote the three sums by $D_i^{\wp,*,1},D_i^{\wp,*,2},D_i^{\wp,*,3}$.
For $p=2,4$, Jensen, Cauchy--Schwarz, and the increment bounds in
Lemmas~\ref{lem:price_moments} and \ref{lem:innovations} give
\[
\EE|D_i^{\wp,*,1}(\kappa_j)|^p\le C_pn^{-p},
\qquad
\EE|D_i^{\wp,*,2}(\kappa_j)|^p\le C_pn^{-p/2}.
\]
For the mixed products, only auxiliary increment moments up to order
$2p\le8$ are used.

The innovations in the third sum are martingale differences with
respect to the observation-time filtration on the extended space.
Their orthogonality and \eqref{eq:innovation_moments} give
\[
\EE|D_i^{\wp,*,3}(\kappa_j)|^2\le Ck_n^{-1}.
\]
For the fourth moment, discrete martingale BDG and Cauchy--Schwarz give
\[
\begin{aligned}
\EE|D_i^{\wp,*,3}(\kappa_j)|^4
&\le C\left(\frac n{k_n}\right)^4
\EE\left\{\sum_{\ell\in\mathcal I_j^n}
(Q_{i,\ell}^{\wp,*})^2\right\}^2\\
&\le C\left(\frac n{k_n}\right)^4
k_n\sum_{\ell\in\mathcal I_j^n}\EE|Q_{i,\ell}^{\wp,*}|^4
\le Ck_n^{-2}.
\end{aligned}
\]
These estimates prove \eqref{eq:auxiliary_D_moments}. Jensen and
\eqref{eq:auxiliary_state_moments} give
\[
\EE|V_i^*(\kappa_j)|^2\le CL_n,\qquad
\EE|V_i^*(\kappa_j)|^4\le CL_n^2.
\]
Together with $R_i^*=D_i^*+V_i^*$, this proves
\eqref{eq:auxiliary_R_moments}. The integrated bounds follow because
all estimates are uniform in the block index.
\end{proof}

\begin{proof}[Proof of Proposition~\ref{prop:daily_recovery}]
Decompose the daily error as
\[
\widehat\rho_i-\rho_i
=(\widehat\rho_i-\widehat\rho_i^*)
+(\widehat\rho_i^*-\rho_i^{[n]})+(\rho_i^{[n]}-\rho_i).
\]
The first term is controlled by \eqref{eq:auxiliary_daily}.
For the second, Lemma~\ref{lem:ratio_expansion}, applied to
$C_i^{[n]}$ and $\widehat C_i^*$, and
\eqref{eq:auxiliary_R_moments} give an expected squared norm of order
$k_n^{-1}+L_n+n^{-1}$. For the third,
\eqref{eq:rho_increment_bounds} gives
\[
\EE\|\rho_i^{[n]}-\rho_i\|_{\mathcal H}^2
\le C\sum_j\int_{\kappa_j}^{\kappa_{j+1}}(\kappa-\kappa_j)\,d\kappa
\le CL_n.
\]
For $M\ge1$, the inequalities $e^{-c_0k_n}\le C/k_n$,
$n^{-1}\le k_n^{-1}$, and $p_0>2$ imply
$r_n+n^{-M}=O(k_n^{-1})$. Combining the three bounds proves
\eqref{eq:daily_recovery}. Averaging and applying Markov's inequality
gives the stated $O_P(k_n^{-1}+L_n)$ rate. Under the polynomial sampling
conditions, $\sqrt T(k_n^{-1}+L_n)\to0$, which proves the final assertion.
\end{proof}

\subsection{Conditional local bias}

To control the sum of estimation errors across days, we also bound the
conditional mean of the local estimation error given the information
at the start of each block.

\begin{lemma}
\label{lem:conditional_bias}
For a fixed $M>0$, let
\[
\beta_n=k_n^{-1}+L_n+n^{-1/2}+\sqrt{r_n}+n^{-M/2},
\]
where $r_n$ is defined in \eqref{eq:pilot_tail}. Uniformly in $i,j$ and
$\kappa\in[\kappa_j,\kappa_{j+1})$,
\begin{equation}
\left\|\EE\{\widehat\rho_{i,\kappa_j}-\rho_i(\kappa)
\mid\mathcal F_{s_{i,j}}\}\right\|_{L^2(\Omega)}
\le C_M\beta_n.
\label{eq:conditional_bias}
\end{equation}
\end{lemma}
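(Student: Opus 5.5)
Fix a block $(i,j)$ with start $s=s_{i,j}$ and decompose
\[
\widehat\rho_{i,\kappa_j}-\rho_i(\kappa)
=(\widehat\rho_{i,\kappa_j}-\widehat\rho^*_{i,\kappa_j})
+(\widehat\rho^*_{i,\kappa_j}-\rho_s)
+(\rho_s-\rho_i(\kappa)).
\]
We bound the $L^2(\Omega)$ norm of the conditional mean given $\mathcal F_s$ of each term separately.

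The first term is handled directly by \eqref{eq:auxiliary_comparison_bias}, which contributes $C_M(L_n+\sqrt{r_n}+n^{-M/2})$. For the third term, $\kappa-\kappa_j\le L_n$, so applying \eqref{eq:rho_increment_bounds} with $h=\kappa-\kappa_j$ bounds $\|\EE(\rho_{s+h}-\rho_s\mid\mathcal F_s)\|_{L^2(\Omega)}$ by $CL_n$. The case $h=0$ is trivial.

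The middle term carries the substance of the argument. Put $x=C_i^{[n]}(\kappa_j)=(c_s^X,c_s^Y,c_s^{XY})$ and $d=R_i^*(\kappa_j)=D_i^*(\kappa_j)+V_i^*(\kappa_j)$. The variance lower bound and positive semidefiniteness give $x\in\mathcal D_{\underline c}$, and $\widehat C_i^*$ is positive semidefinite as an outer-product sum. Lemma~\ref{lem:ratio_expansion} therefore yields
\[
\widehat\rho^*_{i,\kappa_j}-\rho_s=\nabla h(x)^\top d+\operatorname{rem},
\qquad |\operatorname{rem}|\le C|d|^2 .
\]
Since $\nabla h(x)$ is $\mathcal F_s$-measurable and uniformly bounded, the conditional mean of the linear term is $\nabla h(x)^\top\EE(d\mid\mathcal F_s)$, so it suffices to bound $\|\EE(D_i^*\mid\mathcal F_s)\|_{L^2}$ and $\|\EE(V_i^*\mid\mathcal F_s)\|_{L^2}$.

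We split $D_i^*$ as in the proof of Lemma~\ref{lem:auxiliary_covariance_moments}. The innovation sum $D^{*,3}$ has conditional mean zero by \eqref{eq:innovation_moments} and the tower property over observation times. The squared-drift sum $D^{*,1}$ has $L^2$ norm $O(n^{-1})$. For the cross term $D^{*,2}$, contraction and Cauchy--Schwarz give only $O(n^{-1/2})$, and this is the source of the $n^{-1/2}$ in $\beta_n$; no finer argument is needed. For $V_i^*$ we write
\[
\EE(V_i^*\mid\mathcal F_s)=L_n^{-1}\int_s^{s+L_n}\EE(c_u^{*,s}-c_s\mid\mathcal F_s)\,du .
\]
Minkowski's inequality in $L^2(\Omega)$ together with the conditional-mean bound in \eqref{eq:auxiliary_state_moments} then gives $O(L_n)$.

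The quadratic remainder satisfies $\|\EE(\operatorname{rem}\mid\mathcal F_s)\|_{L^2}\le C\|\,|d|^2\|_{L^2}=C(\EE|d|^4)^{1/2}$. By \eqref{eq:auxiliary_R_moments} this is $O(k_n^{-1}+L_n+n^{-1})$. This is exactly why the fourth-moment bounds on the auxiliary process were prepared: only second moments would lose the rate. Collecting the three terms gives $C_M\beta_n$, uniformly in $i,j,\kappa$, since all constants used are uniform.

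The main obstacle is the remainder step. It requires the expansion point $x$ to lie in $\mathcal D_a$ and the fourth moment of $d$ to be controlled, rather than the fourth moment of the true block covariance error, which could be contaminated by state jumps. This is handled by working with the frozen auxiliary process, whose covariance is continuous, and by transferring back through Lemma~\ref{lem:auxiliary_comparison}. A secondary check is that the conditional mean of $D^{*,3}$ is exactly zero on the extended filtration, because the block $s$ is deterministic and $B^*$ is independent of $\mathcal F$.
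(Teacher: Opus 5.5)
Your proposal is correct and follows essentially the same route as the paper. Both use the three-term split through $\widehat\rho^*_{i,\kappa_j}$ and apply Lemma~\ref{lem:ratio_expansion} at the $\mathcal F_s$-measurable point $C_i^{[n]}(\kappa_j)$; the linear term is controlled through the conditional means of $D_i^*$ (innovations exactly centered, drift cross term giving $n^{-1/2}$) and $V_i^*$, the quadratic remainder through the fourth-moment bound on $R_i^*$, and the outer terms through \eqref{eq:auxiliary_comparison_bias} and \eqref{eq:rho_increment_bounds}.
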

\begin{proof}
Fix a block and write $s=s_{i,j}$. In the decomposition from
Lemma~\ref{lem:auxiliary_covariance_moments}, each covariance innovation
has conditional mean zero at the preceding observation time. The tower
property therefore gives
$\EE(D_i^{\wp,*,3}(\kappa_j)\mid\mathcal F_s)=0$.
The second-moment bounds for the other two terms imply
\[
\|\EE(D_i^*(\kappa_j)\mid\mathcal F_s)\|_{L^2(\Omega)}
\le C(n^{-1}+n^{-1/2}).
\]
For the within-block covariance variation, conditional Fubini and
\eqref{eq:auxiliary_state_moments} give
\[
\|\EE(V_i^*(\kappa_j)\mid\mathcal F_s)\|_{L^2(\Omega)}
\le \frac1{L_n}\int_s^{s+L_n}C(u-s)\,du\le CL_n.
\]
Apply Lemma~\ref{lem:ratio_expansion} with
$x=C_i^{[n]}(\kappa_j)$ and $\widehat x=\widehat C_i^*(\kappa_j)$.
Since $\nabla h(x)$ is bounded and $\mathcal F_s$-measurable, the
preceding estimates control the conditional mean of the linear term.
For the quadratic remainder, the fourth-moment bound in
\eqref{eq:auxiliary_R_moments} yields
\begin{equation}
\left\|\EE\{|R_i^*(\kappa_j)|^2\mid\mathcal F_s\}\right\|_{L^2(\Omega)}
\le \{\EE|R_i^*(\kappa_j)|^4\}^{1/2}
\le C(k_n^{-1}+L_n+n^{-1}).
\label{eq:auxiliary_quadratic_bias}
\end{equation}
Consequently,
\[
\begin{aligned}
&\|\EE(\widehat\rho_{i,\kappa_j}^*-\rho_s
\mid\mathcal F_s)\|_{L^2(\Omega)}\\
&\quad\le C\|\EE(R_i^*(\kappa_j)\mid\mathcal F_s)\|_{L^2(\Omega)}
+C\left\|\EE\{|R_i^*(\kappa_j)|^2\mid\mathcal F_s\}\right\|_{L^2(\Omega)}\\
&\quad\le C(k_n^{-1}+L_n+n^{-1/2}).
\end{aligned}
\]
Decompose the estimation error as
\[
\widehat\rho_{i,\kappa_j}-\rho_i(\kappa)
=(\widehat\rho_{i,\kappa_j}-\widehat\rho_{i,\kappa_j}^*)
+(\widehat\rho_{i,\kappa_j}^*-\rho_s)
+(\rho_s-\rho_i(\kappa)).
\]
The first conditional mean is bounded by
\eqref{eq:auxiliary_comparison_bias}, the second by the preceding
display, and the third by $CL_n$ in $L^2(\Omega)$, using
\eqref{eq:rho_increment_bounds}. Their sum is bounded by $C_M\beta_n$.
\end{proof}

\subsection{Approximation of the mean curve}
\label{app:mean_approximation}

\begin{lemma}
\label{lem:mean_approximation}
Under the polynomial sampling conditions,
\[
\left\|\frac1{\sqrt T}\sum_i(\widehat\rho_i-\rho_i)\right\|_{\mathcal H}
=o_P(1).
\]
\end{lemma}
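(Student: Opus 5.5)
We split the daily errors by conditioning on the session start, as the main text indicates. Write $e_i=\widehat\rho_i-\rho_i\in\mathcal H$ and $\mathcal F_i^0:=\mathcal F_{\underline t_i}$, and decompose
\[
\frac1{\sqrt T}\sum_{i=1}^Te_i
=\frac1{\sqrt T}\sum_{i=1}^T\{e_i-\EE(e_i\mid\mathcal F_i^0)\}
+\frac1{\sqrt T}\sum_{i=1}^T\EE(e_i\mid\mathcal F_i^0).
\]
Here $\EE(\cdot\mid\cdot)$ is the Bochner conditional expectation in $\mathcal H$; it is well defined because $\|e_i\|_\infty\le2$. Session $i$ ends at $\underline t_i+1\le\underline t_{i+1}$, so $e_i$ is $\mathcal F_{i+1}^0$-measurable. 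On the extended space, the auxiliary Brownian motions attached to blocks of session $i$ live before $\underline t_{i+1}$, and we include them in the filtration. Hence $\xi_i:=e_i-\EE(e_i\mid\mathcal F_i^0)$ is an $\mathcal H$-valued martingale difference sequence with respect to $(\mathcal F_{i+1}^0)$.

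For $i<i'$ we get $\EE\langle\xi_i,\xi_{i'}\rangle_{\mathcal H}=0$. Therefore
\[
\EE\Bigl\|T^{-1/2}\sum_i\xi_i\Bigr\|_{\mathcal H}^2=T^{-1}\sum_i\EE\|\xi_i\|_{\mathcal H}^2\le \sup_i\EE\|e_i\|_{\mathcal H}^2\le C(k_n^{-1}+L_n),
\]
by Proposition~\ref{prop:daily_recovery}. This bound tends to zero under Assumption~\ref{ass:rates_finite_jump}(i), so the martingale part is $o_P(1)$ in $L^2$.

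The main obstacle is the bias part, which must be $o(1)$ after summing $T$ terms. We therefore need $\sup_i\EE\|\EE(e_i\mid\mathcal F_i^0)\|_{\mathcal H}\,\sqrt T\to0$. By the triangle inequality, this reduces to showing that
\[
b_n:=\sup_{i}\sup_{\kappa}\bigl\|\EE\{\widehat\rho_i(\kappa)-\rho_i(\kappa)\mid\mathcal F_i^0\}\bigr\|_{L^2(\Omega)}
\]
satisfies $\sqrt T\,b_n\to0$. This holds because
\[
\EE\|\EE(e_i\mid\mathcal F_i^0)\|_{\mathcal H}\le\Bigl(\int_0^1\EE|\EE(e_i(\kappa)\mid\mathcal F_i^0)|^2d\kappa\Bigr)^{1/2}\le b_n.
\]

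For $\kappa$ in block $j$, we have $\mathcal F_i^0\subseteq\mathcal F_{s_{i,j}}$. The tower property and conditional Jensen then give
\[
\|\EE\{\cdot\mid\mathcal F_i^0\}\|_{L^2(\Omega)}\le\|\EE\{\cdot\mid\mathcal F_{s_{i,j}}\}\|_{L^2(\Omega)}.
\]
Lemma~\ref{lem:conditional_bias} then yields $b_n\le C_M\beta_n$. The crucial point is that this lemma bounds the conditional bias by $k_n^{-1}$ rather than $k_n^{-1/2}$, because the martingale innovations $D^{*,3}$ are conditionally centered and only the quadratic remainder of Lemma~\ref{lem:ratio_expansion} survives.

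It remains to check the rates under Assumption~\ref{ass:rates_finite_jump}(ii), that is, $\sqrt T\beta_n\to0$ term by term. We have $\sqrt T/k_n\asymp n^{c/2-d}\to0$, and $\sqrt TL_n\asymp n^{c/2+d-1}\to0$. Next, $\sqrt T n^{-1/2}\to0$ since $c<1$. For the jump-related terms, $\sqrt T\sqrt{r_n}\lesssim n^{c/2}(e^{-c_0k_n/2}+n^{-1/2}+n^{-p_0/4})$, which tends to zero because $p_0>2$ and $c<1$. Finally, $\sqrt Tn^{-M/2}\to0$ once we fix $M>c$.

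Combining the two parts by Markov's inequality proves the lemma. The delicate step is justifying that the $n^{-1/2}$ term in $\beta_n$, which comes from the drift cross term $D^{*,2}$, is still summable at rate $\sqrt T$. This holds exactly because $c<1$. If that bound were too crude, one would instead refine it using $\EE(A M^*\mid\mathcal F_s)$.
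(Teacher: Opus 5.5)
Your proposal is correct and follows the paper's proof essentially step for step. It uses the same split into the session-start martingale-difference part, controlled by orthogonality and Proposition~\ref{prop:daily_recovery}, and the conditional-bias part, controlled by Lemma~\ref{lem:conditional_bias} via the tower property, followed by the same term-by-term rate check of $\sqrt T\beta_n$. The one step the paper states explicitly and you leave implicit is the conditional-Fubini identification of $\EE_{\mathcal H}(e_i\mid\mathcal F_{\underline t_i})$ with the pointwise conditional expectations $\EE(e_i(\kappa)\mid\mathcal F_{\underline t_i})$, which your bound on $\EE\|\EE(e_i\mid\mathcal F_i^0)\|_{\mathcal H}$ relies on.
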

\begin{proof}
Let $e_i=\widehat\rho_i-\rho_i$ and
$b_i=\EE_{\mathcal H}(e_i\mid\mathcal F_{\underline t_i})$. Write
\[
\frac1{\sqrt T}\sum_i e_i
=\frac1{\sqrt T}\sum_i(e_i-b_i)+\frac1{\sqrt T}\sum_i b_i.
\]
The first sum is controlled by daily recovery. Indeed, $e_i$ is
$\mathcal F_{\underline t_i+1}$-measurable and
$\underline t_i+1\le\underline t_{i+1}$. Thus, for $i<j$,
$e_i-b_i$ is $\mathcal F_{\underline t_j}$-measurable, and
\[
\begin{aligned}
\EE\langle e_i-b_i,e_j-b_j\rangle_{\mathcal H}
&=\EE\left\langle e_i-b_i,
\EE_{\mathcal H}(e_j-b_j\mid\mathcal F_{\underline t_j})
\right\rangle_{\mathcal H}\\
&=0.
\end{aligned}
\]
Proposition~\ref{prop:daily_recovery} therefore gives
\[
\EE\left\|\frac1{\sqrt T}\sum_i(e_i-b_i)\right\|_{\mathcal H}^2
=\frac1T\sum_i\EE\|e_i-b_i\|_{\mathcal H}^2
\le\frac1T\sum_i\EE\|e_i\|_{\mathcal H}^2
\le C(k_n^{-1}+L_n).
\]
For the second sum, use Lemma~\ref{lem:conditional_bias}. Since
$|e_i(\kappa)|\le2$, conditional Fubini identifies a representative of
$b_i$ with $\EE(e_i(\kappa)\mid\mathcal F_{\underline t_i})$
almost everywhere. On each block,
$\mathcal F_{\underline t_i}\subseteq\mathcal F_{s_{i,j}}$, so the tower
property and contraction transfer \eqref{eq:conditional_bias} to the
session start. Hence, uniformly in $i$,
\[
\EE\|b_i\|_{\mathcal H}^2
=\int_0^1\EE\left|\EE(e_i(\kappa)
\mid\mathcal F_{\underline t_i})\right|^2\,d\kappa
\le C_M\beta_n^2.
\]
The triangle inequality now gives
\[
\left\|\frac1{\sqrt T}\sum_i e_i\right\|_{L^2(\Omega;\mathcal H)}
\le C_M\{(k_n^{-1}+L_n)^{1/2}+\sqrt T\,\beta_n\}.
\]
Expanding $\beta_n$ and applying Markov's inequality gives the order
\begin{equation}
O_P\left(
 k_n^{-1/2}+\sqrt{L_n}+\sqrt{T/n}
 +\sqrt T/k_n+\sqrt T L_n+\sqrt{Tr_n}
 +\sqrt T n^{-M/2}\right).
\label{eq:HF_rate}
\end{equation}
Under $T\asymp n^c$ and $k_n\asymp n^d$, the restrictions $c<1$ and
$c/2<d<1-c/2$ make the polynomial terms vanish. The definition of
$r_n$, together with $c<1$ and $p_0>2$, gives $Tr_n\to0$; choosing
$M>c$ controls the final term. Hence \eqref{eq:HF_rate} is $o_P(1)$.
\end{proof}

\subsection{Latent daily curves and their limit}
\label{app:latent_clt}

\begin{lemma}
\label{lem:latent_covariance}
Under Assumptions~\ref{ass:primitive_correlation_regularity} and
\ref{ass:C5_daily_mixing}, the daily curves form a strictly stationary,
ergodic sequence of $\mathcal H$-valued random elements. Its strong mixing
coefficients satisfy
\[
\alpha_\rho(h)\le
\alpha_{\mathsf Z}\bigl(h(1+g_{\mathrm{nt}})-1\bigr),
\qquad h\ge2,
\]
and $\|\rho_i\|_{\mathcal H}\le1$ almost surely.
If $q\ge3$, then
\begin{equation}
\sum_{h\in\mathbb Z}\|\Gamma_{\rho,h}\|_1<\infty.
\label{eq:trace_lag_sum}
\end{equation}
Consequently, $\mathcal C_\rho$ is a nonnegative, self-adjoint,
trace-class operator with kernel \eqref{eq:rho_long_run_kernel}.
\end{lemma}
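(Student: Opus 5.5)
We prove the lemma by transferring stationarity and mixing from the state process $\mathsf Z$ to the daily curves, and then bounding the lag covariances in trace norm by a Hilbert-space covariance inequality for strongly mixing bounded elements.

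\emph{Measurability, stationarity and the mixing bound.} By Assumption~\ref{ass:C5_daily_mixing}, $\rho_i(\kappa)=\phi_\kappa(\mathsf Z_{\underline t_i+\kappa})$ for a measurable map $\phi_\kappa$ that does not depend on $i$. Since $\rho$ is c\`adl\`ag, the map $\kappa\mapsto\rho_i(\kappa)$ is jointly measurable, and $\rho_i$ is an $\mathcal H$-valued random element. It is measurable with respect to $\sigma(\mathsf Z_u:u\in[\underline t_i,\underline t_i+1])$.

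Stationarity of $\mathsf Z$ gives strict stationarity of $(\rho_i)$ as an $\mathcal H$-valued sequence. This uses the fact that the sessions are shifted by the fixed amount $1+g_{\mathrm{nt}}$ and that $\phi_\kappa$ is the same for every $i$.

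For $h\ge2$, the past $\sigma(\rho_j:j\le i)$ is contained in $\mathcal G_{\underline t_i+1}$. The future $\sigma(\rho_j:j\ge i+h)$ is contained in $\mathcal G^{\underline t_{i+h}}$. The gap between these is $\underline t_{i+h}-\underline t_i-1=h(1+g_{\mathrm{nt}})-1$, which yields the stated bound on $\alpha_\rho(h)$. Because the mixing coefficients tend to zero, the sequence is mixing and hence ergodic. Finally, $|\rho_i|\le1$ gives $\|\rho_i\|_{\mathcal H}\le1$.

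\emph{Trace-norm summability.} This is the main step. The operator $\Gamma_{\rho,h}=\EE(A_0\otimes A_h)$ is a Bochner expectation of rank-one operators with $\|A_0\otimes A_h\|_1=\|A_0\|\|A_h\|\le4$. It is therefore trace class, but a naive bound does not decay in $h$.

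For decay I would invoke a covariance inequality for $\alpha$-mixing bounded Hilbert-valued elements, of Dehling--Philipp or Merlev\`ede type. To make it concrete, expand in an orthonormal basis $(e_k)$ and use the duality $\|\Gamma\|_1=\sup\{|\tr(\Gamma U)|:\|U\|_{\op}\le1\}$. This gives
\[
\tr(\Gamma_{\rho,h}U)=\EE\langle UA_h,A_0\rangle_{\mathcal H}
=\Cov\bigl(\langle UA_h,\cdot\rangle\ \text{evaluated at}\ A_0\bigr),
\]
which is a covariance of the form $\EE\langle f(A_h),A_0\rangle$. Here $\EE A_0=0$ and the element $UA_h$ is bounded by $2$.

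A Hilbert-space version of Davydov's inequality for bounded elements gives $|\EE\langle X,Y\rangle|\le C\alpha\|X\|_\infty\|Y\|_\infty$ for centered bounded elements that are measurable with respect to separated $\sigma$-fields. The constant $C$ may involve a factor of type $\alpha\log(1/\alpha)$ in some versions; I would use Dehling--Philipp, which gives $15\alpha$ for bounded variables. Applied here, it yields $\|\Gamma_{\rho,h}\|_1\le C\alpha_\rho(h)\le C h^{-q-\iota}$ for $h\ge2$.

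The case $h\le -2$ follows from $\Gamma_{\rho,-h}=\Gamma_{\rho,h}^*$, and the lags $|h|\le1$ are finite. Since $q\ge3>1$, the series converges. I expect the obstacle to be ensuring that the inequality is applied in the trace norm rather than the Hilbert--Schmidt norm; the duality with bounded $U$ handles this. A fallback is the bound $\|\Gamma\|_1\le\sum_k\|\Gamma e_k\|$ applied to a finite-rank approximation, but the duality route is cleaner.

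\emph{Properties of $\mathcal C_\rho$.} Absolute trace-norm convergence makes $\mathcal C_\rho$ trace class. It is self-adjoint because it pairs $\Gamma_{\rho,h}$ with $\Gamma_{\rho,-h}=\Gamma_{\rho,h}^*$.

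For nonnegativity, write $\langle\mathcal C_\rho g,g\rangle=\lim_T T^{-1}\EE\langle\sum_{i\le T}A_i,g\rangle^2\ge0$. This follows from the Ces\`aro identity $T^{-1}\EE\langle S_T,g\rangle^2=\sum_{|h|<T}(1-|h|/T)\langle\Gamma_{\rho,h}g,g\rangle$ together with dominated convergence.

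For the kernel, note that each $\Gamma_{\rho,h}$ has the kernel $\Cov\{A_0(\kappa),A_h(\kappa')\}$ by Fubini, since the curves are bounded. Trace-norm convergence implies Hilbert--Schmidt convergence, which is $L^2([0,1]^2)$ convergence of the kernels. Passing to a subsequence where the series converges almost everywhere identifies the limit kernel with \eqref{eq:rho_long_run_kernel} almost everywhere. Pointwise absolute convergence also holds, from $|\Cov|\le C\alpha_\rho(h)$ by the scalar bound.
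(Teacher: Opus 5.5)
Your proof is correct, and in the key trace-norm step it takes a different route from the paper.

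\emph{Your route.} You obtain $\|\Gamma_{\rho,h}\|_1\le C\alpha_\rho(h)$ directly from two ingredients: the duality $\|\Gamma\|_1=\sup_{\|U\|_{\op}\le1}|\tr(\Gamma U)|$, and a dimension-free Hilbert-space covariance inequality for bounded elements. This works because $UA_0^\rho$ stays bounded by $2$ in $\mathcal H$. Such an inequality is available from Dehling--Philipp. It also follows from the scalar bound $4\alpha$ via Grothendieck's inequality. The bounded version is linear in $\alpha$, so your $\alpha\log(1/\alpha)$ hedge is unnecessary. Your trace identity transposes $A_0$ and $A_h$ relative to the paper's convention $(f\otimes g)w=f\langle g,w\rangle_{\mathcal H}$; this is immaterial, since the supremum runs over a set closed under adjoints.

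\emph{The paper's route.} The paper uses only the scalar covariance inequality. This bounds the kernel pointwise and hence gives $\|\Gamma_{\rho,h}\|_{\HS}\le C\alpha_\rho(h)$. It then passes from Hilbert--Schmidt to trace norm using the projection $P_m$ onto step functions on $m$ equal intervals:
\begin{enumerate}
\item the increment bound \eqref{eq:rho_increment_bounds} gives $\EE\|(I_{\mathcal H}-P_m)A_i^\rho\|_{\mathcal H}^2\le C/m$;
\item the rank bound gives $\|P_m\Gamma_{\rho,h}P_m\|_1\le\sqrt m\,\|\Gamma_{\rho,h}\|_{\HS}$;
\item choosing $m=\lceil\alpha_\rho(h)^{-1}\rceil$ yields $\|\Gamma_{\rho,h}\|_1\le C\alpha_\rho(h)^{1/2}$.
\end{enumerate}

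\emph{What each buys.} Your route is sharper: summability needs only $q+\iota>1$ rather than $q+\iota>2$. It also uses no intraday regularity of the curves. So it applies to any bounded, centered, stationary $\mathcal H$-valued sequence with summable mixing coefficients. The paper's route stays with elementary scalar tools. The cost is a square root, and a dependence on the semimartingale structure in Assumption~\ref{ass:primitive_correlation_regularity} through the increment bound.

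Your remaining steps coincide with the paper's. These are stationarity, the mixing bound via the gap $h(1+g_{\mathrm{nt}})-1$, and ergodicity. They also include nonnegativity through the Ces\`aro covariance of $T^{-1/2}\sum_iA_i^\rho$, and the kernel identity via the summable pointwise covariance bound.
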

\begin{proof}
Each bounded c\`adl\`ag daily curve is a measurable $\mathcal H$-valued
function of its state segment. Strict stationarity follows from shifts
by integer multiples of $1+g_{\mathrm{nt}}$ in the stationary state.
Past daily curves through day $i$
are measurable with respect to $\mathcal G_{\underline t_i+1}$, whereas
curves from day $i+h$ onward depend only on state times at or after
$\underline t_{i+h}$. The time gap between these state segments is
\[
\underline t_{i+h}-(\underline t_i+1)
=h(1+g_{\mathrm{nt}})-1.
\]
This gives the stated mixing bound, with rate $O(h^{-q-\iota})$;
mixing also gives ergodicity.

To prove trace-norm summability, let $P_m$ be the orthogonal projection
onto functions constant on $m$ equal intervals. For each such interval
$I$, Jensen gives
\[
\int_I\left|f(u)-\frac1{|I|}\int_If(v)\,dv\right|^2du
\le\frac1{|I|}\int_I\!\int_I|f(u)-f(v)|^2du\,dv.
\]
Apply this to $f=A_i^\rho$. By centering and
\eqref{eq:rho_increment_bounds},
$\EE|A_i^\rho(u)-A_i^\rho(v)|^2
\le\EE|\rho_i(u)-\rho_i(v)|^2\le C|u-v|$.
Summing over the intervals yields
\begin{equation}
\EE\|(I_{\mathcal H}-P_m)A_i^\rho\|_{\mathcal H}^2\le C/m.
\label{eq:latent_projection}
\end{equation}
Here $I_{\mathcal H}$ is the identity on $\mathcal H$.
Write $\|\cdot\|_{\HS}$ for the Hilbert--Schmidt norm.
For $h\ge2$, the kernel of $\Gamma_{\rho,h}$ is
$\Cov\{A_0^\rho(u),A_h^\rho(v)\}$. Boundedness of the centered curves
and the scalar mixing covariance inequality give
\[
\|\Gamma_{\rho,h}\|_{\HS}\le C\alpha_\rho(h).
\]
Moreover,
\[
\Gamma_{\rho,h}-P_m\Gamma_{\rho,h}P_m
=\EE\{(I_{\mathcal H}-P_m)A_0^\rho\otimes A_h^\rho\}
+\EE\{P_mA_0^\rho\otimes(I_{\mathcal H}-P_m)A_h^\rho\}.
\]
The rank-one trace-norm identity, Cauchy--Schwarz, and
\eqref{eq:latent_projection} imply
\[
\|\Gamma_{\rho,h}-P_m\Gamma_{\rho,h}P_m\|_1\le Cm^{-1/2}.
\]
Since $P_m\Gamma_{\rho,h}P_m$ has rank at most $m$,
\[
\|P_m\Gamma_{\rho,h}P_m\|_1
\le\sqrt m\,\|P_m\Gamma_{\rho,h}P_m\|_{\HS}
\le\sqrt m\,\|\Gamma_{\rho,h}\|_{\HS}.
\]
Combining the last three bounds gives
\[
\|\Gamma_{\rho,h}\|_1
\le C\{\sqrt m\,\alpha_\rho(h)+m^{-1/2}\}.
\]
For $\alpha_\rho(h)>0$, choose
$m=\lceil\alpha_\rho(h)^{-1}\rceil$; when $\alpha_\rho(h)=0$, let
$m\to\infty$. In both cases,
$\|\Gamma_{\rho,h}\|_1\le C\alpha_\rho(h)^{1/2}$.
This is summable for $q\ge3$. For $h=0,1$, the trace norms are finite
because $\|A_i^\rho\|_{\mathcal H}\le2$. The bounds for negative lags follow from
$\Gamma_{\rho,-h}=\Gamma_{\rho,h}^*$.

The covariance operator of $T^{-1/2}\sum_iA_i^\rho$ is
\[
\sum_{|h|<T}(1-|h|/T)\Gamma_{\rho,h}.
\]
It converges in trace norm to $\mathcal C_\rho$ by absolute summability,
so $\mathcal C_\rho$ is nonnegative and self-adjoint. The summable
pointwise bound $C\alpha_\rho(h)$ also permits interchange of the lag
sum and the kernel integrals, giving \eqref{eq:rho_long_run_kernel}.
\end{proof}

\begin{lemma}
\label{lem:bounded_mixing_hilbert_clt}
Let $(\upsilon_i)_{i\in\mathbb Z}$ be a centered, strictly stationary sequence
of $\mathcal H$-valued random elements, and define
\[
\alpha_\upsilon(h)
=\alpha\bigl(\sigma(\upsilon_i:i\le0),
\sigma(\upsilon_i:i\ge h)\bigr).
\]
Suppose that $\|\upsilon_0\|_{\mathcal H}\le K$ almost surely for some
$K<\infty$ and
\[
\sum_{h\ge1}\alpha_\upsilon(h)<\infty.
\]
Then
\[
\frac1{\sqrt T}\sum_{i=1}^T \upsilon_i
\Rightarrow G_\upsilon
\quad\text{in }\mathcal H,
\]
where $G_\upsilon$ is a centered Gaussian element. If, in addition,
\[
\sum_{h\in\mathbb Z}\|\Gamma_h\|_1<\infty,
\qquad
\Gamma_h=\EE(\upsilon_0\otimes \upsilon_h),
\]
then the covariance operator of $G_\upsilon$ is
\[
\mathcal C=\sum_{h\in\mathbb Z}\Gamma_h.
\]
\end{lemma}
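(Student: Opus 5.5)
The plan is the standard two-step route for Hilbert-space CLTs: establish tightness via finite-dimensional approximation, then identify the limit through one-dimensional projections. Fix an orthonormal basis $(e_k)_{k\ge1}$ of $\mathcal H$ and put $S_T=T^{-1/2}\sum_{i=1}^T\upsilon_i$. I would carry out three steps.

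\emph{Step 1: covariance bounds.} Boundedness and the scalar covariance inequality for strongly mixing bounded variables give, for $f,g\in\mathcal H$,
\[
|\EE\langle\upsilon_0,f\rangle\langle\upsilon_h,g\rangle|\le 4K^2\alpha_\upsilon(h)\|f\|\|g\|.
\]
More importantly, I need the Hilbert-valued inequality
\[
|\EE\langle\upsilon_0,\upsilon_h\rangle_{\mathcal H}|\le C K^2\alpha_\upsilon(h).
\]
I would prove it by expanding $\langle\upsilon_0,\upsilon_h\rangle=\sum_k\langle\upsilon_0,e_k\rangle\langle\upsilon_h,e_k\rangle$ and using a Dehling--Philipp type inequality for bounded Hilbert-valued mixing elements. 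Alternatively, one can use the covariance bound $|\Cov(U,V)|\le 4\alpha\|U\|_\infty\|V\|_\infty$, applied to $V=\langle\upsilon_h,\EE(\upsilon_0\mid\cdot)\rangle$ after conditioning; this is bounded by $K^2$ up to a constant. Either way, summability of $\alpha_\upsilon$ then gives
\[
\EE\|S_T\|^2\le K^2+2\sum_{h\ge1}|\EE\langle\upsilon_0,\upsilon_h\rangle|<\infty,
\]
uniformly in $T$.

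\emph{Step 2: finite-dimensional convergence.} For each fixed $f$, the scalar sequence $\langle\upsilon_i,f\rangle$ is bounded, stationary, and strongly mixing with summable coefficients. The Ibragimov CLT for bounded strongly mixing sequences, with $\sum\alpha<\infty$ sufficient for bounded variables, gives
\[
\langle S_T,f\rangle\Rightarrow N\Bigl(0,\ \sum_h\EE\langle\upsilon_0,f\rangle\langle\upsilon_h,f\rangle\Bigr).
\]
The series is absolutely convergent by Step 1. The Cramér--Wold device then gives joint convergence of $(\langle S_T,e_1\rangle,\dots,\langle S_T,e_N\rangle)$ for every $N$.

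\emph{Step 3: tightness, the main obstacle.} It suffices to show
\[
\lim_{N\to\infty}\sup_T\EE\|(I-P_N)S_T\|^2=0,
\]
where $P_N$ is the projection onto $\mathrm{span}(e_1,\dots,e_N)$. The Step 1 bound only gives a uniform bound, not a vanishing tail, so I would use a truncation in the lag. Expanding,
\[
\EE\|(I-P_N)S_T\|^2\le\sum_{|h|<T}\bigl|\EE\langle(I-P_N)\upsilon_0,(I-P_N)\upsilon_h\rangle\bigr|.
\]
For lags $|h|\le H$, Cauchy--Schwarz bounds each term by $\EE\|(I-P_N)\upsilon_0\|^2$. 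This tends to zero as $N\to\infty$ by dominated convergence, since the $\upsilon_0$ are bounded. For $|h|>H$, the inequality of Step 1 applied to the bounded elements $(I-P_N)\upsilon_i$, whose mixing coefficients are no larger, bounds the tail by $CK^2\sum_{h>H}\alpha_\upsilon(h)$, uniformly in $N$ and $T$. Choosing $H$ first and then $N$ gives the claim. This establishes flat concentration, hence tightness, and with Step 2 yields $S_T\Rightarrow G_\upsilon$, a centered Gaussian element whose covariance is characterized by $\langle\mathcal Cf,g\rangle=\sum_h\EE\langle\upsilon_0,f\rangle\langle\upsilon_h,g\rangle$.

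Under the extra condition $\sum_h\|\Gamma_h\|_1<\infty$, the series $\sum_h\Gamma_h$ converges in trace norm to a bounded operator. Its bilinear form agrees with the limit variance of Step 2 for all $f,g$, by polarization and the identity $\langle\Gamma_hf,g\rangle=\EE\langle\upsilon_h,f\rangle\langle\upsilon_0,g\rangle$ under the paper's convention for $f\otimes g$, after reindexing $h\to-h$. Hence the covariance operator of $G_\upsilon$ is $\mathcal C=\sum_h\Gamma_h$.
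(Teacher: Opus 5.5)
Your proof is correct, but it takes a genuinely different route from the paper. The paper does not argue by hand. It verifies condition $(\alpha)$ of Corollary~2 in Dedecker and Merlev\`ede (2003); for $\|\upsilon_0\|_{\mathcal H}\le K$ this condition reduces to $K^2\sum_h\alpha_\upsilon(h)<\infty$. It then reads the Gaussian limit off their functional conditional CLT at time $1$; the covariance is deterministic because strong mixing implies ergodicity. Finally, it identifies the covariance through trace-norm convergence of $\sum_{|h|<T}(1-|h|/T)\Gamma_h$ and the coordinate-covariance formula of the cited result.

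You instead give a self-contained proof. Finite-dimensional convergence comes from Ibragimov's scalar CLT for bounded strongly mixing sequences together with Cram\'er--Wold. Tightness comes from flat concentration, obtained by splitting the lags: for $|h|\le H$ you use Cauchy--Schwarz and dominated convergence in $N$, and for $|h|>H$ a dimension-free covariance bound that is uniform in $N$ and $T$. Your route shows exactly where boundedness and $\sum_h\alpha_\upsilon(h)<\infty$ enter. It also shows that $\sum_h\Gamma_h$ already converges in operator norm, since the scalar inequality gives $\|\Gamma_h\|_{\op}\le4K^2\alpha_\upsilon(h)$. The trace-norm hypothesis is therefore needed only to state the series in trace norm. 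The paper's citation is shorter and delivers a stronger conditional, functional statement that also covers unbounded elements, at the price of depending on a heavy external theorem. As a minor point, Ibragimov's theorem is stated for positive limiting variance. The degenerate case follows directly from $\Var\langle S_T,f\rangle\to0$.

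The load-bearing step is the Hilbert-valued bound $|\EE\langle\upsilon_0,\upsilon_h\rangle_{\mathcal H}|\le CK^2\alpha_\upsilon(h)$, with $C$ independent of dimension, and you should rest it on the Dehling--Philipp lemma. Your alternative sketch does not establish it. Conditioning rewrites $\EE\langle\upsilon_0,\upsilon_h\rangle$ as $\EE\langle\upsilon_0,\EE(\upsilon_h\mid\sigma(\upsilon_0))\rangle$, which only moves the problem to bounding $\EE\|\EE(\upsilon_h\mid\sigma(\upsilon_0))\|_{\mathcal H}$, a Hilbert-valued covariance of the same type. The variable $V$ you propose has expectation equal to the target but is not a covariance between past- and future-measurable scalars. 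Applying the scalar bound coordinatewise gives a sum of sup-norms that need not converge. A clean proof approximates by simple elements $\sum_ix_i\1_{A_i}$ and $\sum_jy_j\1_{B_j}$ with $\|x_i\|,\|y_j\|\le1$. It then applies Grothendieck's inequality to the matrix $a_{ij}=\PP(A_i\cap B_j)-\PP(A_i)\PP(B_j)$, whose sign forms $\sum_{i,j}a_{ij}s_it_j$ are scalar covariances bounded by $4\alpha$. With that ingredient cited or proved, your argument is complete.
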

\begin{proof}
Apply Corollary~2 of \citet{DedeckerMerlevede2003HilbertCCLT}. Set
\[
\mathcal M_0=\sigma(\upsilon_i:i\le0),
\qquad
\alpha_{\upsilon,1}(h)
=\alpha(\mathcal M_0,\sigma(\upsilon_h)).
\]
Then $\alpha_{\upsilon,1}(h)\le\alpha_\upsilon(h)$. The upper-tail
quantile function $q_\upsilon$ of $\|\upsilon_0\|_{\mathcal H}$ satisfies
$q_\upsilon\le K$, so
\[
\sum_{h\ge1}\int_0^{\alpha_{\upsilon,1}(h)}q_\upsilon(u)^2\,du
\le K^2\sum_{h\ge1}\alpha_\upsilon(h)<\infty.
\]
This verifies condition $(\alpha)$ of that corollary. Its functional
conditional central limit theorem, evaluated at time $1$, and Remark~1
of the same reference give the stated Gaussian limit; the covariance is
deterministic because strong mixing implies ergodicity.

Under the additional trace-norm summability condition, the covariance
operators of the scaled partial sums satisfy
\[
\sum_{|h|<T}(1-|h|/T)\Gamma_h
\longrightarrow\sum_{h\in\mathbb Z}\Gamma_h
\quad\text{in trace norm}.
\]
The coordinate-covariance formula in the cited result then gives this
sum as the covariance operator of $G_\upsilon$.
\end{proof}

\subsection{Proofs of consistency and the functional central limit theorem}
\label{app:main_mean_proofs}

\begin{proof}[Proof of Theorem~\ref{thm:finite_jump_consistency}]
By Lemma~\ref{lem:latent_covariance}, $(\rho_i)$ is stationary, ergodic,
and bounded in $\mathcal H$. The Hilbert-valued ergodic theorem gives
$\bar\rho_T\to r_u$ almost surely in $\mathcal H$; see
\citet{PopStojanovic1972}. By Jensen and
Proposition~\ref{prop:daily_recovery},
\[
\|\widetilde\rho_u-\bar\rho_T\|_{\mathcal H}^2
\le\frac1T\sum_i\|\widehat\rho_i-\rho_i\|_{\mathcal H}^2=o_P(1),
\]
which proves consistency of the unnormalized estimator. If $\bar r_u\ne0$, continuity of $\ell$
gives $|\ell(\widetilde\rho_u)|\ge|\bar r_u|/2$ with probability tending
to one. Since $\varepsilon_T\downarrow0$, the estimator in
\eqref{eq:direct_normalized_estimator} equals $\Phi(\widetilde\rho_u)$
on this event for all sufficiently large $T$. Continuity of $\Phi$ at
$r_u$, established in Lemma~\ref{lem:normalization}, proves normalized
consistency.
\end{proof}

\begin{proof}[Proof of Theorem~\ref{thm:finite_jump_fclt_unnormalized}]
Write
\[
\sqrt T(\widetilde\rho_u-r_u)
=\frac1{\sqrt T}\sum_{i=1}^T A_i^\rho
+\sqrt T(\widetilde\rho_u-\bar\rho_T).
\]
The second term is $o_P(1)$ in $\mathcal H$ by
Lemma~\ref{lem:mean_approximation} under
Assumption~\ref{ass:rates_finite_jump}(ii), proving
\eqref{eq:mean_expansion}.
On a two-sided stationary extension,
Lemma~\ref{lem:latent_covariance} gives strict stationarity,
$\|A_i^\rho\|_{\mathcal H}\le2$, summable mixing coefficients for
$q\ge3$, and \eqref{eq:trace_lag_sum}.
Lemma~\ref{lem:bounded_mixing_hilbert_clt} therefore yields
$T^{-1/2}\sum_iA_i^\rho\Rightarrow G_\rho$, with covariance
$\mathcal C_\rho$. Slutsky's theorem proves \eqref{eq:raw_fclt}; the
operator and kernel assertions follow from
Lemma~\ref{lem:latent_covariance}.
\end{proof}

\section{Normalization and finite aggregation}
\label{app:normalization}

\begin{lemma}
\label{lem:normalization}
The map $\Phi$ in \eqref{eq:normalization_map} is Fr\'echet
differentiable on its open domain $\mathcal D_\Phi$, with derivative
\eqref{eq:normalization_derivative_section2}. For every
$f\in\mathcal D_\Phi$ and $g\in\mathcal H$ satisfying
$\|g\|_{\mathcal H}\le|\ell(f)|/2$,
\begin{equation}
\|\Phi(f+g)-\Phi(f)-D\Phi_f[g]\|_{\mathcal H}
\le\frac{2\{|\ell(f)|+\|f\|_{\mathcal H}\}}{|\ell(f)|^3}
\|g\|_{\mathcal H}^2.
\label{eq:normalization_remainder}
\end{equation}
Moreover, if $\bar r_u\ne0$, there is a constant $C=C(r_u)<\infty$
such that, whenever
$\|f-r_u\|_{\mathcal H}\le\min\{1,|\bar r_u|/2\}$,
\begin{equation}
\|D\Phi_f-D\Phi_{r_u}\|_{\op}
\le C\|f-r_u\|_{\mathcal H}.
\label{eq:derivative_lipschitz}
\end{equation}
\end{lemma}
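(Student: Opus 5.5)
The plan is to compute everything explicitly, since $\Phi(f)=f/\ell(f)$ is the product of the bounded linear identity map and the scalar function $1/\ell(f)$. Two basic facts are used throughout. First, $\ell$ is a bounded linear functional with $|\ell(g)|\le\|g\|_{\mathcal H}$ by Cauchy--Schwarz on $[0,1]$. Second, $\mathcal D_\Phi=\ell^{-1}(\RR\setminus\{0\})$ is open as the preimage of an open set. Fix $f\in\mathcal D_\Phi$ and $g$ with $\|g\|_{\mathcal H}\le|\ell(f)|/2$. Then $|\ell(f+g)|\ge|\ell(f)|-|\ell(g)|\ge|\ell(f)|/2$, so $f+g\in\mathcal D_\Phi$.

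Write $a=\ell(f)$ and $b=\ell(g)$. I will compute the remainder exactly:
\[
\Phi(f+g)-\Phi(f)-D\Phi_f[g]
=\frac{f+g}{a+b}-\frac fa-\frac ga+\frac{fb}{a^2}.
\]
Put everything over $a^2(a+b)$. The $f$-terms give $f\{a^2-a(a+b)+b(a+b)\}/\{a^2(a+b)\}=fb^2/\{a^2(a+b)\}$. The $g$-terms give $g\{1/(a+b)-1/a\}=-gb/\{a(a+b)\}$. Hence the remainder equals
\[
\frac{b}{a+b}\Bigl(\frac{fb}{a^2}-\frac ga\Bigr).
\]
Use $|b|\le\|g\|_{\mathcal H}$ and $|a+b|\ge|a|/2$. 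This bounds the norm by
\[
\frac{2\|g\|_{\mathcal H}}{|a|}\Bigl(\frac{\|f\|_{\mathcal H}\|g\|_{\mathcal H}}{a^2}+\frac{\|g\|_{\mathcal H}}{|a|}\Bigr)
=\frac{2(|a|+\|f\|_{\mathcal H})}{|a|^3}\|g\|_{\mathcal H}^2,
\]
which is exactly \eqref{eq:normalization_remainder}. Boundedness of $D\Phi_f$ is immediate, with $\|D\Phi_f\|_{\op}\le1/|a|+\|f\|_{\mathcal H}/a^2$. The remainder is $O(\|g\|^2)=o(\|g\|)$, so $\Phi$ is Fréchet differentiable with derivative \eqref{eq:normalization_derivative_section2}. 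Continuity of $\Phi$, which is used in the consistency proof, follows as well.

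For \eqref{eq:derivative_lipschitz}, set $r=r_u$ and $\bar r=\bar r_u$, and take $\|f-r\|_{\mathcal H}\le\min\{1,|\bar r|/2\}$. Then $|\ell(f)|\ge|\bar r|/2$ and $\|f\|_{\mathcal H}\le\|r\|_{\mathcal H}+1$. For $\|g\|_{\mathcal H}\le1$,
\[
D\Phi_f[g]-D\Phi_r[g]
=g\Bigl(\frac1{\ell(f)}-\frac1{\bar r}\Bigr)
-\ell(g)\Bigl(\frac f{\ell(f)^2}-\frac r{\bar r^2}\Bigr).
\]
The first coefficient is $|\bar r-\ell(f)|/|\ell(f)\bar r|\le2\|f-r\|_{\mathcal H}/\bar r^2$. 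For the second, write
\[
\frac f{\ell(f)^2}-\frac r{\bar r^2}=\frac{f-r}{\ell(f)^2}+r\Bigl(\frac1{\ell(f)^2}-\frac1{\bar r^2}\Bigr).
\]
The first piece has norm at most $4\|f-r\|_{\mathcal H}/\bar r^2$. For the second piece,
\[
\Bigl|\frac1{\ell(f)^2}-\frac1{\bar r^2}\Bigr|=\frac{|\bar r-\ell(f)|\,|\bar r+\ell(f)|}{\ell(f)^2\bar r^2},
\]
and $|\bar r+\ell(f)|\le 2|\bar r|+1$, so this is at most $4(2|\bar r|+1)\|f-r\|_{\mathcal H}/|\bar r|^4$. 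Taking the supremum over $\|g\|_{\mathcal H}\le1$ gives the bound with
\[
C=\frac{6}{\bar r^2}+\frac{4(2|\bar r|+1)\|r\|_{\mathcal H}}{\bar r^4}.
\]

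Every step is elementary. The only place needing care is the exact algebraic simplification of the remainder, which is what yields the precise constant in \eqref{eq:normalization_remainder}.
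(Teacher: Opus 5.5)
Your proof is correct and follows essentially the same route as the paper. Your exact remainder $\frac{b}{a+b}\bigl(\frac{fb}{a^2}-\frac{g}{a}\bigr)$, with $a=\ell(f)$ and $b=\ell(g)$, is algebraically identical to the paper's $-\ell(g)\{\ell(f)g-f\ell(g)\}/[\ell(f)^2\{\ell(f)+\ell(g)\}]$, and your Lipschitz bound uses the same coefficient-by-coefficient comparison on the set where $|\ell(f)|\ge|\bar r_u|/2$; the only difference is that you make the constant $C(r_u)$ explicit, which the paper leaves implicit.
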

\begin{proof}
Since $\ell(g)=\langle\1,g\rangle_{\mathcal H}$, the functional $\ell$
has norm one and $\mathcal D_\Phi$ is open. The derivative in
\eqref{eq:normalization_derivative_section2} is linear and satisfies
\[
\|D\Phi_f[g]\|_{\mathcal H}
\le\left(|\ell(f)|^{-1}
+\|f\|_{\mathcal H}|\ell(f)|^{-2}\right)\|g\|_{\mathcal H}.
\]
For $\|g\|_{\mathcal H}\le|\ell(f)|/2$, we have
$|\ell(f+g)|\ge|\ell(f)|/2$, and direct algebra gives
\[
\Phi(f+g)-\Phi(f)-D\Phi_f[g]
=-\frac{\ell(g)}{\ell(f)^2\{\ell(f)+\ell(g)\}}
\{\ell(f)g-f\ell(g)\}.
\]
Taking norms and using $|\ell(g)|\le\|g\|_{\mathcal H}$ proves
\eqref{eq:normalization_remainder} and hence Fr\'echet differentiability.

If $\|f-r_u\|_{\mathcal H}\le\min\{1,|\bar r_u|/2\}$, then
$|\ell(f)|\ge|\bar r_u|/2$ and
$\|f\|_{\mathcal H}\le\|r_u\|_{\mathcal H}+1$.
The derivative formula and
$|\ell(f)-\bar r_u|\le\|f-r_u\|_{\mathcal H}$ therefore give
\[
\begin{aligned}
\|D\Phi_f-D\Phi_{r_u}\|_{\op}
&\le\left|\frac1{\ell(f)}-\frac1{\bar r_u}\right|
+\left\|\frac{f}{\ell(f)^2}
-\frac{r_u}{\bar r_u^2}\right\|_{\mathcal H}\\
&\le C\|f-r_u\|_{\mathcal H},
\end{aligned}
\]
which proves \eqref{eq:derivative_lipschitz}.
\end{proof}

\begin{lemma}
\label{lem:normalization_covariance}
Under the assumptions of Lemma~\ref{lem:latent_covariance}, with $q\ge3$
and $\bar r_u\ne0$, the lag operators of
$\xi_i=D\Phi_{r_u}[A_i^\rho]$ are absolutely summable in trace norm.
Their sum satisfies
$\mathcal C_k=D\Phi_{r_u}\mathcal C_\rho D\Phi_{r_u}^*$ and has kernel
\eqref{eq:Ck_finite_jump_theory} almost everywhere.
\end{lemma}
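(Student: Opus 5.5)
Write $D=D\Phi_{r_u}$, a bounded linear operator on $\mathcal H$ by Lemma~\ref{lem:normalization}, so that $\xi_i=DA_i^\rho$. The lag operators transform by conjugation: for $f,g,w\in\mathcal H$,
\[
(Df\otimes Dg)w=Df\,\langle Dg,w\rangle_{\mathcal H}=D(f\otimes g)D^*w .
\]
Since $(f,g)\mapsto D(f\otimes g)D^*$ is bilinear and bounded, and the Bochner expectation commutes with bounded linear maps, this gives
\[
\EE(\xi_0\otimes\xi_h)=D\,\Gamma_{\rho,h}\,D^*
\]
for every $h\in\mathbb Z$. This step needs only that $\|A_i^\rho\|_{\mathcal H}\le2$, so all expectations exist in trace class.

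The ideal property of the trace norm gives $\|D\Gamma_{\rho,h}D^*\|_1\le\|D\|_{\op}^2\|\Gamma_{\rho,h}\|_1$. By \eqref{eq:trace_lag_sum} in Lemma~\ref{lem:latent_covariance}, valid for $q\ge3$, the lag operators of $\xi_i$ are therefore absolutely summable in trace norm. Because conjugation by $D$ is continuous in trace norm, we may exchange the sum and the conjugation:
\[
\mathcal C_k=\sum_{h}D\Gamma_{\rho,h}D^*=D\,\mathcal C_\rho\,D^* .
\]

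For the kernel, write $D=\bar r_u^{-1}I-\bar r_u^{-2}\,r_u\otimes\1$, using $\ell(g)=\langle\1,g\rangle_{\mathcal H}$. Then $D^*=\bar r_u^{-1}I-\bar r_u^{-2}\,\1\otimes r_u$. Expanding $D\mathcal C_\rho D^*$ produces four terms:
\begin{itemize}
\item $\bar r_u^{-2}\mathcal C_\rho$;
\item $-\bar r_u^{-3}\,\mathcal C_\rho(\1\otimes r_u)$, whose kernel is $r_u(\kappa')\int_0^1 C_\rho(\kappa,s)\,ds$;
\item $-\bar r_u^{-3}\,(r_u\otimes\1)\mathcal C_\rho$, whose kernel is $r_u(\kappa)\int_0^1C_\rho(s,\kappa')\,ds$;
\item $\bar r_u^{-4}\,\langle \mathcal C_\rho\1,\1\rangle_{\mathcal H}\,(r_u\otimes r_u)$, whose kernel is $r_u(\kappa)r_u(\kappa')\iint C_\rho$.
\end{itemize}
Here $\mathcal C_\rho$ has kernel \eqref{eq:rho_long_run_kernel}, and $C_\rho\in L^2([0,1]^2)$ because $\mathcal C_\rho$ is Hilbert--Schmidt. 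This yields \eqref{eq:Ck_finite_jump_theory} almost everywhere.

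The alternative route goes pointwise, computing $\Cov\{\xi_0(\kappa),\xi_h(\kappa')\}$ with $\xi_i=\bar r_u^{-1}A_i^\rho-\bar r_u^{-2}r_u\,\ell(A_i^\rho)$ and summing over $h$. It needs the summable bound $|\Cov\{A_0^\rho(u),A_h^\rho(v)\}|\le C\alpha_\rho(h)$ from Lemma~\ref{lem:latent_covariance}, together with Fubini, to swap the lag sum and the integrals. It matches the operator identity above.

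The main, mild obstacle is the interchange of expectation and sum with the kernel integrals. I would handle it by the operator-level argument, which avoids any pointwise issues, and by identifying kernels only at the end through the fact that compositions of Hilbert--Schmidt operators with rank-one maps have the displayed integral kernels.
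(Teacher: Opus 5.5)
Your proposal is correct and follows the paper's proof essentially step for step. Both arguments use the conjugation identity $\EE(\xi_0\otimes\xi_h)=D\Phi_{r_u}\Gamma_{\rho,h}D\Phi_{r_u}^*$ and the trace-norm ideal bound combined with \eqref{eq:trace_lag_sum}, then expand $D\Phi_{r_u}=\bar r_u^{-1}I_{\mathcal H}-\bar r_u^{-2}(r_u\otimes\1)$ into the same four rank-one terms. The only minor difference is that you identify the kernels at the operator level, whereas the paper cites the summable pointwise covariance bound to justify interchanging the lag sum with the kernel integrals; either justification suffices.
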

\begin{proof}
Write
\[
\begin{aligned}
\mathsf D_u=D\Phi_{r_u}
&=\bar r_u^{-1}I_{\mathcal H}-\bar r_u^{-2}(r_u\otimes\1),\\
\mathsf D_u^*
&=\bar r_u^{-1}I_{\mathcal H}-\bar r_u^{-2}(\1\otimes r_u).
\end{aligned}
\]
Since $\xi_i=\mathsf D_uA_i^\rho$,
\[
\EE(\xi_0\otimes\xi_h)
=\mathsf D_u\Gamma_{\rho,h}\mathsf D_u^*,
\qquad h\in\mathbb Z.
\]
Lemma~\ref{lem:latent_covariance} and the trace-norm inequality give
\[
\sum_h\|\EE(\xi_0\otimes\xi_h)\|_1
\le\|\mathsf D_u\|_{\op}^2
\sum_h\|\Gamma_{\rho,h}\|_1<\infty.
\]
Summing over $h$ shows that the covariance operator in
\eqref{eq:long_run_covariance_normalized} equals
$\mathsf D_u\mathcal C_\rho\mathsf D_u^*$.
Expanding this operator yields
\begin{align*}
\mathsf D_u\mathcal C_\rho\mathsf D_u^*
={}&\bar r_u^{-2}\mathcal C_\rho
-\bar r_u^{-3}\{(r_u\otimes\1)\mathcal C_\rho
+\mathcal C_\rho(\1\otimes r_u)\}\\
&+\bar r_u^{-4}(r_u\otimes\1)\mathcal C_\rho(\1\otimes r_u).
\end{align*}
The rank-one convention gives the four kernel terms in
\eqref{eq:Ck_finite_jump_theory}. The summable pointwise covariance
bound in Lemma~\ref{lem:latent_covariance} justifies the integrations
and their interchange with the lag sum, so the kernel identity holds
almost everywhere.
\end{proof}

\begin{proof}[Proof of Theorem~\ref{thm:finite_jump_fclt_normalized}]
Put $h_T=\widetilde\rho_u-r_u$. The functional CLT for the unnormalized mean curve gives
$\sqrt T h_T\Rightarrow G_\rho$ and
$\|h_T\|_{\mathcal H}=O_P(T^{-1/2})$.
The event $\mathcal E_T=\{\|h_T\|_{\mathcal H}\le|\bar r_u|/2\}$
has probability tending to one. Since $\varepsilon_T\downarrow0$,
on $\mathcal E_T$ we have
$|\ell(\widetilde\rho_u)|\ge|\bar r_u|/2>\varepsilon_T$ for all
sufficiently large $T$, and hence $\widehat k_u^\circ=\Phi(r_u+h_T)$.
On this event, Lemma~\ref{lem:normalization} bounds the norm of the
scaled remainder by
$C\sqrt T\|h_T\|_{\mathcal H}^2=O_P(T^{-1/2})$.
Since $\PP(\mathcal E_T)\to1$, it follows that
\[
\sqrt T(\widehat k_u^\circ-k_u^\circ)
=D\Phi_{r_u}[\sqrt T h_T]+o_P(1)
\Rightarrow D\Phi_{r_u}[G_\rho]
\quad\text{in }\mathcal H,
\]
where the convergence uses boundedness of $D\Phi_{r_u}$ and Slutsky's
theorem. The limit $G_k=D\Phi_{r_u}[G_\rho]$ is centered Gaussian.
Lemma~\ref{lem:normalization_covariance} gives its covariance
\eqref{eq:long_run_covariance_normalized} and kernel
\eqref{eq:Ck_finite_jump_theory}.
\end{proof}

\begin{proof}[Proof of Corollary~\ref{cor:fixed_aggregation}]
The componentwise recovery bounds and convexity give
\[
\sup_i\EE\|\widehat\rho_i^{(w)}-\rho_i^{(w)}\|_{\mathcal H}^2
\le\sum_aw_a\sup_i\EE
\|\widehat\rho_i^{(a)}-\rho_i^{(a)}\|_{\mathcal H}^2
\le C(k_n^{-1}+L_n).
\]
The consistency assertion for the unnormalized mean curve in
Theorem~\ref{thm:finite_jump_consistency}, applied componentwise, gives
consistency of the aggregate mean.

In the separable product space $\mathcal H^P$, the centered vector
$(A_i^{(1)},\ldots,A_i^{(P)})$ is bounded and inherits the common mixing
condition. The argument of Lemma~\ref{lem:bounded_mixing_hilbert_clt}
in this space gives a joint Gaussian limit. Since $P$ is fixed, the
componentwise high-frequency approximations also hold in the product
norm. Applying the bounded linear map
$(f_1,\ldots,f_P)\mapsto\sum_aw_af_a$ gives the aggregate approximation
and functional limit.

Under the common mixing condition, the projection argument in
Lemma~\ref{lem:latent_covariance} applies to each pair $a,b$ and gives
\[
\sum_{h\in\mathbb Z}
\|\EE(A_0^{(a)}\otimes A_h^{(b)})\|_1<\infty.
\]
Expanding the outer product of the weighted sums and summing over $h$
gives \eqref{eq:aggregate_covariance}, including all cross-covariance
terms. If $\ell(r_u^{(w)})\ne0$, Lemma~\ref{lem:normalization} and the
preceding normalized-limit argument give the normalized conclusions
for the aggregate mean.

For feasible inference, the recovery bound and the sampling rates imply
\[
\frac1T\sum_{i=1}^T
\|\widehat\rho_i^{(w)}-\rho_i^{(w)}\|_{\mathcal H}^2
=O_P(k_n^{-1}+L_n)=o_P(T^{-1/2}).
\]
Together with the aggregate functional CLT, boundedness and mixing of
the centered latent aggregate, and trace-norm summability of its lag
covariances, this verifies the conditions used in Supplementary
Appendix~\ref{app:hac}. The covariance and calibration arguments in
Supplementary Appendices~\ref{app:hac} and \ref{app:calibration} then
give the feasible procedures under their respective nondegeneracy
conditions.
\end{proof}

\section{Long-run covariance estimation and inference}
\label{app:inference}

\subsection{Consistency of the covariance estimators}
\label{app:hac}

Under the assumptions of Theorem~\ref{thm:hac}, use the notation
\[
(U_i,\mathcal C,\ell_T)
=(A_i^\rho,\mathcal C_\rho,\ell_T^\rho)
\quad\text{or}\quad
(\xi_i,\mathcal C_k,\ell_T^k),
\qquad \xi_i=D\Phi_{r_u}[A_i^\rho].
\]
The condition $\bar r_u\ne0$ is imposed only in the normalized case.
Write $\Gamma_h=\EE(U_0\otimes U_h)$ and
$\mathcal C=\sum_{h\in\mathbb Z}\Gamma_h$. In either case, $(U_i)$ is
centered and strictly stationary, inherits the mixing rate of $(\rho_i)$,
and has representatives satisfying $\|U_i\|_\infty\le C$ almost surely.
Moreover,
\begin{equation}
\sum_h\|\Gamma_h\|_1<\infty,
\qquad
\sum_h|\EE\langle U_0,U_h\rangle_{\mathcal H}|<\infty.
\label{eq:generic_lag_sums}
\end{equation}
For the unnormalized residuals these properties follow from
Lemma~\ref{lem:latent_covariance}. In the normalized case, boundedness
and mixing are preserved by the fixed map $D\Phi_{r_u}$, and the lag
sums follow from Lemma~\ref{lem:normalization_covariance}.

\begin{lemma}
\label{lem:latent_hac}
If $\ell_T\to\infty$ and $\ell_T^2/T\to0$, then
\begin{equation}
\widetilde{\mathcal C}_{U,T}
=\frac1T\sum_{i,j=1}^T\omega((i-j)/\ell_T)U_i\otimes U_j
\quad\text{satisfies}\quad
\|\widetilde{\mathcal C}_{U,T}-\mathcal C\|_1\xrightarrow{P}0.
\label{eq:latent_hac}
\end{equation}
\end{lemma}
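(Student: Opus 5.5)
We split the error into a bias term and a fluctuation term. Write $\widetilde{\mathcal C}_{U,T}=\sum_{|h|<T}\omega(h/\ell_T)\widehat\Gamma_h$, where $\widehat\Gamma_h=T^{-1}\sum_{i=1}^{T-h}U_i\otimes U_{i+h}$ for $h\ge0$ and $\widehat\Gamma_{-h}=\widehat\Gamma_h^*$. Since $\omega$ is supported on $[-1,1]$, only lags $|h|\le\ell_T$ contribute. Then
\[
\widetilde{\mathcal C}_{U,T}-\mathcal C
=\sum_{|h|\le\ell_T}\omega(h/\ell_T)\bigl(\widehat\Gamma_h-\EE\widehat\Gamma_h\bigr)
+\Bigl\{\sum_{|h|\le\ell_T}\omega(h/\ell_T)(1-|h|/T)\Gamma_h-\mathcal C\Bigr\},
\]
and we bound the two terms separately.

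\emph{Bias term.} This is deterministic and tends to zero in trace norm by dominated convergence over $h$. Each summand is dominated by $(\|\omega\|_\infty+1)\|\Gamma_h\|_1$, which is summable by \eqref{eq:generic_lag_sums}. For each fixed $h$, continuity of $\omega$ at zero with $\omega(0)=1$, together with $\ell_T\to\infty$ and $|h|/T\to0$, makes the coefficient tend to $1$.

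\emph{Fluctuation term.} The trace norm is awkward for variance computations, so we reuse the projection device of Lemma~\ref{lem:latent_covariance}. Let $P_m$ be the projection onto step functions on $m$ equal intervals and split $U_i=P_mU_i+(I-P_m)U_i$. We then handle the two pieces as follows.

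\emph{Tail part.} For terms involving $(I-P_m)U_i$, the rank-one trace identity gives $\|f\otimes g\|_1=\|f\|\|g\|$. The triangle inequality then bounds their contribution in expectation by
\[
C\,\frac1T\sum_{i,j:|i-j|\le\ell_T}\EE\|(I-P_m)U_i\|\,\|U_j\|\le C\ell_T\,\sup_i\bigl(\EE\|(I-P_m)U_i\|^2\bigr)^{1/2}.
\]
This grows with $\ell_T$, so it is too crude on its own. Instead we apply $I-P_m$ only to the centered fluctuation and control it in Hilbert--Schmidt norm on the range of $I-P_m$.

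\emph{Preferred route.} A cleaner plan bounds the fluctuation in Hilbert--Schmidt norm first. By boundedness and the mixing covariance inequality for fourth-order products, using $q\ge3$ so that $\sum_h h\,\alpha(h)^{1/2}<\infty$, we get
\[
\EE\Bigl\|\sum_{|h|\le\ell_T}\omega(h/\ell_T)(\widehat\Gamma_h-\EE\widehat\Gamma_h)\Bigr\|_{\HS}^2\le C\ell_T^2/T\to0.
\]
This is a standard fourth-cumulant computation, using that $\EE\|\widehat\Gamma_h-\EE\widehat\Gamma_h\|_{\HS}^2\le C/T$ uniformly in $h$. Hilbert--Schmidt convergence then upgrades to trace norm because both $\widetilde{\mathcal C}_{U,T}$ and $\mathcal C$ are nonnegative. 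The kernel condition gives $\widetilde{\mathcal C}_{U,T}\ge0$ for every sample. For nonnegative self-adjoint operators, convergence in operator norm together with convergence of traces implies trace-norm convergence, by a Grümm-type lemma or by $\|A-B\|_1\le\|P_m(A-B)P_m\|_1+\tr((I-P_m)A)+\tr((I-P_m)B)+\dots$ with $m$ fixed and then $m\to\infty$.

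\emph{Trace convergence.} It remains to show $\tr\widetilde{\mathcal C}_{U,T}\to\tr\mathcal C$. We have
\[
\tr\widetilde{\mathcal C}_{U,T}=T^{-1}\sum_{i,j}\omega((i-j)/\ell_T)\langle U_i,U_j\rangle.
\]
This is a scalar HAC estimator for the bounded mixing sequence of inner products. Its mean converges to $\sum_h\EE\langle U_0,U_h\rangle=\tr\mathcal C$ by the second sum in \eqref{eq:generic_lag_sums}, and its variance is $O(\ell_T^2/T)$ by the same cumulant bound.

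The main obstacle is this last upgrade from HS/operator-norm convergence to trace norm. We would handle it with the tail-trace argument. For fixed $m$, $P_m(\cdot)P_m$ has rank at most $m$, so HS convergence gives trace-norm convergence of the compressed operators. The off-compressed parts are dominated through $\|(I-P_m)A^{1/2}\|_{\HS}^2=\tr((I-P_m)A)$, which is small uniformly in probability: trace convergence together with $\tr(P_m\widetilde{\mathcal C}_{U,T}P_m)\to\tr(P_m\mathcal C P_m)$ forces $\tr((I-P_m)\widetilde{\mathcal C}_{U,T})\to\tr((I-P_m)\mathcal C)$, and the latter tends to $0$ as $m\to\infty$.
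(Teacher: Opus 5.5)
Your final route is correct and is essentially the paper's: the bias is handled by dominated convergence, the fluctuation is of order $\ell_T/\sqrt T$ by mixing covariance bounds, and the upgrade to trace norm uses nonnegativity of the lag-window estimator with the bound $\|P\mathcal S Q\|_1\le\{\tr(P\mathcal SP)\tr(Q\mathcal SQ)\}^{1/2}$ on finite-rank compressions.

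\emph{A justification to repair.} Your stated reason for the $C\ell_T^2/T$ bound is wrong. The condition $q\ge3$ does not give $\sum_h h\,\alpha(h)^{1/2}<\infty$: with $\alpha(h)=O(h^{-q-\iota})$ this requires $q+\iota>4$. The repair is that the $U_i$ are bounded, so the covariance inequality is linear in $\alpha$. The paper's three-range split ($s\ge h$, $s\le h/2$, $h/2<s<h$) then needs only $\sum_s\alpha(s)<\infty$ and $\sup_h h\,\alpha(h)^2<\infty$. The same split, combined with the Hilbert-space covariance inequality for bounded elements, also gives your per-lag Hilbert--Schmidt bound, so the claimed estimate survives once the argument is substituted.

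\emph{An unnecessary step.} The separate variance bound for $\tr\widetilde{\mathcal C}_{U,T}$ can be dropped. The paper works in expectation, where $\EE\tr(Q_m\widetilde{\mathcal C}_{U,T}Q_m)=\tr(Q_m\EE\widetilde{\mathcal C}_{U,T}Q_m)$ and $\EE\tr\widetilde{\mathcal C}_{U,T}$ converge by the bias step alone, and Cauchy--Schwarz handles the cross term. As a result, only fixed-direction quadratic-form variances, polarized on a fixed finite-rank projection, are needed rather than a global Hilbert--Schmidt bound.

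You can also delete the abandoned ``tail part'' paragraph.
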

\begin{proof}
Write $\mathcal S_T=\widetilde{\mathcal C}_{U,T}$ and
$\mathcal B_T=\EE\mathcal S_T$. Stationarity gives
\[
\mathcal B_T
=\sum_{|h|<T}(1-|h|/T)\omega(h/\ell_T)\Gamma_h.
\]
Since $\omega$ is bounded and continuous at zero with $\omega(0)=1$,
\eqref{eq:generic_lag_sums} and dominated convergence give
\begin{equation}
\|\mathcal B_T-\mathcal C\|_1\longrightarrow0.
\label{eq:hac_bias}
\end{equation}

For a fixed $f\in\mathcal H$, put
$X_i=\langle U_i,f\rangle_{\mathcal H}$,
$\gamma_h=\EE(X_0X_h)$, and $Y_{i,h}=X_iX_{i+h}$ for $h\ge0$.
The process $(X_i)$ is centered and uniformly bounded. We establish
\begin{equation}
\sup_{h\ge0}\sum_{s\in\mathbb Z}
|\Cov(Y_{0,h},Y_{s,h})|\le C_f.
\label{eq:hac_fixed_lag_sum}
\end{equation}
Set $\beta(0)=\beta(1)=1$ and $\beta(s)=\alpha_\rho(s)$ for $s\ge2$.
The bounded-variable mixing inequality gives
$|\gamma_h|\le C_f\beta(h)$. If $s\ge h$, the two lag products are
separated by $s-h$, so
$|\Cov(Y_{0,h},Y_{s,h})|\le C_f\beta(s-h)$.
For $0\le s\le h/2$ and $h>0$, use the identity
\[
\Cov(Y_{0,h},Y_{s,h})
=\Cov(X_0X_s,X_hX_{h+s})+\gamma_s^2-\gamma_h^2.
\]
The resulting bound is
$C_f\{\beta(h-s)+\beta(s)^2+\beta(h)^2\}$.
For $h/2<s<h$, centering $X_0$ gives
\[
|\EE(X_0X_sX_hX_{h+s})|
=|\Cov(X_0,X_sX_hX_{h+s})|\le C_f\beta(s),
\]
and hence the bound $C_f\{\beta(s)+\beta(h)^2\}$.
Summing over the three ranges yields, for $h\ge1$,
\[
\sum_{s\ge0}|\Cov(Y_{0,h},Y_{s,h})|
\le C_f\left\{\sum_{s\ge0}\bigl[\beta(s)+\beta(s)^2\bigr]
+h\beta(h)^2\right\}\le C_f,
\]
where the last bound is uniform in $h$ by the maintained polynomial
mixing rate. The case $h=0$ uses the first bound alone; stationarity
and covariance symmetry handle negative $s$. This proves
\eqref{eq:hac_fixed_lag_sum}.

It follows that
\begin{equation}
\Var\left\{\frac1T\sum_{i=1}^{T-h}Y_{i,h}\right\}
\le C_f/T,\qquad 0\le h<T.
\label{eq:hac_direction_variance}
\end{equation}
There are $O(\ell_T)$ nonzero lag weights. Minkowski's inequality gives
\[
\bigl\{\EE|\langle(\mathcal S_T-\mathcal B_T)f,f\rangle_{\mathcal H}|^2
\bigr\}^{1/2}\le C_f\ell_T/\sqrt T\longrightarrow0.
\]
For any fixed finite-rank orthogonal projection $P$, polarization gives
convergence for every matrix entry on its range. Equivalence of
finite-dimensional norms and \eqref{eq:hac_bias} then imply
\[
\EE\|P(\mathcal S_T-\mathcal C)P\|_1\longrightarrow0.
\]

To pass to trace norm on $\mathcal H$, we use the nonnegativity of
$\mathcal S_T$ supplied by the kernel condition. For a nonnegative
trace-class operator $\mathcal S$, let $P$ be a finite-rank orthogonal
projection and $Q=I_{\mathcal H}-P$. The off-diagonal block satisfies
\[
\begin{aligned}
\|P\mathcal S Q\|_1
&=\|(P\mathcal S^{1/2})(\mathcal S^{1/2}Q)\|_1\\
&\le\{\tr(P\mathcal S P)\tr(Q\mathcal S Q)\}^{1/2}.
\end{aligned}
\]
Using $\mathcal S-P\mathcal S P
=Q\mathcal S Q+P\mathcal S Q+Q\mathcal S P$ gives
\begin{equation}
\|\mathcal S-P\mathcal S P\|_1
\le\tr(Q\mathcal S Q)
+2\{\tr(\mathcal S)\tr(Q\mathcal S Q)\}^{1/2}.
\label{eq:positive_tail}
\end{equation}
The limit $\mathcal C$ is nonnegative by
Lemmas~\ref{lem:latent_covariance} and \ref{lem:normalization_covariance}.
Suppose first that $\mathcal C\ne0$. Let $P_m$ project onto the span of
$m$ orthonormal eigenfunctions corresponding to its largest strictly
positive eigenvalues, and put
$Q_m=I_{\mathcal H}-P_m$ and $\tau_m=\tr(Q_m\mathcal C Q_m)$.
If $\mathcal C$ has finite rank, take all the corresponding orthonormal
eigenfunctions once $m$ reaches that rank. For each fixed $m$, \eqref{eq:hac_bias} gives
\[
\EE\tr(Q_m\mathcal S_TQ_m)\longrightarrow\tau_m,
\qquad \EE\tr(\mathcal S_T)\longrightarrow\tr(\mathcal C).
\]
Apply \eqref{eq:positive_tail} and Cauchy--Schwarz. Combining these
limits with the fixed-projection convergence and
$\|\mathcal C-P_m\mathcal C P_m\|_1=\tau_m$ gives
\[
\limsup_{T\to\infty}\EE\|\mathcal S_T-\mathcal C\|_1
\le2\tau_m+2\{\tr(\mathcal C)\tau_m\}^{1/2}.
\]
Letting $m\to\infty$ makes the right side vanish. If $\mathcal C=0$,
nonnegativity instead gives
$\EE\|\mathcal S_T\|_1=\tr(\mathcal B_T)\to0$ directly.
Markov's inequality proves \eqref{eq:latent_hac}.
\end{proof}

\begin{lemma}
\label{lem:residual_recovery}
For $U_i=A_i^\rho$ and $\widehat U_i=\widehat A_i^\rho$,
\begin{equation}
a_{T,n}:=\frac1T\sum_i\|\widehat U_i-U_i\|_{\mathcal H}^2
=o_P(T^{-1/2}).
\label{eq:residual_rate}
\end{equation}
\end{lemma}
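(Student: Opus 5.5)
We decompose the residual error into a daily recovery error and a mean-estimation error. Since $\widehat U_i-U_i=(\widehat\rho_i-\widetilde\rho_u)-(\rho_i-r_u)=e_i-(\widetilde\rho_u-r_u)$ with $e_i=\widehat\rho_i-\rho_i$, the elementary inequality $\|a-b\|^2\le2\|a\|^2+2\|b\|^2$ gives
\[
a_{T,n}\le\frac2T\sum_{i=1}^T\|e_i\|_{\mathcal H}^2+2\|\widetilde\rho_u-r_u\|_{\mathcal H}^2 .
\]
We then show that each term is $o_P(T^{-1/2})$.

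The first term is handled directly by Proposition~\ref{prop:daily_recovery}. Under Assumption~\ref{ass:rates_finite_jump}(ii), that proposition gives $T^{-1}\sum_i\|e_i\|_{\mathcal H}^2=O_P(k_n^{-1}+L_n)$. Because $T\asymp n^c$ and $k_n\asymp n^d$ with $c/2<d<1-c/2$, we have $\sqrt T/k_n\to0$ and $\sqrt T L_n\to0$. The first term is therefore $o_P(T^{-1/2})$, which is exactly the final assertion of Proposition~\ref{prop:daily_recovery}.

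For the second term we use the expansion \eqref{eq:mean_expansion} from Theorem~\ref{thm:finite_jump_fclt_unnormalized}. This gives $\sqrt T(\widetilde\rho_u-r_u)\Rightarrow G_\rho$ in $\mathcal H$, so $\|\widetilde\rho_u-r_u\|_{\mathcal H}=O_P(T^{-1/2})$ by tightness of the norm under weak convergence. The squared term is then $O_P(T^{-1})=o_P(T^{-1/2})$. Adding the two bounds proves \eqref{eq:residual_rate}.

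There is no genuine obstacle here. The only subtle point is that $\widetilde\rho_u-r_u$ must be controlled at rate $T^{-1/2}$, and consistency alone would not suffice for that. The FCLT, or the mean approximation Lemma~\ref{lem:mean_approximation} combined with the latent CLT in Lemma~\ref{lem:bounded_mixing_hilbert_clt}, supplies this rate. Alternatively, one can avoid the FCLT by bounding $\EE\|\bar\rho_T-r_u\|_{\mathcal H}^2\le T^{-1}\sum_h\|\Gamma_{\rho,h}\|_1$ via \eqref{eq:trace_lag_sum}, and controlling $\widetilde\rho_u-\bar\rho_T$ through Lemma~\ref{lem:mean_approximation}.
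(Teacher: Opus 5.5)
Your proposal is correct and follows essentially the same route as the paper: you write $\widehat A_i^\rho-A_i^\rho=e_i^\rho-(\widetilde\rho_u-r_u)$, apply the elementary inequality $\|a-b\|_{\mathcal H}^2\le 2\|a\|_{\mathcal H}^2+2\|b\|_{\mathcal H}^2$, and then bound the first term by Proposition~\ref{prop:daily_recovery} and the second by the $O_P(T^{-1/2})$ rate from the functional CLT. Your closing remark, which obtains that rate from \eqref{eq:trace_lag_sum} together with Lemma~\ref{lem:mean_approximation} instead of weak convergence, is also valid, though the paper does not need it.
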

\begin{proof}
Put $e_i^\rho=\widehat\rho_i-\rho_i$ and
$h_T=\widetilde\rho_u-r_u=O_P(T^{-1/2})$ in $\mathcal H$.
Then $\widehat A_i^\rho-A_i^\rho=e_i^\rho-h_T$, whence
\[
\frac1T\sum_i\|\widehat A_i^\rho-A_i^\rho\|_{\mathcal H}^2
\le\frac2T\sum_i\|e_i^\rho\|_{\mathcal H}^2
+2\|h_T\|_{\mathcal H}^2
=o_P(T^{-1/2})
\]
by Proposition~\ref{prop:daily_recovery} and the functional CLT.
\end{proof}

\begin{lemma}
\label{lem:hac_replacement}
If $a_{T,n}$ satisfies \eqref{eq:residual_rate} and
$\ell_T=o(T^{1/2})$, then
\begin{equation}
\left\|\frac1T\sum_{i,j}\omega((i-j)/\ell_T)
(\widehat U_i\otimes\widehat U_j-U_i\otimes U_j)\right\|_1=o_P(1).
\label{eq:replacement}
\end{equation}
\end{lemma}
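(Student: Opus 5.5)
The plan is to write $\widehat U_i=U_i+\Delta_i$, with $\Delta_i=\widehat U_i-U_i$, and expand the difference of outer products as
\[
\widehat U_i\otimes\widehat U_j-U_i\otimes U_j
=\Delta_i\otimes U_j+U_i\otimes\Delta_j+\Delta_i\otimes\Delta_j .
\]
The weighted sum then splits into three operators, and each is bounded in trace norm separately. The main tool is the rank-one identity $\|f\otimes g\|_1=\|f\|_{\mathcal H}\|g\|_{\mathcal H}$ combined with the triangle inequality. Since $\omega$ is bounded and supported on $[-1,1]$, for each $i$ at most $2\ell_T+1$ indices $j$ carry a nonzero weight, and every weight is at most $\|\omega\|_\infty$.

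For the quadratic term I would use $\|\Delta_i\|\|\Delta_j\|\le(\|\Delta_i\|^2+\|\Delta_j\|^2)/2$. Summing over pairs with $|i-j|\le\ell_T$ gives
\[
\Bigl\|\frac1T\sum_{i,j}\omega\bigl((i-j)/\ell_T\bigr)\Delta_i\otimes\Delta_j\Bigr\|_1
\le C\ell_T\,\frac1T\sum_i\|\Delta_i\|_{\mathcal H}^2=C\ell_T\,a_{T,n}.
\]
By \eqref{eq:residual_rate}, $a_{T,n}=o_P(T^{-1/2})$, and with $\ell_T=o(T^{1/2})$ this product is $o_P(1)$.

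For the two cross terms, the same counting and Cauchy--Schwarz give
\[
\frac1T\sum_{|i-j|\le\ell_T}\|\Delta_i\|\,\|U_j\|
\le C\ell_T\Bigl(\frac1T\sum_i\|\Delta_i\|^2\Bigr)^{1/2}\Bigl(\frac1T\sum_j\|U_j\|^2\Bigr)^{1/2}.
\]
The second factor is $O(1)$ because $\|U_j\|_{\mathcal H}\le C$. This leaves $\ell_T a_{T,n}^{1/2}=o_P(\ell_T T^{-1/4})$, which is \emph{not} $o_P(1)$ under the stated hypothesis $\ell_T=o(T^{1/2})$. This is the main obstacle, and the crude triangle inequality is not sufficient for the cross terms.

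To handle the cross terms I would use the nonnegativity structure instead. Write the cross terms as $\mathcal X_T+\mathcal X_T^*$, where $\mathcal X_T=T^{-1}\sum_{i,j}\omega((i-j)/\ell_T)\Delta_i\otimes U_j$. The kernel matrix $W=[\omega((i-j)/\ell_T)]$ is positive semidefinite, so $W=B^\top B$. Define $\widetilde\Delta_k=\sum_iB_{ki}\Delta_i$ and $\widetilde U_k=\sum_jB_{kj}U_j$. Then $\mathcal X_T=T^{-1}\sum_k\widetilde\Delta_k\otimes\widetilde U_k$, and the trace-norm Cauchy--Schwarz inequality $\|\sum_k f_k\otimes g_k\|_1\le(\sum_k\|f_k\|^2)^{1/2}(\sum_k\|g_k\|^2)^{1/2}$ gives
\[
\|\mathcal X_T\|_1\le\bigl\{\tr(\mathcal S_T^\Delta)\,\tr(\widetilde{\mathcal C}_{U,T})\bigr\}^{1/2}.
\]
Here $\mathcal S_T^\Delta$ is the quadratic-term operator. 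Its trace is $O_P(\ell_T a_{T,n})=o_P(1)$ by the first step. The trace of $\widetilde{\mathcal C}_{U,T}$ is $O_P(1)$ by Lemma~\ref{lem:latent_hac}, or more simply because its expectation $\tr\mathcal B_T$ is bounded using \eqref{eq:generic_lag_sums} and Markov's inequality. The product is therefore $o_P(1)$, which proves \eqref{eq:replacement}.

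If Lemma~\ref{lem:latent_hac}'s requirement $\ell_T\to\infty$ is unavailable, I would bound $\EE\tr\widetilde{\mathcal C}_{U,T}=\tr\mathcal B_T$ directly, since this bound needs no condition on $\ell_T$. The remaining points to check are the PSD factorization and the trace-norm Cauchy--Schwarz inequality; both are routine.
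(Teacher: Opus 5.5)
Your proof is correct and essentially the paper's. The paper also factors the positive semidefinite weight matrix through $\mathbf W^{1/2}$, with $\mathsf L_U\mathsf L_U^*=\widetilde{\mathcal C}_{U,T}$, and bounds the three terms by $2\|\mathsf L_U\|_{\HS}\|\mathsf L_d\|_{\HS}+\|\mathsf L_d\|_{\HS}^2$, using $\|\mathsf L_d\|_{\HS}^2\le\|\mathbf W\|_{\op}a_{T,n}\le C\ell_Ta_{T,n}$ and the uniform bound on $\EE\tr\widetilde{\mathcal C}_{U,T}$ with Markov's inequality, exactly as you do.
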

\begin{proof}
Put $d_i=\widehat U_i-U_i$ and
$\mathbf W=[\omega((i-j)/\ell_T)]_{i,j=1}^T$.
Under the kernel assumptions, $\mathbf W$ is positive semidefinite.
Its absolute row sums give $\|\mathbf W\|_{\op}\le C\ell_T$.
Factor the lag-window form through $\mathbf W^{1/2}$ by defining
$\mathsf L_U,\mathsf L_d:\RR^T\to\mathcal H$ as
\[
\mathsf L_Uz=T^{-1/2}\sum_i(\mathbf W^{1/2}z)_iU_i,
\qquad
\mathsf L_dz=T^{-1/2}\sum_i(\mathbf W^{1/2}z)_id_i.
\]
For $\mathsf L_d$, we have
\[
\begin{aligned}
\|\mathsf L_d\|_{\HS}^2
&=\frac1T\sum_{i,j}W_{ij}\langle d_i,d_j\rangle_{\mathcal H}\\
&\le\|\mathbf W\|_{\op}a_{T,n}
\le C\ell_Ta_{T,n}=o_P(1).
\end{aligned}
\]
We also have
$\mathsf L_U\mathsf L_U^*=\widetilde{\mathcal C}_{U,T}$.
Stationarity and \eqref{eq:generic_lag_sums} give
\[
\EE\tr(\widetilde{\mathcal C}_{U,T})
\le C\sum_{h\in\mathbb Z}|\tr(\Gamma_h)|<\infty
\]
uniformly in $T$. Nonnegativity and Markov's inequality imply
$\|\mathsf L_U\|_{\HS}^2
=\tr(\widetilde{\mathcal C}_{U,T})=O_P(1)$.
The operator in \eqref{eq:replacement} is
\[
(\mathsf L_U+\mathsf L_d)(\mathsf L_U+\mathsf L_d)^*
-\mathsf L_U\mathsf L_U^*
=\mathsf L_U\mathsf L_d^*+\mathsf L_d\mathsf L_U^*
+\mathsf L_d\mathsf L_d^*.
\]
Thus the Hilbert--Schmidt product inequality yields
\begin{equation}
\begin{aligned}
&\|\mathsf L_U\mathsf L_d^*+\mathsf L_d\mathsf L_U^*
+\mathsf L_d\mathsf L_d^*\|_1\\
&\qquad\le 2\|\mathsf L_U\|_{\HS}\|\mathsf L_d\|_{\HS}
+\|\mathsf L_d\|_{\HS}^2=o_P(1).
\end{aligned}
\qedhere
\label{eq:hac_replacement_bound}
\end{equation}
\end{proof}

\begin{proof}[Proof of Theorem~\ref{thm:hac}]
Apply Lemmas~\ref{lem:latent_hac}, \ref{lem:residual_recovery},
and \ref{lem:hac_replacement} to $U_i=A_i^\rho$ with bandwidth
$\ell_T^\rho$. The triangle inequality proves
\eqref{eq:raw_hac_consistency}. The same argument with bandwidth
$\ell_T^k$ gives
$\|\widehat{\mathcal C}_\rho[\ell_T^k]-\mathcal C_\rho\|_1\to_P0$.

For the normalized estimator, consistency of $\widetilde\rho_u$,
$\bar r_u\ne0$, and $\varepsilon_T\downarrow0$ give
$\PP\{|\widehat{\bar r}_u|>\varepsilon_T\}\to1$.
Lemma~\ref{lem:normalization} gives
$\|\widehat{\mathsf D}_u-\mathsf D_u\|_{\op}=o_P(1)$ and
$\|\widehat{\mathsf D}_u\|_{\op}=O_P(1)$, where
$\mathsf D_u=D\Phi_{r_u}$. The exact relation
\eqref{eq:normalized_hac_conjugation} and
$\mathcal C_k=\mathsf D_u\mathcal C_\rho\mathsf D_u^*$ yield
\begin{align*}
\|\widehat{\mathcal C}_k-\mathcal C_k\|_1
\le{}&\|\widehat{\mathsf D}_u\|_{\op}^2
\|\widehat{\mathcal C}_\rho[\ell_T^k]-\mathcal C_\rho\|_1\\
&+\|\mathcal C_\rho\|_1
(\|\widehat{\mathsf D}_u\|_{\op}+\|\mathsf D_u\|_{\op})
\|\widehat{\mathsf D}_u-\mathsf D_u\|_{\op}
=o_P(1),
\end{align*}
which proves \eqref{eq:normalized_hac_consistency}.
\end{proof}

\begin{proof}[Proof of Corollary~\ref{cor:linear_inference}]
The functional CLT under $f\mapsto\langle f,g\rangle_{\mathcal H}$ gives
the scalar normal limit with variance $v_g$. Moreover,
\[
|\widehat v_g-v_g|
\le\|\widehat{\mathcal C}_\rho-\mathcal C_\rho\|_1
\|g\|_{\mathcal H}^2\xrightarrow{P}0
\]
by Theorem~\ref{thm:hac}. If $v_g>0$, Slutsky's theorem gives the
studentized limit and coverage. For the normalized functional, use
$\mathcal C_k$, $\widehat{\mathcal C}_k$, and $v_{k,g}>0$ in the same
argument.
\end{proof}

\subsection{Eigenvalues, conditional laws, and critical values}
\label{app:calibration}

\begin{lemma}
\label{lem:calibration}
Suppose $\widehat{\mathcal C}$ and $\mathcal C$ are nonnegative
self-adjoint trace-class operators and
$\|\widehat{\mathcal C}-\mathcal C\|_1\to_P0$.
Let $(\widehat\lambda_j)$ and $(\lambda_j)$ be their ordered
nonnegative eigenvalues, with multiplicity and completed by zeros. Then
\begin{equation}
\sum_{j\ge1}|\widehat\lambda_j-\lambda_j|\xrightarrow{P}0.
\label{eq:spectral_l1}
\end{equation}
Let $(\eta_j)$ be an i.i.d. sequence of standard normal random variables,
independent of the data. Write $\PP_\eta$ and $\EE_\eta$ for probability
and expectation with respect to these variables, and set
\[
\widehat{\mathcal Q}=\sum_j\widehat\lambda_j\eta_j^2,
\qquad
\mathcal Q=\sum_j\lambda_j\eta_j^2.
\]
The conditional law of $\widehat{\mathcal Q}$ given the data converges
weakly in probability to the law of $\mathcal Q$.
If $\tr(\mathcal C)>0$, the distribution function $F$ of
$\mathcal Q$ is
continuous on $\RR$ and strictly increasing on $(0,\infty)$.
Its $(1-\alpha)$ quantile is positive and unique for every
$\alpha\in(0,1)$, and the corresponding conditional quantile of
$\widehat{\mathcal Q}$ converges to it in probability.
\end{lemma}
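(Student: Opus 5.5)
The plan has four parts: an eigenvalue inequality for \eqref{eq:spectral_l1}, a characteristic-function argument for the conditional law, a support argument for continuity of $F$, and a standard quantile argument.

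\emph{Step 1: $\ell^1$ convergence of eigenvalues.} Write $\widehat{\mathcal C}=\widehat{\mathcal C}^+$ for the nonnegative operator. I will invoke the Lidskii--Mirsky type inequality for self-adjoint compact operators: if $A,B$ are nonnegative trace-class and their eigenvalues are ordered nonincreasingly and completed by zeros, then $\sum_j|\lambda_j(A)-\lambda_j(B)|\le\|A-B\|_1$. To keep the argument self-contained, I would reduce to finite dimensions. Let $P$ be the finite-rank projection onto the span of the first $N$ eigenvectors of both operators, and use the finite-dimensional Hoffman--Wielandt/Lidskii inequality (majorization of $\lambda(A)-\lambda(B)$ by $\lambda(A-B)$). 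The tails are then controlled by Ky Fan's maximum principle, which gives $\sum_{j\le N}\lambda_j(A)=\max_{\operatorname{rank}P=N}\tr(PAP)$. In practice I will simply cite the infinite-dimensional version of the Lidskii inequality (e.g.\ Bhatia or Simon's \emph{Trace Ideals}). Then \eqref{eq:spectral_l1} follows immediately from $\|\widehat{\mathcal C}-\mathcal C\|_1\to_P0$.

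\emph{Step 2: conditional weak convergence.} I will compare Laplace transforms or characteristic functions. Given the data, $\EE_\eta e^{it\widehat{\mathcal Q}}=\prod_j(1-2it\widehat\lambda_j)^{-1/2}$, and similarly for $\mathcal Q$. A simpler route is to couple the two with the same $\eta$: then $\EE_\eta|\widehat{\mathcal Q}-\mathcal Q|\le\sum_j|\widehat\lambda_j-\lambda_j|\EE\eta_j^2=\sum_j|\widehat\lambda_j-\lambda_j|\to_P0$. Hence the bounded-Lipschitz distance between the conditional law of $\widehat{\mathcal Q}$ and the law of $\mathcal Q$ is at most this sum and tends to zero in probability. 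This gives weak convergence in probability. I take this as the main line because it avoids product limits.

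\emph{Step 3: continuity and strict monotonicity of $F$ when $\tr(\mathcal C)>0$.} Some $\lambda_1>0$, and $\mathcal Q=\lambda_1\eta_1^2+R$ with $R\ge0$ independent of $\eta_1$. Since $\lambda_1\eta_1^2$ has a density, conditioning on $R$ shows $\mathcal Q$ has a density, so $F$ is continuous (and $F(0)=0$). For strict increase on $(0,\infty)$, I will show that every interval $(a,b)\subset(0,\infty)$ has positive probability. The series $R$ converges a.s.\ and $\PP(R<\epsilon)>0$ for every $\epsilon>0$: truncate $R$ at a finite index $N$ with $\EE\sum_{j>N}\lambda_j\eta_j^2<\epsilon/4$, and use independence together with $\PP(\sum_{2\le j\le N}\lambda_j\eta_j^2<\epsilon/2)>0$. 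Then take $\epsilon<(b-a)/2$ and the event $\{R<\epsilon,\ \lambda_1\eta_1^2\in(a,b-\epsilon)\}$, which has positive probability by independence. Hence the $(1-\alpha)$ quantile $c_{1-\alpha}$ is unique and positive.

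\emph{Step 4: quantile convergence.} This is the standard argument. Fix $\delta>0$. By strict monotonicity, $F(c_{1-\alpha}-\delta)<1-\alpha<F(c_{1-\alpha}+\delta)$. Weak convergence in probability and continuity of $F$ give $\PP_\eta(\widehat{\mathcal Q}\le x\mid\text{data})\to_PF(x)$ at $x=c_{1-\alpha}\pm\delta$. So with probability tending to one the conditional quantile lies in $[c_{1-\alpha}-\delta,c_{1-\alpha}+\delta]$. The main obstacle is Step 1 in infinite dimensions, specifically justifying the ordering with completion by zeros. I would handle this through the finite-rank approximation combined with Ky Fan's principle, or by citing Lidskii's theorem directly.
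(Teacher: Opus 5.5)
Your proposal is correct. Steps 2--4 essentially coincide with the paper's proof: the same-$\eta$ coupling with the bounded-Lipschitz criterion, the decomposition $\mathcal Q=\lambda_1\eta_1^2+R_\lambda$ with truncation plus Markov to get $\PP(R_\lambda<\epsilon)>0$, and the sandwich $F(c-\delta)<1-\alpha<F(c+\delta)$ for the quantile.

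The genuine difference is Step~1. You invoke the Lidskii--Mirsky inequality $\sum_j|\lambda_j(A)-\lambda_j(B)|\le\|A-B\|_1$ for nonnegative trace-class operators. This bound is deterministic and holds sample by sample, so any rate for $\|\widehat{\mathcal C}-\mathcal C\|_1$ passes directly to the eigenvalues. The paper avoids Lidskii altogether. It uses only Weyl's inequality, $|\widehat\lambda_j-\lambda_j|\le\|\widehat{\mathcal C}-\mathcal C\|_{\op}$ for each fixed $j$, together with trace convergence. It then controls the tail through nonnegativity:
\[
\sum_{j\ge1}|\widehat\lambda_j-\lambda_j|
\le2\sum_{j\le m}|\widehat\lambda_j-\lambda_j|
+|\tr(\widehat{\mathcal C})-\tr(\mathcal C)|+2\sum_{j>m}\lambda_j ,
\]
and lets $m\to\infty$. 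The paper's route is fully elementary and needs only operator-norm and trace convergence, but it gives convergence without a rate. Your route gives a sharper quantitative statement at the cost of citing a deeper theorem.

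If you want to keep your version self-contained, the finite-rank reduction is simpler than you suggest, and no Ky Fan tail argument is needed. Let $P$ project onto a finite-dimensional space containing the top $N$ eigenvectors of both operators. By min--max, $\lambda_j(PAP)=\lambda_j(A)$ and $\lambda_j(PBP)=\lambda_j(B)$ for $j\le N$. The finite-dimensional Lidskii inequality on $\mathrm{ran}\,P$ then gives
\[
\sum_{j\le N}|\lambda_j(A)-\lambda_j(B)|\le\|P(A-B)P\|_1\le\|A-B\|_1 .
\]
This bound is uniform in $N$, so letting $N\to\infty$ finishes the proof.
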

\begin{proof}
Trace-norm convergence implies operator-norm and trace convergence.
The variational characterization therefore gives
$\widehat\lambda_j\to_P\lambda_j$ for every fixed $j$.
For each fixed $m$, nonnegativity of the eigenvalues yields
\[
\begin{aligned}
\sum_{j\ge1}|\widehat\lambda_j-\lambda_j|
\le{}&2\sum_{j\le m}|\widehat\lambda_j-\lambda_j|
+|\tr(\widehat{\mathcal C})-\tr(\mathcal C)|\\
&+2\sum_{j>m}\lambda_j.
\end{aligned}
\]
The first two terms tend to zero in probability. Taking $m$ sufficiently
large makes the last term arbitrarily small, proving
\eqref{eq:spectral_l1}.

Use the same sequence $(\eta_j)$ in both sums. Both are
almost surely finite by nonnegativity and finite trace, and
\[
\EE_\eta(|\widehat{\mathcal Q}-\mathcal Q|\mid\text{data})
\le\sum_j|\widehat\lambda_j-\lambda_j|\xrightarrow{P}0.
\]
The bounded Lipschitz criterion now gives conditional weak convergence
in probability.

Suppose $\tr(\mathcal C)>0$. Then $\lambda_1>0$, and
\[
\mathcal Q=\lambda_1\eta_1^2+R_\lambda,
\qquad R_\lambda=\sum_{j\ge2}\lambda_j\eta_j^2,
\]
where $R_\lambda$ is independent of $\eta_1$. Since
$\lambda_1\eta_1^2$ has no atoms, $F$ is continuous.
For every $\epsilon>0$, choose $m\ge2$ so that
$\sum_{j>m}\lambda_j<\epsilon/4$. Markov's inequality gives
\[
\PP_\eta\left(\sum_{j>m}\lambda_j\eta_j^2<\epsilon/2\right)>1/2,
\]
whereas the finite sum $\sum_{j=2}^m\lambda_j\eta_j^2$ is less than
$\epsilon/2$ with positive probability. Independence of these sums
therefore gives $\PP_\eta(R_\lambda<\epsilon)>0$.
For $0<a<b$, conditional on any $R_\lambda<a$, the positive density of
$\lambda_1\eta_1^2$ on $(0,\infty)$ gives positive probability to
$(a-R_\lambda,b-R_\lambda)$. Thus
$\PP_\eta(a<\mathcal Q<b)>0$, so $F$ is strictly increasing on
$(0,\infty)$. Applying the same finite-sum and tail argument to
$\mathcal Q$ gives $F(x)>0$ for $x>0$; the unbounded support of
$\lambda_1\eta_1^2$ gives $F(x)<1$ for every finite $x$.

Let $c>0$ be the unique population $(1-\alpha)$ quantile, and set
$\widehat F_T(x)=\PP_\eta(\widehat{\mathcal Q}\le x\mid\text{data})$.
Conditional weak convergence gives $\widehat F_T(x)\to_P F(x)$ at
every fixed $x$. For every $\delta>0$,
\[
F(c-\delta)<1-\alpha<F(c+\delta),
\]
with the first inequality also valid when $c-\delta\le0$.
Consequently, with probability tending to one,
\[
\widehat F_T(c-\delta)<1-\alpha<\widehat F_T(c+\delta).
\]
These inequalities place the conditional quantile between $c-\delta$
and $c+\delta$, proving its convergence.
\end{proof}

\begin{proof}[Proof of Corollary~\ref{cor:confidence_sets}]
The continuous mapping theorem and Gaussian spectral expansion give
\[
T\|\widetilde\rho_u-r_u\|_{\mathcal H}^2
\Rightarrow\sum_j\lambda_{\rho,j}\eta_j^2.
\]
Theorem~\ref{thm:hac} and Lemma~\ref{lem:calibration} give
$\widehat c_{\rho,1-\alpha}\to_P c_{\rho,1-\alpha}$ when
$\tr(\mathcal C_\rho)>0$. Continuity of the limiting distribution at
this quantile gives coverage $1-\alpha$ asymptotically.
The estimated operator has range in the $m_n$-dimensional step space,
so \eqref{eq:gaussian_calibration} includes all its nonzero eigenvalues.
The normalized case follows in the same way when $\tr(\mathcal C_k)>0$.
\end{proof}

\begin{proof}[Proof of Corollary~\ref{cor:flatness} and
Remark~\ref{rem:projection}]
For the normalized test, consistency, $\bar r_u\ne0$, and
$\varepsilon_T\downarrow0$ give
$\PP\{|\widehat{\bar r}_u|>\varepsilon_T\}\to1$ under the null and
every fixed alternative under consideration. Not rejecting when
$|\widehat{\bar r}_u|\le\varepsilon_T$ therefore has no asymptotic effect.
Under the null, $k_u^\circ=\1$,
so the normalized functional CLT and continuity of the squared norm
give the stated limiting law. Lemma~\ref{lem:calibration} gives exact
asymptotic size when $\tr(\mathcal C_k)>0$.

Under a fixed alternative, consistency gives
$Q_T^{\mathrm{flat}}/T\to_P\|k_u^\circ-\1\|_{\mathcal H}^2>0$.
By conditional Markov's inequality,
\[
\widehat c_{k,1-\alpha}
\le\alpha^{-1}\EE_\eta(\widehat{\mathcal Q}_k\mid\text{data})
=\alpha^{-1}\tr(\widehat{\mathcal C}_k)=O_P(1).
\]
The statistic diverges and the critical value remains bounded in
probability, proving consistency. This argument does not require
$\tr(\mathcal C_k)>0$ under the alternative.

For Remark~\ref{rem:projection}, apply the bounded self-adjoint
projection $P_\perp$ to the functional CLT for the unnormalized mean curve. The covariance estimator
is trace-norm consistent because
\[
\|P_\perp(\widehat{\mathcal C}_\rho-\mathcal C_\rho)P_\perp\|_1
\le\|\widehat{\mathcal C}_\rho-\mathcal C_\rho\|_1\xrightarrow{P}0.
\]
Lemma~\ref{lem:calibration} then gives exact asymptotic size when
$\tr(P_\perp\mathcal C_\rho P_\perp)>0$.
At $r_u=\bar r_u\1\ne0$, \eqref{eq:normalization_derivative_section2}
gives $D\Phi_{r_u}=\bar r_u^{-1}P_\perp$.
\end{proof}

\section{Population identities and examples}
\label{app:population}

This appendix compares $r_u(\kappa)=\EE\rho_i(\kappa)$ with the
population correlation ratio formed from the mean covariance coordinates.
Expectations in the population ratios are taken at a fixed time of day,
and the mean marginal variances are assumed positive.

\subsection{A population identity}

Let $\mathcal V_i(\kappa)=\sqrt{c_i^X(\kappa)c_i^Y(\kappa)}$.
Under the maintained moment conditions, the expectations below are finite.
Define
\[
r_{\CF}(\kappa)
=\frac{\EE c_i^{XY}(\kappa)}
 {\sqrt{\EE c_i^X(\kappa)\,\EE c_i^Y(\kappa)}}.
\]
Since $c_i^{XY}=\rho_i\mathcal V_i$, the covariance decomposition gives
\begin{align}
r_{\CF}(\kappa)
={}&\frac{\EE \mathcal V_i(\kappa)}
 {\sqrt{\EE c_i^X(\kappa)\,\EE c_i^Y(\kappa)}}\,r_u(\kappa)\notag\\
&+\frac{\Cov\{\rho_i(\kappa),\mathcal V_i(\kappa)\}}
 {\sqrt{\EE c_i^X(\kappa)\,\EE c_i^Y(\kappa)}}.
\label{eq:population_identity}
\end{align}
By Cauchy--Schwarz, the coefficient of $r_u(\kappa)$ is at most one,
but it may vary with $\kappa$. Thus
$\Cov\{\rho_i(\kappa),\mathcal V_i(\kappa)\}=0$ at every $\kappa$ does not
ensure equal integral-normalized shapes, even when both normalizations
are defined.

\subsection{A common multiplicative specification}

\begin{proposition}
\label{prop:population_proportionality}
Suppose
\[
\sigma_i^X(\kappa)=s_X(\kappa)\Lambda_i^X(\kappa),\qquad
\sigma_i^Y(\kappa)=s_Y(\kappa)\Lambda_i^Y(\kappa),\qquad
\rho_i(\kappa)=k(\kappa)R_i(\kappa),
\]
where $s_X,s_Y$ are positive deterministic functions, $k$ is deterministic,
and the joint distribution of
$(\Lambda_i^X(\kappa),\Lambda_i^Y(\kappa),R_i(\kappa))$ does not depend on
$i$ or $\kappa$. Suppose the random variables in the expectations defining
$a$ and $b$ below are integrable and the two denominator moments are finite
and strictly positive. Then
\[
r_u(\kappa)=a\,k(\kappa),\qquad
r_{\CF}(\kappa)=b\,k(\kappa),
\]
where
\[
a=\EE R_i(\kappa),\qquad
b=\frac{\EE[\Lambda_i^X(\kappa)\Lambda_i^Y(\kappa)R_i(\kappa)]}
 {\sqrt{\EE[\Lambda_i^X(\kappa)^2]\,\EE[\Lambda_i^Y(\kappa)^2]}}.
\]
If $a,b$, and $\int_0^1k(s)\,ds$ are nonzero, the two curves have the
same integral-normalized shape, namely
$k(\kappa)/\int_0^1k(s)\,ds$. The unnormalized curves coincide when $a=b$;
if $k\not\equiv0$, the converse also holds.
\end{proposition}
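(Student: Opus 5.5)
This is a direct computation, carried out pointwise in $\kappa$. Write $c_i^X(\kappa)=s_X(\kappa)^2\Lambda_i^X(\kappa)^2$ and $c_i^Y(\kappa)=s_Y(\kappa)^2\Lambda_i^Y(\kappa)^2$. Since $s_X,s_Y>0$, we have $\sqrt{c_i^X(\kappa)c_i^Y(\kappa)}=s_X(\kappa)s_Y(\kappa)|\Lambda_i^X(\kappa)\Lambda_i^Y(\kappa)|$. The volatilities $\sigma^\wp=\sqrt{c^\wp}$ are nonnegative, so $\Lambda^X,\Lambda^Y\ge0$ and the absolute value can be dropped. (If one insists on signed $\Lambda$, the product $\sigma^X\sigma^Y$ still equals $s_Xs_Y\Lambda^X\Lambda^Y$ by definition.) Therefore
\[
c_i^{XY}(\kappa)=\rho_i(\kappa)\sigma_i^X(\kappa)\sigma_i^Y(\kappa)
=k(\kappa)s_X(\kappa)s_Y(\kappa)\Lambda_i^X(\kappa)\Lambda_i^Y(\kappa)R_i(\kappa).
\]

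First, taking expectations in $\rho_i(\kappa)=k(\kappa)R_i(\kappa)$ gives $r_u(\kappa)=k(\kappa)\EE R_i(\kappa)$. Because the law of $R_i(\kappa)$ does not depend on $(i,\kappa)$, this equals $a\,k(\kappa)$ with $a$ a constant. Second, pulling the deterministic factors out of the expectations,
\[
r_{\CF}(\kappa)=\frac{k(\kappa)s_Xs_Y\,\EE[\Lambda^X\Lambda^YR]}{\sqrt{s_X^2\EE[(\Lambda^X)^2]\,s_Y^2\EE[(\Lambda^Y)^2]}}=b\,k(\kappa).
\]
Here the factors $s_Xs_Y$ cancel by positivity, and $b$ is constant by the invariance of the joint law. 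The integrability and positive-denominator hypotheses make every expression finite and well defined.

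For the shape claim, $\int_0^1 r_u=a\int_0^1k$ and $\int_0^1 r_{\CF}=b\int_0^1k$, and both are nonzero under the stated conditions. Dividing gives the common normalized shape $k/\int_0^1k$, since the constants $a$ and $b$ cancel. For the last claim, if $a=b$ then $r_u=r_{\CF}$ identically. Conversely, if $r_u=r_{\CF}$, then $(a-b)k(\kappa)=0$ for every $\kappa$ (or for almost every $\kappa$, if the equality is read in $\mathcal H$). Since $k\not\equiv0$, meaning $k$ is nonzero on a set of positive measure in the $\mathcal H$ sense, we get $a=b$.

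No step is a genuine obstacle. The only point needing care is the sign convention for $\sigma^X\sigma^Y$ versus $\sqrt{c^Xc^Y}$ in the cancellation of $s_Xs_Y$. I would settle it by noting $\sigma^\wp\ge0$, so that $\sqrt{c^Xc^Y}=\sigma^X\sigma^Y$ exactly.
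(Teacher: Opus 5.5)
Your proposal is correct and follows the paper's own argument. You take expectations of $\rho_i=kR_i$ to get $r_u$, substitute the volatility factorizations into $r_{\CF}$ so that $s_Xs_Y$ cancels, use invariance of the joint law to make $a$ and $b$ constants, and divide by the signed integrals for the shape claim. Your additional remarks, that $\sigma^\wp=\sqrt{c^\wp}\ge0$ gives $\sqrt{c^Xc^Y}=\sigma^X\sigma^Y$ and that the converse $a=b$ follows when $k\not\equiv0$, just make explicit steps the paper leaves implicit.
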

\begin{proof}
Taking expectations of the correlation representation gives the first
formula. Substitution of the two volatility representations in
$r_{\CF}$ cancels $s_Xs_Y$ from its numerator and denominator, giving
the second. Since the joint distribution does not depend on $i$ or
$\kappa$, neither do $a$ and $b$.
Division by the respective signed integrals gives the shape assertion.
\end{proof}

No independence among $\Lambda_i^X$, $\Lambda_i^Y$, and $R_i$ is required.
If $a=0$ or $b=0$, the corresponding curve is identically zero and cannot
be integral-normalized.

\subsection{Different conclusions about intraday constancy}

Take $g_{\mathrm{nt}}=0$, so that $\underline t_i=i-1$. The two examples
below use the same volatility process.

Let $V_t$ be a stationary Ornstein--Uhlenbeck process satisfying
$dV_t=-\lambda V_t\,dt+\sqrt{2\lambda}\,dB_t$, $\lambda>0$, with
$V_0\sim N(0,1)$. Set $U_t=\tanh(V_t)$,
$m_2=\EE(U_t^2)\in(0,1)$,
$\varphi(\kappa)=\sin(2\pi\kappa)$, and
$v(\kappa)=1+0.4\cos(2\pi\kappa)$.
For $|\delta|<1$, define, at $t=i-1+\kappa$,
\begin{equation}
c_i^X(\kappa)=c_i^Y(\kappa)
=v(\kappa)\{1+\delta\varphi(\kappa)U_t\}.
\label{eq:example_variances}
\end{equation}
The two variances are bounded below by $0.6(1-|\delta|)>0$ and
$\EE U_t=0$ by symmetry. Choose nonzero constants $r_0,\gamma,\delta$
with $|r_0|+|\gamma|(1+|\delta|)<1$.

\begin{example}[A constant mean correlation curve]
\label{ex:constant_ED}
For $\rho_i(\kappa)=r_0+\gamma U_t$, the mean of the daily correlation
curves is constant, while the covariance-first curve is not:
\[
r_u(\kappa)=r_0,
\qquad
r_{\CF}(\kappa)=r_0+\delta\gamma m_2\varphi(\kappa).
\]
\end{example}
\begin{proof}
The equality $\EE U_t=0$ gives $r_u(\kappa)=r_0$.
The common variances imply $\mathcal V_i(\kappa)=c_i^X(\kappa)=c_i^Y(\kappa)$,
and
\[
\begin{aligned}
\EE\mathcal V_i(\kappa)&=v(\kappa),\\
\Cov\{\rho_i(\kappa),\mathcal V_i(\kappa)\}
&=v(\kappa)\delta\gamma m_2\varphi(\kappa).
\end{aligned}
\]
The coefficient of $r_u(\kappa)$ in \eqref{eq:population_identity} is
therefore one. Substitution gives the stated $r_{\CF}$, whose sine
coefficient $\delta\gamma m_2$ is nonzero.
\end{proof}

\begin{example}[A constant covariance-first curve]
\label{ex:constant_CF}
With the same variances, let
\[
\rho_i(\kappa)=r_0+\gamma U_t-\delta\gamma m_2\varphi(\kappa).
\]
Then
\[
r_u(\kappa)=r_0-\delta\gamma m_2\varphi(\kappa),
\qquad r_{\CF}(\kappa)=r_0.
\]
\end{example}
\begin{proof}
The deterministic adjustment to Example~\ref{ex:constant_ED} subtracts
$\delta\gamma m_2\varphi(\kappa)$ from $r_u(\kappa)$ and leaves the
covariance with $\mathcal V_i(\kappa)$ unchanged. Since the coefficient
of $r_u(\kappa)$ in \eqref{eq:population_identity} is one,
\[
r_{\CF}(\kappa)=r_u(\kappa)+\delta\gamma m_2\varphi(\kappa)=r_0.
\]
The mean curve $r_u$ is nonconstant because $\delta\gamma m_2\ne0$.
\end{proof}

In both examples, each target integrates to $r_0\ne0$ over $[0,1]$.
Integral normalization therefore preserves the different constancy
conclusions. The parameter restriction keeps both correlation processes
strictly inside $(-1,1)$.

The deterministic intraday functions are smooth and one-periodic, so
they match across day boundaries. For $U_t=\tanh(V_t)$, It\^o's formula
gives the diffusion and drift coefficients
\[
\sqrt{2\lambda}\operatorname{sech}^2(V_t),\qquad
-\lambda V_t\operatorname{sech}^2(V_t)
-2\lambda\tanh(V_t)\operatorname{sech}^2(V_t),
\]
respectively. Both are bounded. The square roots of the variances in
\eqref{eq:example_variances} are smooth functions of $(\kappa,U_t)$,
with bounded derivatives on $[0,1]\times[-1,1]$. Thus the volatility and
correlation processes have bounded drift and diffusion coefficients and
no state jumps.

For the stationarity and mixing condition, take $\mathsf Z_t=V_t$.
The mappings to the variance and correlation coordinates depend on
$\kappa$ but not on $i$.
The OU transition law is $N(e^{-\lambda h}x,1-e^{-2\lambda h})$;
averaged over $x\sim N(0,1)$, its total-variation distance from $N(0,1)$
is $O(e^{-\lambda h})$. The Markov property bounds dependence between
separated past and future sigma-fields by this distance. Hence the
required polynomial mixing rate holds for any fixed $q$.

Take two independent Brownian motions $W^X,W^Y$, independent of the
state process $V$, and define the continuous price parts by
\[
\begin{aligned}
dX_t^c&=\sigma_t^X\,dW_t^X,\\
dY_t^c&=\sigma_t^Y\{\rho_t\,dW_t^X
+\sqrt{1-\rho_t^2}\,dW_t^Y\}.
\end{aligned}
\]
These prices have zero drift and the prescribed spot covariance.
Independent compound Poisson price jumps with constant finite intensities
may be added. The primitive moment and nondegeneracy conditions then hold.

\section{Additional Monte Carlo evidence}
\label{suppsec:simulations}

This appendix gives implementation details and supplementary results for
Section~\ref{sec:simulations}, including recovery errors, confidence-set
coverage, flatness diagnostics, and the paired covariance-first
comparison.

\subsection{Implementation}
\label{suppsec:simulation_implementation}

For the design in Section~\ref{subsec:simulation_design}, each OU state is
advanced by its exact transition on a one-second grid with $23{,}400$
cells per day. The final state of one day is the initial state of the
next. The covariance coordinates are evaluated at the left endpoint of
each cell. Conditional on these values, continuous returns are generated
as follows, with cell indices suppressed and $\Delta t=1/23{,}400$:
\begin{align*}
 \Delta X&=\sqrt{c^X\Delta t}\,\epsilon_X,\\
 \Delta Y&=\sqrt{c^Y\Delta t}
 \{\rho\epsilon_X+(1-\rho^2)^{1/2}\epsilon_Y\},
\end{align*}
where $\epsilon_X$ and $\epsilon_Y$ are independent standard normals,
independent of the state innovations. The variances lie in $[0.45,1.75]$.
Price jumps are independent across assets and of the continuous drivers;
multiple jumps within one cell are summed. Coarser returns are obtained
by summing the one-second returns. Paired comparisons use the same latent
and Brownian paths.

The estimator and threshold parameters are those in
Section~\ref{subsec:simulation_design}. Integrals of estimated step curves
use weight $1/m_n=L_n$ on each block. The population target is known
analytically; it and the latent sample mean are evaluated on the
one-second left grid. For daily recovery, we separate estimation error
from the error of the orthogonal block-average projection, as defined in
\suppref{suppsec:ring1_full}. This projection is used for evaluation and
is distinct from the left-endpoint approximation in the proofs.

For confidence sets and flatness tests, the unnormalized and normalized HAC
estimates use their respective residual curves and denominator $T$.
The primary Parzen bandwidth rule in Section~\ref{subsec:simulation_design}
gives $4,5,6$ in the joint-growth design and $2,4,6$ in the
fixed-frequency design. Operator matrices include the integration
weights. Numerically negligible eigenvalues are set to zero according
to the prescribed floating-point tolerance; materially negative
eigenvalues are not removed by this rule.

Each replication has its own HAC estimate and conditional critical
value. Gaussian squared norms are simulated using $4{,}999$ draws from
a separate calibration random-number stream. Critical values use the
higher empirical quantile, and rejection is strict. For a realized
squared-norm statistic $Q_{\mathrm{obs}}$ and its $b$th simulated
counterpart $Q_b^*$, the reported Monte Carlo $p$-value is
\[
 \widehat p=\frac{1+\#\{Q_b^*\geq Q_{\mathrm{obs}}\}}{4{,}999+1}.
\]
Within each replication, paired calculations on the same intraday grid
reuse the underlying Gaussian draws. Each procedure
uses its own covariance estimate to obtain the corresponding critical
value.

Bandwidths, block sizes, parameter combinations, and reporting thresholds
were fixed before the formal coverage and rejection runs. The replication
counts for the main experiments are given in
Section~\ref{subsec:simulation_design}. The zero-integral coverage
experiment uses $2{,}000$ replications for each configuration. The covariance-first
comparison uses $5{,}000$ under the null and $2{,}000$ under fixed
alternatives. An entry of $1.0000$ with zero plug-in Monte Carlo standard
error (MCSE) means that every simulated sample rejected, not that
population power is known to equal one.

The reported calculations had no local denominator failures, estimator
boundary corrections, or materially negative covariance eigenvalues.
The normalization cutoff was never triggered in calculations of the
normalized estimator.

\subsection{Complete recovery results}
\label{suppsec:ring1_full}

Let $\Pi_n$ denote the orthogonal projection onto blockwise constant
curves, obtained by averaging the latent curve within each block. Since
$\widehat\rho_i$ lies in this space, the daily squared error decomposes as
\[
 \|\widehat\rho_i-\rho_i\|_{\mathcal H}^2
 =\|\widehat\rho_i-\Pi_n\rho_i\|_{\mathcal H}^2
  +\|\Pi_n\rho_i-\rho_i\|_{\mathcal H}^2.
\]
Table~\ref{tab:supp_ring1_daily} reports the square roots of the averaged
squared errors in this decomposition. Table~\ref{tab:supp_ring1_mean}
reports the mean-curve errors defined in
Section~\ref{subsec:simulation_recovery}. Those three norms, unlike the
daily squared projection errors, are not additive in levels or squares.

\begin{table}[p]
\centering
\caption{Daily-curve recovery with and without jumps}
\label{tab:supp_ring1_daily}
\small
\begin{tabular}{rrrrrrrr}
\toprule
$T$ & $n$ & $k_n$ & $r_0$ & Jumps & Estimation & Projection & Full RMSE \\
\midrule
66  & 390  & 30  & 0.2 & No  & 0.1737 & 0.0262 & 0.1757 \\
66  & 390  & 30  & 0.2 & Yes & 0.1739 & 0.0262 & 0.1759 \\
66  & 390  & 30  & 0.6 & No  & 0.1209 & 0.0268 & 0.1238 \\
66  & 390  & 30  & 0.6 & Yes & 0.1226 & 0.0268 & 0.1255 \\
132 & 1560 & 60 & 0.2 & No  & 0.1224 & 0.0187 & 0.1238 \\
132 & 1560 & 60 & 0.2 & Yes & 0.1224 & 0.0187 & 0.1239 \\
132 & 1560 & 60 & 0.6 & No  & 0.0842 & 0.0189 & 0.0863 \\
132 & 1560 & 60 & 0.6 & Yes & 0.0844 & 0.0189 & 0.0865 \\
228 & 4680 & 104 & 0.2 & No  & 0.0929 & 0.0143 & 0.0939 \\
228 & 4680 & 104 & 0.2 & Yes & 0.0929 & 0.0143 & 0.0939 \\
228 & 4680 & 104 & 0.6 & No  & 0.0635 & 0.0144 & 0.0651 \\
228 & 4680 & 104 & 0.6 & Yes & 0.0636 & 0.0144 & 0.0652 \\
\bottomrule
\end{tabular}
\begin{minipage}{0.94\linewidth}
\footnotesize
\emph{Notes:} $N=2{,}000$ in every row and $\delta=0.10$.  The largest
MCSE among the displayed RMSE components is $1.11\times10^{-4}$. The
Estimation, Projection, and Full columns are obtained by averaging the
corresponding integrated squared errors over replications and trading
days and then taking square roots. At full numerical
precision, the squared Estimation and Projection components sum to the
squared Full RMSE; displayed entries are rounded independently.
\end{minipage}
\end{table}

\begin{table}[p]
\centering
\caption{Recovery of the mean curve with and without jumps}
\label{tab:supp_ring1_mean}
\small
\begin{tabular}{rrrrrrrr}
\toprule
$T$ & $n$ & $r_0$ & Jumps & Sampling & HF estimation & Total & $\sqrt T$ HF error \\
\midrule
66  & 390  & 0.2 & No  & 0.01681 & 0.02162 & 0.02729 & 0.1757 \\
66  & 390  & 0.2 & Yes & 0.01681 & 0.02175 & 0.02735 & 0.1767 \\
66  & 390  & 0.6 & No  & 0.01681 & 0.01726 & 0.02377 & 0.1402 \\
66  & 390  & 0.6 & Yes & 0.01681 & 0.01814 & 0.02423 & 0.1474 \\
132 & 1560 & 0.2 & No  & 0.01186 & 0.01080 & 0.01616 & 0.1241 \\
132 & 1560 & 0.2 & Yes & 0.01186 & 0.01081 & 0.01616 & 0.1242 \\
132 & 1560 & 0.6 & No  & 0.01186 & 0.00855 & 0.01460 & 0.0982 \\
132 & 1560 & 0.6 & Yes & 0.01186 & 0.00866 & 0.01464 & 0.0995 \\
228 & 4680 & 0.2 & No  & 0.00898 & 0.00625 & 0.01110 & 0.0944 \\
228 & 4680 & 0.2 & Yes & 0.00898 & 0.00625 & 0.01110 & 0.0944 \\
228 & 4680 & 0.6 & No  & 0.00898 & 0.00492 & 0.01030 & 0.0743 \\
228 & 4680 & 0.6 & Yes & 0.00898 & 0.00494 & 0.01030 & 0.0746 \\
\bottomrule
\end{tabular}
\begin{minipage}{0.94\linewidth}
\footnotesize
\emph{Notes:} $N=2{,}000$ and $\delta=0.10$. Sampling,
high-frequency estimation error, and total are the three norms defined in
Section~\ref{subsec:simulation_recovery}.
Their largest MCSE is $2.02\times10^{-4}$; the largest MCSE in the scaled
last column is $7.42\times10^{-4}$.
\end{minipage}
\end{table}

For the specified jump intensity and mark distribution, the largest
daily RMSE difference between observations with and without jumps is
$0.0017$, at $(T,n,r_0)=(66,390,0.6)$. The differences are smaller at the
finest configuration.

\subsection{Additional confidence-set results}
\label{suppsec:ring2_additional}

Table~\ref{tab:supp_ring2_scalar} reports coverage and mean lengths for
the three prespecified scalar intervals. Coverage of the unnormalized
full-day interval is below the nominal level in all reported configurations.
Close-minus-early coverage is near nominal in the coarsest low-correlation
configuration and is conservative in the finer joint-growth configurations.
When $\left|\int_0^1\widetilde\rho_u(s)\,ds\right|>\varepsilon_T$,
the normalized estimator integrates to one, so no interval is reported
for its full-day integral.

\begin{table}[p]
\centering
\caption{Coverage and mean length of nominal $95\%$ scalar intervals}
\label{tab:supp_ring2_scalar}
\small
\begin{tabular}{rrrrrrr}
\toprule
$T$ & $n$ & $r_0$ & $N$ & \shortstack[r]{Unnormalized\\full day} & \shortstack[r]{Unnormalized\\close $-$ early} & \shortstack[r]{Normalized\\close $-$ early} \\
\midrule
66  & 390  & 0.2 & 5000 & 0.8946 / 0.06176 & 0.9508 / 0.09452 & 0.9534 / 0.48880 \\
66  & 390  & 0.6 & 5000 & 0.8680 / 0.06055 & 0.9650 / 0.07539 & 0.9672 / 0.12806 \\
132 & 1560 & 0.2 & 2000 & 0.9085 / 0.04386 & 0.9755 / 0.04289 & 0.9750 / 0.21790 \\
132 & 1560 & 0.6 & 2000 & 0.9015 / 0.04381 & 0.9835 / 0.03757 & 0.9855 / 0.06335 \\
228 & 4680 & 0.2 & 2000 & 0.9175 / 0.03401 & 0.9810 / 0.02583 & 0.9795 / 0.13099 \\
228 & 4680 & 0.6 & 2000 & 0.9150 / 0.03407 & 0.9870 / 0.02420 & 0.9850 / 0.04079 \\
\bottomrule
\end{tabular}
\begin{minipage}{0.94\linewidth}
\footnotesize
\emph{Notes:} Each entry is coverage / mean length.  The close-minus-early contrast is the mean over the last 60 minutes minus
the mean over the first 60 minutes, fixed before simulation.  Jumps are
included and $\delta=0.10$.
\end{minipage}
\end{table}

Figure~\ref{fig:supp_coverage_calibration} reports probability--probability
diagnostics for the unnormalized and normalized confidence sets in the main design.
The unnormalized calibration curves lie mostly below the 45-degree line, consistent
with undercoverage. Normalization brings the curves closer to that line,
but deviations remain. The shading measures Monte Carlo uncertainty in
calibration, not uncertainty about the intraday mean curve.

\begin{figure}[!htbp]
\centering
\includegraphics[width=\textwidth]{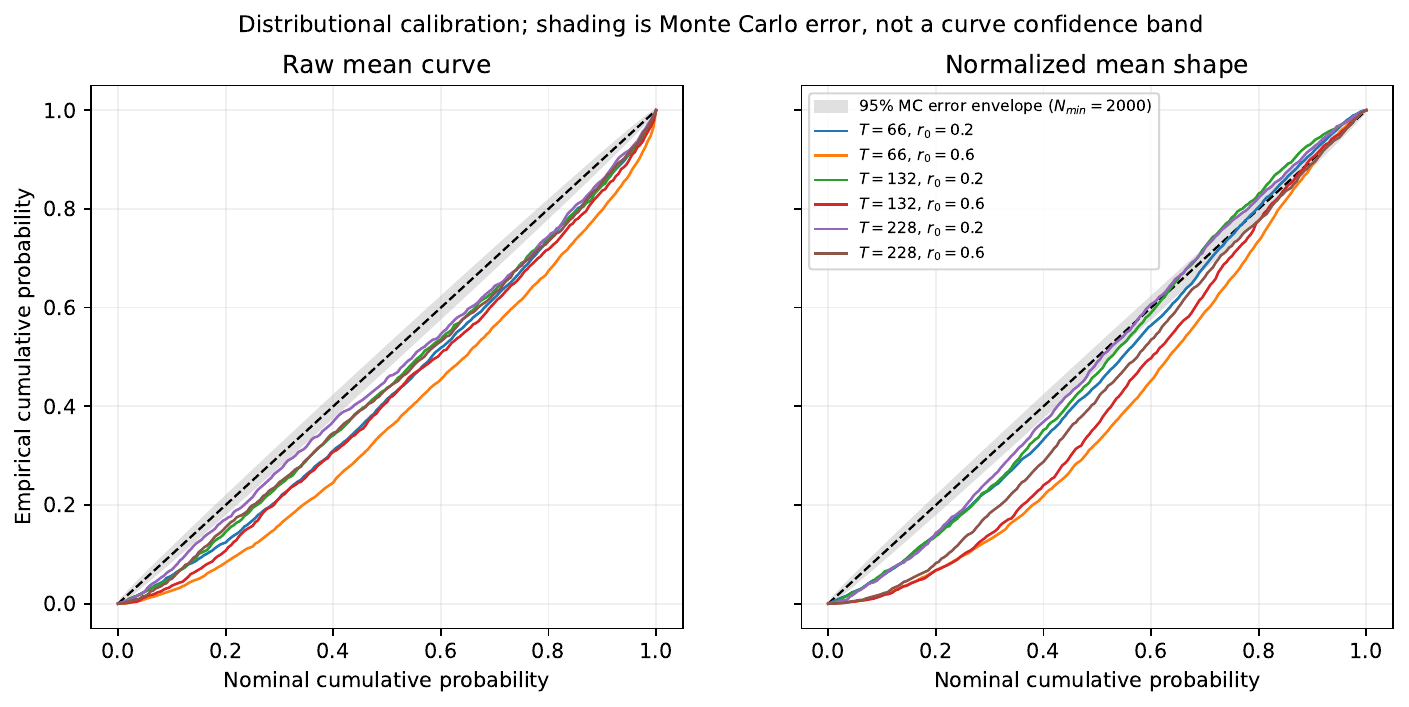}
\caption{Probability--probability diagnostics for unnormalized and normalized
whole-curve confidence sets in the main design. At nominal probability $p$,
the vertical coordinate is the fraction of replications whose conditional
CDF evaluated at the observed statistic is at most $p$. Gray shading is the
pointwise 95\% normal-approximation envelope under uniform calibration for
$N_{\min}=2{,}000$; it is not a confidence band for the mean curve.}
\label{fig:supp_coverage_calibration}
\end{figure}

For the zero-integral design, the mean curve is
$r_u(\kappa)=\beta_{\mathrm{ZI}}s(\kappa)$, where
$s(\kappa)=-\sin(2\pi\kappa)$ and
$\beta_{\mathrm{ZI}}\in\{0,0.06\}$. Normalization is undefined and is not
computed. Table~\ref{tab:supp_ring2_zero} reports similar unnormalized confidence-ball
coverage and radii at the two values of $\beta_{\mathrm{ZI}}$. These are
coverage results; the corresponding unnormalized flatness test is
reported in \suppref{suppsec:ring3_diagnostics}.

\begin{table}[p]
\centering
\caption{Unnormalized confidence sets and scalar intervals in the zero-integral design}
\label{tab:supp_ring2_zero}
\small
\begin{tabular}{rrrrrrr}
\toprule
$T$ & $\beta_{\mathrm{ZI}}$ & $N$ & Ball coverage & Radius & \shortstack[r]{Full day\\cov./len.} & \shortstack[r]{Close $-$ early\\cov./len.} \\
\midrule
66  & 0.00 & 2000 & 0.9170 & 0.04018 & 0.8965 / 0.06203 & 0.9545 / 0.09697 \\
66  & 0.06 & 2000 & 0.9130 & 0.04015 & 0.8960 / 0.06201 & 0.9525 / 0.09692 \\
132 & 0.00 & 2000 & 0.9105 & 0.02544 & 0.9050 / 0.04389 & 0.9720 / 0.04366 \\
132 & 0.06 & 2000 & 0.9055 & 0.02543 & 0.9060 / 0.04388 & 0.9725 / 0.04364 \\
228 & 0.00 & 2000 & 0.9255 & 0.01875 & 0.9245 / 0.03401 & 0.9815 / 0.02608 \\
228 & 0.06 & 2000 & 0.9250 & 0.01875 & 0.9245 / 0.03401 & 0.9815 / 0.02607 \\
\bottomrule
\end{tabular}
\end{table}

\clearpage
\subsection{Flatness diagnostics and sensitivity analysis}
\label{suppsec:ring3_diagnostics}

We compare feasible and latent-curve calculations, alternative HAC
bandwidths, and paired observations with and without price jumps.
The latent calculation replaces the estimated daily curves by block
averages of the latent correlation, retaining the primary bandwidth and
calibration scheme. Its null rejection frequencies are
$0.0274$--$0.0386$, so using latent curves does not remove the conservative
size pattern. Across the alternative bandwidths, null rejection rates are
$0.0198$--$0.0412$. These comparisons do not separately identify the
contributions of Gaussian approximation, covariance estimation, and
discretization.

Table~\ref{tab:supp_ring3_latent} shows a substantial power loss associated
with high-frequency estimation in the coarse, low-correlation
configuration. At $r_0=0.2$, $\delta=0.10$, and $T=66$, feasible and latent
rejection rates are $0.1800$ and $0.7285$. The gap narrows along the
joint-growth design, reaching $0.9980$ versus $1.0000$ at $T=228$.
Because both rates are then close to one, this last comparison describes
agreement in rejection frequencies rather than the magnitude of the
estimation error.

\begin{table}[p]
\centering
\caption{Latent and feasible flatness rejection frequencies}
\label{tab:supp_ring3_latent}
\small
\begin{tabular}{rrrrrr}
\toprule
$T$ & $r_0$ & $\delta$ & $N$ & Feasible & Latent \\
\midrule
66  & 0.2 & 0.00 & 5000 & 0.0348 & 0.0274 \\
66  & 0.2 & 0.10 & 2000 & 0.1800 & 0.7285 \\
66  & 0.6 & 0.00 & 5000 & 0.0304 & 0.0288 \\
66  & 0.6 & 0.10 & 2000 & 1.0000 & 1.0000 \\
132 & 0.2 & 0.00 & 5000 & 0.0282 & 0.0314 \\
132 & 0.2 & 0.10 & 2000 & 0.7935 & 0.9725 \\
132 & 0.6 & 0.00 & 5000 & 0.0288 & 0.0320 \\
132 & 0.6 & 0.10 & 2000 & 1.0000 & 1.0000 \\
228 & 0.2 & 0.00 & 5000 & 0.0328 & 0.0372 \\
228 & 0.2 & 0.10 & 2000 & 0.9980 & 1.0000 \\
228 & 0.6 & 0.00 & 5000 & 0.0364 & 0.0386 \\
228 & 0.6 & 0.10 & 2000 & 1.0000 & 1.0000 \\
\bottomrule
\end{tabular}
\end{table}

The bandwidth sensitivity uses
$\ell_{T,\mathrm{short}}^k=\lfloor T^{1/4}\rfloor$,
$\ell_T^k=\lfloor T^{1/3}\rfloor$, and
$\ell_{T,\mathrm{long}}^k=\lfloor T^{2/5}\rfloor$.
Table~\ref{tab:supp_ring3_bandwidth} shows similar power patterns across
these bandwidths, while null rejection frequencies vary more visibly in
several configurations.

\begin{table}[p]
\centering
\caption{Flatness rejection frequencies across HAC bandwidths}
\label{tab:supp_ring3_bandwidth}
\small
\begin{tabular}{rrrrrrr}
\toprule
$T$ & $r_0$ & $\delta$ & $(\ell_{T,\mathrm{short}}^k,\ell_T^k,\ell_{T,\mathrm{long}}^k)$ & Short & Primary & Long \\
\midrule
66  & 0.2 & 0.00 & $(2,4,5)$ & 0.0320 & 0.0348 & 0.0340 \\
66  & 0.2 & 0.10 & $(2,4,5)$ & 0.1705 & 0.1800 & 0.1820 \\
66  & 0.6 & 0.00 & $(2,4,5)$ & 0.0244 & 0.0304 & 0.0306 \\
66  & 0.6 & 0.10 & $(2,4,5)$ & 1.0000 & 1.0000 & 1.0000 \\
132 & 0.2 & 0.00 & $(3,5,7)$ & 0.0238 & 0.0282 & 0.0284 \\
132 & 0.2 & 0.10 & $(3,5,7)$ & 0.7855 & 0.7935 & 0.7890 \\
132 & 0.6 & 0.00 & $(3,5,7)$ & 0.0200 & 0.0288 & 0.0320 \\
132 & 0.6 & 0.10 & $(3,5,7)$ & 1.0000 & 1.0000 & 1.0000 \\
228 & 0.2 & 0.00 & $(3,6,8)$ & 0.0198 & 0.0328 & 0.0346 \\
228 & 0.2 & 0.10 & $(3,6,8)$ & 0.9975 & 0.9980 & 0.9985 \\
228 & 0.6 & 0.00 & $(3,6,8)$ & 0.0200 & 0.0364 & 0.0412 \\
228 & 0.6 & 0.10 & $(3,6,8)$ & 1.0000 & 1.0000 & 1.0000 \\
\bottomrule
\end{tabular}
\begin{minipage}{0.92\linewidth}
\footnotesize
\emph{Notes:} Null rows use $N=5{,}000$ and alternative rows use
$N=2{,}000$.  The three bandwidth calculations share paths and Gaussian
draws within a replication.
\end{minipage}
\end{table}

The jump comparison in Table~\ref{tab:supp_ring3_jumps} uses $2{,}000$
paired replications. Its jump-on null frequencies use the first $2{,}000$
draws and therefore need not equal the main size frequencies based on
$5{,}000$ replications. With the specified jump intensity and mark
distribution, adding jumps changes the rejection frequency by at most
$0.0035$ in absolute value across the reported configurations.

\begin{table}[p]
\centering
\caption{Paired jump and no-jump flatness rejection frequencies}
\label{tab:supp_ring3_jumps}
\small
\begin{tabular}{rrrrrr}
\toprule
$T$ & $r_0$ & $\delta$ & Jump on & No jump & On minus off \\
\midrule
66  & 0.2 & 0.00 & 0.0340 & 0.0360 & -0.0020 \\
66  & 0.2 & 0.10 & 0.1800 & 0.1765 &  0.0035 \\
66  & 0.6 & 0.00 & 0.0320 & 0.0325 & -0.0005 \\
66  & 0.6 & 0.10 & 1.0000 & 1.0000 &  0.0000 \\
132 & 0.2 & 0.00 & 0.0290 & 0.0305 & -0.0015 \\
132 & 0.2 & 0.10 & 0.7935 & 0.7915 &  0.0020 \\
132 & 0.6 & 0.00 & 0.0280 & 0.0305 & -0.0025 \\
132 & 0.6 & 0.10 & 1.0000 & 1.0000 &  0.0000 \\
228 & 0.2 & 0.00 & 0.0330 & 0.0325 &  0.0005 \\
228 & 0.2 & 0.10 & 0.9980 & 0.9980 &  0.0000 \\
228 & 0.6 & 0.00 & 0.0355 & 0.0360 & -0.0005 \\
228 & 0.6 & 0.10 & 1.0000 & 1.0000 &  0.0000 \\
\bottomrule
\end{tabular}
\begin{minipage}{0.92\linewidth}
\footnotesize
\emph{Notes:} $N=2{,}000$ in every row.  The two versions share all latent
and continuous-price draws and differ only in the inclusion of price jumps.
\end{minipage}
\end{table}

The unnormalized projection test in Remark~\ref{rem:projection} applies to
$r_u(\kappa)=\beta_{\mathrm{ZI}}s(\kappa)$ and tests $P_\perp r_u=0$ without
normalization. Table~\ref{tab:supp_ring3_raw_projected} reports conservative
null rejection frequencies at $\beta_{\mathrm{ZI}}=0$ and high power at
$\beta_{\mathrm{ZI}}=0.06$. Its statistic and critical values are on a
different scale from those of the normalized flatness test.

\begin{table}[p]
\centering
\caption{Unnormalized projection test in the zero-integral design}
\label{tab:supp_ring3_raw_projected}
\small
\begin{tabular}{rrrrr}
\toprule
$T$ & Null $\beta_{\mathrm{ZI}}=0$ & MCSE & Alternative $\beta_{\mathrm{ZI}}=0.06$ & MCSE \\
\midrule
66  & 0.0394 & 0.00275 & 0.9890 & 0.00233 \\
132 & 0.0290 & 0.00237 & 1.0000 & 0.00000 \\
228 & 0.0336 & 0.00255 & 1.0000 & 0.00000 \\
\bottomrule
\end{tabular}
\begin{minipage}{0.92\linewidth}
\footnotesize
\emph{Notes:} The null results use $N=5{,}000$ replications and the
alternative results use $N=2{,}000$.
\end{minipage}
\end{table}

\clearpage
\subsection{Paired covariance-first comparison}
\label{suppsec:chl_simulation}

The reference procedure follows
\citet[Section~5.1]{ChristensenHounyoLiu2026}.  Let $\widehat c_i^\wp$
denote the blockwise step-function extension of
$\widehat c_{i,\kappa_j}^\wp$ for $\wp\in\{X,Y,XY\}$.  The procedure first
forms the covariance-first curve
\[
 \widehat r_{\CF}(\kappa)
 =\frac{T^{-1}\sum_{i=1}^T\widehat c_i^{XY}(\kappa)}
 {\{T^{-1}\sum_{i=1}^T\widehat c_i^X(\kappa)\}^{1/2}
  \{T^{-1}\sum_{i=1}^T\widehat c_i^Y(\kappa)\}^{1/2}}
\]
and uses
\[
 Q_T^{\CF}
 =T\left\|\widehat r_{\CF}
 -\left\{\int_0^1\widehat r_{\CF}(v)\,dv\right\}\1
 \right\|_{\mathcal H}^2.
\]
The HAC estimate uses covariance-coordinate residuals, the
correlation-ratio linearization, and projection off the constant direction.
It gives the covariance and critical value corresponding to the
covariance-first estimator.

The comparison under common numerical settings uses the same paths,
high-frequency blocks, thresholds, Parzen bandwidths, $4{,}999$ Gaussian
draws, and quantile convention as the proposed test. Under this design,
the two procedures test the same flatness restriction. Rejection rates
are not adjusted to a common empirical size. The paired-count columns
record both tests rejecting, the proposed test only, CHL only, and neither.

\begin{figure}[!htbp]
\centering
\includegraphics[width=\textwidth]{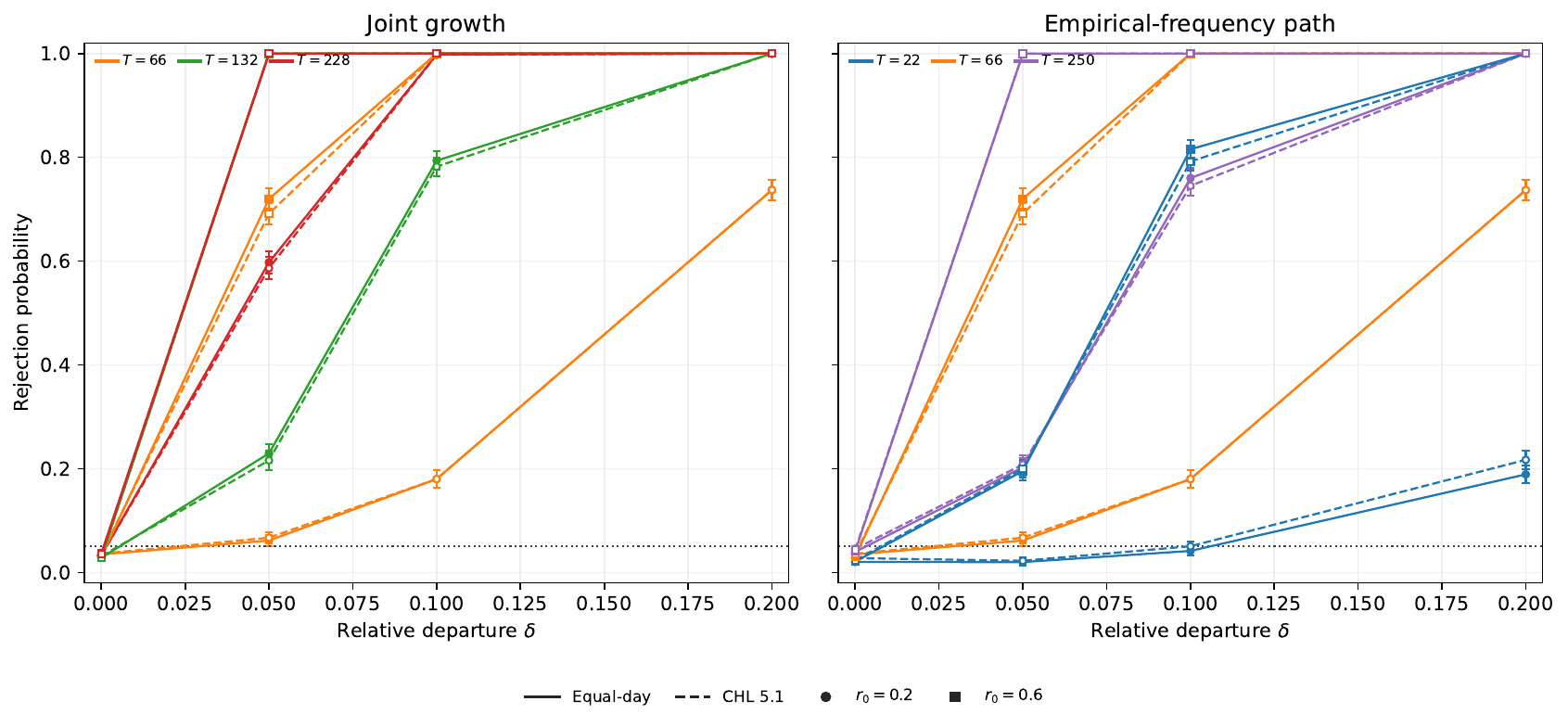}
\caption{Paired rejection frequencies for the proposed and CHL Section 5.1
procedures under common numerical settings. Solid and dashed curves use
the same simulated paths and common Gaussian calibration draws.}
\label{fig:supp_chl_comparison}
\end{figure}

\begin{table}[p]
\centering
\caption{Paired comparison under common numerical settings: null rejection frequencies}
\label{tab:supp_chl_size}
\scriptsize
\resizebox{\textwidth}{!}{%
\begin{tabular}{llrrrrrrrr}
\toprule
Design & $r_0$ & $T$ & Proposed & CHL 5.1 & CHL--proposed (paired MCSE) & Both & Proposed only & CHL only & Neither \\
\midrule
Joint & 0.2 & 66  & 0.0348 & 0.0362 &  0.0014 (0.002) & 124 & 50 & 57 & 4769 \\
Joint & 0.2 & 132 & 0.0282 & 0.0314 &  0.0032 (0.002) & 116 & 25 & 41 & 4818 \\
Joint & 0.2 & 228 & 0.0328 & 0.0326 & -0.0002 (0.002) & 131 & 33 & 32 & 4804 \\
Joint & 0.6 & 66  & 0.0304 & 0.0350 &  0.0046 (0.002) & 108 & 44 & 67 & 4781 \\
Joint & 0.6 & 132 & 0.0288 & 0.0292 &  0.0004 (0.002) & 114 & 30 & 32 & 4824 \\
Joint & 0.6 & 228 & 0.0364 & 0.0356 & -0.0008 (0.002) & 149 & 33 & 29 & 4789 \\
\midrule
Fixed minute & 0.2 & 22  & 0.0202 & 0.0276 & 0.0074 (0.002) & 85 & 16 & 53 & 4846 \\
Fixed minute & 0.2 & 66  & 0.0348 & 0.0362 & 0.0014 (0.002) & 124 & 50 & 57 & 4769 \\
Fixed minute & 0.2 & 250 & 0.0396 & 0.0456 & 0.0060 (0.002) & 161 & 37 & 67 & 4735 \\
Fixed minute & 0.6 & 22  & 0.0208 & 0.0234 & 0.0026 (0.002) & 67 & 37 & 50 & 4846 \\
Fixed minute & 0.6 & 66  & 0.0304 & 0.0350 & 0.0046 (0.002) & 108 & 44 & 67 & 4781 \\
Fixed minute & 0.6 & 250 & 0.0408 & 0.0434 & 0.0026 (0.002) & 147 & 57 & 70 & 4726 \\
\bottomrule
\end{tabular}%
}
\begin{minipage}{0.94\linewidth}
\footnotesize
\emph{Notes:} $N=5{,}000$ in every row.  The $(T,n)=(66,390)$ rows are the
same simulated samples in the two sampling designs.
\end{minipage}
\end{table}

\begin{table}[p]
\centering
\caption{Paired comparison under common numerical settings: joint-growth alternatives}
\label{tab:supp_chl_power_joint}
\scriptsize
\resizebox{\textwidth}{!}{%
\begin{tabular}{rrrrrrrrrr}
\toprule
$r_0$ & $T$ & $\delta$ & Proposed & CHL 5.1 & CHL--proposed (paired MCSE) & Both & Proposed only & CHL only & Neither \\
\midrule
0.2 & 66  & 0.05 & 0.0615 & 0.0670 &  0.0055 (0.004) & 100  & 23  & 34  & 1843 \\
0.2 & 66  & 0.10 & 0.1800 & 0.1800 &  0.0000 (0.005) & 302  & 58  & 58  & 1582 \\
0.2 & 66  & 0.20 & 0.7370 & 0.7370 &  0.0000 (0.006) & 1411 & 63  & 63  & 463 \\
0.2 & 132 & 0.05 & 0.2295 & 0.2160 & -0.0135 (0.005) & 401  & 58  & 31  & 1510 \\
0.2 & 132 & 0.10 & 0.7935 & 0.7820 & -0.0115 (0.005) & 1527 & 60  & 37  & 376 \\
0.2 & 132 & 0.20 & 1.0000 & 1.0000 &  0.0000 (0.000) & 2000 & 0   & 0   & 0 \\
0.2 & 228 & 0.05 & 0.5975 & 0.5865 & -0.0110 (0.005) & 1126 & 69  & 47  & 758 \\
0.2 & 228 & 0.10 & 0.9980 & 0.9975 & -0.0005 (0.001) & 1994 & 2   & 1   & 3 \\
0.2 & 228 & 0.20 & 1.0000 & 1.0000 &  0.0000 (0.000) & 2000 & 0   & 0   & 0 \\
0.6 & 66  & 0.05 & 0.7200 & 0.6915 & -0.0285 (0.007) & 1313 & 127 & 70  & 490 \\
0.6 & 66  & 0.10 & 1.0000 & 0.9995 & -0.0005 (0.001) & 1999 & 1   & 0   & 0 \\
0.6 & 66  & 0.20 & 1.0000 & 1.0000 &  0.0000 (0.000) & 2000 & 0   & 0   & 0 \\
0.6 & 132 & 0.05 & 1.0000 & 0.9995 & -0.0005 (0.001) & 1999 & 1   & 0   & 0 \\
0.6 & 132 & 0.10 & 1.0000 & 1.0000 &  0.0000 (0.000) & 2000 & 0   & 0   & 0 \\
0.6 & 132 & 0.20 & 1.0000 & 1.0000 &  0.0000 (0.000) & 2000 & 0   & 0   & 0 \\
0.6 & 228 & 0.05 & 1.0000 & 1.0000 &  0.0000 (0.000) & 2000 & 0   & 0   & 0 \\
0.6 & 228 & 0.10 & 1.0000 & 1.0000 &  0.0000 (0.000) & 2000 & 0   & 0   & 0 \\
0.6 & 228 & 0.20 & 1.0000 & 1.0000 &  0.0000 (0.000) & 2000 & 0   & 0   & 0 \\
\bottomrule
\end{tabular}%
}
\begin{minipage}{0.94\linewidth}
\footnotesize
\emph{Notes:} $N=2{,}000$ in every row.  Rejection rates are not adjusted
to a common empirical size.
\end{minipage}
\end{table}

\begin{table}[p]
\centering
\caption{Paired comparison under common numerical settings: one-minute alternatives}
\label{tab:supp_chl_power_fixed}
\scriptsize
\resizebox{\textwidth}{!}{%
\begin{tabular}{rrrrrrrrrr}
\toprule
$r_0$ & $T$ & $\delta$ & Proposed & CHL 5.1 & CHL--proposed (paired MCSE) & Both & Proposed only & CHL only & Neither \\
\midrule
0.2 & 22  & 0.05 & 0.0200 & 0.0225 &  0.0025 (0.003) & 30   & 10  & 15  & 1945 \\
0.2 & 22  & 0.10 & 0.0415 & 0.0505 &  0.0090 (0.003) & 69   & 14  & 32  & 1885 \\
0.2 & 22  & 0.20 & 0.1890 & 0.2170 &  0.0280 (0.006) & 330  & 48  & 104 & 1518 \\
0.2 & 66  & 0.05 & 0.0615 & 0.0670 &  0.0055 (0.004) & 100  & 23  & 34  & 1843 \\
0.2 & 66  & 0.10 & 0.1800 & 0.1800 &  0.0000 (0.005) & 302  & 58  & 58  & 1582 \\
0.2 & 66  & 0.20 & 0.7370 & 0.7370 &  0.0000 (0.006) & 1411 & 63  & 63  & 463 \\
0.2 & 250 & 0.05 & 0.2025 & 0.2080 &  0.0055 (0.005) & 350  & 55  & 66  & 1529 \\
0.2 & 250 & 0.10 & 0.7600 & 0.7450 & -0.0150 (0.006) & 1434 & 86  & 56  & 424 \\
0.2 & 250 & 0.20 & 1.0000 & 1.0000 &  0.0000 (0.000) & 2000 & 0   & 0   & 0 \\
0.6 & 22  & 0.05 & 0.1955 & 0.2000 &  0.0045 (0.006) & 316  & 75  & 84  & 1525 \\
0.6 & 22  & 0.10 & 0.8155 & 0.7920 & -0.0235 (0.006) & 1530 & 101 & 54  & 315 \\
0.6 & 22  & 0.20 & 1.0000 & 1.0000 &  0.0000 (0.000) & 2000 & 0   & 0   & 0 \\
0.6 & 66  & 0.05 & 0.7200 & 0.6915 & -0.0285 (0.007) & 1313 & 127 & 70  & 490 \\
0.6 & 66  & 0.10 & 1.0000 & 0.9995 & -0.0005 (0.001) & 1999 & 1   & 0   & 0 \\
0.6 & 66  & 0.20 & 1.0000 & 1.0000 &  0.0000 (0.000) & 2000 & 0   & 0   & 0 \\
0.6 & 250 & 0.05 & 1.0000 & 0.9995 & -0.0005 (0.001) & 1999 & 1   & 0   & 0 \\
0.6 & 250 & 0.10 & 1.0000 & 1.0000 &  0.0000 (0.000) & 2000 & 0   & 0   & 0 \\
0.6 & 250 & 0.20 & 1.0000 & 1.0000 &  0.0000 (0.000) & 2000 & 0   & 0   & 0 \\
\bottomrule
\end{tabular}%
}
\begin{minipage}{0.94\linewidth}
\footnotesize
\emph{Notes:} $N=2{,}000$ in every row.  The $T=66$ rows repeat the same
simulated samples as in Table~\ref{tab:supp_chl_power_joint}.
\end{minipage}
\end{table}

Equal rejection frequencies need not mean identical decisions on the same
samples. At $(T,n,r_0,\delta)=(66,390,0.2,0.10)$, there are 302 common
rejections and 58 rejections unique to each method. Both marginal rates
are $0.1800$, but $5.8\%$ of paired decisions differ. Across the design,
rejection frequencies are generally close, with differences in either
direction.

We next vary the numerical settings while keeping the data-generating
process and the CHL Section~5.1 statistic fixed. We use threshold
constant $q=5$ ($\tau_X=\tau_Y=5$ in our notation), $9{,}999$ Gaussian
draws, and the block lengths and Parzen bandwidths listed below. The
quantile and $p$-value conventions are unchanged. For the three
joint-growth configurations, the settings are
\[
(T,n,k_n,m_n,\ell_T)
=(66,390,130,3,4),\quad
(132,1560,390,4,5),\quad
(228,4680,936,5,6).
\]
This sensitivity analysis is restricted to the joint-growth
configurations with $\delta\in\{0,0.05\}$.

\begin{table}[p]
\centering
\caption{Sensitivity of CHL rejection frequencies to the numerical settings}
\label{tab:supp_chl_source}
\scriptsize
\resizebox{\textwidth}{!}{%
\begin{tabular}{lrrrrrrrrr}
\toprule
Part & $r_0$ & $T$ & Alternative settings & Common settings & Alternative--common (paired MCSE) & Both & Alternative only & Common only & Neither \\
\midrule
Size & 0.2 & 66  & 0.0506 & 0.0362 & 0.0144 (0.004) & 55  & 198 & 126 & 4621 \\
Size & 0.2 & 132 & 0.0356 & 0.0314 & 0.0042 (0.003) & 71  & 107 & 86  & 4736 \\
Size & 0.2 & 228 & 0.0362 & 0.0326 & 0.0036 (0.002) & 116 & 65  & 47  & 4772 \\
Size & 0.6 & 66  & 0.0424 & 0.0350 & 0.0074 (0.003) & 55  & 157 & 120 & 4668 \\
Size & 0.6 & 132 & 0.0320 & 0.0292 & 0.0028 (0.002) & 89  & 71  & 57  & 4783 \\
Size & 0.6 & 228 & 0.0362 & 0.0356 & 0.0006 (0.002) & 139 & 42  & 39  & 4780 \\
\midrule
Power & 0.2 & 66  & 0.0820 & 0.0670 &  0.0150 (0.007) & 51   & 113 & 83  & 1753 \\
Power & 0.2 & 132 & 0.3005 & 0.2160 &  0.0845 (0.009) & 358  & 243 & 74  & 1325 \\
Power & 0.2 & 228 & 0.6455 & 0.5865 &  0.0590 (0.007) & 1123 & 168 & 50  & 659 \\
Power & 0.6 & 66  & 0.6855 & 0.6915 & -0.0060 (0.010) & 1187 & 184 & 196 & 433 \\
Power & 0.6 & 132 & 1.0000 & 0.9995 &  0.0005 (0.001) & 1999 & 1   & 0   & 0 \\
Power & 0.6 & 228 & 1.0000 & 1.0000 &  0.0000 (0.000) & 2000 & 0   & 0   & 0 \\
\bottomrule
\end{tabular}%
}
\begin{minipage}{0.94\linewidth}
\footnotesize
\emph{Notes:} Size rows have $N=5{,}000$ and power rows, all at
$\delta=0.05$, have $N=2{,}000$. Alternative and Common refer to the two
prespecified numerical settings. Alternative--common is the paired
rejection-rate difference. The four paired-count columns report both,
Alternative only, Common only, and neither.
\end{minipage}
\end{table}

The alternative settings materially affect some low-correlation
configurations. At $(T,r_0,\delta)=(132,0.2,0.05)$, rejection rises from
$0.2160$ to $0.3005$. At the null configuration $(T,r_0)=(66,0.2)$, it
rises from $0.0362$ to $0.0506$. Thus the settings affect both size and
power in this comparison. Because several numerical choices change
together, these results do not isolate the effect of any one choice.
\clearpage

\section{Empirical implementation and additional results}
\label{suppsec:empirical}

\subsection{Data and aggregation}
\label{supsubsec:empirical_implementation}

The input is the balanced panel of precomputed one-minute log returns
described in Section~\ref{subsec:empirical_design}. The same 30 stocks are
used throughout:

\begin{quote}\small
AAPL, MSFT, JPM, BAC, XOM, CVX, JNJ, PFE, WMT, HD, KO, MCD, IBM,
CSCO, INTC, VZ, MRK, DIS, PG, UNH, MMM, GE, BA, CAT, TRV, AXP,
NKE, WBA, T, and ORCL.
\end{quote}

Quality checks require positive within-day variance, absolute one-minute
returns below 0.5, and prespecified limits on stock-specific and
market-wide zero-return runs. The largest observed absolute one-minute
return is 0.144431. The fractions of returns truncated are 0.00011735 for
stocks and 0.00013059 for leave-one-out proxies.

Let $R_{i,t}^{a}$ be the one-minute return of stock $a$ and define its
leave-one-out proxy by
\[
 R_{i,t}^{(-a)}=\frac1{29}\sum_{b\ne a}R_{i,t}^{b}.
\]
Proxies are constructed from the original returns before computing local
scales and truncating returns. The proposed stock--proxy aggregate equally
averages the 30 daily correlation curves for $(R^a,R^{(-a)})$; the
average-pairwise aggregate equally averages the 435 unordered stock-pair
curves. These weights are unchanged across days and quarterly windows.

For the CHL reference, covariance coordinates are averaged across days
within a window for each component pair. The correlation ratio is then
formed for each pair, followed by the fixed-weight average across pairs.
For either collection, let $a$ index a component curve and let
$\widehat\psi_i^{(a)}$ denote its daily linearized curve. The aggregate
HAC estimator uses
\[
 \widehat\psi_i^{\mathrm{agg}}
   =\sum_a w_a\widehat\psi_i^{(a)},
\]
rather than an average of the component HAC matrices. Products of the
aggregate curves include the cross terms
$w_aw_b\widehat\psi_i^{(a)}\otimes\widehat\psi_{i-h}^{(b)}$, including those
with $a\ne b$. The proposed procedure likewise forms the aggregate daily
correlation curve first and uses its corresponding residual curves to
estimate the long-run covariance.

\subsection{Correlation summaries}
\label{supsubsec:empirical_descriptive}

The pooled summaries in Table~\ref{tab:empirical_levels} use all 2,370
trading days. For each quarter, the proposed procedure also gives $95\%$
$L^2$ confidence-ball radii and marginal Gaussian intervals for the
full-day mean and the unnormalized and normalized close-minus-early contrasts. The
scalar intervals are unadjusted; the Holm adjustment below applies to the
global flatness tests, not to these intervals.

Figure~\ref{fig:supp_quarterly_contrasts} shows the quarterly normalized
close-minus-early estimates for both methods, together with the proposed
marginal intervals.

\begin{figure}[!htbp]
  \centering
  \includegraphics[width=\textwidth]{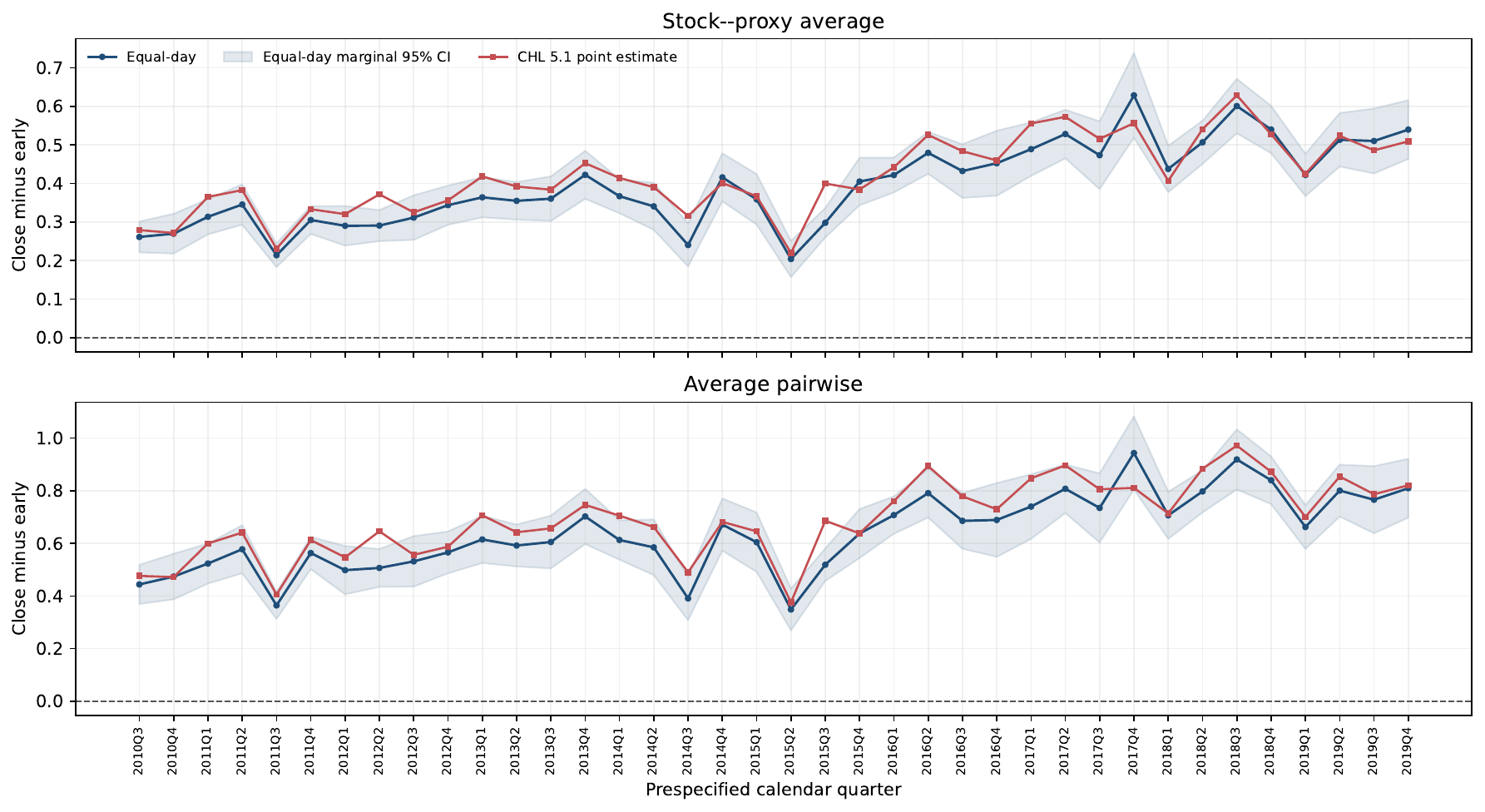}
  \caption{Quarter-specific normalized close-minus-early correlation contrasts.
  Normalization uses each curve's own full-day mean within the quarter.
  Shading gives unadjusted marginal 95\% intervals for the proposed
  estimator. The CHL line reports point estimates.}
  \label{fig:supp_quarterly_contrasts}
\end{figure}

\subsection{Quarterly flatness tests}
\label{supsubsec:empirical_quarterly}

All 38 complete quarters are retained regardless of their estimates or
$p$-values. The bandwidth rule in Section~\ref{subsec:empirical_design}
gives 4 for the seven quarters with 64 trading days and 3 for the other
31 quarters, which contain 60--63 days.

For each aggregate in each quarter, both methods use the same
$B=200{,}000$ vectors of squared standard normal draws, weighted by the eigenvalues
of their respective covariance estimates. For an observed statistic $Q$
and its simulated counterparts $Q_b^*$, the Monte Carlo $p$-value is
\[
 \widehat p=\frac{1+\#\{b:Q_b^*\ge Q\}}{B+1}.
\]
The smallest possible value is $1/200{,}001\simeq0.000005$, attained when
no simulated statistic is at least as large as $Q$. This reporting floor
is not a zero tail probability. The unadjusted $5\%$ test rejects when
$Q$ strictly exceeds the ``higher'' empirical 0.95 quantile. Holm-adjusted
$p$-values are computed separately within each of the four method--object
families, each containing 38 quarterly tests.

\begin{table}[!htbp]
\centering
\caption{Agreement of quarterly flatness decisions}
\label{tab:supp_quarter_agreement}
\small
\begin{tabular}{lrrrr}
\toprule
Object & Both reject & Proposed only & CHL only & Neither\\
\midrule
Stock--proxy average & 38 & 0 & 0 & 0\\
Average pairwise & 37 & 1 & 0 & 0\\
\bottomrule
\end{tabular}
\begin{minipage}{0.94\linewidth}\footnotesize
\emph{Notes:} Counts are identical for unadjusted and Holm-adjusted
$5\%$ decisions. Both methods use the same data and underlying Gaussian
draws, with their own covariance estimates and critical values.
\end{minipage}
\end{table}

Figure~\ref{fig:supp_quarterly_pvalues} compares the unadjusted Monte Carlo
$p$-values. All proposed values reach the reporting floor, while a small
number of CHL values lie above it. Values at the floor cannot be ranked
further at this simulation resolution.

\begin{figure}[!htbp]
  \centering
  \includegraphics[width=0.94\textwidth]{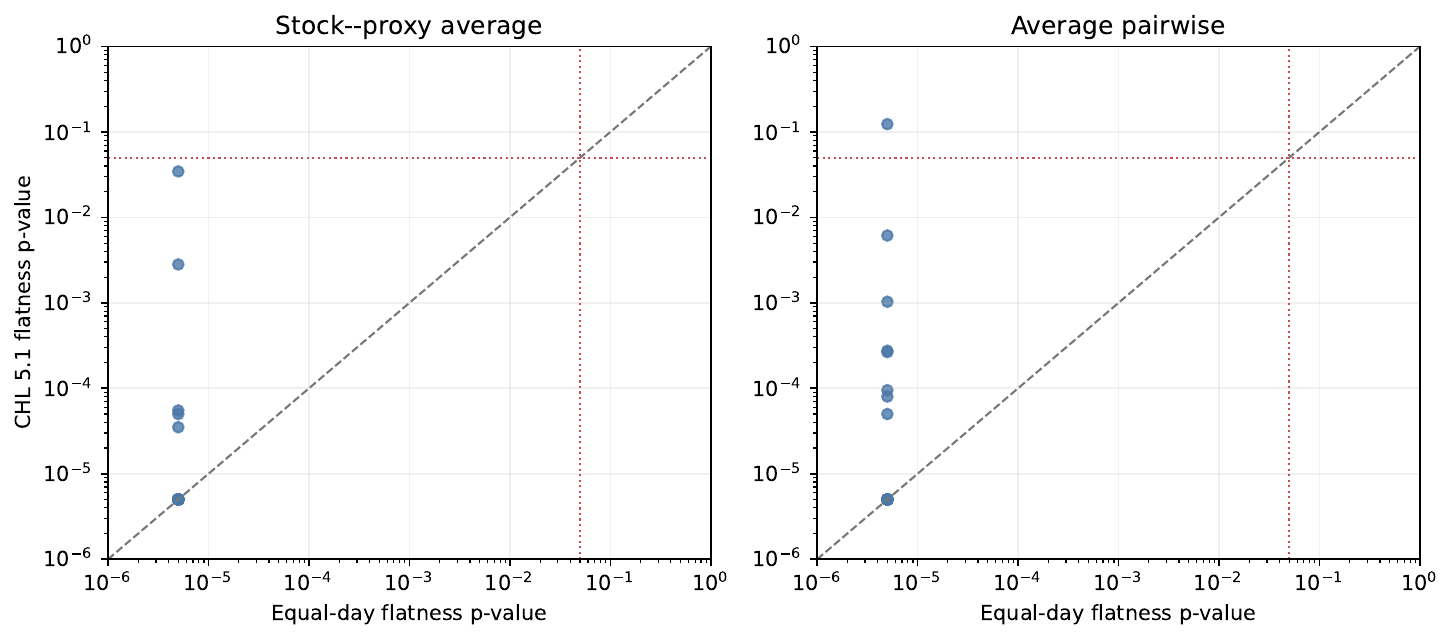}
  \caption{Unadjusted quarter-level flatness $p$-values under the two
  procedures. Dotted lines mark 0.05; the dashed diagonal marks equality
  of the reported $p$-values. Points at approximately $5\times10^{-6}$
  are at the Monte Carlo reporting floor for 200,000 draws.}
  \label{fig:supp_quarterly_pvalues}
\end{figure}

The only decision disagreement, for average-pairwise correlation in
2017Q4 ($T_q=62$, HAC bandwidth 3), is reported in
Section~\ref{subsec:empirical_quarterly}. The proposed statistic, 5.878419,
exceeds its 95\% critical value, 0.517456. Its unadjusted $p$-value is at
the reporting floor and its Holm-adjusted value is 0.000190. The CHL~5.1
statistic, 0.165343, is below its critical value, 0.264980; its unadjusted
and Holm-adjusted $p$-values are both 0.123469. These statistics are
compared with their own critical values, not directly with one another.

The $L^2$ distances between the two estimated curves in this quarter are
0.047253 on the original scale and 0.352334 after normalization. The latter
is the largest normalized distance among the quarterly comparisons.
These are descriptive differences between estimates, not a test of
equality of their population targets.

The following table reports all 76 quarter--object comparisons, with both
methods shown in each row.

\begin{landscape}
\begingroup
\footnotesize
\setlength{\tabcolsep}{4pt}
\begin{longtable}{llrrrrrr}
\caption{Quarter-level flatness probabilities under the equal-day and CHL 5.1 procedures}\label{tab:supp_quarter_details}\\
\toprule
Quarter & Object & $T_q$ & HAC lag & ED $p$ & CHL $p$ & ED Holm $p$ & CHL Holm $p$ \\
\midrule
\endfirsthead
\toprule
Quarter & Object & $T_q$ & HAC lag & ED $p$ & CHL $p$ & ED Holm $p$ & CHL Holm $p$ \\
\midrule
\endhead
2010Q3 & Stock--proxy & 64 & 4 & 0.000005 & 0.000005 & 0.000190 & 0.000190 \\
2010Q3 & Pairwise & 64 & 4 & 0.000005 & 0.000005 & 0.000190 & 0.000190 \\
2010Q4 & Stock--proxy & 63 & 3 & 0.000005 & 0.000035 & 0.000190 & 0.000190 \\
2010Q4 & Pairwise & 63 & 3 & 0.000005 & 0.000275 & 0.000190 & 0.001325 \\
2011Q1 & Stock--proxy & 62 & 3 & 0.000005 & 0.000005 & 0.000190 & 0.000190 \\
2011Q1 & Pairwise & 62 & 3 & 0.000005 & 0.000095 & 0.000190 & 0.000570 \\
2011Q2 & Stock--proxy & 63 & 3 & 0.000005 & 0.000005 & 0.000190 & 0.000190 \\
2011Q2 & Pairwise & 63 & 3 & 0.000005 & 0.000005 & 0.000190 & 0.000190 \\
2011Q3 & Stock--proxy & 64 & 4 & 0.000005 & 0.000005 & 0.000190 & 0.000190 \\
2011Q3 & Pairwise & 64 & 4 & 0.000005 & 0.000005 & 0.000190 & 0.000190 \\
2011Q4 & Stock--proxy & 62 & 3 & 0.000005 & 0.000005 & 0.000190 & 0.000190 \\
2011Q4 & Pairwise & 62 & 3 & 0.000005 & 0.000005 & 0.000190 & 0.000190 \\
2012Q1 & Stock--proxy & 62 & 3 & 0.000005 & 0.000005 & 0.000190 & 0.000190 \\
2012Q1 & Pairwise & 62 & 3 & 0.000005 & 0.000005 & 0.000190 & 0.000190 \\
2012Q2 & Stock--proxy & 63 & 3 & 0.000005 & 0.000005 & 0.000190 & 0.000190 \\
2012Q2 & Pairwise & 63 & 3 & 0.000005 & 0.000005 & 0.000190 & 0.000190 \\
2012Q3 & Stock--proxy & 62 & 3 & 0.000005 & 0.002815 & 0.000190 & 0.005630 \\
2012Q3 & Pairwise & 62 & 3 & 0.000005 & 0.006140 & 0.000190 & 0.012280 \\
2012Q4 & Stock--proxy & 60 & 3 & 0.000005 & 0.000005 & 0.000190 & 0.000190 \\
2012Q4 & Pairwise & 60 & 3 & 0.000005 & 0.000005 & 0.000190 & 0.000190 \\
2013Q1 & Stock--proxy & 60 & 3 & 0.000005 & 0.000005 & 0.000190 & 0.000190 \\
2013Q1 & Pairwise & 60 & 3 & 0.000005 & 0.000005 & 0.000190 & 0.000190 \\
2013Q2 & Stock--proxy & 64 & 4 & 0.000005 & 0.000055 & 0.000190 & 0.000200 \\
2013Q2 & Pairwise & 64 & 4 & 0.000005 & 0.001030 & 0.000190 & 0.003090 \\
2013Q3 & Stock--proxy & 63 & 3 & 0.000005 & 0.000005 & 0.000190 & 0.000190 \\
2013Q3 & Pairwise & 63 & 3 & 0.000005 & 0.000005 & 0.000190 & 0.000190 \\
2013Q4 & Stock--proxy & 62 & 3 & 0.000005 & 0.000005 & 0.000190 & 0.000190 \\
2013Q4 & Pairwise & 62 & 3 & 0.000005 & 0.000005 & 0.000190 & 0.000190 \\
2014Q1 & Stock--proxy & 61 & 3 & 0.000005 & 0.000005 & 0.000190 & 0.000190 \\
2014Q1 & Pairwise & 61 & 3 & 0.000005 & 0.000005 & 0.000190 & 0.000190 \\
2014Q2 & Stock--proxy & 63 & 3 & 0.000005 & 0.000005 & 0.000190 & 0.000190 \\
2014Q2 & Pairwise & 63 & 3 & 0.000005 & 0.000005 & 0.000190 & 0.000190 \\
2014Q3 & Stock--proxy & 63 & 3 & 0.000005 & 0.000005 & 0.000190 & 0.000190 \\
2014Q3 & Pairwise & 63 & 3 & 0.000005 & 0.000050 & 0.000190 & 0.000400 \\
2014Q4 & Stock--proxy & 62 & 3 & 0.000005 & 0.000005 & 0.000190 & 0.000190 \\
2014Q4 & Pairwise & 62 & 3 & 0.000005 & 0.000005 & 0.000190 & 0.000190 \\
2015Q1 & Stock--proxy & 61 & 3 & 0.000005 & 0.000005 & 0.000190 & 0.000190 \\
2015Q1 & Pairwise & 61 & 3 & 0.000005 & 0.000005 & 0.000190 & 0.000190 \\
2015Q2 & Stock--proxy & 63 & 3 & 0.000005 & 0.000005 & 0.000190 & 0.000190 \\
2015Q2 & Pairwise & 63 & 3 & 0.000005 & 0.000080 & 0.000190 & 0.000560 \\
2015Q3 & Stock--proxy & 64 & 4 & 0.000005 & 0.000005 & 0.000190 & 0.000190 \\
2015Q3 & Pairwise & 64 & 4 & 0.000005 & 0.000005 & 0.000190 & 0.000190 \\
2015Q4 & Stock--proxy & 62 & 3 & 0.000005 & 0.000005 & 0.000190 & 0.000190 \\
2015Q4 & Pairwise & 62 & 3 & 0.000005 & 0.000005 & 0.000190 & 0.000190 \\
2016Q1 & Stock--proxy & 61 & 3 & 0.000005 & 0.000005 & 0.000190 & 0.000190 \\
2016Q1 & Pairwise & 61 & 3 & 0.000005 & 0.000005 & 0.000190 & 0.000190 \\
2016Q2 & Stock--proxy & 64 & 4 & 0.000005 & 0.000005 & 0.000190 & 0.000190 \\
2016Q2 & Pairwise & 64 & 4 & 0.000005 & 0.000005 & 0.000190 & 0.000190 \\
2016Q3 & Stock--proxy & 64 & 4 & 0.000005 & 0.000005 & 0.000190 & 0.000190 \\
2016Q3 & Pairwise & 64 & 4 & 0.000005 & 0.000005 & 0.000190 & 0.000190 \\
2016Q4 & Stock--proxy & 62 & 3 & 0.000005 & 0.000005 & 0.000190 & 0.000190 \\
2016Q4 & Pairwise & 62 & 3 & 0.000005 & 0.000005 & 0.000190 & 0.000190 \\
2017Q1 & Stock--proxy & 62 & 3 & 0.000005 & 0.000005 & 0.000190 & 0.000190 \\
2017Q1 & Pairwise & 62 & 3 & 0.000005 & 0.000005 & 0.000190 & 0.000190 \\
2017Q2 & Stock--proxy & 63 & 3 & 0.000005 & 0.000005 & 0.000190 & 0.000190 \\
2017Q2 & Pairwise & 63 & 3 & 0.000005 & 0.000005 & 0.000190 & 0.000190 \\
2017Q3 & Stock--proxy & 62 & 3 & 0.000005 & 0.000005 & 0.000190 & 0.000190 \\
2017Q3 & Pairwise & 62 & 3 & 0.000005 & 0.000005 & 0.000190 & 0.000190 \\
2017Q4 & Stock--proxy & 62 & 3 & 0.000005 & 0.034495 & 0.000190 & 0.034495 \\
2017Q4 & Pairwise & 62 & 3 & 0.000005 & 0.123469 & 0.000190 & 0.123469 \\
2018Q1 & Stock--proxy & 61 & 3 & 0.000005 & 0.000005 & 0.000190 & 0.000190 \\
2018Q1 & Pairwise & 61 & 3 & 0.000005 & 0.000005 & 0.000190 & 0.000190 \\
2018Q2 & Stock--proxy & 64 & 4 & 0.000005 & 0.000005 & 0.000190 & 0.000190 \\
2018Q2 & Pairwise & 64 & 4 & 0.000005 & 0.000005 & 0.000190 & 0.000190 \\
2018Q3 & Stock--proxy & 62 & 3 & 0.000005 & 0.000005 & 0.000190 & 0.000190 \\
2018Q3 & Pairwise & 62 & 3 & 0.000005 & 0.000005 & 0.000190 & 0.000190 \\
2018Q4 & Stock--proxy & 61 & 3 & 0.000005 & 0.000005 & 0.000190 & 0.000190 \\
2018Q4 & Pairwise & 61 & 3 & 0.000005 & 0.000005 & 0.000190 & 0.000190 \\
2019Q1 & Stock--proxy & 61 & 3 & 0.000005 & 0.000005 & 0.000190 & 0.000190 \\
2019Q1 & Pairwise & 61 & 3 & 0.000005 & 0.000005 & 0.000190 & 0.000190 \\
2019Q2 & Stock--proxy & 63 & 3 & 0.000005 & 0.000005 & 0.000190 & 0.000190 \\
2019Q2 & Pairwise & 63 & 3 & 0.000005 & 0.000005 & 0.000190 & 0.000190 \\
2019Q3 & Stock--proxy & 63 & 3 & 0.000005 & 0.000005 & 0.000190 & 0.000190 \\
2019Q3 & Pairwise & 63 & 3 & 0.000005 & 0.000005 & 0.000190 & 0.000190 \\
2019Q4 & Stock--proxy & 62 & 3 & 0.000005 & 0.000050 & 0.000190 & 0.000200 \\
2019Q4 & Pairwise & 62 & 3 & 0.000005 & 0.000265 & 0.000190 & 0.001325 \\
\bottomrule
\end{longtable}

\endgroup
\end{landscape}

\clearpage
\begingroup
\small
\singlespacing
\setlength{\bibsep}{4pt}
\bibliographystyle{plainnat}
\bibliography{references}
\endgroup
\end{document}